\newcounter{enumitemp}
\newenvironment{enumeratecontinue}{
 \setcounter{enumitemp}{\value{enumi}}
 \begin{enumerate}
 \setcounter{enumi}{\value{enumitemp}}
}
{
 \end{enumerate}
}
\newcommand\pref[1]{(\ref{#1})}
\newtheorem{thm}{Theorem}[section]
\newtheorem{lemma}[thm]{Lemma}
\newtheorem*{theoremA}{Theorem A}
\newtheorem*{theoremC}{Theorem C}
\newtheorem*{theoremF}{Hyperbolic action theorem (for automorphism subgroups)}
\newtheorem*{corollary*}{Corollary}
\newtheorem*{disintegrationtheorem}{Disintegration Theorem}
\newtheorem{corollary}[thm]{Corollary}
\newtheorem{proposition}[thm]{Proposition}
\newtheorem*{proposition*}{Proposition}
\newtheorem{fact}[thm]{Fact}
\theoremstyle{definition}
\newtheorem{definition}[thm]{Definition} 
\newtheorem*{defn*}{Definition}
\newtheorem{notations}[thm]{Notations}
\theoremstyle{remark}
\newcounter{remarks}
{\paragraph*{Remarks}\smallskip
 \begin{list}{\arabic{remarks}. }{\usecounter{remarks}%
 \setlength{\leftmargin}{0in}%
 \setlength{\rightmargin}{0in}%
 \setlength{\labelsep}{0pt}%
 \setlength{\labelwidth}{0pt}%
 \setlength{\listparindent}{0pt}%
 }
}
{
\end{list}
}
\newcommand\from{:}
\newcommand\inv{{-1}}
\newcommand\subgroup{<}
\newcommand\normal{\,\triangleleft\,}
\newcommand\infinity\infty
\newcommand\na{\text{na}}
\newcommand\disjunion\coprod
\newcommand\act\curvearrowright
\DeclareMathOperator{\Fix}{Fix}
\DeclareMathOperator{\Per}{Per}
\DeclareMathOperator\image{Image}
\DeclareMathOperator\kernel{Kernel}
\DeclareMathOperator\diam{Diam}
\DeclareMathOperator\Isom{Isom}
\newcommand{\R}{{\mathbb R}}
\newcommand\reals{\R}
\newcommand{\Z}{{\mathbb Z}}
\newcommand{\D}{{\mathcal D}}
\renewcommand{\S}{{\mathcal S}}
\newcommand{\A}{\mathcal A}
\newcommand\cQ{\mathcal Q}
\renewcommand\L{\mathcal L}
\newcommand{\PF}{{\text{PF}}}
\newcommand\X{\mathcal X}
\newcommand\Y{\mathcal Y}
\newcommand\semidirect{\rtimes}
\DeclareMathOperator{\Out}{\mathsf{Out}}
\DeclareMathOperator{\Aut}{\mathsf{Aut}}
\DeclareMathOperator{\Inn}{\mathsf{Inn}}
\DeclareMathOperator{\Stab}{\mathsf{Stab}}
\DeclareMathOperator{\wtStab}{\widetilde{\mathsf{Stab}}}
\newcommand{\ffs}{free factor system}
\newcommand{\pg}{PG}
\newcommand{\upg}{UPG}
\newcommand{\F}{\mathcal F}
\def\B{\mathcal B}
\newcommand{\ti} {\tilde}
\newcommand{\eg}{EG}
\newcommand{\noneg}{NEG}
\renewcommand\neg\noneg
\newcommand{\wt}{\widetilde}
\newcommand\gp{\text{gp}}
\newcommand{\ct}{CT}
\newcommand{\cts}{CTs}
\newcommand{\comment}[1]{}
\newcommand\BeFuji{\cite{BestvinaFujiwara:bounded}}
\newcommand\BHTag{BestvinaHandel:tt}
\newcommand\BH{\cite{\BHTag}}
\newcommand\BookZeroTag{BFH:laminations}
\newcommand\BookZero{\cite{\BookZeroTag}}
\newcommand\BookOneTag{BFH:TitsOne}
\newcommand\BookOne{\cite{\BookOneTag}}
\newcommand\BookThree{\cite{BFH:Solvable}}
\newcommand\recognitionTag{FeighnHandel:recognition}
\newcommand\recognition{\cite{\recognitionTag}}
\newcommand\abelianTag{FeighnHandel:abelian}
\newcommand\abelian{\cite{\abelianTag}}
\newcommand\SubgroupsTag{HandelMosher:Subgroups}
\newcommand\SubgroupsOne{\cite[Part I]{\SubgroupsTag}}
\newcommand\SubgroupsTwo{\cite[Part II]{\SubgroupsTag}}
\newcommand\SubgroupsThree{\cite[Part III]{\SubgroupsTag}}
\newcommand\SubgroupsFour{\cite[Part IV]{\SubgroupsTag}}
\newcommand\FSLoxTag{HandelMosher:FreeSplittingLox}
\newcommand\FSLox{\cite{\FSLoxTag}}
\newcommand\HTwoBOneTag{HandelMosher:BddCohomologyI}
\newcommand\HTwoBOne{\cite{\HTwoBOneTag}}
\newcommand\PartOneTag\HTwoBOneTag
\newcommand\PartOne\HTwoBOne
\newcommand\bdy\partial
\newcommand\intersect\cap
\newcommand\union\cup
\newcommand\<\langle
\renewcommand\>\rangle
\newcommand\meet\wedge
\newcommand\composed{\circ}
\newcommand\cross\times
\newcommand\restrict{\bigm |}
\newcommand\wh{\widehat}
\newcommand\inject\hookrightarrow
\DeclareMathOperator\Length{Length}
\newcommand\abs[1]{\bigl| #1 \bigr |}
\newcommand\Id{\text{Id}}
\newcommand\injectto\hookrightarrow
\newcommand\injectfrom\hookleftarrow
\newcommand\surjectto\twoheadrightarrow
\newcommand\surjectfrom\twoheadleftarrow
\DeclareMathOperator\rank{rank}
 \newcommand\surjection\twoheadrightarrow
\newcommand\suchthat{\bigm|}
\newcommand\hyp{\mathbf{H}}
\DeclareMathOperator\IA{IA}
\newcommand\IAThree{\IA_n(\Z/3)}
\newcommand\cH{{\cal H}}
\newcommand\cK{\mathcal{K}}
\newcommand\cN{\mathcal{N}}
\newcommand\I{{\cal I}}
\newcommand\dg{\to}
\DeclareMathOperator\lin{{LIN}}
\newcommand\Ad{\text{Ad}}
\newcommand\shsh{{\#\!\#}}
\newcommand\lum{l^-_u}
\newcommand\coarsesim[2]{\mathrel{\substack{{#1}\\\sim\\ {#2}}}}
\newcommand\fTstar{f_{T^*}}
\title{Hyperbolic actions and 2nd bounded cohomology \\of subgroups of $\Out(F_n)$ \\ Part II: Finite lamination subgroups}
\author{Michael Handel and Lee Mosher \thanks{The first author  was supported by the National Science Foundation under Grant No.~DMS-1308710 and by PSC-CUNY under grants in Program Years 46 and 47. The second author was supported by the National Science Foundation under Grant No.~DMS-1406376.}}
\begin{document}

\maketitle

\begin{abstract}
This is the second part of a two part work in which we prove that for every finitely generated subgroup $\Gamma \subgroup \Out(F_n)$, either $\Gamma$ is virtually abelian or its second bounded cohomology $H^2_b(\Gamma;\reals)$ contains an embedding of $\ell^1$. Here in Part II we focus on finite lamination subgroups $\Gamma$ --- meaning that the set of all attracting laminations of elements of $\Gamma$ is finite --- and on the construction of useful hyperbolic actions of those subgroups.
\end{abstract}

\section{Introduction}

In this two part work, we study the second bounded cohomology of finitely generated subgroups of $\Out(F_n)$, the outer automorphism group of a free group of finite rank~$n$. The main theorem of the whole work is an ``$H^2_b$-alternative'' analogous to similar results for subgroups of mapping class groups \BeFuji\ and for groups acting in certain ways on hyperbolic spaces \cite{Brooks:H2bRemarks,BrooksSeries:H2bSurface,EpsteinFujiwara,Fujiwara:H2BHyp,Fujiwara:H2bFreeProduct}:

\begin{theoremA}
For every finitely generated subgroup $\Gamma \subgroup \Out(F_n)$, either $\Gamma$ is virtually abelian or $H^2_b(\Gamma;\reals)$ has an embedded copy of $\ell^1$ and so is of uncountable dimension.
\end{theoremA}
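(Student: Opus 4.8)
The plan is to establish the dichotomy by showing that every finitely generated $\Gamma \subgroup \Out(F_n)$ which is \emph{not} virtually abelian admits an isometric action on a Gromov hyperbolic space with two independent loxodromic elements, at least one of which satisfies the weak proper discontinuity hypothesis WWPD, and then to invoke a general bounded cohomology mechanism. That mechanism, going back to \BeFuji\ and developed in the form needed here in Part~I \PartOne, produces from such an action an abundance of Brooks-type counting quasimorphisms whose span contains an isometrically embedded $\ell^1$; passing to the quotient by the (finite-dimensional) space of homomorphisms $\Gamma \to \reals$ then gives $\ell^1 \inject H^2_b(\Gamma;\reals)$. Since an $\ell^1$-embedding in $H^2_b$ is inherited both by and from finite-index subgroups (transfer), one is free to replace $\Gamma$ by any finite-index subgroup throughout.

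The first case split is on the set $\L(\Gamma)$ of all attracting laminations of all elements of $\Gamma$. If $\L(\Gamma)$ is infinite, i.e. $\Gamma$ is not a \emph{finite lamination subgroup}, then I would invoke the Hyperbolic action theorem for automorphism subgroups: relative to a suitable $\Gamma$-invariant free factor system, $\Gamma$ acts on a hyperbolic graph — the free splitting complex (\FSHyp, \FSLox), the free factor complex, or a co-surface type graph — with a loxodromic element enjoying WPD in the sense of Bestvina--Feighn, and the first paragraph finishes this case.

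The main case, and the content of Part~II, is that $\Gamma$ is a finite lamination subgroup. After replacing $\Gamma$ by the finite-index subgroup fixing each of its (finitely many) attracting and repelling laminations, I would use relative train track and \csirt\ technology (\recognition), together with the relative Kolchin theorem (\relKolchin) and the analysis of abelian subgroups in \abelian, to stratify the action of $\Gamma$ along a $\Gamma$-invariant filtration $\ffsfilt$ by free factor systems. To each invariant attracting lamination $\Lambda$ one attaches structural coordinates — expansion factors, twist parameters along (periodic) indivisible Nielsen paths, and a hyperbolic line or graph of the kind built in \Axes\ on which the relevant stabilizer acts. The claim is then a clean alternative: either some piece of this data exhibits $\Gamma$ acting on a hyperbolic space with independent loxodromic WWPD elements, so the first paragraph applies; or every such piece is bounded/degenerate, in which case the structure theory (including the subgroup decomposition results of \Subgroups) forces the image of $\Gamma$ in each relevant coordinate group to be abelian modulo finite index, hence $\Gamma$ itself virtually abelian.

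The hard part will be this last step. Constructing, for a finite lamination subgroup, a hyperbolic space carrying a loxodromic element with WWPD is delicate precisely because full WPD typically fails in this regime, and because such a $\Gamma$ need not act loxodromically on any of the classical hyperbolic complexes; the construction must be carried out by hand from the lamination data, with hyperbolicity and acylindricity-type estimates kept uniform across the strata of the filtration. Equally delicate is proving that the ``no loxodromic WWPD'' branch genuinely coincides with virtual abelianness: this requires tight control of periodic and indivisible Nielsen paths, of the group of units/dilatations acting on each invariant lamination, and of how these data assemble up the filtration, so that the only obstruction to a hyperbolic action of the desired kind is the one obstruction — abelianness — that Theorem~A must allow.
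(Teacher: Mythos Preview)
Your overall architecture is right: split on $\L(\Gamma)$ finite/infinite, produce a hyperbolic action with WWPD loxodromics, then invoke the quasimorphism machinery. But for the finite lamination case --- which is the entire content of this paper --- your proposal is missing the actual mechanism, and the alternative you sketch does not obviously work.

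Two concrete gaps. First, you never mention the reduction to subgroups with \emph{virtually abelian restrictions}: in Part~I one shows that if $\Gamma$ has a $\Gamma$-invariant proper free factor conjugacy class $[A]$ such that the restriction $\Gamma \to \Out(A)$ is not virtually abelian, then one can descend by induction on rank. So the hard case is a finite lamination $\Gamma \subgroup \IAThree$ that is not virtually abelian yet has abelian image in $\Out(A)$ for every invariant proper free factor~$A$. Second, in that hard case the paper does not ``stratify along a filtration'' and read off coordinates; instead it constructs an \emph{automorphic lift} $\Gamma \to \Aut(A)$ (Proposition~\ref{LemmaAutLiftExists}), reducing Theorem~C to the Hyperbolic Action Theorem for a subgroup $\wh\cH \subgroup \Aut(F_k)$ whose image in $\Out(F_k)$ is abelian. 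The hyperbolic space for $\wh\cH$ is then built by hand: either a Bass--Serre tree in the one-edge case, or a bi-infinite mapping cylinder $\S$ of a map $T^* \to T^*$ induced by the top \eg\ stratum of a \ct, with hyperbolicity proved via flaring and the Mj--Sardar combination theorem, and the action produced using Feighn--Handel disintegration groups. None of this appears in your proposal; the tools you cite (relative Kolchin, the spaces of \Axes) are not what is used, and your ``structural coordinates'' picture does not explain how to get a nonelementary hyperbolic action out of a group whose outer image is already abelian --- that is precisely the difficulty the automorphic lift and the suspension construction are designed to overcome.
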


In Part I \PartOne, Theorem~A was reduced to Theorem~C which is the main result here in Part~II. Theorem~C explains how to produce useful hyperbolic actions for a certain class of finite lamination subgroups of $\Out(F_n)$. To fully state Theorem~C we first briefly review the mathematical objects relevant to its statement: hyperbolic actions and their WWPD elements; subgroups of $\Out(F_n)$ which have (virtually) abelian restriction to any invariant proper free factor; and the dichotomy of finite lamination subgroups versus infinite lamination subgroups of $\Out(F_n)$.

Consider a group action $G \act \S$ on a hyperbolic space. Recall that $\gamma \in G$ is \emph{loxodromic} if it acts on the Gromov boundary $\bdy \S$ with a unique repeller-attractor pair $(\bdy_- \gamma, \bdy_+ \gamma) \in \bdy \S \times \bdy \S - \Delta$. Recall also that $G \act \S$ is \emph{nonelementary} if (following \BeFuji) there exist \emph{independent} loxodromic elements $\delta,\gamma \in G$, meaning that the sets $\{\bdy_-\delta,\bdy_+\delta\}$, $\{\bdy_-\gamma,\bdy_+\gamma\}$ are disjoint. Given a loxodromic element $\gamma \in \Gamma$, the WWPD property for $\gamma$ was first defined in \cite{BBF:MCGquasitrees}, and that property has several equivalent reformulations that are collected together in \cite[Proposition 2.6]{HandelMosher:WWPD} and are denoted WWPD~(1)--(4). In particular, WWPD~(2) says that the $G$-orbit of the ordered pair $(\bdy_- \gamma, \bdy_+ \gamma)$ is a discrete subset of the space of distinct ordered pairs $\bdy \S \times \bdy \S - \Delta$. 

Let $\IAThree \subgroup \Out(F_n)$ denote the finite index normal subgroup consisting of all outer automorphisms whose induced action on $H_1(F_n;\Z/3)$ is trivial. A subgroup $\Gamma \subgroup \IAThree$ has \emph{(virtually) abelian restrictions}  (Definition~\ref{DefVirAbRestr}) if for each proper free factor $A \subgroup F_n$ whose conjugacy class $[A]$ is fixed by each element of~$\Gamma$, the natural restriction homomorphism $\Gamma \mapsto \Out(A)$ has (virtually) abelian image. As explained in Part~I~\HTwoBOne, the property of $\Gamma \subgroup \IAThree$ having (virtually) abelian restrictions plays a role in our theory analogous to the role played by irreducible subgroups of mapping class groups in the theory of Bestvina and Fujiwara \BeFuji.

The decomposition theory of Bestvina, Feighn and Handel \BookOne\ associates to each $\phi \in \Out(F_n)$ a finite set $\L(\phi)$ of \emph{attracting laminations}. Associated to a subgroup $\Gamma \subgroup \Out(F_n)$ is the set $\L(\Gamma) = \union_{\phi \in \Gamma} \, \L(\Gamma)$. If $\L(\Gamma)$ is finite then $\Gamma$ is a \emph{finite lamination subgroup}, otherwise $\Gamma$ is an \emph{infinite lamination subgroup}. In Part I we reduced Theorem~A to two results about subgroups of $\IAThree$ with (virtually) abelian restrictions: Theorem~B concerning infinite lamination subgroups which was proved there in Part~I; and Theorem~C concerning finite lamination subgroups, which is proved here in Part~II. Each of Theorems~B and~C concludes with the existence of a hyperbolic action of a finite index normal subgroup possessing a sufficiently rich collection of WWPD elements. The reduction argument in Part~I combines those two conclusions with the Global WWPD Theorem of~\cite{HandelMosher:WWPD} to prove Theorem~A. 

Here is the main theorem of this paper:

\begin{theoremC}
For any finitely generated, non virtually abelian, finite lamination subgroup $\Gamma \subgroup \IAThree$ such that $\Gamma$ has virtually abelian restrictions, there exists a finite index normal subgroup $N \normal \Gamma$ and an action $N \act \S$ on a hyperbolic space, such that the following hold:
\begin{enumerate}
\item\label{ItemThmC_EllOrLox}
Every element of $N$ acts either elliptically or loxodromically on $\S$;
\item\label{ItemThmC_Nonelem}
The action $N \act \S$ is nonelementary;
\item\label{ItemThmC_WWPD}
Every loxodromic element of the commutator subgroup $[N,N]$ is a strongly axial, WWPD element with respect to the action $N \act \S$.
\end{enumerate}
\end{theoremC}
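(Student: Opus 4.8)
The plan is: absorb all the finite combinatorial data attached to $\Gamma$ into a finite-index normal subgroup $N$; locate the non-virtual-abelianness of $\Gamma$ as a nonelementary action on a complex built from proper free factors; assemble those pieces into a Bestvina--Bromberg--Fujiwara quasi-tree of hyperbolic spaces $\S$ by way of the Hyperbolic Action Theorem (for automorphism subgroups); and finally read off \itemref{ItemThmC_EllOrLox}--\itemref{ItemThmC_WWPD} from the geometry of $\S$, with the passage to the commutator subgroup entering in \itemref{ItemThmC_WWPD} precisely to trivialize the virtually abelian ``local'' contributions of the vertex spaces.

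\emph{Preliminary reductions.} First I would use the structure theory of polynomially growing and solvable subgroups (\relKolchin, \BookOne, \abelian, together with the Tits alternative) to reduce to the case that $\Gamma$ is not virtually solvable. Indeed, if $\L(\Gamma)$ were empty then $\Gamma \subgroup \IAThree$ would be a UPG subgroup, hence nilpotent, and the virtually-abelian-restrictions hypothesis would force $\Gamma$ virtually abelian; more generally virtual solvability together with virtually abelian restrictions would force $\Gamma$ virtually abelian. So we may assume $\L(\Gamma) \neq \emptyset$ and $\Gamma$ contains a nonabelian free subgroup. Next I would replace $\Gamma$ by a finite-index normal subgroup $N$ fixing each lamination in $\L(\Gamma)$, fixing each of the finitely many free factor systems canonically attached to that set of laminations and to the associated periodic data, and small enough that $\ct$ representatives and the structure theory of $\Subgroups$ and $\abelian$ apply uniformly. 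This step is bookkeeping: pinning down exactly which finite $\Gamma$-sets must be made pointwise fixed so that $N$ genuinely preserves every structure used below, and noting (via a transfer lemma from Part~I) that $N$ still restricts to a virtually abelian group on each relevant free factor.

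\emph{Locating the non-abelian behaviour and building $\mathbf Y$.} Because the non-abelianness of $N$ is invisible to restriction to any invariant proper free factor, it must be ``spread out.'' Using the Disintegration Theorem of $\Subgroups$ together with an analysis of $\ct$s relative to an invariant proper free factor system, I would separate the exponentially growing contributions --- finite in number and essentially rigid since $\L(\Gamma)$ is finite --- from the polynomially growing ones, which are governed by the relative Kolchin theory \relKolchin. If none of these gave rise to a nonelementary action of $N$ on a relative free splitting or relative free factor complex (or, in the borderline polynomially growing directions, on a cyclic-splitting-type complex), one could reassemble $N$ out of abelian twist and expansion-factor homomorphisms and conclude $N$ virtually abelian, a contradiction. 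Hence there is an $N$-invariant proper free factor system $\F$ together with an $N$-invariant family $\mathbf{Y}$ of hyperbolic spaces --- each the relevant complex of a proper free factor carried by $\F$, so that each vertex stabilizer restricts to a virtually abelian group --- equipped with $N$-equivariant coarse projections, witnessing two independent elements of $N$ acting loxodromically on the assembled object.

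\emph{Assembling $\S$ and verifying the conclusions.} With $\mathbf{Y}$ in hand I would check the Bestvina--Bromberg--Fujiwara projection axioms --- the finiteness needed to bound projections again coming from finiteness of $\L(\Gamma)$ --- and form the quasi-tree of hyperbolic spaces $\S$ with its $N$-action, which is exactly the situation packaged by the Hyperbolic Action Theorem (for automorphism subgroups). Conclusion \itemref{ItemThmC_EllOrLox} holds because each element of $N$ is carried by a $\ct$, so its action on every $Y \in \mathbf{Y}$ and on the domain set has either bounded orbits or strictly positive translation length, the $\ct$ structure excluding the intermediate parabolic case. Conclusion \itemref{ItemThmC_Nonelem} is the pair of independent loxodromics from the previous step. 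For \itemref{ItemThmC_WWPD}: a loxodromic element of $N$ automatically has an axis of domains in $\mathbf{Y}$, hence is strongly axial; the only obstruction to the standard WWPD criterion (in the style of \BeFuji) is the ``local'' action of an axis-domain stabilizer on its vertex space, and for an element of $[N,N]$ this factors through the commutator subgroup of a virtually abelian restriction group, hence has finite image --- so the obstruction vanishes and WWPD holds, whereas for general elements of $N$ it need not.

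\emph{Main obstacle.} I expect the crux to be the middle step: proving that non-virtual-abelianness of a finite lamination subgroup with virtually abelian restrictions is always witnessed by a nonelementary action on some relative complex, i.e.\ ruling out a hidden virtually solvable structure assembled purely from twist and expansion-factor data. This requires a delicate case analysis of $\ct$s relative to an invariant proper free factor system, separating and controlling the finitely many exponentially growing strata against the polynomially growing directions, before the comparatively clean Bestvina--Bromberg--Fujiwara machinery of the final step can take over.
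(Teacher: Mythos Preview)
Your proposal has a genuine gap: the construction of $\mathbf{Y}$ as ``relative complexes of proper free factors carried by $\F$'' cannot produce a nonelementary action. By your own hypothesis, the restriction of $N$ to each such free factor is virtually abelian, so its action on the associated complex is elementary; and since $N \subgroup \Out(F_n)$ fixes the conjugacy classes of these free factors (indeed you arranged this in your preliminary reductions), there is no movement in the projection-complex direction of a BBF assembly either. You have correctly identified the crux --- non-virtual-abelianness must be witnessed \emph{somewhere} --- but your mechanism for locating it does not work, and the Hyperbolic Action Theorem is not a BBF-style statement.

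The paper's route is quite different and hinges on an idea you are missing: the \emph{automorphic lift}. One first shows (Lemma~\ref{LemmaUltimateEGCase}) that if a maximal proper $\Gamma$-invariant free factor system $\A$ has co-edge number $\ge 2$ and the restrictions to components of $\A$ are abelian, then $\Gamma$ itself is abelian. Hence non-abelianness forces $\A$ to have co-edge number~$1$, and in that situation (Proposition~\ref{LemmaAutLiftExists}) the restriction $\Gamma \to \Out(A)$ to a component $A$ of $\A$ lifts to a homomorphism $\rho \from \Gamma \to \Aut(A)$ whose image $\wh\cH$ is not virtually abelian. The point is that $\cH = \image(\wh\cH \to \Out(A))$ is abelian (by virtually-abelian-restrictions), so the non-abelianness of $\wh\cH$ lives entirely in the kernel $J = \wh\cH \cap \Inn(A)$, which is a free group of rank~$\ge 2$. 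Choosing $A$ of minimal rank forces $\wh\cH$ to fix no proper free factor of $A$, and one then applies the Hyperbolic Action Theorem to $\wh\cH \subgroup \Aut(A)$: the hyperbolic space $\S$ is either a Bass--Serre tree (when a maximal $\cH$-invariant free factor system of $A$ has co-edge number~$1$) or a suspension space built from a relative train track map via the Mj--Sardar combination theorem (co-edge number~$\ge 2$). The nonelementary action and the WWPD elements both come from $J$ acting on this tree or suspension; for conclusion~\pref{ItemThmC_WWPD}, since $\cH$ is abelian the image of $[N,N]$ in $\wh\cH$ lands in~$J$, and WWPD for elements of $J$ is proved via the concrete tree-action criterion of Lemma~\ref{LemmaTreeWWPD}, then pulled back along $\rho$.
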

\noindent
See below, after the \emph{Hyperbolic Action Theorem}, for the definition of ``strongly axial''.


\paragraph{Remarks: WPD versus WWPD.} In lectures on this topic we stated a version of Theorem~C~\pref{ItemThmC_WWPD} with a stronger conclusion, saying that in the group $\image(N \mapsto \Isom(\S))$, every loxodromic element of the commutator subgroup satisfies WPD. That conclusion requires a stronger hypothesis, saying roughly that ``virtually abelian restrictions'' holds for a broader class of $\Gamma$-invariant subgroups of $F_n$ than just free factors. This makes the proof and the applications of the theorem considerably more intricate, which was not necessary for the application to Theorem~A in Part~I, and so we have settled for the version of Theorem~C presented here.

\subsection*{Methods of proof of Theorem C} 
The first step of Theorem~C will be to reduce it to a theorem which produces hyperbolic actions of certain subgroups of \emph{auto}morphism groups of free groups. The key step of the reduction argument, carried out in Proposition~\ref{LemmaAutLiftExists}, is the construction of ``automorphic lifts'': for each subgroup $\Gamma \subgroup \Out(F_n)$ that satisfies the hypotheses of Theorem~C, there exists a free factor $A < F_n$ of rank $2 \le k \le n-1$, such that its conjugacy class $[A]$ is $\Gamma$-invariant, and such that the natural homomorphism $\Gamma \mapsto \Out(A)$ lifts to a homomorphism $\Gamma \mapsto \Aut(A)$ whose image in $\Aut(A)$ is not virtually abelian. In Section~\ref{SectionFImpliesCProof} we shall show, using an automorphic lift for which the free factor factor~$A$ has minimal rank, how to reduce Theorem~C to the following statement, in which we have identified $A \approx F_k$ and then rewritten $k$ as~$n$. Recall the canonical isomorphism $F_n \approx \Inn(F_n)$ which associates to each $\gamma \in F_n$ the inner automorphism $i_\gamma(\delta)=\gamma\delta\gamma^\inv$.


\begin{theoremF}
Consider a subgroup $\wh\cH \subgroup \Aut(F_n)$ with $n \ge 2$, and denote $\cH = \image(\wh\cH \mapsto \Out(F_n))$ and $J = \kernel(\wh\cH \mapsto \cH) = \wh\cH \intersect \Inn(F_n)$, giving the following commutative diagram of short exact sequences:
\vspace{-.1in}
$$\xymatrix{
1 \ar[r] & \, J \,\, \ar[r]^{\subset} \ar[d]^-{\subset} & \wh\cH \ar@{->>}[r] \ar[d]^-{\subset} & \cH \ar[r] \ar[d]^-{\subset} & 1 \\
1 \ar[r] & F_n \approx \Inn(F_n) \ar[r]_{\subset}              & \Aut(F_n) \ar@{->>}[r] & \Out(F_n) \ar[r] & 1
}
$$
\vspace{-.1in}

\noindent
If $\cH$ is abelian, if $\wh\cH$ is not virtually abelian, and if no proper, nontrivial free factor of $F_n$ is fixed by the action of $\wh\cH$ on the set of subgroups of $F_n$, then there exists a finite index normal subgroup $\cN \subgroup \wh\cH$ and an action $\cN \act \S$ on a hyperbolic space such that the following properties hold:
\begin{enumerate}
\item\label{ItemThmF_EllOrLox}
Every element of $\cN$ acts either elliptically or loxodromically on $\S$;
\item\label{ItemThmF_Nonelem}
The action $\cN \act \S$ is nonelementary;
\item\label{ItemThmF_WWPD}
Every loxodromic element of $J \intersect \cN$ is a strongly axial, WWPD element with respect to the action $\cN \act \S$. 
\end{enumerate}
\end{theoremF}
To say that a loxodromic element $\phi \in G$ of a hyperbolic action $G \act X$ is \emph{strongly axial} means that it has a \emph{strong axis}, which is a quasi-isometric embedding $\ell \from \reals \to X$ for which there exists a homomorphism $\tau \from \Stab(\bdy_\pm\phi) \to \reals$ such that for all $\psi \in \Stab(\bdy_\pm\phi)$ and $s \in \reals$ we have $\psi(\ell(s)) = \ell(s+\tau(\psi))$.


The proof of the Hyperbolic Action Theorem begins in Section~\ref{SectionOneEdgeExtension} by reducing to the case $\cH \subgroup \IAThree$.
We then choose a maximal, proper, $\cH$-invariant free factor system~$\B$ in $F_n$, and break the proof into cases depending on the ``co-edge number'' of $\B$, which is the minimum integer $k \ge 1$ such that $\B$ is represented by a subgraph $H \subset G$ of a marked graph $G$ for which the complement $G \setminus H$ has $k$ edges. The ``one-edge'' case, where the co-edge number of $\B$ equals~$1$, is handled in Section~\ref{SectionOneEdgeExtension} using an action of $\wh\cH$ on a simplicial tree that is naturally associated to the free factor system~$\B$. 

The ``multi-edge'' case, where the co-edge number of $\B$ is~$\ge 2$, takes up the majority of the paper from Section~\ref{SectionMultiEdgeIntro} to the end. For a full introduction to the multi-edge case, see Section~\ref{SectionMultiEdgeIntro}. In brief, one finds $\phi \in \cH$ which is fully irreducible relative to the free factor system~$\B$, and one uses the top \eg\ stratum of a good relative train track representative of $\phi$
to produce a certain hyperbolic suspension space $\S$. The construction of~$\S$, the formulation and verification of flaring properties of $\S$, and the proof of hyperbolicity of $\S$ by applying the Mj-Sardar combination theorem \cite{MjSardar:combination}, are found in Sections~\ref{SectionFlaringOfLu} and~\ref{SectionPathFunctionsReT}. Then one constructs an isometric action $\wh\cH \act \S$ by applying the theory of abelian subgroups of $\Out(F_n)$ \cite{FeighnHandel:abelian}; see Sections~\ref{SectionAbelianSubgroups}, \ref{SectionTTSemiAction} and~\ref{SectionSuspConstr}. The pieces are put together, and the multi-edge case of the Hyperbolic Action Theorem is proved, in Section~\ref{SectionMultiEdgeExtension}.

\paragraph{Prerequisites from the theory of $\Out(F_n)$.} We will assume that the reader is familiar with certain basic topics of $\Out(F_n)$ that have already been reviewed in Part~I of this paper \PartOne, with detailed references found there. We list these some of these topics here with original citations:
\begin{description}
\item[\protect{\cite[Section 2.2]{\PartOneTag}}] Marked graphs, topological representatives, and relative train track maps \BH. Free factor systems and attracting laminations \BookOne.
\item[\protect{\cite[Section 3.1]{\PartOneTag}}] Properties of $\IAThree = \kernel\bigl(\Out(F_n) \mapsto GL(n,\Z/3)\bigr)$ (\cite{BaumslagTaylor:Center,Vogtmann:OuterSpaceSurvey} and \SubgroupsTwo).
\item[\protect{\cite[Section 3.1]{\PartOneTag}}] Elements and subgroups of $\Out(F_n)$ which are fully irreducible relative to a free factor system~$\A$ of $\Out(F_n)$. The co-edge number of a free factor system $\A$ in $F_n$.
\item[\protect{\cite[Section 3.1]{\PartOneTag}}] Weak attraction theory. The nonattracting subgroup system $\A_\na\Lambda^\pm_\phi$ associated to $\phi \in \Out(F_n)$ and one of its lamination pairs $\Lambda^\pm_\phi$  (\cite{\BookOneTag} and \SubgroupsThree).
\end{description}
Where needed in this paper, we will conduct reviews of other basic concepts.


\tableofcontents

\section{Lifting to an automorphism group}
\label{SectionAutLift}
In this section, the first thing we do is to study the structure of finitely generated, finite lamination subgroups $\Gamma \subgroup \IAThree$ which are not abelian but have virtually abelian restrictions. The motivating question of this study is: If $A \subgroup F_n$ is a proper, nontrivial free factor whose conjugacy class $[A]$ is fixed by $\Gamma$, can the natural restriction map $\Gamma \mapsto \Out(A)$ be lifted to a homomorphism $\Gamma \mapsto \Aut(A)$? And can this be done so that the image is still not virtually abelian? Sections~\ref{SectionDefAutLift}--\ref{SectionAutLiftExists} are devoted to constructions of such ``automorphic lifts''. Using this construction, in Section~\ref{SectionFImpliesCProof} we prove the implication (Hyperbolic Action Theorem)$\implies$(Theorem~C). 


After that, in Section~\ref{SectionFreeSplittingExtension}, we consider any free splitting $F_n \act T$, and we study a natural subgroup of $\Aut(F_n)$ which acts on $T$ in a manner that extends the free splitting action of $F_n \approx \Inn(F_n)$. That study is used in Section~\ref{SectionOneEdgeExtension} to prove one of the two major cases of the Hyperbolic Action Theorem.

\subsection{Definition of automorphic lifts}
\label{SectionDefAutLift}

Recall that for any group and subgroup $H \subgroup G$ which is its own normalizer (e.g.\ a free factor), letting $\Stab[H] \subgroup \Out(G)$ be the stabilizer of the conjugacy class of~$H$, the natural \emph{restriction homomorphism} $\Stab[H] \mapsto \Out(H)$, denoted $\phi \mapsto \phi \restrict H$, is well-defined by choosing $\Phi \in \Aut(G)$ representing~$\phi$ and preserving $H$, and then taking $\phi \restrict H$ to be the outer automophism class of $\Phi \restrict H \in \Aut(H)$ (see \hbox{e.g.\ \SubgroupsOne~Fact 1.4).}

Throughout the paper we use the theorem that virtually abelian subgroups of $\IAThree$ are abelian \cite{HandelMosher:VirtuallyAbelian}. We sometimes write ``(virtually) abelian'' as a reminder that one may freely include or ignore the adverb ``virtually'' in front of the adjective ``abelian'' in the context of a subgroup of $\IA_F(\Z/3)$ for any finite rank free group $F$. One has this freedom, for example, in the following definition (see \cite[Corollary 3.1]{\PartOneTag}):

\begin{definition}[(Virtually) Abelian restrictions]
\label{DefVirAbRestr}
A subgroup $\Gamma \subgroup \IAThree$ has (virtually) abelian restrictions if for any proper free factor $A \subgroup F_n$ such that $\Gamma \subgroup \Stab[A]$, the restriction homomorphism $\Gamma \mapsto \Out(A)$ has (virtually) abelian image.
\end{definition}

\begin{definition}\label{DefAutLift}
Let $\Gamma \subgroup \IAThree$ be a 
subgroup which is not (virtually) abelian and which has (virtually) abelian restrictions. An \emph{automorphic lift} of $\Gamma$ is a homomorphism $\rho \from \Gamma \mapsto \Aut(A)$, where $A \subgroup F_n$ is a proper free factor and $\Gamma \subgroup \Stab[A]$, such that the group $\wh\cH = \image(\rho)$ is not virtually abelian, and such that the following triangle commutes
$$\xymatrix{
 & \Aut(A) \ar[d] \\
\Gamma \ar[ur]^\rho \ar[r]  & \Out(A) 
}$$
In this diagram the horizontal arrow is the natural restriction homomorphism $\Stab[A] \mapsto \Out(A)$ with domain restricted to~$\Gamma$, and the vertical arrow is the natural quotient homomorphism. To emphasize the role of $A$ we will sometimes refer to an \emph{automorphic lift of $\Gamma$ rel~$A$}. Adopting the notation of the Hyperbolic Action Theorem, we set $\wh\cH = \image(\rho)$, $\cH = \image(\wh\cH \mapsto \Out(A)) = \image(\Gamma \mapsto \Out(A))$, and $J = \kernel(\wh\cH \mapsto \cH) = \wh\cH \intersect \Inn(A)$, thus obtaining the commutative diagram shown in Figure~\ref{FigureAutLift}.
\begin{figure}
$$\xymatrix{
& & \Gamma \ar@{->>}[d]_-\rho \ar[ddr] \\
1 \ar[r] & \, J \,\, \ar[r]^\subset \ar[d]^-{\subset} & \wh\cH \ar@{->>}[r] \ar[d]^-{\subset} & \cH \ar[r] \ar[d]^-{\subset} & 1 \\
1 \ar[r] & A \approx \Inn(A) \ar[r]_\subset              & \Aut(A) \ar@{->>}[r] & \Out(A) \ar[r] & 1
}
$$
\caption{Notation associated to an automorphic lift $\rho \from \Gamma \to \Aut(A)$ with image $\wh\cH$. The group $\wh\cH$ is not virtually abelian, the quotient $\cH$ is virtually abelian, and the kernel $J$ is free of rank~$\ge 2$ (possibly infinite). The horizontal rows are exact.}
\label{FigureAutLift}
\end{figure}
\noindent
We note two properties which follow from the definition:
\begin{itemize}
\item $\cH$ is abelian;
\item The free group $J$ has rank~$\ge 2$.
\end{itemize}
The first holds because $A < F_n$ is proper and $\Gamma$ has (virtually) abelian restrictions (see Definition~\ref{DefVirAbRestr}). The second is a consequence of the first combined with the defining requirement that $\wh\cH$ is not virtually abelian, for otherwise the free group $J$ is abelian and so $\wh\cH$ is virtually solvable, but $\Aut(A)$ injects into $\Out(F_n)$ and solvable subgroups of $\Out(F_n)$ are virtually abelian by \BookThree. We put no further conditions on the rank of~$J$, it could well be infinite. When referring to~$J$, each of its elements will be thought of ambiguously as an element of the free factor $A \subgroup F_n$ \emph{or} as the corresponding element of the inner automorphism group $\Inn(A)$; this ambiguity should cause little trouble, by using the canonical isomorphism $A \leftrightarrow \Inn(A)$ given by $\delta \leftrightarrow i_\delta$ where $i_\delta(\gamma)=\delta\gamma\delta^\inv$. 

The \emph{rank} of the automorphic lift $\Gamma \mapsto \Aut(A)$ is defined to be $\rank(A)$, and note that $\rank(A) \ge 2$ because otherwise $\Aut(A)$ is finite in which case each of its subgroups is virtually abelian. 

This completes Definition~\ref{DefAutLift}.
\end{definition}

Here is the first of two main results of Section~\ref{SectionAutLift}. 

\begin{proposition}
\label{LemmaAutLiftExists} If $\Gamma \subgroup \IAThree$ is a finitely generated, finite lamination subgroup which is not (virtually) abelian but which has (virtually) abelian restrictions, then there exists an automorphic lift $\Gamma \mapsto \Aut(A)$. 
\end{proposition}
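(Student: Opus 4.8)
The plan is to produce the automorphic lift by first finding the right free factor $A$ and then showing that the restriction $\Gamma \mapsto \Out(A)$ genuinely lifts through $\Aut(A)$ with non-virtually-abelian image. Since $\Gamma$ is not virtually abelian but has virtually abelian restrictions, the obstruction to $\Gamma$ being virtually abelian must be ``invisible'' to every proper invariant free factor. First I would invoke the fact that a finitely generated subgroup of $\Out(F_n)$ which fixes no proper nontrivial free factor conjugacy class and is not virtually abelian forces large structure; but here $\Gamma$ \emph{is} finite lamination, so I expect there \emph{is} a proper $\Gamma$-invariant free factor system. I would choose a maximal proper $\Gamma$-invariant free factor system and, among its components, pick a free factor $A$ with $[A]$ $\Gamma$-invariant of rank $2 \le k \le n-1$ that is ``responsible'' for the non-virtual-abelianness in the sense that $\Gamma$ does not act on the set of subgroups of $F_n$ fixing a proper nontrivial free factor \emph{of $A$}. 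The key point is that, because $\Gamma$ is not virtually abelian but all its restrictions to proper free factors are, the restriction to $A$ itself cannot be the whole story — the ``extra'' information lives in how $\Gamma$ permutes/twists the conjugates of $A$ inside $F_n$, i.e.\ in $\Inn$-type data, and that is exactly what an $\Aut(A)$-lift records.

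The heart of the argument is constructing the lift $\rho \from \Gamma \to \Aut(A)$. Here I would use the short exact sequence $1 \to \Inn(F_n) \to \Aut(F_n) \to \Out(F_n) \to 1$ restricted over $\Stab[A]$, together with the observation that because $\Gamma \subgroup \IAThree$ acts trivially on $H_1(F_n;\Z/3)$, one has good control over the relevant cohomological obstruction classes. Concretely: the restriction homomorphism $\Stab[A] \to \Out(A)$ has a partial lift whenever one can choose coherently, for each $\gamma \in \Gamma$, an automorphism of $F_n$ preserving $A$ (not just $[A]$) — this is possible because $A$ is its own normalizer, so $\Stab[A]$ is represented by automorphisms preserving $A$ — and then restricting to $A$ gives a homomorphism $\Gamma \to \Aut(A)$ up to the ambiguity of post-composing by $\Inn(A)$. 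The obstruction to making this a genuine homomorphism into $\Aut(A)$ (rather than into $\Aut(A)/(\text{something})$) is a class valued in a quotient of $A \approx \Inn(A)$; I would argue this obstruction vanishes (or can be killed by passing to a better representative) using that $\Gamma \subgroup \IAThree$ and the peripheral structure of $A$ is controlled. This is the step I expect to be the main obstacle: pinning down that the lift exists as an honest homomorphism, not just a ``quasi-morphism,'' and this is presumably what Sections~\ref{SectionDefAutLift}--\ref{SectionAutLiftExists} are set up to handle carefully.

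Finally I would verify that the image $\wh\cH = \image(\rho)$ is not virtually abelian. Suppose it were; then since $\cH = \image(\Gamma \to \Out(A))$ is abelian (by virtually abelian restrictions, as noted in Definition~\ref{DefAutLift}) and $J = \wh\cH \cap \Inn(A)$ is free, $\wh\cH$ would be virtually solvable, hence virtually abelian by \BookThree\ applied inside $\Out(F_n)$ (using $\Aut(A) \inject \Out(F_n)$); tracing this back up the surjection $\Gamma \surjection \wh\cH$ and the fact that $\ker\rho$ contributes only restrictions to proper free factors of $A$ — which are again virtually abelian — I would conclude $\Gamma$ itself is virtually abelian, a contradiction. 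The choice of $A$ as ``responsible for non-virtual-abelianness'' at the start is what makes this last deduction go through: it guarantees the non-abelian behavior of $\Gamma$ survives in $\wh\cH$ rather than being absorbed into $\ker\rho$. Assembling: the correct choice of $A$, the honest lift $\rho$, and the non-virtual-abelianness of $\image(\rho)$ together give the desired automorphic lift.
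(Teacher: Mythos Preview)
Your proposal has a genuine gap at the crucial structural step. You choose a maximal proper $\Gamma$-invariant free factor system $\A$ and then try to lift the restriction to a single component $A$ via a vague appeal to ``cohomological obstructions'' and $\IAThree$-control. But you never establish, and never use, the key fact that makes the lift exist as an honest homomorphism: the extension $\A \sqsubset \{[F_n]\}$ has co-edge number~$1$. This is the content of Lemma~\ref{LemmaUltimateEGCase}, and it is where the finite lamination hypothesis is actually spent. Without it, there is no reason the obstruction to lifting should vanish; in a multi-edge extension the choice of $\Phi \in \Aut(F_n)$ preserving $A$ is ambiguous by $\Inn(A)$, and nothing in your sketch pins that ambiguity down coherently across~$\Gamma$.

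Once you know the extension is one-edge, the paper's argument is not cohomological at all but completely explicit and rigid. In the two-component case $\A=\{[A_1],[A_2]\}$ with $F_n=A_1*A_2$, the point is that for each $\phi\in\Gamma$ there is a \emph{unique} $\Phi\in\Aut(F_n)$ representing $\phi$ and preserving both $A_1$ and $A_2$ (since $A_1\cap A_2$ is trivial by malnormality, no nontrivial inner automorphism preserves both). Uniqueness forces $\phi\mapsto\Phi$ to be a homomorphism, and restriction to $A_i$ gives $\alpha_i\from\Gamma\to\Aut(A_i)$. Since the combined map $\alpha=\alpha_1\oplus\alpha_2$ is injective and $\Gamma$ is not virtually abelian, at least one $\alpha_i$ has non-virtually-abelian image. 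The one-component case is handled by the same uniqueness trick applied to the pair $A,A^b$. Your final paragraph about deducing non-virtual-abelianness of $\wh\cH$ is more complicated than needed and relies on unclear claims about $\ker\rho$; the paper's injectivity argument sidesteps all of that.
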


The proof is found in Section~\ref{SectionAutLiftExists}, preceded by Lemma~\ref{LemmaUltimateEGCase} in Section~\ref{SectionVirtAbelSuffCond}. 

When $n=2$, one recovers from Proposition~\ref{LemmaAutLiftExists} the simple fact that every finite lamination subgroup of $\Out(F_2)$ is virtually abelian, for otherwise the intersection with $\IAThree$ would have an automorphic lift to $\Aut(A)$ for some proper free factor~$A$, from which it would follow that $2 \le \rank(A) \le n-1=1$. Of course this fact has a simple proof, expressed in terms the isomorphism $\Out(F_2) \mapsto \Aut(H_1(F_2;\Z)) \approx GL(2,\Z)$, which we leave to the reader.

\subsection{A sufficient condition to be abelian.}
\label{SectionVirtAbelSuffCond}
In this section we prove Lemma~\ref{LemmaUltimateEGCase} which gives a sufficient condition for a finitely generated, finite lamination subgroup $\Gamma \subgroup \IAThree$ to be abelian. The negation of this condition then becomes a property that must hold when $\Gamma$ is not abelian.

\smallskip


\begin{lemma} \label{LemmaUltimateEGCase} Let $\Gamma \subgroup \IAThree$ be a finitely generated, finite lamination subgroup. If $\A = \{[A_1],\ldots,[A_I]\}$ is a maximal proper $\Gamma$-invariant \ffs, if each restriction $\Gamma \restrict A_i \subgroup \Out(A_i)$ is abelian for $i=1,\ldots,I$, and if $\A$ has co-edge number $\ge 2$ in $F_n$, then $\Gamma$ is abelian.
\end{lemma}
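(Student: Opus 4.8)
The plan is to argue by contradiction: assume $\Gamma$ is not abelian, together with all the stated hypotheses, and derive a contradiction by producing too rich a lamination structure, thereby contradicting that $\Gamma$ is a finite lamination subgroup. Since $\A$ is a \emph{maximal} proper $\Gamma$-invariant free factor system, any $\phi \in \Gamma$ that does not preserve a proper free factor system strictly larger than $\A$ is, in a suitable sense, ``fully irreducible relative to $\A$''; and because $\A$ has co-edge number $\ge 2$, such a $\phi$ must have an attracting lamination whose support ``lives above $\A$'' — i.e.\ a lamination pair $\Lambda^\pm_\phi$ that is not carried by $\A$. The first step is to make this precise: show that the non-abelianness of $\Gamma$, combined with maximality of $\A$ and the fact that the restrictions $\Gamma \restrict A_i$ are abelian, forces the existence of (many) elements $\phi \in \Gamma$ that are fully irreducible rel $\A$ and genuinely exponentially growing relative to $\A$ (co-edge $\ge 2$ is exactly what rules out the degenerate NEG/polynomial and ``one-edge'' situations, which is why it is a hypothesis).

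The key mechanism I expect to use is the following. If $\Gamma$ has \emph{two} elements $\phi, \psi$ that are fully irreducible rel $\A$ but whose attracting laminations (rel $\A$) are distinct — i.e.\ $\Lambda_\phi \ne \Lambda_\psi$ as elements of the finite set $\L(\Gamma)$ — then by a ping-pong/pingpong-on-laminations argument (as in \BookOne, \BookTwo) one can build elements of $\Gamma$ with arbitrarily many distinct attracting laminations, contradicting finiteness of $\L(\Gamma)$. Hence all fully-irreducible-rel-$\A$ elements of $\Gamma$ must share a common lamination pair $\Lambda^\pm$, and in fact $\Gamma$ must stabilize $\Lambda^\pm$ (up to finite index). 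But then $\Gamma$ preserves the pair $\Lambda^\pm$ together with the associated nonattracting subgroup system $\A_{\na}\Lambda^\pm$; combining this with the assumption that all restrictions to the $A_i$ are abelian, and with the structure of stabilizers of lamination pairs from \SubgroupsThree\ (and the recognition/disintegration-type machinery of \recognition, \abelian), one concludes that $\Gamma$ itself is (virtually) abelian — and virtually abelian subgroups of $\IAThree$ are abelian by \VirtuallyAbelian. That is the desired contradiction.

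More concretely, the steps in order would be: (1) reduce to producing a single $\phi \in \Gamma$ fully irreducible rel $\A$ with a co-edge-$\ge 2$ EG stratum at top — here one invokes maximality of $\A$ to say $\Gamma$ acts on the (finite) set of proper free factor systems properly containing nothing between itself and $\A$, and passes to the stabilizer; non-abelianness guarantees such $\phi$ exists with an honest EG behavior above $\A$, since otherwise everything is polynomially growing rel $\A$ and the abelian-restriction hypothesis would already force $\Gamma$ abelian (via \abelian, \BookThree); (2) show $\L(\Gamma)$ finite forces all such $\phi$ to have a common attracting lamination pair $\Lambda^\pm$ and forces $\Gamma$ to virtually fix $\Lambda^\pm$ — this is the ping-pong step; (3) analyze $\Stab(\Lambda^\pm) \cap \Gamma$ using the nonattracting subgroup system and the relative train track / CT machinery, and use abelianness of the restrictions to the $A_i$ to conclude $\Gamma$ is virtually abelian, hence abelian.

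The main obstacle, I expect, is step (2)–(3): pinning down exactly why finiteness of $\L(\Gamma)$ forces $\Gamma$ to virtually fix a single lamination pair rather than merely permuting finitely many of them, and then why fixing that pair (plus abelian restrictions below $\A$) is enough to get abelianness. The permutation issue is handled by passing to a finite-index subgroup, but one must check this does not destroy the other hypotheses (finitely generated: fine; finite lamination: fine; $\subgroup \IAThree$: one may need to re-intersect, but \IAThree\ is already torsion-free so finite-index subgroups behave well). The second part is where the heavy $\Out(F_n)$ technology enters: one needs the description of elements of $\Stab(\Lambda^\pm)$ in terms of their action relative to $\A_{\na}\Lambda^\pm$, the fact that a subgroup acting with abelian restrictions to a maximal invariant free factor system and virtually fixing a lamination pair has a natural homomorphism to an abelian group (stretch/twist coordinates, à la \abelian) with kernel that is itself controlled, and finally that all of this is abelian. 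I would expect to quote Lemma~\ref{LemmaUltimateEGCase}'s analogue-free inputs from \SubgroupsThree\ and \abelian\ rather than reprove them.
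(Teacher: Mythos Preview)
Your overall architecture is correct and matches the paper's proof: find an element $\eta \in \Gamma$ fully irreducible rel~$\A$, show its lamination pair $\Lambda^\pm_\eta$ is the \emph{only} pair in $\L(\Gamma)$ not carried by $\A$, deduce $\Gamma \subgroup \Stab(\Lambda^+_\eta)$, and then use this to exhibit $\Gamma$ as abelian-by-abelian. But several steps are either misattributed or left as genuine gaps.

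\textbf{Step (1).} The existence of $\eta$ fully irreducible rel~$\A$ does not come from non-abelianness of $\Gamma$; it is a direct consequence of Theorem~C of \SubgroupsFour, which applies because $\A$ is a maximal proper $\Gamma$-invariant free factor system. Your ``otherwise everything is polynomially growing rel $\A$'' is not the mechanism. Also, the proof is direct, not by contradiction --- the non-abelianness hypothesis is never used.

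\textbf{Step (2).} The paper's argument is close in spirit to yours but uses weak attraction (Theorem~H of \SubgroupsThree) rather than ping-pong: since $\A_{\na}(\Lambda^\pm_\eta)$ is either $\A$ or $\A \cup \{[C]\}$ with $C$ cyclic, any nonperiodic line not carried by $\A$ is weakly attracted to $\Lambda^+_\eta$ or $\Lambda^-_\eta$ under iteration of $\eta^{\pm 1}$. So if some $\psi \in \Gamma$ had $\Lambda^+_\psi \notin \{\Lambda^\pm_\eta\}$ not carried by $\A$, the sequence $\eta^k(\Lambda^+_\psi) = \Lambda^+_{\eta^k\psi\eta^{-k}}$ would be infinite, contradicting finiteness of $\L(\Gamma)$. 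Then $\Gamma \subgroup \IAThree$ plus \cite[Lemma~3.8]{\PartOneTag} gives $\Gamma \subgroup \Stab(\Lambda^+_\eta)$ (no passage to finite index needed).

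\textbf{Step (3) --- the real gap.} You gesture at ``stretch/twist coordinates'' and ``CT machinery'', but the specific argument is this: take the homomorphism
\[
\Omega \from \Gamma \to \reals \oplus (\Gamma \restrict A_1) \oplus \cdots \oplus (\Gamma \restrict A_I), \qquad \Omega(\phi) = \PF_{\Lambda^+_\eta}(\phi) \oplus (\phi \restrict A_1) \oplus \cdots \oplus (\phi \restrict A_I),
\]
whose range is abelian. The point you do not identify is why $\ker(\Omega)$ is abelian. Any $\phi \in \ker(\Omega)$ has trivial restrictions to each $A_i$, so any $\Lambda^\pm_\phi \in \L(\phi)$ is not carried by $\A$, hence by Step~(2) equals $\Lambda^\pm_\eta$, forcing $\PF_{\Lambda^+_\eta}(\phi) \ne 0$, contradiction. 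Thus $\L(\phi)=\emptyset$, so $\phi$ is PG, hence UPG (since $\phi \in \IAThree$, \cite[Corollary~5.7.6]{\BookOneTag}). The key technical input is then \cite[Proposition~5.5]{\FSLoxTag}: any subgroup of $\ker(\PF_{\Lambda^+_\eta})$ consisting entirely of UPG elements is abelian. Hence $\Gamma$ is abelian-by-abelian, so solvable, so virtually abelian by \BookThree, so abelian by \VirtuallyAbelian. Without naming the UPG step and the \FSLox\ input, your Step~(3) is not a proof sketch but a hope.
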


\begin{proof} By Theorem~C of \SubgroupsFour, there exists $\eta \in \Gamma$ which is fully irreducible rel~$\A$, meaning that there does not exist any $\eta$-invariant proper free factor system that strictly contains~$\A$. By relative train track theory, there is a unique lamination pair $\Lambda^\pm_\eta \in \L^\pm(\eta)$ for $\eta$ which is not carried by~$\A$ (\cite{\BookOneTag} Section 3; also see \SubgroupsOne\ Fact~1.55). The nonattracting subgroup system of $\Lambda^\pm_\eta$ has one of two forms, either $\A_\na(\Lambda^\pm_\eta) = \A$ or $\A_\na(\Lambda^\pm_\eta) = \A \union \{[C]\}$ where $C \subgroup F_n$ is a certain maximal infinite cyclic subgroup that is not carried by $\A$ (see \cite[Section 6]{\BookOneTag}; also see \SubgroupsThree, Definitions 1.2 and Corollary 1.9). 

For every nonperiodic line $\ell$ that is not carried by~$\A$, evidently $\ell$ is not carried by $[C]$, and so $\ell$ is not carried by $\A_\na(\Lambda^\pm_\eta)$. By applying Theorem H of \hbox{\SubgroupsThree,} it then follows that $\ell$ is weakly attracted to either $\Lambda^+_\eta$ by iteration of $\eta$ or to $\Lambda^-_\eta$ by iteration of $\eta^\inv$. From this it follows that the two laminations $\Lambda^+_\eta$, $\Lambda^+_{\eta^\inv}=\Lambda^-_\eta$ are the unique elements of $\L(\Gamma)$ not carried by~$\A$, for if there existed $\psi \in \Gamma$ with attracting lamination $\Lambda^+_\psi \in \L(\Gamma) - \{\Lambda^\pm_\eta\}$ not supported by~$\A$ then the generic lines of $\Lambda^+_\psi$ would be weakly attracted either to $\Lambda^+_\eta$ by iteration of $\eta$ or to $\Lambda^-_\eta$ by iteration of $\eta^\inv$, and in either case the set of laminations $\eta^k(\Lambda^+_\psi) = \Lambda^+_{\eta^k \psi \eta^{-k}} \in \L(\Gamma)$, $k \in \Z$, would form an infinite set, contradicting that~$\Gamma$ is a finite lamination group.

Since $\L(\Gamma)$ is finite, for each $\phi \in \Gamma$ each element of $\L(\Gamma)$ has finite orbit under the action of of $\phi$. Since $\Gamma \subgroup \IAThree$, it follows by \cite[Lemma 3.8]{\PartOneTag} that each element of $\L(\Gamma)$ is fixed by $\phi$. In particular, $\Gamma \subgroup \Stab(\Lambda^+_\eta)$. By \cite[Corollary 3.3.1]{\BookOneTag}, there exists a homomorphism $\PF_{\Lambda^+_\eta} \from \Stab(\Lambda^+_\eta) \to \reals$ having the property that for each $\phi \in \Stab(\Lambda^+_\eta)$, the inequality $\PF_{\Lambda^+_\eta}(\phi) \ne 0$ holds if and only if $\Lambda^+_\eta \in \L(\phi)$. 

Consider the following homomorphism, the range of which is abelian:
\begin{align*}
\Omega \from \Gamma \,\, \to \qquad \R\qquad &\oplus \,\, \Gamma \restrict A_1 \,\,\oplus \cdots \oplus \,\,\Gamma\restrict A_I \\
\Omega(\phi) = PF_{\Lambda^+_\eta}(\phi) &\oplus\,\, \phi \restrict A_1 \,\,\oplus \cdots \oplus\,\, \phi \restrict A_I
\end{align*}
We claim that that the kernel of $\Omega$ is also abelian. This claim completes the proof of the lemma, because every solvable subgroup of $\Out(F_n)$ is virtually abelian \BookThree, and every virtually abelian subgroup of $\IAThree$ is abelian~\cite{HandelMosher:VirtuallyAbelian}. 

To prove the claim, by Proposition 5.5 of \FSLox, every subgroup of the group $\kernel\bigl(PF_{\Lambda^+_\eta}(\phi)\bigr)$ consisting entirely of \upg\ elements is abelian, and so we need only check that each element of $\kernel(\Omega)$ is \upg. By \cite[Corollary 5.7.6]{\BookOneTag}, every \pg\ element of $\IAThree$ is \upg, and so we need only check that each $\phi \in \kernel(\Omega)$ is \pg, equivalently $\L(\phi) = \emptyset$. Suppose to the contrary that there exists $\Lambda^+_\phi \in \L(\phi)$, with dual repelling lamination $\Lambda^-_\phi \in \L(\phi^\inv)$. Since $\phi \restrict A_i$ is trivial in $\Out(A_i)$ for each component $[A_i]$ of $\A$, neither of the laminations $\Lambda^\pm_\phi$ is supported by $\A$. Since $\L(\phi) \subset \L(\Gamma)$, it follows as shown above that $\{\Lambda^+_\phi,\Lambda^-_\phi\} = \{\Lambda^+_\eta,\Lambda^-_\eta\}$, and so $PF_{\Lambda^+_\eta}(\phi) \ne 0$ a contradiction.
\end{proof}

\subsection{Constructing automorphic lifts: proof of Proposition~\ref{LemmaAutLiftExists}}
\label{SectionAutLiftExists}

Let $\Gamma \subgroup \IAThree$ be a finitely generated, finite lamination subgroup that is not (virtually) abelian and has (virtually) abelian restrictions. Choose a maximal proper $\Gamma$-invariant free factor system $\A = \{[A_1],\ldots,[A_I]\}$, and so each restricted group denoted $\cH_i = \Gamma \restrict [A_i] \subgroup \Out(A_i)$ is abelian. Since $\Gamma \subgroup \IAThree$, it follows that each component $[A_i]$ of $\A$ is fixed by $\Gamma$ (\SubgroupsTwo, Lemma 4.2). 

The group $\Gamma$ is not abelian by~\cite{HandelMosher:VirtuallyAbelian}, and so by Lemma~\ref{LemmaUltimateEGCase} the extension $\A \sqsubset \{[F_n]\}$ is a one-edge extension. We may therefore choose a marked graph pair $(G,H)$ representing $\A$ so that $G \setminus H = E$ is a single edge, and we may choose $H$ so that its components are roses, with each endpoint of $E$ being the rose vertex of a component of $H$. The number of components of $\A$ equals the number of components of $H$, that number being either one or two, and we cover those cases separately. 

\medskip\noindent
\textbf{Case 1: $\A$ has two components,} say $\A = \{[A_1],[A_2]\}$ where $F_n = A_1 \ast A_2$. We construct a commutative diagram as follows:
$$
\xymatrix{
\Aut(A_1) \oplus \Aut(A_2) \ar[r]^q & \Out(A_1) \oplus \Out(A_2) \\
 & \Gamma \ar[ul]^{\alpha=\alpha_1 \oplus \alpha_2 \!\quad} \ar[u]_{\rho=\rho_1\oplus\rho_2} \ar[dl]_\omega \ar[d]^\subset \\
\Aut(F_n,A_1,A_2) \ar[uu]^r \ar[r] & \Out(F_n)}
$$
\noindent
$\Aut(F_n,A_1,A_2)$ is the subgroup of $\Aut(F_n)$ that preserves both $A_1$ and~$A_2$. The homomorphism $r$ is induced by restricting $\Aut(F_n,A_1,A_2)$ to $\Aut(A_1)$ and to $\Aut(A_2)$. Evidently $r$ is injective, since an automorphism of $F_n$ is determined by its restrictions to the complementary free factors $A_1,A_2$. The homomorphisms denoted by the top and bottom arrows of the diagram are induced by canonical homomorphisms from automorphism groups to outer automorphism groups. For $i=1,2$ the homomorphism $\rho_i$ is the composition $\Gamma \inject  \Stab[A_i] \mapsto \Out(A_i)$ where the latter map is the natural restriction homomorphism.

We must construct $\omega$, $\alpha_1$, $\alpha_2$. We may choose the marked graph pair $(G,H)$ representing the free factor system~$\A$ to have the following properties: the two rose components $H_1$, $H_2$ of the subgraph $H$ have ranks equal to $\rank(A_1)$, $\rank(A_2)$ respectively; the edge $E$ is oriented and decomposes into oriented half-edges $E=\overline E_1 E_2$; the common initial point of $E_1,E_2$ is denoted $w$; their respective terminal vertices are the rose vertices $v_i \in H_i$; and there is an isomorphism $\pi_1(G,w) \approx F_n$ which restricts to isomorphisms $\pi_1(E_i \union H_i, w) \approx A_i$ for $i=1,2$. Given $\phi \in \Gamma$, let $f \from G \to G$ be a homotopy equivalence that represents $\phi$, preserves $H_1$ and $H_2$, and restricts to a locally injective path on $E = \overline E_1 E_2$. By Corollary~3.2.2 of \BookOne\ we have $f(E) = \bar u_1 E^{\pm 1} u_2$ for possibly trivial paths closed paths $u_i$ in $H_i$, $i=1,2$, and in fact the plus sign occurs and so $f(E) = \bar u_1 E u_2$, because $\phi \in \Gamma \subgroup \IAThree$. After pre-composing $f$ with a homeomorphism of $G$ isotopic to the identity that restricts to the identity on $H_1 \union H_2$ and that moves the point $w \in E$ to $f^\inv(w) \in E$, we may also assume that $f(w)=w$, and so $f(E_1) = E_1 u_1$ and $f(E_2) = E_2 u_2$. Define $\alpha_i(\phi) \in \Aut(A_i) \approx \Aut(\pi_1(E_i \union H_i,v_i))$ to be the automorphism induced by $f \restrict E_i \union H_i$, and then use the isomorphism $F_n = A_1 * A_2$ to define $\omega(\phi)$. Note that $\omega(\phi)$ is the unique lift of $\phi \in \Out(F_n)$ to $\Aut(F_n)$ which preserves $A_1$ and $A_2$, because any two such lifts differ by an inner automorphism $i_c$ that preserves both $A_1$ and $A_2$, implying by malnormality that $c \in A_1 \intersect A_2$ and so is trivial. It follows from uniqueness that $\omega$, $\alpha_1$, and $\alpha_2$ are homomorphisms, and hence so is $\alpha$. Commutativity of the diagram is straightforward from the construction. The homomorphism $\omega$ is injective because it is a lift of the inclusion $\Gamma \inject \Out(F_n)$. Since $r$ and $\omega$ are injective, by commutativity of the diagram $\alpha$ is also injective. 

At least one of the two maps $\alpha_i \from \Gamma \to \Aut(A_i)$ is an automorphic lift because at least one of the corresponding images $\wh\cH_i = \image(\alpha_i) < \Aut(A_i)$ is not virtually abelian: if both were virtually abelian, then $\alpha(\Gamma) \subgroup \wh\cH_1 \oplus \wh\cH_2$ would be virtually abelian, but $\alpha$ is injective and so $\Gamma$ would be virtually abelian, a contradiction. 


\medskip\noindent
\textbf{Case 2: $\A$ has a single component,} say $\A=\{[A]\}$, and so $F_n = A * \<b\>$ for some $b \in F_n$. The proof in this case is similar to Case~1, the main differences being that in place of direct sum we use fiber sum, and the marked graph pair $(G,H)$ representing $\A$ will have connected subgraph~$H$.

We shall construct the following commutative diagram:
$$\xymatrix{
\Aut^2(A,A^b) \ar[rr]^{q} && \Out(A) \\
 && \cH \ar[u]_\rho \ar[ull]^{\alpha} \ar[dll]^\omega \ar[d]^\subset \\
\Aut(F_n,A,A^b) \ar[uu]^{r} \ar[rr] && \Out(F_n)
}
$$
\noindent
In this diagram we use the following notations: the conjugate $A^b = b A b^\inv$; the restricted inner automorphism $i_{b} \from A \to A^b$ where $i_{b}(a)=b a b^\inv$; the adjoint isomorphism $\Ad_b \from \Aut(A) \to \Aut(A^b)$ where $\Ad_b(\Phi) = i^{\vphantom\inv}_{b} \,\composed \, \Phi \, \composed \, i^\inv_{b}$; the canonical epimorphism $q_A \from \Aut(A) \to \Out(A)$; and the epimorphism $q_{A^b} = q_A \composed \Ad_b^\inv \from \Aut(A^b) \to \Out(A)$. Also, the fiber sum of $q_A$ and $q_{A^b}$ is the following subgroup of $\Aut(A) \oplus \Aut(A^b)$:
$$\Aut^2(A,A^b) = \{(\Phi,\Phi') \in \Aut(A) \oplus \Aut(A^b) \suchthat q_A(\Phi)=q_{A^b}(\Phi')\}
$$
Define the homomorphism $q$ by $q(\Phi,\Phi') = q_A(\Phi)=q_{A^b}(\Phi')$. Define $\Aut(F_n,A,A^b) \subgroup \Aut(F_n)$ to be the subgroup that preserves both $A$ and~$A^b$. The homomorphism $r$ is jointly induced by the two restriction homomorphisms $r_A$, $r_{A^b}$ from $\Aut(F_n,A,A^b)$ to $\Aut(A)$, $\Aut(A^b)$ because the two compositions $q_A \composed r_A$, $q_{A^b} \composed r_{A^b} \from \Aut(F_n,A,A^b) \to \Out(A)$ are evidently the same. Note that $r$ is injective, for if $\Phi \in \Aut(F_n,A,A^b)$ restricts to the identity on each of $A$ and $A^b$, then $\Phi(a)=a$ and $\Phi(a^b)=a^b$ for all $a \in A$, and it follows that $b^\inv \Phi(b)$ commutes with all $a \in A$; since $\rank(A) \ge 2$, we have $\Phi(b)=b$, and hence $\Phi$ is trivial.

To construct the homomorphism $\omega$, we may in this case choose the marked graph pair $(G,H)$ representing $\A$ so that: $H$ is a rose whose rank equals $\rank(A)=n-1$; as before $E=\overline E_1 E_2$ with $w$ the common initial point of $E_1,E_2$; the terminal endpoints of both $E_1$ and $E_2$ equal the rose vertex $v \in H$; and there is an isomorphism $\pi_1(G,w) \approx F_n$ which restricts to $\pi_1(E_1 \union H,w) \approx A$ and $E_2 \overline E_1 \approx b$. It follows that $\pi_1(E_2 \union H,w) \approx A^b$. For each $\phi \in \cH$, applying Corollary~3.2.2 of \BookOne\ as in the previous case, we find $\Phi = \omega(\phi) \in \Aut(F_n,A,A^b)$ that represents $\phi$ and that is represented by a homotopy equivalence $f_\phi \from G \to G$ that preserves $H$, fixes $w$, and takes $E_i \mapsto E_i u_i$ for possibly trivial paths $u_1,u_2$ in $H$ based at $v$. The map $\omega$ is a homomorphism because $\Phi$ is the unique element of $\Aut(F_n,A,A^b)$ that represents $\phi$, for if $c \in F_n$ and if the inner automorphism $i_c$ preserves both $A$ and $A^b$ then by malnormality of $A$ and $A^b$ we have $c \in A \cap A^b$, and again by malnormality we have that $c$ is trivial. It follows that $\omega$ is injective. Since $r$ is injective it follows that $\alpha = r \composed \omega$ is injective. Denote $\alpha(\phi) = (\alpha_A(\phi),\alpha_{A^b}(\phi)) \in \Aut^2(A,A^b)$. Obviously the compositions $q_A \, \composed \, \alpha_A$, $q_{A^b} \, \composed \, \alpha_{A^b} \from \cH \to \Out(A)$ are the same, and hence there is an induced homomorphism $\rho \from \cH \to \Out(A)$ which is topologically represented by $f_\phi \restrict H$. This completes the construction of the above diagram, and commutativity is evident. 

As in the previous case, we will be done if we can show that at least one of the two homomorphisms $\alpha_A \from \Gamma_0 \to \Aut(A)$ or $\alpha_{A^b} \from \Gamma_0 \to \Aut(A^b)$ has image that is not virtually abelian, but if both are virtually abelian then $\image(\alpha)$ is contained in a virtually abelian subgroup of $\Aut^2(A,A^b)$, which by injectivity of $\alpha$ implies that $\Gamma$ is virtually abelian, a contradiction.


\subsection{Proof that the Hyperbolic Action Theorem implies Theorem C}
\label{SectionFImpliesCProof}

Let $\Gamma \subgroup \IAThree$ be a finitely generated, finite lamination subgroup which is not (virtually) abelian and which has (virtually) abelian restrictions. 
By applying Proposition~\ref{LemmaAutLiftExists}, there exists a free factor $A \subgroup F_n$ such that $\Gamma \subgroup \Stab[A]$, and there exists an automorphic lift $\rho \from \Gamma \mapsto \Aut(A)$; we may assume that $\rank(A)$ is minimal amongst all choices of $A$ and $\rho$. We adopt the notation of Figure~\ref{FigureAutLift} in Section~\ref{SectionDefAutLift}, matching that notation with the Hyperbolic Action Theorem by choosing an isomorphism $A \approx F_k$ where $k = \rank(A)$. Most of the hypotheses of the Hyperbolic Action Theorem are now immediate: $\wh\cH = \image(\rho) \subgroup \Aut(A)$ is not virtually abelian by definition of automorphic lifts; also $\cH = \image(\wh\cH \mapsto \Out(A))$ is abelian.

We must check the one remaining hypothesis of the Hyperbolic Action Theorem, namely that no proper, nontrivial free factor $B \subgroup A$ is preserved by the action of~$\wh\cH$. Assuming by contradiction that $B$ is preserved by $\wh\cH$, consider the restriction homomorphism $\sigma \from \wh\cH \mapsto \Aut(B)$. We claim that the composition $\sigma\rho \from \Gamma \to \Aut(B)$ is an automorphic lift of $\Gamma$. Since $\rank(B)<\rank(A)$, once this claim is proved, it contradicts the assumption that $\rho \from \Gamma \to \Aut(A)$ is an automorphic lift of minimal rank. The canonical isomorphism $\Inn(F_k) \leftrightarrow F_k$, denoted $i_\delta \leftrightarrow \delta$, restricts to an isomorphism between $J = \wh\cH \intersect \Inn(F_k)$ and some subgroup of $F_k$. If $i_\delta \in J$ then $i_\delta$ preserves $B$, and since $B$ is malnormal in $A$ it follows that $\delta \in B$. Thus $\sigma$ restricts to an injection from $J$ to $\Inn(B)$. Also, the group $J$ is a free group of rank~$\ge 2$, for if it were trivial or cyclic then $\wh\cH$ would be virtually solvable and hence, by \BookThree, virtually abelian, a contradiction. Since the image of the map $\sigma\rho \from \Gamma \xrightarrow{\rho} \wh\cH \xrightarrow{\sigma} \Aut(B)$ contains $\sigma(J)$, it follows that $\image(\sigma\rho)$ is not virtually abelian. Since $\wh\cH$ preserves the $A$-conjugacy class of~$B$, and since $B$ is malnormal in~$F_n$, it follows $\Gamma$ preserves the $F_n$-conjugacy class of~$B$. Tracing through the definitions one easily sees that the composed homomorphism $\Gamma \xrightarrow{\sigma\rho} \Aut(B) \mapsto \Out(B)$ is equal to the composition of $\Gamma \inject \Stab_{\Out(F_n)}[B] \mapsto \Out(B)$, where the latter map is the natural restriction homomorphism. Thus $\sigma\rho \from \Gamma \to \Aut(B)$ is an automorphic lift of $\Gamma$, completing the proof of the claim.

Applying the conclusions of the Hyperbolic Action Theorem using the free group $A \approx F_k$ and $\wh\cH \subgroup \Aut(F_k)$, we obtain a finite index normal subgroup $\wh N \subgroup \wh\cH$ and a hyperbolic action $\wh N \act \S$ satisfying conclusions~\pref{ItemThmF_EllOrLox}, \pref{ItemThmF_Nonelem} and~\pref{ItemThmF_WWPD} of that theorem. The subgroup $N = \rho^\inv(\wh N) \subgroup \Gamma$ is a finite index normal subgroup of~$\Gamma$, and by composition we have a pullback action $N \mapsto \wh N \act \S$. By the Hyperbolic Action Theorem~\pref{ItemThmF_EllOrLox}, each element of~$\wh N$ acts elliptically or loxodromically on~$\S$, and so the same holds for each element of~$N$, which is Theorem~C~\pref{ItemThmC_EllOrLox}. By item~\pref{ItemThmF_Nonelem} of the Hyperbolic Action Theorem, the action $\wh N \act \S$ is nonelementary, and so the same holds for the action $N \act \S$, which is Theorem~C~\pref{ItemThmC_Nonelem}. Since the image of the homomorphism $N \mapsto \wh N \inject \wh\cH \mapsto \cH$ is abelian, it follows that the image in $\wh\cH$ of the commutator subgroup $[N,N]$ is contained in $J = \kernel(\wh\cH \mapsto \cH)$, and hence the image of $[N,N]$ in $\wh N$ is contained in $J \intersect \wh N$. By conclusion~\pref{ItemThmF_WWPD} of the Hyperbolic Action Theorem, each loxodromic element of $J \intersect \wh N$ is a strongly axial WWPD element with respect to the action $\wh N \act \S$. Applying \cite[Corollary 2.5]{HandelMosher:WWPD} which says that WWPD is preserved under pullback, and using the evident fact that the strongly axial property is preserved under pullback, it follows that each loxodromic element of $[N,N]$ is a strongly axial, WWPD element of the action $N \act \S$, which is the statement of Theorem~C~\pref{ItemThmC_WWPD}. \qed

\subsection{Automorphic extensions of free splitting actions}
\label{SectionFreeSplittingExtension}
From the hypothesis of the Hyperbolic Action Theorem, our interest is now transferred to the context of a finite rank free group $F_n$ --- perhaps identified isomorphically with some free factor of a higher rank free group --- and of a subgroup $\wh\cH \subgroup \Aut(F_n)$ that has the following irreducibility property: no proper, nontrivial free factor of $F_n$ is preserved by $\wh\cH$. To prove the Hyperbolic Action Theorem one needs actions of such groups $\wh\cH$ on hyperbolic spaces. In this section we focus on a natural situation which produces actions on trees.

\paragraph{Free splittings.} Recall that a \emph{free splitting} of $F_n$ is a minimal, simplicial action $F_n \act T$ on a simplicial tree~$T$ such that the stabilizer of each edge is trivial. Two free splittings $F_n \act S,T$ are \emph{simplicially equivalent} if there exists an $F_n$-equivariant simplicial isomorphism $S \mapsto T$; we sometimes use the notation $[T]$ for the simplicial equivalence class of a free splitting $F_n \act T$. Formally the action $F_n \act T$ is given by a homomorphism $\alpha \from F_n \mapsto \Isom(T)$, which we denote more briefly as $\alpha \from F_n \act T$. In this formal notation, $\Isom(T)$ refers to the group of simplicial self-isomorphisms of $T$, equivalently the self-isometry group of $T$ using the geodesic metric given by barycentric coordinates on simplices of $T$. We note that an element of $\Isom(T)$ is determined by its restriction to the vertex set, in fact it is determined by its restriction to the subset of vertices of valence~$\ge 3$. Two free splittings are \emph{equivalent} if there is an $F_n$-equivariant simplicial isomorphism between them, the equivalence class of a free splitting $F_n \act T$ is denoted~$[T]$, and the group $\Out(F_n)$ acts naturally on the set of equivalence classes of free splittings.

Given a free splitting $F_n \act T$, the set of conjugacy classes of nontrivial vertex stabilizers of a free splitting is a free factor system of $F_n$ called the \emph{vertex group system} of~$T$ denoted as $\A(T)$. The function which assigns to each free splitting $T$ its vertex group system $\A(T)$ induces a well-defined, $\Out(F_n)$-equivariant function 
$$[T] \mapsto \A(T)
$$
from the set of simplicial equivalence classes of free splittings to the set of free factor systems. 

Every free splitting $F_n \act T$ can be realized by some marked graph pair $(G,H)$ in the sense that $T$ is the $F_n$-equivariant quotient of the universal cover $\wt G$ where each component of the total lift $\wt H \subset \wt G$ is collapsed to a point. One may also assume that each component of $H$ is noncontractible, in which case the same marked graph pair $(G,H)$ topologically represents the vertex group system of $T$.

\paragraph{Twisted equivariance (functional notation).} Given two free splittings $F_n \act S,T$ and an automorphism $\Phi \in \Aut(F_n)$, a map $h \from S \to T$ is said to be \emph{$\Phi$-twisted equivariant} if
$$h(\gamma \cdot x) = \Phi(\gamma) \cdot h(x) \quad\text{for all $x \in S$, $\gamma \in F_n$.}
$$
The special case when $\Phi = \Id$ is simply called \emph{equivariance}. 

A twisted equivariant map behaves well with respect to stabilizers, as shown in the following simple fact:

\begin{lemma}\label{LemmaActionTwisted}
For any free splittings $F_n \act S,T$, for each $\Phi \in \Aut(F_n)$, and for each $\Phi$-twisted equivariant map $f \from S \to T$, we have
\begin{enumerate}
\item\label{ItemSublemmaPoints}
$\Phi(\Stab(x)) \subgroup \Stab(f(x))$ for all $x \in S$.
\item\label{ItemSublemmaAxes}
If in addition the map $f \from S \to T$ is a simplicial isomorphism, then the inclusion of item~\pref{ItemSublemmaPoints} is an equality: $\Phi(\Stab(x)) = \Stab(f(x))$. Furthermore, $\gamma \in F_n$ acts loxodromically on $S$ if and only if $\Phi(\gamma)$ acts loxodromically on $T$, in which case their axes $A^S_\gamma \subset S$ and $A^T_\Phi(\gamma) \subset T$ satisfy $A^T_{\Phi(\gamma)} = f(A^S_\gamma)$.
\end{enumerate}
\end{lemma}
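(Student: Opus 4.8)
The plan for item~\pref{ItemSublemmaPoints} is a one-line computation: if $\gamma \in \Stab(x)$ then $\gamma \cdot x = x$, so by $\Phi$-twisted equivariance $f(x) = f(\gamma\cdot x) = \Phi(\gamma)\cdot f(x)$, i.e.\ $\Phi(\gamma) \in \Stab(f(x))$; hence $\Phi(\Stab(x)) \subgroup \Stab(f(x))$.

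For item~\pref{ItemSublemmaAxes} I would first observe that when $f$ is a simplicial isomorphism its inverse $f^\inv \from T \to S$ is $\Phi^\inv$-twisted equivariant: for $y = f(x) \in T$ and $\gamma \in F_n$, from $f(\Phi^\inv(\gamma)\cdot x) = \Phi(\Phi^\inv(\gamma))\cdot f(x) = \gamma \cdot y$ one reads off $f^\inv(\gamma\cdot y) = \Phi^\inv(\gamma)\cdot f^\inv(y)$. Applying item~\pref{ItemSublemmaPoints} to $f^\inv$ then gives $\Phi^\inv(\Stab(f(x))) \subgroup \Stab(x)$, equivalently $\Stab(f(x)) \subgroup \Phi(\Stab(x))$; combined with item~\pref{ItemSublemmaPoints} this is the asserted equality $\Phi(\Stab(x)) = \Stab(f(x))$.

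Next I would invoke the standard dichotomy for an isometry of a simplicial tree: each $g \in \Isom(T)$ either fixes a point of $T$ or acts loxodromically with a unique invariant axis, namely its min-set. (One may also note that, since the edge stabilizers of a free splitting are trivial and $F_n$ is torsion free, no nontrivial $\gamma \in F_n$ inverts an edge, so in the elliptic case the fixed point can be taken to be a vertex; this refinement is not needed below.) Given this, a nontrivial $\gamma \in F_n$ is elliptic on $S$ if and only if $\gamma \in \Stab(x)$ for some $x \in S$, if and only if $\Phi(\gamma) \in \Phi(\Stab(x)) = \Stab(f(x))$ for some $x \in S$, if and only if (using that $f$ is a bijection) $\Phi(\gamma)$ is elliptic on $T$; equivalently, $\gamma$ is loxodromic on $S$ if and only if $\Phi(\gamma)$ is loxodromic on $T$.

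Finally, assuming $\gamma$ (hence $\Phi(\gamma)$) loxodromic, $\Phi$-twisted equivariance gives $\Phi(\gamma)\cdot f(x) = f(\gamma\cdot x)$ for all $x \in S$, so $f$ carries the action of $\langle\gamma\rangle$ on $S$ to that of $\langle\Phi(\gamma)\rangle$ on $T$. Hence $f(A^S_\gamma)$ is a bi-infinite geodesic in $T$ that is $\Phi(\gamma)$-invariant and along which $\Phi(\gamma)$ translates, so by uniqueness of the axis of a loxodromic tree isometry $f(A^S_\gamma) = A^T_{\Phi(\gamma)}$. I do not expect a genuine obstacle here, since the statement is essentially bookkeeping; the only point worth stating with care is that $f^\inv$ inherits twisted equivariance (with $\Phi$ replaced by $\Phi^\inv$), as this is what upgrades the inclusion of item~\pref{ItemSublemmaPoints} to an equality and lets fixed points be transported across $f$ in both directions.
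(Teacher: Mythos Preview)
Your proof is correct and follows essentially the same approach as the paper: both argue item~\pref{ItemSublemmaPoints} by a direct twisted-equivariance computation, and both upgrade the inclusion to equality in item~\pref{ItemSublemmaAxes} by noting that $f^{-1}$ is $\Phi^{-1}$-twisted equivariant and applying~\pref{ItemSublemmaPoints} again. The only cosmetic difference is that the paper deduces loxodromicity of $\Phi(\gamma)$ by computing $\Stab(f(A^S_\gamma)) = \langle\Phi(\gamma)\rangle$ and observing that a line with infinite cyclic stabilizer is an axis, whereas you first transport fixed points to get the elliptic/loxodromic dichotomy and then invoke uniqueness of the axis; both are equally valid.
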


\paragraph{Remark.} One could approach this proof by first working out the equivariant case ($\Phi = \Id$), and then reducing the twisted case to the equivariant case by conjugating the action $F_n \act T$ using $\Phi$. We instead give a direct proof.

\begin{proof} To prove~\pref{ItemSublemmaPoints}, for each $x \in S$ and $\gamma \in F_n$ we have
\begin{align*}
\gamma \in \Phi(\Stab(x)) &\iff \Phi^\inv(\gamma) \in \Stab(x) \iff \Phi^\inv(\gamma) \cdot x = x \\
  &\implies f(\Phi^\inv(\gamma) \cdot x) = f(x) \\
  &\iff \Phi(\Phi^\inv(\gamma)) \cdot f(x) = f(x) \quad\text{(by twisted equivariance)}\\
  &\iff \gamma \cdot f(x) = f(x) \\
  &\iff \gamma \in \Stab(f(x))
\end{align*}
and so $\Phi(\Stab(x)) \subgroup \Stab(\Phi \cdot x)$. 

To prove \pref{ItemSublemmaAxes}, consider the inverse simplicial automorphism $f^\inv \from T \to S$. The implication in the second line may be inverted by applying $f^\inv$ to both sides of the equation in the second line. For the rest of \pref{ItemSublemmaAxes}, it suffices to prove the ``only if'' direction, because the ``if'' direction can then be proved using that $f^\inv$ is $\Phi^\inv$-twisted equivariant. Assuming $\gamma$ is loxodromic in $S$ with axis $A^S_\gamma$, consider the line $f(A^S_\gamma) \subset T$. Calculating exactly as above one shows that the equation $\Phi(\Stab(A^S_\gamma)) = \Stab(f(A^S_\gamma))$ holds. We may assume that $\gamma$ is a generator of the infinite cyclic group $\Stab(A^S_\gamma)$, and so $\<\Phi(\gamma)\> = \Phi\<\gamma\> = \Stab(f(A^S_\gamma))$. Since the stabilizer of the line $f(A^S_\gamma)$ is the infinite cyclic group $\<\Phi(\gamma)\>$, it follows that $\Phi(\gamma)$ is loxodromic and its axis $A^T_{\Phi(\gamma)}$ is equal to~$f(A^S_\gamma)$.
\end{proof}

\paragraph{Free splittings of co-edge number~$1$.}
Recall the \emph{co-edge number} of a free factor system $\A$ of~$F_n$ (see for example \cite[Section 3.1]{\PartOneTag}), which is the minimal number of edges of $G \setminus H$ amongst all marked graph pairs $(G,H)$ such that $H$ is a representative of $\A$. There is a tight relationship between free splittings with a single edge orbit and free factor systems with co-edge number~$1$. To be precise:

\begin{fact}\label{FactOneEdge} \cite[Section 4.1]{HandelMosher:distortion}
When the $\Out(F_n)$-equivariant function $[T] \mapsto \A = \A(T)$ is restricted to free splittings $T$ with one edge orbit and free factor systems $\A$ with co-edge number~$1$, the result is a bijection, and hence 
$$\Stab[T] = \Stab(\A)
$$
whenever $T$ and $\A$ correspond under this bijection.
\qed\end{fact}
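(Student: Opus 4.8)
The plan is to verify three properties of the restricted correspondence $[T] \mapsto \A(T)$ --- that it lands in co-edge-number-$1$ systems, that it is onto such systems, and that it is injective --- and then deduce the stabilizer identity formally from bijectivity together with $\Out(F_n)$-equivariance.

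\emph{Image and surjectivity.} If $F_n \act T$ has one edge orbit, realize it by a marked graph pair $(G,H)$ in which each component of $H$ is noncontractible; then $H$ represents $\A(T)$ and $G \setminus H$ is a single edge, so $\A(T)$ has co-edge number $\le 1$, and it is exactly $1$ because $\A(T)$ is proper ($T$ has an edge and is minimal, so no vertex stabilizer is all of $F_n$). Conversely, given $\A$ of co-edge number $1$, choose $(G,H)$ with $G \setminus H$ a single edge $E$ and with each component of $H$ noncontractible; collapsing each component of the total lift $\wt H \subset \wt G$ to a point produces a simplicial $F_n$-tree $T$ with a single edge orbit (the lifts of $E$), with trivial edge stabilizers (freeness of the deck action), and with vertex group system $\A$. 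Minimality is automatic here: the minimal subtree is invariant and contains an edge (it is not a point, since no vertex stabilizer equals $F_n$), hence contains the whole edge orbit, hence all of $T$.

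\emph{Injectivity.} Let $T, T'$ be one-edge-orbit free splittings with $\A(T) = \A(T') = \A$; we seek an $F_n$-equivariant simplicial isomorphism $T \to T'$. The quotient graph of groups of such a $T$ has a single edge with trivial edge group, so it is either a segment, giving a free product $F_n = G_1 * G_2$ with $G_1, G_2$ nontrivial, or a loop, giving an HNN decomposition over a nontrivial vertex group $G$ and trivial edge group. In the segment case $G_1$ is not conjugate to $G_2$ --- otherwise $G_2$ would lie in the normal closure of $G_1$ in $F_n$, hence vanish in $F_n$ modulo that normal closure, which is isomorphic to $G_2$, forcing $G_2 = 1$ --- so $\A$ has two components; in the loop case $\A$ has one. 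Thus the number of components of $\A$ decides which case holds for both $T$ and $T'$. Using that nontrivial vertex groups are free factors, hence malnormal, pick $v \in T$ with $\Stab(v)$ equal to a fixed component $A_1$ of $\A$ (two-component case) or to a fixed conjugate $A$ of the single component (one-component case), and likewise pick $v' \in T'$; such vertices exist because a vertex orbit realizing the conjugacy class $[A_1]$ contains a vertex with stabilizer exactly $A_1$. Then $T$ and $T'$ are the Bass--Serre trees of free products $F_n = A_1 * B$ and $F_n = A_1 * B'$ in the segment case, and of the one-loop graphs of groups with vertex group $A$ and trivial edge group, with stable letters $t$, $t'$, in the loop case. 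In the segment case the standard fact that the complements to a free factor inside a free product form a single conjugacy class under that factor gives $B' = aBa^\inv$ for some $a \in A_1$; since conjugating one factor of a free product by an element of the other does not change the Bass--Serre tree up to equivariant isomorphism, $T \cong T'$. In the loop case the same uniqueness-of-complements fact gives $\langle t' \rangle = a\langle t\rangle a^\inv$ for some $a \in A$, and the Bass--Serre trees of such graphs of groups whose stable letters differ by this kind of conjugation (and possibly by reversal of the loop orientation) are equivariantly isomorphic, so again $T \cong T'$.

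\emph{Stabilizers, and the main obstacle.} Once the restricted correspondence is a bijection, $\Stab[T] = \Stab(\A)$ is formal: if $\phi[T] = [T]$ then $\phi \cdot \A = \A(\phi T) = \A$ by equivariance; conversely if $\phi \cdot \A = \A$ then $\A(\phi T) = \phi \cdot \A = \A = \A(T)$, and as $\phi T$ is again a one-edge-orbit free splitting, injectivity forces $[\phi T] = [T]$. The main obstacle is the injectivity step --- the rigidity of one-edge free splittings with prescribed vertex group system --- which rests on uniqueness of complements in a free product (a consequence of Bass--Serre theory and malnormality of free factors) together with the bookkeeping of writing down the equivariant isomorphism between Bass--Serre trees; the remaining parts are routine.
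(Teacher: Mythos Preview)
The paper does not actually prove this fact: it is stated with a citation to \cite[Section 4.1]{HandelMosher:distortion} and closed with \qed. So there is no ``paper's own proof'' to compare against --- you have supplied a direct argument where the authors simply invoked a reference.

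Your argument is essentially correct, and the overall structure (well-definedness, surjectivity, injectivity via Bass--Serre theory, then the formal deduction of $\Stab[T]=\Stab(\A)$) is exactly right. One small slip in the loop case: the claim that ``the same uniqueness-of-complements fact gives $\langle t'\rangle = a\langle t\rangle a^{-1}$ for some $a\in A$'' is not true as stated. The cyclic complements to $A$ are \emph{not} part of the vertex group system, and they need not be conjugate at all --- for instance $F_2=\langle a\rangle*\langle t\rangle=\langle a\rangle*\langle ta\rangle$, and $t$, $ta$ are not conjugate. The correct statement is that any $t'$ with $F_n=A*\langle t'\rangle$ satisfies $t'\in At^{\pm 1}A$ (proved by the same Bass--Serre/fundamental-domain reasoning you use in the segment case: the geodesic $[v_A,\,t'v_A]$ must be a single edge, else an interior vertex would have nontrivial stabilizer contradicting the free-product structure on $T'$). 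From $t'=a_1t^{\pm1}a_2$ one checks directly that the double cosets $At'A\cup At'^{-1}A$ and $AtA\cup At^{-1}A$ coincide, hence the adjacency relation on $F_n/A$ is the same and the trees agree. With that correction your injectivity argument goes through.
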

\noindent
Under the bijection in Fact~\ref{FactOneEdge}, the number of components of $\A$ equals the number of vertex orbits of $T$ which equals $1$ or $2$.  If $\A=\{[A]\}$ has a single component then $\rank(A)=n-1$, there is a free factorization $F_n = A * B$ where $\rank(B)=1$, and the quotient graph of groups $T / F_n$ is a circle with one edge and one vertex. On the other hand if $\A = \{[A_1],[A_2]\}$ has two components then $\rank(A_1) + \rank(A_2)=n$, $A_1,A_2$ can be chosen in their conjugacy classes so that there is a free factorization $F_n = A_1 * A_2$, and the quotient graph of groups $T / F_n$ is an arc with one edge and two vertices.

\paragraph{The stabilizer of a free splitting and its automorphic extension}
Consider a free splitting $\alpha \from F_n \act T$ and its stabilizer subgroup $\Stab[T] \subgroup \Out(F_n)$. Let $\wtStab[T] \subgroup \Aut(F_n)$ be the preimage of $\Stab[T]$ under the standard projection homomorphism $\Aut(F_n) \mapsto \Out(F_n)$, and so we have a short exact sequence
$$1 \mapsto F_n \approx \Inn(F_n) \inject \wtStab[T] \mapsto \Stab[T] \mapsto 1
$$
From this setup we shall define in a natural way an action $\wtStab[T] \act T$ which extends the given free splitting action $\alpha \from F_n \act T$. We proceed as follows. 

For each $\Phi \in \Aut(F_n)$ we have the composed action $\alpha \circ \Phi \from F_n \act T$. Assuming in addition that $\Phi \in \wtStab[T]$, in other words that $\Phi$ is a representative of some element of $\Stab[T]$, it follows the actions $\alpha$ and $\alpha \circ \Phi \from F_n \act T$ are equivalent, meaning that there exists a simplicial automorphism $h \in \Isom(T)$ such that $h \composed \alpha(\gamma) = \alpha(\Phi(\gamma)) \composed h$. When this equation is rewritten in action notation it simply says that $h$ satisfies $\Phi$-twisted equivariance: $h(\gamma \cdot x) = \Phi(\gamma) \cdot h(x)$ for all $\gamma \in F_n$, $x \in S$. Suppose conversely that for some $\Phi \in \Aut(F_n)$ there exists a $\Phi$-twisted equivariant $h \in \Isom(T)$. Since $h$ conjugates the action $\alpha \from F_n \act T$ to the action $\alpha \circ\Phi \from F_n \act T$, it follows that $\phi \in \Stab[T]$ and $\Phi \in \wtStab[T]$. This proves the equivalence of \pref{ItemLTUInStabT}, \pref{ItemLTUForall} and~\pref{ItemLTUExists} in the following lemma, which also contains some uniqueness information regarding the conjugating maps $h$.


\begin{lemma}
\label{LemmaTwistedUniqueness}
For each free splitting $\alpha \from F_n \act T$ and each $\phi \in \Out(F_n)$ the following are equivalent:
\begin{enumerate}
\item\label{ItemLTUInStabT}
$\phi \in \Stab[T]$
\item\label{ItemLTUForall}
For each $\Phi \in \Aut(F_n)$ representing $\phi$, there exists a $\Phi$-twisted equivariant isomorphism $h \from T \to T$.
\item\label{ItemLTUExists}
For some $\Phi \in \Aut(F_n)$ representing $\phi$, there exists a $\Phi$-twisted isomorphism $h \from T \to T$.
\end{enumerate}
Furthermore, 
\begin{enumeratecontinue}
\item\label{ItemLTUUnique}
If $\phi$ satisfies the equivalent conditions \pref{ItemLTUInStabT}, \pref{ItemLTUForall},~\pref{ItemLTUExists} then for each $\Phi \in \wtStab[T]$ representing $\phi$, the $\Phi$-twisted equivariant isomorphism of $h_\Phi$ is uniquely determined by~$\Phi$, and is denoted
$$h_\Phi \from T \to T
$$
\item\label{ItemLTUInner}
For each $\gamma \in F_n$ with corresponding inner automorphism $i_\gamma(\delta)=\gamma\delta\gamma^\inv$, the two maps $h_{i_\gamma} \from T \to T$ and $\alpha(\gamma) \from T \to T$ are equal.
\end{enumeratecontinue}
\end{lemma}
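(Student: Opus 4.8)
I would prove the lemma in two stages: first the equivalence of items \pref{ItemLTUInStabT}--\pref{ItemLTUExists}, which the text has essentially already sketched in the paragraph preceding the statement, and then the uniqueness statements \pref{ItemLTUUnique} and \pref{ItemLTUInner}, which are the genuinely new content. For the equivalences, I would simply organize the cyclic chain $\pref{ItemLTUInStabT}\implies\pref{ItemLTUForall}\implies\pref{ItemLTUExists}\implies\pref{ItemLTUInStabT}$: the first implication unwinds the definition of $\Stab[T]$ (if $\phi$ fixes the equivalence class $[T]$, then for any representative $\Phi$ the actions $\alpha$ and $\alpha\circ\Phi$ both represent $[T]$ and hence are $F_n$-equivariantly simplicially isomorphic, which is exactly a $\Phi$-twisted equivariant $h$); the second is trivial (``for all'' implies ``for some'', noting that every $\phi$ has at least one representative $\Phi$); the third reverses the first, since a $\Phi$-twisted equivariant isomorphism $h$ conjugates $\alpha$ to $\alpha\circ\Phi$ and therefore witnesses $[T]=\Phi\cdot[T]$, i.e.\ $\phi\in\Stab[T]$ and $\Phi\in\wtStab[T]$.

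For item \pref{ItemLTUUnique}, the point is that a $\Phi$-twisted equivariant simplicial isomorphism $h\from T\to T$ is uniquely determined by $\Phi$. I would argue as follows: if $h,h'$ are both $\Phi$-twisted equivariant isomorphisms, then $g = h^{-1}\composed h'\from T\to T$ is $\Id$-twisted equivariant, i.e.\ genuinely $F_n$-equivariant (this uses that $h^{-1}$ is $\Phi^{-1}$-twisted equivariant, so the composition is $(\Phi^{-1}\Phi)$-twisted $=$ equivariant). An $F_n$-equivariant simplicial self-isomorphism of a minimal free splitting is the identity: indeed it permutes the finitely many vertex orbits while commuting with the $F_n$-action, and since edge stabilizers are trivial and the action is minimal, a standard argument (e.g.\ looking at the induced map on the quotient graph of groups, or using that $g$ fixes a point with nontrivial stabilizer and then propagates outward along edges using triviality of edge stabilizers and minimality) forces $g=\Id$. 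Hence $h=h'$, and we may write $h_\Phi$. I would also record here the homomorphism property implicitly: $h_{\Phi\Psi}=h_\Phi\composed h_\Psi$ by the same uniqueness applied to the twisted equivariance of the composite, though strictly this is not demanded by the statement as written.

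For item \pref{ItemLTUInner}, I need only exhibit one $i_\gamma$-twisted equivariant isomorphism and invoke uniqueness. The natural candidate is $\alpha(\gamma)\from T\to T$ itself: for $\delta\in F_n$ and $x\in T$ we compute $\alpha(\gamma)(\delta\cdot x)=\gamma\cdot(\delta\cdot x)=(\gamma\delta)\cdot x=(\gamma\delta\gamma^{-1})\cdot(\gamma\cdot x)=i_\gamma(\delta)\cdot\alpha(\gamma)(x)$, which is precisely $i_\gamma$-twisted equivariance. Since $\alpha(\gamma)$ is a simplicial isomorphism of $T$, uniqueness from \pref{ItemLTUUnique} gives $h_{i_\gamma}=\alpha(\gamma)$.

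\textbf{The main obstacle.} The only non-formal ingredient is the rigidity fact that an $F_n$-equivariant simplicial self-isomorphism of a minimal free splitting is the identity. This is where I expect to have to be slightly careful: it is standard but deserves a clean citation or a one-line argument (triviality of edge stabilizers plus minimality plus connectedness of $T$). Everything else is bookkeeping with the twisted-equivariance identity, already modeled on the proof of Lemma~\ref{LemmaActionTwisted}.
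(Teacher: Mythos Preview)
Your proof is correct and largely matches the paper's. For the equivalence \pref{ItemLTUInStabT}--\pref{ItemLTUExists} and for item~\pref{ItemLTUInner} you do exactly what the paper does: the equivalences are argued in the paragraph preceding the lemma, and item~\pref{ItemLTUInner} is the same one-line verification that $\alpha(\gamma)$ is $i_\gamma$-twisted equivariant, followed by uniqueness.

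The one genuine difference is in item~\pref{ItemLTUUnique}. You reduce to the untwisted case via $g=h^{-1}\circ h'$ and then invoke rigidity of $F_n$-equivariant self-isomorphisms of a minimal free splitting. The paper instead isolates a slightly more general statement (Lemma~\ref{LemmaTwistedUniqueGeneral}): any $\Phi$-twisted equivariant simplicial isomorphism $S\to T$ between two free splittings is unique, proved directly by observing that each vertex of valence~$\ge 3$ is a triple intersection $A^S_\beta\cap A^S_\gamma\cap A^S_\delta$ of loxodromic axes, and that such a map sends axes to axes by Lemma~\ref{LemmaActionTwisted}. Your reduction is a clean shortcut for the statement at hand; the paper's route makes the mechanism explicit and yields the between-trees version (not otherwise used). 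One caution on your rigidity sketch: the alternative ``$g$ fixes a point with nontrivial stabilizer and then propagates outward'' does not cover free splittings with all vertex stabilizers trivial; the axis/triple-intersection argument (which is what your untwisted rigidity fact ultimately needs anyway) handles all cases uniformly.
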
 

\textbf{Remark:} In item~\pref{ItemLTUInner}, note that $h_{i_\gamma}$ is defined because $i_\gamma \in \Inn(F_n) \subgroup \wtStab[T]$.  

\medskip

The uniqueness statement~\pref{ItemLTUUnique} is a special case of a more general uniqueness statement that we will make use of later:

\begin{lemma}\label{LemmaTwistedUniqueGeneral}
For any two free splittings $F_n \act S,T$ and any $\Phi \in \Aut(F_n)$, there exists at most one $\Phi$-equivariant simplicial isomorphism $h \from S \mapsto T$. In particular, taking $\Phi = \Id$, there exists at most one equivariant simplicial isomorphism $h \from S \mapsto T$.
\end{lemma}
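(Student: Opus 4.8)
The plan is to reduce the statement to the case $\Phi = \Id$ and then to prove the core assertion that the only $F_n$-equivariant simplicial automorphism of a free splitting is the identity. So suppose $h, h' \from S \to T$ are two $\Phi$-twisted equivariant simplicial isomorphisms. First I would observe that $g = (h')^\inv \composed h \from S \to S$ is an ordinary (untwisted) $F_n$-equivariant simplicial automorphism: applying $(h')^\inv$ to the identity $h'(\gamma \cdot x) = \Phi(\gamma) \cdot h'(x)$ gives $(h')^\inv(\Phi(\gamma) \cdot y) = \gamma \cdot (h')^\inv(y)$ for all $y \in T$, and feeding in $h(\gamma \cdot x) = \Phi(\gamma) \cdot h(x)$ then yields $g(\gamma \cdot x) = \gamma \cdot g(x)$. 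Hence it suffices to show $g = \Id_S$, and the ``in particular'' clause is just the case $\Phi = \Id$.

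Next I would study the fixed-point set $\Fix(g) \subseteq S$, viewing $g$ as an isometry of the $\reals$-tree underlying the simplicial tree $S$. Like any isometry of a tree, $g$ has convex fixed-point set, so $\Fix(g)$ is a (possibly empty) subtree; and it is $F_n$-invariant, since $g(x) = x$ implies $g(\gamma \cdot x) = \gamma \cdot g(x) = \gamma \cdot x$. Because the free splitting action $F_n \act S$ is minimal, any nonempty $F_n$-invariant subtree equals $S$, so if $\Fix(g) \neq \emptyset$ then $g = \Id_S$ and we are done. It remains to exclude the case $\Fix(g) = \emptyset$, in which $g$ is a hyperbolic isometry with positive translation length and a unique axis $A_g$. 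Being $F_n$-equivariant, $g$ commutes with the action of each $\gamma \in F_n$, so $\gamma g \gamma^\inv = g$, and therefore $\gamma \cdot A_g$, being the axis of $\gamma g \gamma^\inv = g$, equals $A_g$. Thus $A_g$ is a nonempty $F_n$-invariant subtree, so by minimality $A_g = S$; that is, $S$ is a line.

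The main obstacle, and the only step with genuine content, is to rule out a free splitting $F_n \act S$ with $S$ a line (here $n \ge 2$, as throughout this part of the paper). The simplicial automorphism group of a line is infinite dihedral, and its translation subgroup has index at most two; let $K \subgroup F_n$ be the preimage of that translation subgroup under $F_n \to \Isom(S)$. Then $K$ has rank $\ge 2$ (it has index $\le 2$ in $F_n$, hence rank $n$ or $2n-1$), so the homomorphism recording translation lengths $K \to \Z$ has nontrivial kernel $K_0$, since a free group of rank $\ge 2$ admits no injection into $\Z$. But $K_0$ then acts trivially on $S$, so $K_0$ is contained in every edge stabilizer of $S$ --- contradicting that edge stabilizers of a free splitting are trivial. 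This contradiction eliminates the case $\Fix(g) = \emptyset$, so $g = \Id_S$, hence $h = h'$, proving the lemma.
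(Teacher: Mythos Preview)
Your proof is correct and takes a genuinely different route from the paper's. The paper argues directly that any $\Phi$-twisted equivariant simplicial isomorphism $h$ is determined on the nose: by Lemma~\ref{LemmaActionTwisted}, $h$ must send the axis $A^S_\gamma$ to $A^T_{\Phi(\gamma)}$ for each loxodromic $\gamma$, and since every vertex of valence $\ge 3$ can be written as a triple intersection $A^S_\beta \cap A^S_\gamma \cap A^S_\delta$, the restriction of $h$ to such vertices---and hence $h$ itself---is forced. Your approach instead composes two candidate isomorphisms to reduce to showing that an untwisted $F_n$-equivariant simplicial automorphism $g$ of $S$ is the identity, and then runs a clean minimality/fixed-set argument: $\Fix(g)$ is a nonempty invariant subtree (forcing $g=\Id$) or else $g$ is hyperbolic with invariant axis equal to all of $S$, which you rule out for $n \ge 2$ via the edge-stabilizer condition.

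The paper's argument is more constructive---it tells you explicitly where $h$ sends each branch vertex---and it plugs directly into the surrounding narrative about axes. Your argument is more self-contained (it does not invoke Lemma~\ref{LemmaActionTwisted}) and isolates the essential content as the statement that a minimal $F_n$-tree with trivial edge stabilizers admits no nontrivial equivariant automorphism. Both approaches implicitly need $n \ge 2$: yours to rule out $S$ being a line, and the paper's to guarantee the existence of valence-$\ge 3$ vertices.
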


\begin{proof} Suppose that a $\Phi$-twisted equivariant simplicial isomorphism $h \from S \mapsto T$ exists. For each $\gamma \in F_n$, it follows by $\Phi$-twisted equivariance that $\gamma$ acts loxodromically on~$S$ with axis $A^S_\gamma$ if and only if $\Phi(\gamma)$ acts loxodromically on $T$ with axis $A^T_\gamma = f(A^S\gamma)$ (by Lemma~\ref{LemmaActionTwisted}). The map that $h$ induces on the set of axes of loxodromic elements is therefore uniquely determined. It follows that the restriction of $h$ to the set of vertices $v \in T$ of valence~$\ge 3$ is uniquely determined by~$\Phi$, because $v$ may be expressed in the form $\{v\} = A^S_\beta \intersect A^S_\gamma \intersect A^S_\delta$ for a certain choice of $\beta,\gamma,\delta \in F_n$, and hence 
\begin{align*}
\{h(v)\} &= h(A^S_\beta) \intersect h(A^S_\gamma) \intersect h(A^S_\delta) \\
&= A^T_{\Phi(\beta)} \intersect A^T_{\Phi(\gamma)} \intersect A^T_{\Phi(\delta)}
\end{align*}
Since $h$ is uniquely determined by its restriction to the vertices of valence~$\ge 3$, it follows that $h$ is uniquely determined amongst simplicial isomorphisms.
\end{proof}

\begin{proof}[Proof of Lemma~\ref{LemmaTwistedUniqueness}] \quad The uniqueness clause~\pref{ItemLTUUnique} is an immediate consequence of Lemma~\ref{LemmaTwistedUniqueGeneral}. Item~\pref{ItemLTUInner} follows from the uniqueness clause~\pref{ItemLTUUnique}, because for each $\gamma \in F_n$ the map $h = \alpha(\gamma)$ clearly satisfies $i_\gamma$ twisted equivariant: $\alpha(\gamma) \composed \alpha(\delta) = \alpha(i_\gamma(\delta)) \composed \alpha(\gamma)$ for all $\delta \in F_n$.
\end{proof}

%
%

Consider now the function $\ti\alpha \from \wtStab[T] \mapsto \Isom[T]$ defined by $\ti\alpha(\Phi) = h_\Phi$ as given by Lemma~\ref{LemmaTwistedUniqueness}. This defines an action $\ti\alpha \from \wtStab[T] \act T$, because for each $\Phi,\Psi \in \wtStab[T]$ both sides of the action equation $h_\Phi \circ h_\Psi = h_{\Phi\Psi}$
clearly satisfy $\Phi\Psi$-twisted equivariance, and hence the equation holds by application of the uniqueness clause \pref{ItemLTUUnique} of Lemma~\ref{LemmaTwistedUniqueness}. 

The following lemma summarizes this discussion together with the evident generalization to subgroups of $\wtStab[T]$, and rewrites the twisted equivariance property using action notation instead of functional notation.


\begin{lemma}
\label{LemmaAutTreeAction}
Associated to each free splitting $F_k \act T$ there is a unique isometric action $\wtStab[T] \act T$ which assigns to each $\Phi \in \wtStab[T]$ the unique simplicial isomorphism $T \mapsto T$ as stated in Lemma~\ref{LemmaTwistedUniqueness}, denoted in action notation as $x \mapsto \Phi\cdot x$, satisfying the following:
\begin{description}
\item[Twisted equivariance (action notation):] For all $\Phi \in \Aut(F_n)$, $\gamma \in F_n$, $x \in T$,

\smallskip
\centerline{$\Phi \cdot (\gamma \cdot x) = \Phi(\gamma) \cdot (\Phi \cdot x)$}
\end{description}
More generally, the restriction to any subgroup $\cK \subgroup \wtStab[T]$ of the action $\wtStab[T] \act T$ is the unique isometric action $\cK \act T$ such that satisfies twisted equivariance.\qed
\end{lemma}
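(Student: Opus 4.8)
The plan is to assemble the action directly out of the maps $h_\Phi$ already constructed in Lemma~\ref{LemmaTwistedUniqueness}, so that essentially nothing new has to be proved beyond citing the two preceding lemmas. First I would recall that the function $\ti\alpha \from \wtStab[T] \to \Isom(T)$ given by $\ti\alpha(\Phi) = h_\Phi$ is well-defined: existence of a $\Phi$-twisted equivariant simplicial isomorphism $T \to T$ is clause~\pref{ItemLTUForall} of Lemma~\ref{LemmaTwistedUniqueness} (using that $\Phi \in \wtStab[T]$ represents an element of $\Stab[T]$), and its uniqueness is clause~\pref{ItemLTUUnique}; moreover such a map is an isometry of the barycentric metric, since by the standing convention recalled earlier in this section $\Isom(T)$ is exactly the group of simplicial self-isomorphisms of $T$. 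Then I would verify, as in the discussion immediately preceding the lemma, that $\ti\alpha$ is a homomorphism, hence an isometric action: for $\Phi,\Psi \in \wtStab[T]$ the composite $h_\Phi \composed h_\Psi$ is a simplicial self-isomorphism of $T$ that is $(\Phi\Psi)$-twisted equivariant, and so equals $h_{\Phi\Psi}$ by the uniqueness clause~\pref{ItemLTUUnique}. Finally the twisted-equivariance identity asserted in the lemma is just the defining property of $h_\Phi = \ti\alpha(\Phi)$ rewritten in action notation.

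Next I would establish the uniqueness assertion for the action $\wtStab[T] \act T$. Suppose given any isometric action of $\wtStab[T]$ on $T$ satisfying twisted equivariance, written $x \mapsto \Phi \cdot x$. Then for each $\Phi \in \wtStab[T]$ the map $x \mapsto \Phi \cdot x$ is a simplicial self-isomorphism of $T$ (because the action is isometric) and is $\Phi$-twisted equivariant (by hypothesis); Lemma~\ref{LemmaTwistedUniqueGeneral}, applied with $S = T$, says there is at most one such map, so it must be $h_\Phi = \ti\alpha(\Phi)$. Hence the action equals $\ti\alpha$.

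For the ``more generally'' clause I would run exactly the same argument over a subgroup $\cK \subgroup \wtStab[T]$: the restriction of $\ti\alpha$ to $\cK$ is an isometric action on $T$ satisfying twisted equivariance, and conversely any isometric action $\cK \act T$ satisfying twisted equivariance assigns to each $\Phi \in \cK$ a $\Phi$-twisted equivariant simplicial self-isomorphism of $T$, which by Lemma~\ref{LemmaTwistedUniqueGeneral} is forced to be $h_\Phi$; so that action coincides with the restriction of $\ti\alpha$ to $\cK$.

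I do not expect any genuine obstacle here, since all the real work is already carried out in Lemmas~\ref{LemmaTwistedUniqueness} and~\ref{LemmaTwistedUniqueGeneral} and in the discussion before the statement. The only point deserving a word of care is the bookkeeping that lets Lemma~\ref{LemmaTwistedUniqueGeneral} --- phrased for simplicial isomorphisms --- apply to the maps $x \mapsto \Phi \cdot x$ coming from an arbitrary isometric action: this is exactly the identification of ``isometric action on $T$'' with ``simplicial action on $T$'' built into the definition of $\Isom(T)$, together with the observation that the twisted-equivariance hypothesis on an action is literally the statement that each $x \mapsto \Phi \cdot x$ is $\Phi$-twisted equivariant.
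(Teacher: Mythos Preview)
Your proposal is correct and follows essentially the same approach as the paper: the paper presents the lemma as a summary (marked with \qed) of the discussion immediately preceding it, which constructs $\ti\alpha(\Phi)=h_\Phi$ via Lemma~\ref{LemmaTwistedUniqueness}, verifies the homomorphism property using the uniqueness clause~\pref{ItemLTUUnique}, and notes that the subgroup case is the evident generalization. Your write-up is slightly more explicit about the uniqueness of the action (invoking Lemma~\ref{LemmaTwistedUniqueGeneral} directly), but this is exactly the content the paper intends.
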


\subparagraph{Remark:} In the twisted equivariance equation $\Phi \cdot (\gamma \cdot x) = \Phi(\gamma) \cdot (\Phi \cdot x)$, the action dot ``$\cdot$'' is used ambiguously for both the action of $F_n$ on $T$ and the action of $\wtStab[T]$ on $T$. The meaning of any particular action dot should be clear by context. Furthermore, in contexts where the two meanings overlap they will always agree. For example, Lemma~\ref{LemmaTwistedUniqueness}~\pref{ItemLTUInner} says that the action dots always respect the standard isomorphism $F_n \approx \Inn(F_n)$ given by $\delta \approx i_\delta$.

\bigskip

The next lemma will be a key step of the proof of the loxodromic and WWPD portions of the Hyperbolic Action Theorem (see remarks after the statement).  In brief, given a free splitting $F_n \act T$ and a certain subgroup $J \subset F_n$, the lemma gives a criterion for verifying that the restriction of the free splitting action $F_n \act T$ to a certain group $J \subset F_n$ is nonelementary. To understand the statement of the lemma, the reader may note that by applying Lemma~\ref{LemmaAutTreeAction}, the entire setup of the lemma --- including hypotheses~\pref{ItemJActionsAgree} and~\pref{ItemHatHTwistedEquiv} --- is satisfied for any free splitting $F_n \act T$ and any subgroup $\wh\cH \subgroup \wtStab[T]$.

\newcommand\nt{{\text{nt}}}


\begin{lemma}
\label{LemmaJNonelementary}
Let $F_n \act T$ be a free splitting with vertex set $V$, and let $V^\nt$ be the subset of all $v \in V$ such that $\Stab(v)$ is nontrivial. Let $\wh\cH \subgroup \Aut(F_n)$ be a subgroup with normal subgroup $J = \wh\cH \intersect \Inn(F_n)$. Let $J \act T$ denote the restriction to $J$ of the given free splitting action $\Inn(F_n) \approx F_n \act T$. Let $\wh\cH \act V^\nt$ be an action that satisfies the following:
\begin{enumerate}
\item\label{ItemJActionsAgree}
The two actions $J \act V^\nt$, one obtained by restricting to $J$ the given action \hbox{$\wh\cH \act V^\nt$,} the other by restricting the action $J \act T$ to the subset $V^\nt$, \break are~identical. 
\item\label{ItemHatHTwistedEquiv}
The following twisted equivariance condition holds:
$$\Phi \cdot (\gamma \cdot x) = \Phi(\gamma) \cdot (\Phi \cdot x), \quad\text{for all $\Phi \in \wh\cH$, $\gamma \in F_n$, $x \in V^\nt$}
$$
\end{enumerate}
%
If no subgroup $\Stab_{F_n}(v)$ $(v \in V^\nt(T))$ is fixed by the whole group $\wh\cH$, and if the free group $J$ has rank~$\ge 2$, then the action $J \act T$ is nonelementary.
\end{lemma}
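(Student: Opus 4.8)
The plan is to argue by contradiction. Suppose the action $J \act T$ is \emph{not} nonelementary. I will first show that $J$ then fixes a vertex of $T$, and afterwards use the twisted equivariance hypothesis~\pref{ItemHatHTwistedEquiv}, together with the normality of $J$ in $\wh\cH$, to exhibit a vertex $v_0 \in V^\nt$ whose stabilizer $\Stab_{F_n}(v_0)$ is fixed by all of $\wh\cH$ --- contradicting the hypothesis that no such vertex stabilizer is $\wh\cH$-fixed.

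\emph{Step 1: the action $J \act T$ is elliptic.} Here I claim that, because $T$ is a simplicial tree with trivial edge stabilizers and $J$ is free of rank~$\ge 2$, any isometric action of $J$ on $T$ is either elliptic (has a global fixed vertex) or nonelementary. I would invoke the standard classification of isometric actions on trees: an action that is neither elliptic nor nonelementary either preserves a line $L \subset T$ or fixes a unique point $\xi$ of the Gromov boundary $\bdy T$. Neither is possible for $J$. If $J$ preserved a line $L$, then $J$ would be virtually infinite cyclic --- the index-$\le 2$ orientation-preserving subgroup $J^+ \subgroup J$ maps to the translation group $\cong \Z$ of $L$ with kernel fixing $L$, hence fixing an edge, hence trivial --- contradicting that $J$ is free of rank~$\ge 2$. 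If $J$ fixed a unique end $\xi$, let $b \from J \to \Z$ be the Busemann homomorphism associated to $\xi$; an element of $J$, which already fixes $\xi$, is loxodromic if and only if $\xi$ is one of its two boundary fixed points, equivalently if and only if $b$ is nonzero on it. Hence $\kernel(b)$ consists of elliptic elements all fixing $\xi$; but any nontrivial such element fixes a vertex (it is elliptic and $J$ is torsion-free), hence fixes the geodesic ray from that vertex out to $\xi$, hence fixes an edge, hence is trivial. So $\kernel(b) = 1$, $J$ embeds in $\Z$, and rank~$\ge 2$ is again contradicted. Therefore $J \act T$ is elliptic, and since $J$ is torsion-free with trivial edge stabilizers its fixed-point set contains an honest vertex $v_0$; thus $J \subgroup \Stab_{F_n}(v_0)$, and in particular $v_0 \in V^\nt$ because $J \ne 1$.

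\emph{Step 2: the contradiction.} By hypothesis~\pref{ItemJActionsAgree}, the given action $\wh\cH \act V^\nt$ restricts on $J$ to the action induced by $J \act T$, so $J$ fixes $v_0$ under $\wh\cH \act V^\nt$ as well. Fix $\Phi \in \wh\cH$. The self-bijection $x \mapsto \Phi\cdot x$ of $V^\nt$ satisfies $\Phi$-twisted equivariance by~\pref{ItemHatHTwistedEquiv}, so the purely formal argument proving Lemma~\ref{LemmaActionTwisted}\pref{ItemSublemmaPoints}--\pref{ItemSublemmaAxes} applies verbatim and gives $\Phi\bigl(\Stab_{F_n}(v)\bigr) = \Stab_{F_n}(\Phi\cdot v)$ for every $v \in V^\nt$. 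On the other hand $J = \wh\cH \intersect \Inn(F_n)$ is normal in $\wh\cH$, and under the identification $F_n \approx \Inn(F_n)$ conjugation by $\Phi$ is the automorphism $\Phi$ itself, so $\Phi(J) = J$. Hence $J = \Phi(J) \subgroup \Phi\bigl(\Stab_{F_n}(v_0)\bigr) = \Stab_{F_n}(\Phi\cdot v_0)$, so the nontrivial group $J$ fixes both $v_0$ and $\Phi\cdot v_0$ in $T$; if these were distinct vertices, $J$ would fix the geodesic segment between them, hence an edge, forcing $J = 1$, a contradiction. Therefore $\Phi\cdot v_0 = v_0$, and since $\Phi \in \wh\cH$ was arbitrary we conclude $\Phi\bigl(\Stab_{F_n}(v_0)\bigr) = \Stab_{F_n}(\Phi\cdot v_0) = \Stab_{F_n}(v_0)$ for all $\Phi \in \wh\cH$; that is, the stabilizer of the vertex $v_0 \in V^\nt$ is fixed by the whole group $\wh\cH$, contrary to hypothesis. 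This contradiction shows that $J \act T$ is nonelementary.

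I expect Step~1 to be the main obstacle: one must rule out every ``elementary but non-elliptic'' possibility for the rank-$\ge 2$ free group $J$, complicated by the fact that $J$ need not be finitely generated, so one cannot merely quote ``finitely generated and all elements elliptic implies a fixed point''. The key device, handling the parabolic and focal cases uniformly, is the elementary observation that triviality of edge stabilizers forces any \emph{nontrivial} element of $J$ which is elliptic and fixes a prescribed end of $T$ to fix an edge, which is impossible. Step~2, by contrast, is routine bookkeeping with twisted equivariance, essentially transporting the proof of Lemma~\ref{LemmaActionTwisted} from trees to the $F_n$-set $V^\nt$.
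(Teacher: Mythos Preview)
Your proof is correct. The core idea---using twisted equivariance together with normality of $J$ in $\wh\cH$ to rule out a global fixed vertex for $J$---is the same as the paper's, but the two arguments are organized oppositely. The paper first proves (via twisted equivariance) that $J$ cannot have a common fixed vertex, and then directly exhibits a loxodromic element: either some $\gamma \in J$ is already loxodromic, or all nontrivial elements are elliptic with differing fixed points, in which case a product $\alpha\beta$ of two such elements is loxodromic (citing \cite[Proposition 1.5]{CullerMorgan:Rtrees}); finally, $\rank(J)\ge 2$ supplies a conjugator $\delta$ giving an independent loxodromic $\delta\gamma\delta^{-1}$. You instead argue by contradiction through the classification of isometric tree actions: assuming the action is not nonelementary, you rule out the lineal and parabolic/focal cases using $\rank(J)\ge 2$ and trivial edge stabilizers (the Busemann kernel argument is a nice uniform treatment), leaving only the elliptic case, and then apply the twisted-equivariance step. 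The paper's route is slightly more elementary and constructive---it avoids invoking the full classification and produces the loxodromic pair explicitly---while yours is more systematic and makes the role of $\rank(J)\ge 2$ in excluding the lineal and horocyclic possibilities quite transparent. Your Step~2 differs cosmetically from the paper's: where the paper argues uniqueness of the fixed vertex via the $J$-equivariant bijection $v\leftrightarrow \Stab_{F_n}(v)$, you instead observe that $J$ fixing two distinct vertices would force it to fix an edge; both are fine.
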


\textbf{Remarks.} Lemma~\ref{LemmaJNonelementary} is applied in Section~\ref{SectionOneEdgeExtension} when proving the one-edge case of the Hyperbolic Action Theorem, in which case we have a group $\wh\cH \subgroup \wtStab[T]$ for which the hypotheses \pref{ItemJActionsAgree} and \pref{ItemHatHTwistedEquiv} hold automatically (by Lemma~\ref{LemmaAutTreeAction}).

Lemma~\ref{LemmaJNonelementary} is also applied in Section~\ref{SectionSuspension} when proving the multi-edge case of the Hyperbolic Action Theorem. When Lemma~\ref{LemmaJNonelementary} is applied in that case, the group $\wh\cH \subgroup \Aut(F_n)$ is not contained in $\wtStab[T]$, and the given action $\wh\cH \act V^\nt$ does not extend to an action of $\wh\cH$ on~$T$ itself. Nonetheless $\wh\cH$ will have a kind of ``semi-action'' on $T$ which extends the given free splitting action $J \act T$, and this will be enough to give an action $\wh\cH \act V^\nt(T)$ that also satisfies~\pref{ItemJActionsAgree} and~\pref{ItemHatHTwistedEquiv}.

\begin{proof} By hypothesis~\pref{ItemJActionsAgree} the action $J \act T$ has trivial edge stabilizers and each point of $V-V^\nt$ has trivial stabilizer. It follows that for each nontrivial $\alpha \in J$, either $\alpha$ is elliptic and its fixed point set $\Fix(\alpha) \subset T$ is a single point of $V^\nt$, or $\alpha$ is loxodromic with repeller--attractor pair $(\bdy_-\alpha,\bdy_+\alpha) \subset \bdy T \times \bdy T$ and $\Stab_{J}(\bdy_-\alpha) = \Stab_{J}(\bdy_+\alpha) = \Stab_{J}\{\bdy_-\alpha,\bdy_+\alpha\}$ is an infinite cyclic group. 

We claim there is no point $v \in V^\nt(T)$ such that $\Fix(\alpha)=\{v\}$ for each nontrivial $\alpha \in J$. Otherwise that point $v$ is unique, and its stabilizer $B_v = \Stab_{F_n}(v)$ is the unique nontrivial vertex stabilizer fixed by the action of $J$ on subgroups of $F_n$, because the bijection $v \leftrightarrow B_v$ is $J$-equivariant (by Lemma~\ref{LemmaActionTwisted}). It follows that each $\Phi \in \wh\cH$ also fixes~$B_v$, because $J = \Phi J \Phi^\inv$ fixes the subgroup $\Phi(B_v) \subgroup F_n$ which is also a nontrivial vertex stabilizer, namely $\Phi(B_v)$ is equal to the stabilizer of $\Phi \cdot v$ by hypothesis~\pref{ItemHatHTwistedEquiv}; by~uniqueness of $B_v$ we therefore have $\Phi(B_v)=B_v$. Since this holds for all $\Phi \in \wh\cH$, we have contradicted the hypothesis of the lemma, thus proving the claim.

The proof that the action $J \act T$ is nonelementary now follows a standard argument. Some $\gamma \in J$ is loxodromic, for otherwise by applying the claim it follows that $J$ has nontrivial elliptic elements $\alpha,\beta$ with $\Fix(\alpha) \ne \Fix(\beta) \in T$ but in that case $\gamma=\alpha\beta$ is loxodromic (see for example \cite[Proposition 1.5]{CullerMorgan:Rtrees}). Since $\rank(J) \ge 2$, there exists $\delta \in J - \Stab_J\{\bdy_-\gamma,\bdy_+\gamma\}$. It follows that $\gamma$ and $\delta\gamma\delta^\inv$ are independent loxodromic elements of~$J$.
\end{proof}

\subsection{Hyperbolic Action Theorem, One-edge case: Proof.}
\label{SectionOneEdgeExtension}

Adopting the notation of the Hyperbolic Action Theorem, we may assume that $\cH \subgroup \IAThree$. Choose $\B$ to be a maximal, proper, $\cH$-invariant free factor system of $F_n$, and so $\cH$ is fully irreducible relative to the extension $\B \sqsubset \{[F_n]\}$, meaning that there is no free factor system $\A$ with strict nesting $\B \sqsubset \A \sqsubset \{[F_n\}$ such that $\A$ is invariant under any finite index subgroup of $\cH$, equivalently (since $\cH \subgroup \IAThree$) such that $\A$ is invariant under $\cH$. 

The proof of the Hyperbolic Action Theorem breaks into two cases: 

\textbf{The one-edge case:} The co-edge number of $\B$ equals~$1$.

\textbf{The multi-edge case:} The co-edge number of $\B$ is $\ge 2$.

\medskip\noindent\textbf{Proof of the Hyperbolic Action Theorem in the one-edge case.} Assuming that $\B \sqsubset \{[F_n]\}$ is a one-edge extension, using that $\cH \subgroup \Stab(\B)$, and applying Fact~\ref{FactOneEdge}, there exists a free splitting $F_n \act T$ with one edge orbit whose vertex stabilizer system forms the free factor system~$\B$, and we have equality of stabilizer subgroups $\Stab(\B) = \Stab[T]$. It follows that $\cH \subgroup \Stab[T]$. Applying Lemma~\ref{LemmaAutTreeAction}, consider the resulting action $\wtStab[T] \act T$. We show that the restricted action $\wh\cH \act T$ satisfies the conclusions of the Hyperbolic Action Theorem (using~$\S=T$ and~$\cN = \wh\cH$). Since every isometry of a tree is either elliptic or loxodromic, Conclusion~\pref{ItemThmF_EllOrLox} of the Hyperbolic Action Theorem holds. 

For proving Conclusion~\pref{ItemThmF_Nonelem} of the Hyperbolic Action Theorem we shall apply Lemma~\ref{LemmaJNonelementary} to the action $\wh\cH \act T$, so we must check its hypotheses. Hypotheses~\pref{ItemJActionsAgree} and~\pref{ItemHatHTwistedEquiv} of Lemma~\ref{LemmaJNonelementary} hold by applying Lemma~\ref{LemmaAutTreeAction} to the subgroup $\wh\cH \subgroup \wtStab[T]$. Also, by the hypothesis of the Hyperbolic Action Theorem, the subgroup $\wh\cH \subgroup \Aut(F_n)$ does not preserve any proper, nontrivial free factor of $F_n$, in particular it does not preserve any nontrivial vertex stabilizer of the free splitting $F_n \act T$. Finally, by hypothesis of the Hyperbolic Action Theorem, $\cH$ is abelian and $\wh\cH$ is not virtually abelian, and so it follows that $J$ has rank~$\ge 2$, for otherwise the group $\wh\cH \subgroup \Aut(F_n) \subgroup \Out(F_{k+1})$ would be solvable and hence virtually abelian (by \SubgroupsThree), a contradiction. The conclusion of Lemma~\ref{LemmaJNonelementary} therefore holds, saying that the restricted action $J \act T$ is nonelementary. The action $\wh\cH \act T$ is therefore nonelementary, which verifies conclusion~\pref{ItemThmF_Nonelem} of the Hyperbolic Action Theorem.  

Conclusion~\pref{ItemThmF_WWPD} of the Hyperbolic Action Theorem says that each loxodromic element of $J$ is a strongly axial, WWPD element for the action $\wh\cH \act T$, and this is an immediate consequence of the next lemma (which will also be used in the multi-edge case of the Hyperbolic Action Theorem). 

In formulating this lemma, we were inspired by results of Minasyan and Osin \cite[Section 4]{MinasyanOsin:TreeAcylindrical} producing WPD elements of certain group actions on trees.

\begin{lemma} 
\label{LemmaTreeWWPD}
Let $G \act T$ be a group action on a simplicial tree equipped with the simplicial metric on $T$ that assigns length~$1$ to each edge. If $J \normal G$ is a normal subgroup such that the restricted action $J \act T$ has trivial edge stabilizers, 
then each loxodromic element of $J$ is a strongly axial, WWPD element of the action $G \act T$.
\end{lemma}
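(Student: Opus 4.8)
The statement claims that for a $G$-action on a simplicial tree $T$ with a normal subgroup $J \normal G$ acting with trivial edge stabilizers, every loxodromic $\gamma \in J$ is a strongly axial, WWPD element for $G \act T$. I would separate the two assertions. For the strongly axial part, I would take the axis $A_\gamma^T \subset T$ of the loxodromic element $\gamma$, which is a $G$-invariant notion only up to the action, and observe that $\Stab_G(\bdy_\pm\gamma)$ preserves $A_\gamma^T$ setwise and acts on it by isometries of $\reals$ (once we parametrize $A_\gamma^T$ as a copy of $\reals$). The translation-length map $\tau \from \Stab_G(\bdy_\pm\gamma) \to \reals$ recording the signed displacement along $A_\gamma^T$ is then a homomorphism, provided the action on $A_\gamma^T$ is by translations rather than allowing reflections; to rule out reflections one uses that $\gamma$ itself translates $A_\gamma^T$ in a fixed direction, that $\gamma \in J \normal G$, and that a reflection would conjugate $\gamma$ to an element translating in the opposite direction, which would then also lie in $J$ and share the same axis — forcing, via triviality of edge stabilizers for $J \act T$, a contradiction (a nontrivial element of $J$ fixing an edge of $A_\gamma^T$). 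This gives the quasi-isometric embedding $\ell \from \reals \to T$ (isometric, in fact) with $\psi(\ell(s)) = \ell(s + \tau(\psi))$ for all $\psi \in \Stab_G(\bdy_\pm\gamma)$, which is exactly strong axiality.

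For the WWPD part, I would use one of the equivalent formulations collected in \cite[Proposition 2.6]{HandelMosher:WWPD} — most conveniently WWPD~(2), which says the $G$-orbit of the ordered endpoint pair $(\bdy_-\gamma, \bdy_+\gamma)$ is a discrete subset of $\bdy T \times \bdy T - \Delta$. The plan is to show that the stabilizer $\Stab_G\{\bdy_-\gamma, \bdy_+\gamma\}$ has finite orbits on initial segments of $A_\gamma^T$ in a controlled way, or more directly to verify WWPD via the coarse-stabilizer criterion. Concretely, I would argue: if $g_k \in G$ is a sequence with $g_k \cdot (\bdy_-\gamma, \bdy_+\gamma) \to (\bdy_-\gamma, \bdy_+\gamma)$, then for large $k$ the translate $g_k(A_\gamma^T)$ fellow-travels $A_\gamma^T$ along a long segment; since both are genuine geodesics (bi-infinite lines) in a tree, fellow-traveling along a segment of length $> 1$ forces them to share that segment, hence $g_k(A_\gamma^T) = A_\gamma^T$ (a tree has a unique geodesic through any two of its points, and two lines agreeing on a subsegment of positive length coincide). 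Thus $g_k \in \Stab_G(A_\gamma^T)$, and then $g_k^\inv g_\ell$ fixes $A_\gamma^T$ pointwise-or-flips it; composing with the $\tau$-homomorphism and the direction argument above, one gets that $g_k^\inv g_\ell$ differs from a power of $\gamma$ by an element of $\kernel$, which — again by triviality of $J$-edge-stabilizers applied to the normal closure considerations — is trivial. Hence the orbit map $G/\Stab \to \bdy T \times \bdy T - \Delta$ is injective with discrete image, giving WWPD~(2).

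The main obstacle I anticipate is the bookkeeping around reflections and the precise invocation of the trivial-edge-stabilizer hypothesis: the hypothesis is only that $J \act T$ (not $G \act T$) has trivial edge stabilizers, so every step that wants to conclude "this tree element is trivial" must first place the element in $J$, which is where normality $J \normal G$ is used — typically by forming a commutator or a conjugate $g\gamma g^\inv \gamma^{-m}$ that lands in $J$ and fixes an edge. Keeping track of which group each element belongs to, and making sure the element one produces both lies in $J$ and fixes an edge of $T$, is the delicate part; everything else (the structure of isometries of trees, axes, translation lengths, fellow-traveling in trees) is standard. I would also need to confirm at the end that the $\reals \to T$ map I produce is a quasi-isometric embedding in the sense required — here it is literally an isometric embedding onto $A_\gamma^T$ with the simplicial metric, so that is immediate, and the homomorphism $\tau$ is exactly the restriction of the translation-length-with-sign function to $\Stab_G(\bdy_\pm\gamma)$.
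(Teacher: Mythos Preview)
Your strongly axial argument is overcomplicated but ultimately unnecessary: by definition $\Stab(\bdy_\pm\gamma)$ is the stabilizer of the \emph{ordered} pair $(\bdy_-\gamma,\bdy_+\gamma)$, so any element of it fixes both endpoints of the line $A_\gamma$ and therefore acts on $A_\gamma$ by an orientation-preserving isometry of $\reals$, i.e.\ a translation. Reflections never arise, and your argument to rule them out is in any case incomplete (it only shows $g\gamma g^{-1}=\gamma^{-1}$, which is what a reflection does, not a contradiction). The paper disposes of strong axiality in one sentence for exactly this reason.

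The WWPD argument has a genuine gap. The assertion ``two lines agreeing on a subsegment of positive length coincide'' is false in a tree: two distinct bi-infinite geodesics in a tree can share an arbitrarily long finite segment, or even a ray. So from $g_k(\bdy_\pm\gamma)\to(\bdy_\pm\gamma)$ you get that $g_k(A_\gamma)\cap A_\gamma$ contains longer and longer segments, but you cannot conclude $g_k(A_\gamma)=A_\gamma$. Your follow-up step is also broken: even if $g_k\in\Stab_G(A_\gamma)$, the element $g_k^{-1}g_\ell\gamma^{-m}$ lies in $G$, not in $J$, so the trivial-edge-stabilizer hypothesis does not apply to it.

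The missing idea, which you correctly identify as the ``delicate part'' but do not implement, is to conjugate $\gamma$ rather than compare the $g_k$ to each other. Given $g\in G$ with $g(\bdy_\pm\gamma)$ close enough to $(\bdy_\pm\gamma)$ that $g(A_\gamma)\cap A_\gamma$ contains a segment of length exceeding the translation length $\ell_\gamma$, set $\gamma'=g\gamma g^{-1}\in J$ (this is where normality is used). Both $\gamma$ and $\gamma'$ translate along the shared segment by $\ell_\gamma$ in the same direction (the direction is forced by the \emph{ordered}-pair convergence). Hence $\gamma^{-1}\gamma'\in J$ fixes an edge of that segment and is therefore trivial. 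So $\gamma'=\gamma$, meaning $g$ centralizes $\gamma$; in particular $g(A_\gamma)=A_\gamma$ with orientation preserved, so $g\cdot(\bdy_\pm\gamma)=(\bdy_\pm\gamma)$. This is exactly WWPD~(2), and it is the paper's argument.
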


\begin{proof} Given an oriented line $A \subset T$, let the stabilizers of $A$ under the actions of $G$ and of $J$ be denoted
\begin{align*}
\Stab_G(A) &= \{\gamma \in G \suchthat \gamma(A)=A\} \\
\Stab_J(A) &= J \intersect \Stab_G(A)
\end{align*}
Consider a loxodromic $\mu \in J$. Clearly the axis $A_\mu \subset T$ of $\mu$ is a strong axis of~$\mu$. Orient $A_\mu$ so that $\mu$ translates in the positive direction, with repelling/attracting endpoints $\bdy_- A_\mu, \bdy_+ A_\mu \in \bdy T$. Since $J \act T$ has trivial edge stabilizers, the group $\Stab_J(\bdy_- A_\mu) = \Stab_J(\bdy_+ A_\mu) = \Stab_J(\bdy_- A_\mu,\bdy_+A_\mu)$ is infinite cyclic.

By \cite[Section 2.2]{HandelMosher:WWPD}, for $\mu$ to satisfy WWPD with respect to the action $G \act T$ is equivalent to saying that the ordered pair $\bdy_\pm A_\mu = (\bdy_-A_\mu,\bdy_+A_\mu)$ is an isolated point in its orbit $G \cdot \bdy_\pm A_\mu \subset \bdy T \times \bdy T - \Delta$ (this is the property denoted ``WWPD~(2)'' in \cite[Section 2.2]{HandelMosher:WWPD}). We may assume that $\mu$ is a generator for the infinite cyclic group $\Stab_J(\bdy_\pm A_\mu)$; if this is not already true then, without affecting the WWPD property for $\mu$, we may replace $\mu$ with a generator. Letting $\ell_\mu>0$ denote the integer valued length of a fundamental domain for the action of $\mu$ on $A_\mu$, it follows that any edge path in $A_\mu$ of length $\ell_\mu$ is a fundamental domain for the $\Stab_J(\bdy_\pm A_\mu)$ on $A_\mu$. Choose a subsegment $\alpha \subset A_\mu$ of length $\ell_\mu+1$. There is a corresponding neighborhood $U_\alpha \subset \bdy T \times \bdy T - \Delta$ consisting of all endpoint pairs of oriented lines in $T$ containing $\alpha$ as an oriented subsegment. Consider $\gamma \in G$ such that $\gamma(\bdy_\pm A_\mu) \subset U_\alpha$. It follows that $\mu' = \gamma \mu \gamma^\inv \in J$ has axis $A_{\mu'} = \gamma(A_\mu)$ and that $A_\mu \intersect A_{\mu'}$ contains $\alpha$, and hence $A_\mu \intersect A_{\mu'}$ has length $\ge \ell_\mu+1$. Also, the map $\gamma \from A_{\mu} \to A_{\mu'}$ takes the restricted orientation of the subsegment $A_\mu \intersect A_{\mu'} \subset A_{\mu}$ to the restricted orientation on the subsegment $\gamma(A_\mu \intersect A_{\mu'}) \subset A_{\mu'}$. Let the edges of the oriented segment $A_\mu \intersect A_{\mu'}$ be parameterized as $E_0 E_1 E_2 \ldots E_J$, $J \ge \ell_\mu$. Since $\mu$ and $\mu'$ both have translation number $\ell_\mu$ it follows that $\mu(E_0)=\mu'(E_0)=E_{\ell_\mu}$, and so $\mu^\inv \mu' \in \Stab_J(E_0)$. Since $J$ has trivial edge stabilizers it follows that $\mu=\mu'$ and so $\gamma \in \Stab_J\{\bdy_- A_\mu,\bdy_+ A_\mu\} = \<\mu\>$. And having shown that $\gamma$ preserves orientation, it follows that $\gamma(\bdy_\pm A_\mu) = \bdy_\pm A_\mu$. This shows that $\bdy_\pm A_\mu$ is isolated in its orbit $G \cdot \bdy_\pm A_\mu$, being the unique element in the intersection $U_\alpha \intersect (G \cdot \bdy_\pm A_\mu)$.
\end{proof}

This completes the proof of the Hyperbolic Action Theorem in the one-edge case.

\section{Hyperbolic Action Theorem, Multi-edge case: Introduction.} 
\label{SectionMultiEdgeIntro}

The proof of the Hyperbolic Action Theorem in the multi-edge case will take up the rest of the paper. In this section, we give a broad outline of the methods of proof, followed by some motivation coming from well-known constructions in geometric group theory. 

\subsection{Outline of the multi-edge case.}
\label{SectionMultiEdgeOutline}

We will make heavy use of the theory of abelian subgroups of $\Out(F_n)$ developed by Feighn and Handel \abelian. In very brief outline here are the main features of that theory we will need. 

\smallskip

\textbf{Disintegration groups.} (\abelian, and see Section \ref{SectionDisintegration}) Any element of $\Out(F_n)$ has a uniformly bounded power which is \emph{rotationless}, meaning roughly that each of various natural finite permutations induced by that element are trivial. Any rotationless $\phi \in \Out(F_n)$ has a particularly nice relative train track representative called a~\ct. For any \ct\ $f \from G \to G$ there is an associated abelian subgroup $\D(f) \subgroup \Out(F_n)$ that contains $\phi$ and is called the ``disintegration subgroup'' of~$f$. The idea of the disintegration group is to first disintegrate or decompose $f$ into pieces, one piece for each non-fixed stratum $H_r$, equal to $f$ on $H_r$ and to the identity elsewhere. Then one re-integrates those pieces to form generators of the group $\D(f)$, by choosing a list of non-negative exponents, one per non-fixed stratum, and composing the associated powers of the pieces of $f$. However, in order for this composition to be continuous and a homotopy equivalence, and for $\D(f)$ to be abelian, the exponents in that list cannot be chosen independently. Instead two constraints are imposed: the non-fixed strata are partitioned into a collection of ``almost invariant subgraphs'' on each of which the exponent must be constant; and certain linear relations are required amongst strata that wrap around a common twist path of~$f$.

The following key theorem about disintegration groups lets us study an abelian subgroup of $\Out(F_n)$ (up to finite index) by working entirely in an appropriate disintegration group:

\begin{disintegrationtheorem}[\protect{\cite[Theorem 7.2]{\abelianTag}}]
For every rotationless abelian subgroup $\cH \subgroup \Out(F_n)$ there exists $\phi \in \cH$ such that for every \ct\ $f \from G \to G$ representing $\phi$, the intersection $\cH \intersect \D(f)$ has finite index in $\cH$.
\end{disintegrationtheorem}

\textbf{The proof of the Hyperbolic Action Theorem in the multi-edge case.} The detailed proof is carried out in Section~\ref{SectionMultiEdgeExtension}, based on material whose development we soon commence. Here is a sketch.

By a finite index argument we may assume that every element of the abelian group $\cH$ is rotationless. Let $\B$ be a maximal, proper, $\cH$-invariant free factor system of $F_n$. Being in the multi-edge case means that $\B$ has co-edge number~$\ge 2$. From the Disintegration Theorem we obtain $\phi \in \cH$, and we apply the conclusions of that theorem to a \ct\ representative $f \from G \to G$ of $\phi$ having a proper core filtration element $G_t$ representing $\B$. We may assume that $\cH \subgroup \D(f)$, by replacing $\cH$ with its finite index subgroup $\cH \intersect \D(f)$. From the construction of the disintegration group $\D(f)$, any core filtration element properly contained between $G_t$ and $G$ would represent a free factor system which is $\D(f)$-invariant, hence $\cH$-invariant, contradicting maximality of~$\B$. Thus $G_t$ is the \emph{maximal} proper core filtration element, and $\phi$ is fully irreducible relative to~$\B$. Since $\B$ has co-edge number~$\ge 2$, the top stratum $H_u$ is an \eg\ stratum. By maximality of $G_t$, every stratum strictly between $G_t$ and $G$ is either an \neg-linear edge with terminal endpoint attached to $G_t$ or a zero stratum enveloped by~$H_u$.

The hard work of the proof breaks into two major phases, the first of which is:

\begin{description}
\item[Sections~\ref{SectionFlaringOfLu}, 
\ref{SectionPathFunctionsReT}:] Construction and hyperbolicity of $\S$. 
\end{description}

The hyperbolic metric space $\S$ needed for verifying the conclusions of the Hyperbolic Action Theorem is constructed in terms of the \ct\ $f \from G \to G$ (see Section~\ref{SectionMultiEdgeMotivate} for further motivation of the construction). First we describe $\S$ in the simpler case that $f$ has no height~$u$ indivisible Nielsen path. Starting with $f \from G \to G$, lift to the universal cover to obtain $\ti f \from \wt G \to \wt G$. Let $\wt G_{u-1} \subset \wt G$ be the total lift of $G_{u-1}$. Collapse to a point each component of $\wt G_{u-1}$, obtaining a simplicial tree $T$. The map $\ti f \from \wt G \to \wt G$ induces a map $f_T \from T \to T$. Let $\S$ be the bi-infinite mapping cylinder of $f_T$, obtained from $T \cross \Z \cross [0,1]$ by identifying $(x,n,1) \sim (f_T(x),n+1,0)$ for each $x \in T$ and $n \in \Z$. 

The construction of $\S$ is more complex when $H_u$ is geometric, meaning that $f$ has a unique (up to inversion) height~$u$ indivisible Nielsen path $\rho$, and $\rho$ is closed, forming a circuit $c$ in~$G$ (\BookOne, and see \SubgroupsOne). Each line $\ti c \subset \wt G$ obtained by lifting $c$ projects to a line in $T$ called a \emph{geometric axis}, and the projections to $T$ of the lifts of $\rho$ in $\ti c$ are fundamental domains for that axis. In the bi-infinite mapping cylinder as defined above, the portion of the mapping cylinder corresponding to each geometric axis is a quasiflat, contradicting hyperbolicity. Thus we do not take $\S$ to be the bi-infinite mapping cylinder itself, instead $\S$ is constructed from the bi-infinite mapping cylinder by coning off each geometric axis in $T \cross n$ for each $n \in \Z$, using one cone point per geometric axis, attaching arcs that connect the cone point to the endpoints of the fundamental domains of that axis.

In Section \ref{SectionHypOfS}, we use the Mj-Sardar combination theorem \cite{MjSardar:combination} to prove hyperbolicity of $\S$. 
The Mj-Sardar theorem, a generalization of the Bestvina-Feighn Combination Theorem \cite{BestvinaFeighn:combination}, requires us to verify a flaring hypothesis. To do this, in Section~\ref{SectionFlaringOfLu} we study relative flaring properties of \ct\ $f \from G \to G$, specifically: how $f$ flares relative to the lower filtration element $G_{u-1}$; and in the geometric case, how $f$ flares relative to the Nielsen path~$\rho$. Then in Section~\ref{SectionPathFunctionsReT}, we study flaring properties of the induced map $f_T \from T \to T$ (and, in the geometric case, flaring properties of the induced map obtained by coning off each geometric axis of $T$). The required flaring hypothesis on $\S$ itself can then be verified in Section~\ref{SectionHypOfS}, allowing application of the Mj-Sardar theorem to deduce hyperbolicity of $\S$.

\medskip

The other major phase of the proof is:
\begin{description}
\item[Sections~\ref{SectionTTSemiAction}, \ref{SectionSuspConstr}:] Use the theory of disintegration groups to obtain an isometric action $\wh\cH \act \S$ with appropriate WWPD elements.
\end{description}
To do this, one uses that there is a number $\lambda > 0$ and a homomorphism 
$$PF_\Lambda \from \D(f) \to \Z
$$
such that for each $\psi \in \D(f)$, the lamination $\Lambda$ is an attracting lamination for $\psi$ if and only if $PF(\psi)>0$, in which case the stretch factor is $\lambda^{\PF(\psi)}$. One would like to think of $H_u$ as an \eg\ stratum for $\psi$ having Perron-Frobenius eigenvalue~$\lambda^{\PF(\psi)}$, but this does not yet make sense because we do not yet have an appropriate topological representative of $\psi$ on $G$. We define a subgroup and sub-semigroup 
\begin{align*}
\D_0(f) &= \kernel(PF_\Lambda) \\
\D_+(f) &= PF_\Lambda^\inv[0,\infty)
\end{align*}
and we then lift the sequence of inclusions $\D_0(f) \subset \D_+(f) \subset \D(f) \subgroup \Out(F_n)$ to a sequence of inclusions
to 
$$\wh D_0(f) \subset \wh D_+(f) \subset \wh D(f) \subgroup \Aut(F_n)
$$
The hard work in Section~\ref{SectionTTSemiAction} is to use the theory of disintegration groups to construct a natural action of the semigroup $\wh\D_+(f)$ on $T$, in which each element $\Psi \in \wh\D_+(f)$ acts on $T$ by stretching each edge by a uniform factor equal to $\lambda^{\PF(\psi)}$, and such that for each $\Psi \in \wh\D_+(f)$ the resulting map $x \mapsto \Psi \cdot x$ of $T$ is $\Psi$-twisted equivariant. When restricted to the subgroup $\wh\D_0$ we obtain an action on $T$ by twisted equivariant isometries, which allows us to identify the action $\wh\D_0 \act T$ with the one described in Lemma~\ref{LemmaAutTreeAction}.

Then what we do in Section~\ref{SectionSuspConstr} is to suspend the semigroup action of $\wh\D_+(f)$ on~$T$ (and, in the geometric case, on the graph obtained by coning off geometric axes), obtaining an isometric action $\wh\D(f) \act \S$. By restriction we obtain the required action $\wh\cH \act \S$.

Finally,
\begin{description}
\item[Section~\ref{SectionMultiEdgeExtension}:] Put the pieces together and verify the conclusions of the Hyperbolic Action Theorem.
\end{description}
The basis of the WWPD conclusions in the multi-edge case of the Hyperbolic Action Theorem is Lemma~\ref{LemmaTreeWWPD}, which has already played the same role for the one-edge case.

\subsection{Motivation: Suspension actions and combination theorems.} 
\label{SectionMultiEdgeMotivate}
The construction of the hyperbolic suspension action $\wh\D(f) \act \S$ may be motivated by looking at some familiar examples in a somewhat unfamiliar way. 

\medskip\textbf{Example: Mapping torus hyperbolization after Thurston.}
Consider a pseudo-Anosov homeomorphism $f \from S \to S$ of a closed, oriented hyperbolic surface $S$ of genus $\ge 2$ with associated deck action $\pi_1 S \act \wt S = \hyp^2$. The map $f$ uniquely determines an outer automorphism $\phi \in \Out(\pi_1 S)$. The associated extension group $\Gamma \subgroup \Aut(\pi_1 S)$ is the inverse image of the infinite cyclic group $\<\phi\> \subgroup \Out(\pi_1 S)$ under the natural homomorphism $\Aut(\pi_1 S) \mapsto \Out(\pi_1 S)$. A choice of $\Phi \in \Aut(\pi_1 S)$ representing $\phi$ naturally determines a semidirect product structure $\Gamma \approx \pi_1(S) \semidirect_\Phi \Z$. 

The deck transformation action $\pi_1 S \act \hyp^2$ extends naturally to an action $\Gamma \act \hyp^2$, whereby if $\Psi \in \Gamma$ projects to $\psi = \phi^k$ then the action of $\Psi$ on $\hyp^2$, denoted $F_\Psi \from \hyp^2 \to \hyp^2$, is the lift of $f^k$ whose induced action on the circle at infinity $\bdy\hyp^2$ agrees with the induced action of $\gamma$. Equivalently, $F_\Psi$ is the unique $\Psi$-twisted equivariant lift of $f^k$. Although the deck action $\pi_1 S \act \hyp^2$ is by isometries, the extended action $\Gamma \act \hyp^2$ is \emph{not} by isometries. 

However, there is a way to suspend the action $\Gamma \act \hyp^2$ obtaining an isometric action $\Gamma \act \S$, as follows. The suspension space $\S$ is the bi-infinite mapping cylinder obtained as the quotient of $\hyp^2 \times \Z \times [0,1]$ under the identifications $(x,n,1) \sim (F_\Phi(x),n+1,0)$. One may check (and we shall do in Section~\ref{SectionSuspConstr} in a different context) that there is an action $\Gamma \act \S$ which is generated by letting $\pi_1 S$ act as the deck group on $\hyp^2 \approx \hyp^2 \cross \{0\} \subset \S$ and extending naturally over the rest of $\S$, and by letting $\Phi$ according to the formula $\Phi \cdot (x,n,t)=(x,n-1,t)$. One may also check that the hyperbolic metrics on the slices $\hyp^2 \times n \times 0$ extend to a path metric on $\S$, uniquely up to quasi-isometry, such that the action $\Gamma \act \S$ is by isometries.

Returning to the land of the familiar, the group $\Gamma$ may be identified with the fundamental group of the 3-dimensional mapping torus $M_f$ of the surface homeomorphism $f \from S \to S$. By Thurston's hyperbolization theorem applied to $M_f$, the suspension space $\S$ may be identified up to $\Gamma$-equivariant quasi-isometry with the universal cover $\wt M_f \approx \hyp^3$ equipped with its deck transformation action by the group~$\Gamma \approx \pi_1 M_f$. 

\medskip\textbf{Example: Mapping torus hyperbolization after \BookZero.} Consider now an irreducible train track representative $f \from G \to G$ of a nongeometric, fully irreducible outer automorphism $\phi \in \Out(F_n)$. Again we have a natural extension group $\Gamma \subgroup \Aut(F_n)$ of $\<\phi\> \in \Out(F_n)$, with a semidirect product structure $\Gamma = F_n \semidirect_\Phi \Z$ determined by a choice of $\Phi \in \Aut(F_n)$ representing~$\phi$.

Unlike in the previous situation where we started with a surface homeomorphism, here we have started with a non-homeomorphic topological representative $f \from G \to G$ of $\phi$, and so the deck action $F_n \act \wt G$ does not extend to an action $\Gamma \act \wt G$. But it \emph{does} extend to an action of the semigroup $\Gamma_+$ which is the inverse image under $\Aut(F_n) \mapsto \Out(F_n)$ of the semigroup $\<\phi\>_+ = \<\phi^i \suchthat i \in \{0,1,2,\ldots\}\>$: for each $\Psi \in \Gamma_+$ mapping to $\phi^i$ with $i \ge 0$, the associated map $F_\Psi \from \wt G \to \wt G$ is the unique $\Psi$-twisted equivariant lift of $f^i$. Although the deck action $F_n \act \wt G$ is by isometries, the semigroup action $\Gamma_+ \act \wt G$ is \emph{not} by isometries.

But there is a way to suspend the semigroup action $\Gamma_+ \act \wt G$ to an isometric action $\Gamma \act \S$ where $\S$ is the bi-infinite mapping cyclinder of $F_\Phi \from \wt G \to \wt G$, defined exactly as above, namely the quotient space $\S$ of $\wt G \times \Z \times [0,1]$ where $(x,n,1) \sim (F_\Phi(x),n+1,0)$ with an appropriate metric. What is done in \cite[Theorem 5.1]{\BookZeroTag} is to apply properties of the train track map $f \from G \to G$ to prove a flaring hypothesis, allowing one to apply Bestvina--Feighn combination theorem \cite{BestvinaFeighn:combination} to conclude that $\S$ is Gromov hyperbolic, thus proving that $\Gamma$ is a hyperbolic group.

\medskip\textbf{Flaring methods.} Our proof of the multi-edge case will use the combination theorem of Mj and Sarder \cite{MjSardar:combination}, a generalization of the Bestvina and Feighn combination theorem \cite{BestvinaFeighn:combination}. A common feature of the above examples that is shared in the construction of this paper is that $\S$ is a ``metric bundle over $\reals$'': there is a Lipschitz projection map $\pi \from \S \mapsto \reals$ such that the minimum distance from $\pi^\inv(s)$ to $\pi^\inv(t)$ equals $\abs{s-t}$ for all $s,t \in \reals$, and each point $x \in \S$ is contained in the image of a geodesic section $\sigma \from \reals \to \S$ of the map $\pi$. In studying the large scale geometry of $\S$ it is important to study \emph{quasigeodesic} sections $\sigma \from \reals \to \S$ and their flaring properties. In our context it is convenient to translate these concepts into dynamical systems parlance: each such quasigeodesic section turns out to be a ``pseudo-orbit'' of a suspension semiflow on~$\S$ whose true orbits are the geodesic sections (see the closing paragraphs of Section~\ref{SectionMultiEdgeExtension}). The combination theorems of \cite{MjSardar:combination} and its predecessor \cite{BestvinaFeighn:combination} share key hypotheses regarding the asymptotic ``flaring'' behavior of such pseudo-orbits (see Definition~\ref{DefPathFlaring}). We remark, though, that those combination theorems hold in much more general settings, e.g.\ certain kinds of metric bundles over more general metric base spaces.

In Section~\ref{SectionFlaringOfLu} we study flaring of pseudo-orbits in the context of a relative train track map, which is then used in Section~\ref{SectionPathFunctionsReT} to extend to further contexts building up to the suspension space $\S$, its metric bundle $\S \mapsto \reals$, and its pseudo-orbits, allowing application of the Mj-Sardar combination theorem.

\section{Flaring in a top \eg\ stratum}
\label{SectionFlaringOfLu}

We assume the reader is familiar with the theory of \ct\ representatives of elements of $\Out(F_n)$, developed originally in \recognition. More narrowly, we shall focus on the terminology and notation of a \ct\ having a top \eg\ stratum, as laid out in Section~2.2 of Part I \PartOne, under the heading \textbf{\eg\ properties of \cts}. Here is a brief review, in order to fix notations for what follows in the rest of this paper; see \cite[Section 3.1]{\PartOneTag} for detailed citations drawn primarily from \recognition. At present we need no more about \cts\ than is described here; when needed in Section~\ref{SectionMoreCTs} we will give a more thorough review of~\cts.


\begin{notations}
\label{NotationsFlaring}
We fix $\phi \in \Out(F_n)$ and a relative train track representative \hbox{$f \from G \to G$} with associated $f$-invariant filtration $\emptyset = G_0 \subset G_1 \subset \cdots \subset G_u = G$ satisfying the following:
\begin{enumerate}
\item\label{ItemCTEG}
 The top stratum $H_u$ is \eg\ with Perron-Frobenius transition matrix $M_u$ having top eigenvalue $\lambda > 1$. The attracting lamination of $\phi$ corresponding to $H_u$ is denoted $\Lambda^+$ or just $\Lambda$, and its dual repelling lamination is $\Lambda^-$.
\item\label{ItemCTiNP}
There exists (up to reversal) at most one indivisible periodic Nielsen path $\rho$ of height $u$, meaning that $\rho$ is not contained in $G_{u-1}$. In this case $\rho$ and its inverse $\bar\rho$ are Nielsen paths, there is a decomposition $\rho = \alpha\beta$ where $\alpha,\beta$ are $u$-legal paths with endpoints at vertices, and $(\overline\alpha,\beta)$ is the unique illegal turn in $H_u$. At least one endpoint of $\rho$ is disjoint from $G_{u-1}$; we assume that $\rho$ is oriented with initial point $p \not\in G_{u-1}$ and terminal point $q$. The initial and terminal directions are distinct fixed directions in $H_u$. The stratum $H_u$ is geometric if and only if $\rho$ exists and is a closed path in which case $p=q$.
\item\label{ItemCTEigen}
From the matrix $M_u$ one obtains an \emph{eigenlength} function $l_\PF(\sigma)$ defined on all paths $\sigma$ in $G$ having endpoints at vertices, and having the following properties: 
\begin{enumerate}
\item for each edge $E \subset G$ we have $l_\PF(E)=0$ if and only if $E \subset G_{u-1}$; 
\item in general if $\sigma = E_1 \ldots E_K$ then $l_\PF(\sigma) = l_\PF(E_1) + \cdots + l_\PF(E_K)$; 
\item for all edges $E \subset G$ we have $l_\PF(f(E)) = \lambda \, l_\PF(E)$. 
\item If $\rho = \alpha\beta$ exists as in \pref{ItemCTiNP} then $l_\PF(\alpha)=l_\PF(\beta) = \frac{1}{2} l_\PF(\rho)$. 
\end{enumerate}
\item\label{ItemCTSplitSimple}
If $\gamma$ is a path with endpoints at vertices of $H_u$ or a circuit crossing an edge of $H_u$ then for all sufficiently large $i$ the path $f^i_\#(\gamma)$ has a splitting with terms in the set $\{$edges of~$H_u\} \union \{\rho,\bar\rho\}\union\{$paths in $G_{u-1}$ with endpoints in~$H_u\}$. 
\end{enumerate}
\end{notations}
\noindent
In what follows, there are three cases to consider: the \emph{ageometric} case where $\rho$ does not exist; the \emph{parageometric} case where $\rho$ exists but is not closed; and the \emph{geometric} case where $\rho$ exists and is closed. Occasionally we will need to consider these cases separately. But for the most part we will attempt to smoothly handle all three cases simultaneously, using the following conventions:
\begin{itemize}
\item In the ageometric case, where $\rho$ does not exist, the notation $\rho$ should simply be ignored (for an exception see the ``Notational Convention'' following Corollary~\ref{CorollarySomeFormulas});
\item In the parageometric case, where $\rho$ exists but is not closed, the notations $\rho^i$ for $\abs{i} \ge 2$ should be ignored.
\end{itemize}
This completes Notations~\ref{NotationsFlaring}.

\medskip

The main result of this section is Proposition~\ref{PropFlaringGeneral}, a uniform flaring property for the top \eg\ stratum $H_u$ of $f$. This result generalizes \cite[Theorem 5.1 and Lemma 5.6]{\BookZeroTag} which covers the special case that $f$ is a nongeometric train track map, that is, $u=1$ and if $\rho$ exists then it is not closed. There are two features which distinguish our present general situation from that special case. First, our flaring property is formulated relative to the penultimate filtration element $G_{u-1}$ of~$G$. Second, if the height~$u$ indivisible Nielsen path $\rho$ exists then our flaring property is formulated relative to~$\rho$. Taken together, these two features require very careful isolation and annihilation of the metric effects of paths in $G_{u-1}$ and of the path $\rho$. This task is particularly tricky in the geometric case where $\rho$ exists and is closed, in which situation we must also annihilate the metric effects of all iterates~$\rho^n$.

\subsection{The path functions $L_u$ and $L_\PF$.}
\label{SectionFlaringBasics}

Throughout this paper, and following standard terminology in the literature of relative train track maps, a \emph{(finite) path} in a graph $G$ is by definition a locally injective continuous function $\sigma \from [0,1] \to G$. We often restrict further by requiring that the endpoints $\sigma(0),\sigma(1)$ be vertices in which case $\sigma$ is a concatenation of edges without backtracking. A more general concatenation of edges in which locally injectivity may fail, hence backtracking may occur, will be called an ``edge~path''.

Generalizing the eigenlength function $l_\PF$ of Notations~\ref{NotationsFlaring}~\pref{ItemCTEigen}, a \emph{path function} on a graph is simply a function $l(\cdot)$ which assigns, to each finite path $\sigma$ having endpoints at vertices, a number $l(\sigma) \in [0,\infinity)$, subject to two metric-like properties:
\begin{itemize}
\item $l(\cdot)$ is symmetric, meaning that $l(\sigma)=l(\bar\sigma)$. 
\item $l(\cdot)$ assigns length zero to any trivial path. 
\end{itemize}
We do not require that $l(\sigma) > 0$ for all nontrivial paths $\sigma$. We also do not require additivity: the value of $l(\cdot)$ on a path $\sigma$ need not equal the sum of its values over the 1-edge subpaths of $\sigma$. We also do not require any version of the triangle inequality, but see Lemma~\ref{LemmaCTI} and the preceding discussion regarding coarse triangle inequalities for the path functions of most interest to~us.

Henceforth in this section we adopt adopt Notations~\ref{NotationsFlaring}.

The ``big $L$'' path functions $L_u$, $L_\PF$ are defined in terms of auxiliary ``little $l$'' path functions $l_u$, $l_\PF$ by omitting certain terms. The function $l_\PF$ is already defined in Notations~\ref{NotationsFlaring}~\pref{ItemCTEigen}. 

Consider a path $\sigma$ in $G$ with edge path decomposition $\sigma = E_1 \ldots E_K$. Define $l_u(\sigma)$ to be the number of terms $E_k$ contained in $H_u$. Now define $L_u(\sigma)$ and $L_\PF(\sigma)$ by summing $l_u(E_k)$ and $l_\PF(E_k)$ (respectively) only over those terms $E_k$ which do not occur in any $\rho$ or $\bar\rho$ subpath of~$\sigma$. Note that for each edge $E \subset G$ the ``little $l$'' eigenvector equation $l_\PF(f(E)) = \lambda \, l_\PF(E)$ implies:
\begin{description}
\item[``Big $L$'' eigenvector equation:]
$$L_\PF(f(E)) = \lambda \, L_\PF(E) \quad\text{for all edges $E \subset G$.}
$$
\end{description}
To see why this holds, if $E \subset G_{u-1}$ both sides are zero. For $E \subset H_u$ this follows from the fact that $f(E)$ is $H_u$-legal and hence has no $\rho$ or $\bar\rho$ subpath.

The following ``quasicomparability'' result is an obvious consequence of the fact that the Perron-Frobenius eigenvector of the transition matrix $M_u$ has positive entries:

\begin{lemma}
\label{LemmaUPFQI}
There is a constant $K = K_{\ref{LemmaUPFQI}}(f) \ge 1$ such that for all edges $E$ of $H_u$ we have $\frac{1}{K} L_\PF(E) \le L_u(E) \le K \cdot L_\PF(E)$. \hfill\qed
\end{lemma}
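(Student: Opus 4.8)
The plan is to reduce the statement to the single-edge definitions of $L_u$ and $L_\PF$, combined with the positivity clause in Notations~\ref{NotationsFlaring}~\pref{ItemCTEigen} and the finiteness of the edge set of $H_u$. Indeed, the asserted ``quasicomparability'' is nothing more than the fact that the two path functions $L_u$ and $L_\PF$, when evaluated on single edges of $H_u$, become respectively the ``counting'' function (which assigns $1$ to each edge) and the list of Perron--Frobenius eigenvector entries of $M_u$; the latter are positive, and a positive function on a finite set is bounded away from $0$ and $\infty$.

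First I would check that for a path consisting of a single edge $E$ there is nothing to excise in the defining sums for $L_u(E)$ and $L_\PF(E)$: since $\rho=\alpha\beta$ contains the illegal turn $(\overline\alpha,\beta)$, both $\alpha$ and $\beta$ are nontrivial, so $\rho$ (and likewise $\bar\rho$) is an edge path of length at least two and therefore cannot occur as a subpath of the one-edge path $E$. Consequently the defining sums range over the single term $E$ itself, giving $L_u(E)=l_u(E)$ and $L_\PF(E)=l_\PF(E)$. For $E$ an edge of $H_u$ we have $l_u(E)=1$ by definition of $l_u$, and $l_\PF(E)>0$ because, by Notations~\ref{NotationsFlaring}~\pref{ItemCTEigen} (the clause $l_\PF(E)=0 \iff E\subset G_{u-1}$), the eigenlength of an $H_u$-edge is nonzero. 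Thus $L_u(E)=1$ and $L_\PF(E)=l_\PF(E)\in(0,\infty)$ for every edge $E$ of $H_u$.

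Finally I would set
$$K \;=\; \max_{E\subset H_u}\,\max\bigl\{\,l_\PF(E),\ l_\PF(E)^{-1}\,\bigr\},$$
a finite real number with $K\ge 1$, since $H_u$ has only finitely many edges and each $l_\PF(E)$ is positive (note $\max\{x,x^{-1}\}\ge 1$ for all $x>0$). Then for each edge $E$ of $H_u$ the choice of $K$ gives $l_\PF(E)\le K$ and $l_\PF(E)\ge K^{-1}$, hence $\tfrac1K\, l_\PF(E)\le 1\le K\, l_\PF(E)$, which is exactly $\tfrac1K L_\PF(E)\le L_u(E)\le K\, L_\PF(E)$ as claimed.

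There is essentially no obstacle: the only point worth stating explicitly is that a one-edge path carries no $\rho$- or $\bar\rho$-subpath, so that the ``big $L$'' functions agree with the ``little $l$'' functions on single edges; after that the lemma is the elementary fact that a positive function on a finite set is two-sided bounded, which is precisely the positivity of the Perron--Frobenius eigenvector of $M_u$.
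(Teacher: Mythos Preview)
Your argument is correct and is exactly the approach the paper takes: the lemma is stated with a \qed\ immediately after the remark that it ``is an obvious consequence of the fact that the Perron--Frobenius eigenvector of the transition matrix $M_u$ has positive entries.'' You have simply spelled out this obvious consequence, including the observation that a single edge contains no $\rho^{\pm 1}$-subpath so that $L_u,L_\PF$ reduce to $l_u,l_\PF$ on edges.
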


The next lemma says in part that $\rho$ and $\bar\rho$ subpaths never overlap. 

\begin{lemma}[Isolation of $\rho$]
\label{LemmaNoOverlap} 
For any path $\sigma$ in $G$ with endpoints, if any, at vertices, there is a unique decomposition $\sigma = \ldots \sigma_i \, \ldots \, \sigma_j \, \ldots$ with the following properties:
\begin{enumerate}
\item Each term $\sigma_i$ is an edge or a copy of $\rho$ or $\bar\rho$.
\item Every subpath of $\sigma$ which is a copy of $\rho$ or $\bar\rho$ is a term in the decomposition. 
\end{enumerate}
\end{lemma}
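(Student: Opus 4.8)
The content of the lemma splits into two parts: \textbf{(i)} (non-overlap) any two distinct subpaths of $\sigma$, each a copy of $\rho$ or $\bar\rho$, share no edge of $\sigma$; and \textbf{(ii)} given (i), the asserted decomposition exists and is unique. Part (ii) is routine, so the plan is to reduce to (i) and then concentrate on it.

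For the reduction: since $\rho$ and $\bar\rho$ have the same edge-length, no copy of $\rho$ or $\bar\rho$ in $\sigma$ is a proper subpath of another; together with non-overlap this shows that the copies of $\rho,\bar\rho$ occurring in $\sigma$ occupy pairwise disjoint subintervals of the edge sequence of $\sigma$. One then builds the decomposition by taking each such copy as one term and each remaining edge as a one-edge term; properties (1) and (2) are immediate. For uniqueness, property (2) forces the $\rho$- and $\bar\rho$-terms of any valid decomposition to be exactly these copies in their exact locations, whence by (1) all other terms are single edges filling the gaps, so the decomposition is the one just constructed. The case where $\sigma$ is a ray or bi-infinite line follows by applying this to finite subpaths and checking that the decompositions are consistent.

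Now for non-overlap. From Notations~\ref{NotationsFlaring}\itemref{ItemCTiNP}, $\rho=\alpha\beta$ with $\alpha,\beta$ $u$-legal and with $(\bar\alpha,\beta)$ the unique illegal turn of $H_u$ at their common endpoint, and the same holds for $\bar\rho=\bar\beta\,\bar\alpha$; hence every copy of $\rho$ or $\bar\rho$ in $\sigma$ contains exactly one illegal turn of $H_u$ --- its ``midpoint''. Suppose $\tau_1\ne\tau_2$ are copies of $\rho$ or $\bar\rho$ sharing an edge of $\sigma=E_1\cdots E_K$; let $i_1\le i_2$ be the positions of their midpoint turns and $[a_k,b_k]$ the edge-interval occupied by $\tau_k$. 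If $i_1=i_2$, the two copies share the illegal turn; since its two directions are distinct, the direction pointing back into a copy's first half determines whether that copy is a copy of $\rho$ or of $\bar\rho$ and determines its two halves ($\alpha,\beta$ or $\bar\beta,\bar\alpha$), and since $\alpha,\beta$ have fixed edge-lengths $\tau_1$ and $\tau_2$ then occupy the same edges, contradicting $\tau_1\ne\tau_2$. If $i_1<i_2$: were $i_2\le b_1-1$, the turn at position $i_2$ would be an internal (hence legal) turn of $\tau_1$ and also the illegal midpoint turn of $\tau_2$, which is impossible, so $i_2\ge b_1$; symmetrically $a_2\ge i_1+1$. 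Combined with the overlap hypothesis $a_2\le b_1$, this forces $a_1\le i_1<a_2\le b_1\le i_2<b_2$, so the overlap $E_{a_2}\cdots E_{b_1}$ is simultaneously a terminal segment of $\tau_1$'s second half (one of $\{\beta,\bar\alpha\}$) and an initial segment of $\tau_2$'s first half (one of $\{\alpha,\bar\beta\}$).

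The main obstacle is ruling out this last ``staggered'' configuration, i.e.\ showing $\rho$ cannot self-overlap. I would dispatch it using the rigidity of indivisible Nielsen paths: $f_\#(\rho)=\rho$ together with the train-track property of $f$ on $H_u$ forces $\alpha$ and $\beta$ to be the maximal $u$-legal extensions of the illegal turn, which prevents a $u$-legal segment from being both a terminal segment of one of $\{\beta,\bar\alpha\}$ and an initial segment of one of $\{\alpha,\bar\beta\}$ unless it is all of the relevant half; a length comparison using $l_\PF(\alpha)=l_\PF(\beta)=\tfrac12 l_\PF(\rho)$ and $l_\PF(f(E))=\lambda\,l_\PF(E)$ then forces the two copies to coincide, a contradiction. (Alternatively one iterates $f_\#$ and invokes the eventual complete splitting of $f^i_\#(\sigma)$ with $\rho,\bar\rho$ among its terms, as in Notations~\ref{NotationsFlaring}\itemref{ItemCTSplitSimple}, where splitting terms visibly do not overlap, and then pulls non-overlap back to $\sigma$ via bounded cancellation.) With non-overlap established, the decomposition and its uniqueness follow as above.
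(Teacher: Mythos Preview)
Your reduction to the non-overlap statement and your treatment of the case $i_1=i_2$ are fine and match the paper's setup. The gap is in the ``staggered'' case $i_1<i_2$: what you write there is not a proof but a gesture. The phrase ``$\alpha$ and $\beta$ are the maximal $u$-legal extensions of the illegal turn'' does not give you what you need --- nothing prevents a $u$-legal terminal segment of $\beta$ from coinciding, as a path in $G$, with a $u$-legal initial segment of $\alpha$; that is exactly what happens in the overlap, and you have to derive a contradiction from it rather than assert that it cannot occur. Your alternative via eventual splitting and bounded cancellation is also only a sketch, and ``pulling non-overlap back'' through $f_\#$ is not automatic.

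The paper separates into two subcases that you conflate. If $\tau_1,\tau_2$ project with opposite orientation, the overlap forces a nontrivial initial or terminal segment of $\rho$ to equal its own inverse, which is impossible; this disposes of two of your four possibilities instantly. In the same-orientation case (both $\rho$, say), the overlap gives a decomposition $\alpha=\alpha_1\alpha_2$, $\beta=\beta_1\beta_2$ with $\alpha_1=\beta_2$ the overlap. The key point, which you mention but do not use, is that the initial and terminal directions of $\rho$ are distinct fixed directions in $H_u$ (Notations~\ref{NotationsFlaring}\pref{ItemCTiNP}); hence for all $k$ the turn between $f^k_\#(\bar\alpha_1)$ and $f^k_\#(\beta_2)$ is $u$-legal. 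Now $l_\PF(\alpha_1)=l_\PF(\beta_2)$ and $l_\PF(\alpha)=l_\PF(\beta)$ give $l_\PF(\alpha_2)=l_\PF(\beta_1)$, so when $f^k_\#(\alpha_1)f^k_\#(\alpha_2)f^k_\#(\beta_1)f^k_\#(\beta_2)$ tightens to $\rho$ the middle two factors cancel completely, leaving $f^k_\#(\alpha_1)f^k_\#(\beta_2)$ tightening to $\rho$. But both factors are $u$-legal and the turn between them is $u$-legal, so this concatenation has no height-$u$ illegal turn, contradicting that $\rho$ does. This is the actual argument you need in place of the hand-wave.
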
 
\noindent Note that we do not assume $\sigma$ is finite in this statement. 

\medskip

Leaving the proof of Lemma~\ref{LemmaNoOverlap} for the end of Section~\ref{SectionFlaringBasics}, we give applications. For a path $\sigma$ in $G$ with endpoints at vertices, let $K_\rho(\sigma)$ denote the number of $\rho$ or $\bar\rho$ subpaths in the Lemma~\ref{LemmaNoOverlap} decomposition of $\sigma$.

\begin{corollary}[Some formulas for $L_u$ and $L_\PF$]
\label{CorollarySomeFormulas}
For any finite path $\sigma$ with endpoints at vertices, letting its Lemma~\ref{LemmaNoOverlap} decomposition be $\sigma = \sigma_1 \ldots \sigma_B$, we have
\begin{align*}
L_u(\sigma) &= 
\begin{cases}
l_u(\sigma) - K_\rho(\sigma) \cdot l_u(\rho) &\text{in general} \\
l_u(\sigma) &\text{if $\rho$ does not exist}
\end{cases} \\
\intertext{and similarly}
L_\PF(\sigma) &= 
\begin{cases}
l_\PF(\sigma) - K_\rho(\sigma) \cdot l_\PF(\rho) &\text{in general} \\
l_\PF(\sigma) &\text{if $\rho$ does not exist}
\end{cases}
\end{align*}\qed
\end{corollary}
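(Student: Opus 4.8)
The plan is to reduce both identities to a bookkeeping argument over the terms of the decomposition produced by Lemma~\ref{LemmaNoOverlap}. Write that decomposition as $\sigma = \sigma_1 \ldots \sigma_B$, so each $\sigma_i$ is a single edge or a copy of $\rho$ or $\bar\rho$ and, crucially, every subpath of $\sigma$ that is a copy of $\rho$ or $\bar\rho$ is one of the terms. Since the $\sigma_i$ concatenate to $\sigma$, they tile the edge-path $\sigma = E_1 \ldots E_K$, so each edge $E_k$ lies in exactly one term. The first thing I would verify is the equivalence: \emph{$E_k$ occurs in some $\rho$- or $\bar\rho$-subpath of $\sigma$ if and only if the unique term containing $E_k$ is a copy of $\rho$ or $\bar\rho$.} The ``if'' direction is immediate, since that term is itself such a subpath. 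For ``only if'', if $E_k$ lies in a $\rho$- or $\bar\rho$-subpath $\tau$, then by property~(2) of Lemma~\ref{LemmaNoOverlap} $\tau$ is a term $\sigma_j$; but $E_k$ also lies in its unique containing term $\sigma_i$, and distinct terms are disjoint, so $\sigma_i = \sigma_j = \tau$.

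Granting this equivalence, I would split the sums defining $l_u(\sigma)$ and $l_\PF(\sigma)$ according to which term each $E_k$ lies in (both $l_u$ and $l_\PF$ being additive over edge-path decompositions, by definition of $l_u$ and by Notations~\ref{NotationsFlaring}~\pref{ItemCTEigen}). By property~(1) of Lemma~\ref{LemmaNoOverlap} together with the equivalence just proved, the edges lying in terms that are single edges are exactly those lying in no $\rho$- or $\bar\rho$-subpath, so their $l_u$-contributions sum to $L_u(\sigma)$ and their $l_\PF$-contributions to $L_\PF(\sigma)$, by definition of $L_u$ and $L_\PF$. Each of the remaining terms $\sigma_i$ is a copy of $\rho$ or $\bar\rho$ and contributes $l_u(\sigma_i) = l_u(\rho)$, respectively $l_\PF(\sigma_i) = l_\PF(\rho)$, using the symmetry of the path functions $l_u, l_\PF$ to handle the $\bar\rho$ case. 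There are $K_\rho(\sigma)$ such terms, by definition of $K_\rho$, so
$$l_u(\sigma) = L_u(\sigma) + K_\rho(\sigma)\cdot l_u(\rho), \qquad l_\PF(\sigma) = L_\PF(\sigma) + K_\rho(\sigma)\cdot l_\PF(\rho),$$
and rearranging yields the ``in general'' formulas. When $\rho$ does not exist, Lemma~\ref{LemmaNoOverlap} forces every term to be a single edge, so $K_\rho(\sigma) = 0$ and these collapse to $L_u(\sigma) = l_u(\sigma)$ and $L_\PF(\sigma) = l_\PF(\sigma)$, which is the second case in each display (there the undefined symbols $l_u(\rho), l_\PF(\rho)$ are simply dropped).

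I do not expect a genuine obstacle: the entire content is the edge-level versus term-level equivalence of the first paragraph, i.e.\ that the set of edges of $\sigma$ belonging to some $\rho$- or $\bar\rho$-subpath coincides with the set of edges belonging to a term equal to $\rho$ or $\bar\rho$. This is precisely what properties~(1) and~(2) of Lemma~\ref{LemmaNoOverlap}, together with uniqueness and hence disjointness of its terms, are designed to provide: property~(2) guarantees that no Nielsen subpath is overlooked, while disjointness of the terms rules out the one thing that could spoil the count, namely two distinct Nielsen subpaths sharing an edge.
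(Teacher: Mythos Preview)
Your proof is correct and is precisely the argument the paper has in mind: the corollary is stated with a \qed and no proof, so the authors regard it as immediate from the definitions of $L_u$, $L_\PF$, $K_\rho$ together with Lemma~\ref{LemmaNoOverlap}. Your write-up simply makes explicit the one nontrivial point, namely that an edge lies in some $\rho$- or $\bar\rho$-subpath of $\sigma$ if and only if its containing term in the Lemma~\ref{LemmaNoOverlap} decomposition is such a subpath, which is exactly what property~(2) and the disjointness of terms guarantee.
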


\bigskip\noindent
\textbf{Notational convention when $\rho$ does not exist:}
Motivated by Corollary~\ref{CorollarySomeFormulas}, we extend the meaning of the notations $l_u(\rho)$ and $l_\PF(\rho)$ to the case that $\rho$ does not exist by defining  
$$l_u(\rho)=l_\PF(\rho)=0
$$

\begin{corollary}
\label{CorollaryAltDecomp}
For any finite path $\sigma$ in $G$ with endpoints at vertices, there is a unique decomposition $\sigma = \mu_0 \, \nu_1 \, \mu_1 \, \ldots \, \nu_A \, \mu_A$ with the following properties:
\begin{enumerate}
\item If $\rho$ does not exist then $A=0$ and $\sigma=\mu_0$.
\item If $\rho$ exists then each $\nu_a$ is an iterate of $\rho$ or $\bar\rho$ (the iteration exponent equals $1$ if $\rho$ is not closed), and each $\mu_a$ contains no $\rho$ or $\bar\rho$ subpath.
\item\label{ItemEveryOtherHasHu}
If $\rho$ exists, and if $1 \le a < a+1 \le A-1$, then at least one of the subpaths $\mu_a,\mu_{a+1}$ contains an edge of $H_u$; if in addition $\rho$ is closed then each $\mu_a$ subpath contains an edge of $H_u$.
\end{enumerate}
\end{corollary}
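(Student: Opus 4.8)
The plan is to derive this decomposition from the Isolation of $\rho$ decomposition of Lemma~\ref{LemmaNoOverlap} by grouping maximal runs of $\rho^{\pm 1}$-terms. First I would dispose of the case that $\rho$ does not exist: then the Lemma~\ref{LemmaNoOverlap} decomposition of $\sigma$ contains no copy of $\rho$ or $\bar\rho$, hence $\sigma$ has no such subpath, and one takes $A = 0$, $\mu_0 = \sigma$. Assume now that $\rho$ exists and write the Lemma~\ref{LemmaNoOverlap} decomposition as $\sigma = \sigma_1 \cdots \sigma_m$, each $\sigma_i$ an edge or a copy of $\rho$ or $\bar\rho$, with every copy of $\rho$ or $\bar\rho$ occurring as a subpath of $\sigma$ being one of the $\sigma_i$. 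Partition $\{1,\dots,m\}$ into maximal blocks of consecutive indices, alternately of ``edge type'' (all $\sigma_i$ edges) and ``$\rho$ type'' (all $\sigma_i$ copies of $\rho$ or $\bar\rho$). A preliminary observation I would record is that within a $\rho$-type block all terms equal $\rho$ or all equal $\bar\rho$: writing $\rho = \alpha\beta$ as in Notations~\ref{NotationsFlaring}, the concatenations $\rho\bar\rho = \alpha\beta\bar\beta\bar\alpha$ and $\bar\rho\rho = \bar\beta\bar\alpha\alpha\beta$ both backtrack; and when $\rho$ is not closed, $p \ne q$ also rules out $\rho\rho$ and $\bar\rho\bar\rho$, so each $\rho$-type block is then a single term.

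Granting this, each $\rho$-type block concatenates to a path of the form $\rho^{k}$ or $\bar\rho^{k}$, $k \ge 1$ (with $k = 1$ when $\rho$ is not closed); I take these, left to right, as $\nu_1,\dots,\nu_A$, and let $\mu_a$ be the concatenation of the edge-type block between $\nu_a$ and $\nu_{a+1}$, with $\mu_0$ and $\mu_A$ the (possibly trivial) initial and terminal edge-type blocks. This gives a decomposition satisfying (1) and (2); the interior subpaths $\mu_1,\dots,\mu_{A-1}$ are nontrivial, because maximality of the blocks together with the preliminary observation forbids consecutive $\nu_a,\nu_{a+1}$ from being adjacent. For uniqueness I would invoke the uniqueness clause of Lemma~\ref{LemmaNoOverlap}: given any decomposition $\sigma = \mu_0\nu_1\mu_1\cdots\nu_A\mu_A$ as in the statement, subdividing each $\nu_a$ into its $\rho^{\pm 1}$-copies and each $\mu_a$ into its edges yields a decomposition of $\sigma$ into edges and copies of $\rho,\bar\rho$ in which no copy of $\rho$ or $\bar\rho$ occurring as a subpath of $\sigma$ gets split --- property (2) forbids such a subpath from lying inside a single $\mu_a$, nontriviality of the interior $\mu_a$ forbids one from straddling two consecutive $\nu$'s, and the remaining straddling and within-$\nu_a$ possibilities are ruled out by the prescribed illegal-turn structure of a copy of $\rho$ or $\bar\rho$ inside a power $\rho^{\pm k}$ --- so this refinement is forced to be the Lemma~\ref{LemmaNoOverlap} decomposition, and the original $\mu_a,\nu_a$ are recovered from it by maximal grouping and are therefore unique.

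The step I expect to be the only genuine obstacle is property (3), which I would prove using the fact from Notations~\ref{NotationsFlaring} that the initial endpoint $p$ of $\rho$ lies outside $G_{u-1}$. The consequence I need is that any nontrivial path with an endpoint at $p$ must cross an edge of $H_u$, since the edges outside $H_u$ are precisely those of $G_{u-1}$ and a nontrivial path in $G_{u-1}$ has both endpoints among the vertices of $G_{u-1}$. Fix $a$ with $1 \le a < a+1 \le A-1$, so $\mu_a,\mu_{a+1}$ are interior, hence nontrivial. If $\nu_{a+1}$ is a power of $\rho$ then its initial vertex is $p$ and $\mu_a$, ending at that vertex, crosses an edge of $H_u$; if $\nu_{a+1}$ is a power of $\bar\rho$ then its terminal vertex is $p$ and $\mu_{a+1}$, starting at that vertex, crosses an edge of $H_u$; either way one of $\mu_a,\mu_{a+1}$ contains an edge of $H_u$. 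Finally, in the geometric case $\rho$ is closed, so $p = q$ and every $\nu_a$ is a loop at $p$; then each interior $\mu_a$ ($1 \le a \le A-1$) runs from $p$ to $p$, is nontrivial, and hence crosses an edge of $H_u$, which is the remaining assertion of (3).
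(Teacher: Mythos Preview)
Your proof is correct and follows essentially the same approach as the paper: derive the decomposition from Lemma~\ref{LemmaNoOverlap} by grouping consecutive $\rho^{\pm 1}$-terms, and obtain item~\pref{ItemEveryOtherHasHu} from the fact that the endpoint $p$ of $\rho$ lies outside $G_{u-1}$ (Notations~\ref{NotationsFlaring}~\pref{ItemCTiNP}). The paper's proof is a single sentence to this effect; you have simply unpacked the details, including the observation that adjacent $\rho^{\pm 1}$-terms must carry the same orientation, and the case analysis on $\nu_{a+1}$ for item~\pref{ItemEveryOtherHasHu}.

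One small remark on your uniqueness argument: when you invoke ``nontriviality of the interior $\mu_a$'' to rule out a $\rho$-subpath straddling two consecutive $\nu$'s, you established nontriviality only for the \emph{constructed} decomposition, whereas here you are working with an arbitrary decomposition satisfying (1)--(3). This is easily patched: in the parageometric case the endpoint mismatch $p\ne q$ and the backtracking of $\rho\bar\rho,\bar\rho\rho$ already force adjacent $\nu_a,\nu_{a+1}$ apart for \emph{any} such decomposition; in the geometric case property~\pref{ItemEveryOtherHasHu} itself (each interior $\mu_a$ contains an $H_u$-edge) forces nontriviality. The paper does not address this point at all, so your treatment is already more careful than the original.
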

\noindent
Note that in the context of Corollary~\ref{CorollaryAltDecomp} the subpaths $\mu_a$ are nondegenerate for $1 \le a \le A-1$, but the subpaths $\mu_0$ and $\mu_A$ are allowed to be degenerate.

\begin{proof} \quad Everything is an immediate consequence of Lemma~\ref{LemmaNoOverlap} except perhaps item~\pref{ItemEveryOtherHasHu}, which follows from the fact that at least one endpoint of $\rho$ is disjoint from~$G_{u-1}$ (Notations~\ref{NotationsFlaring}~\pref{ItemCTiNP}). 
\end{proof}

The following is an immediate consequence of Lemma~\ref{LemmaNoOverlap} and Corollary~\ref{CorollaryAltDecomp}, with Lemma~\ref{LemmaUPFQI} applied term-by-term for the ``Furthermore'' part:

\begin{corollary}[More formulas for $L_u$ and $L_\PF$] 
\label{CorollaryLFormulas}
For any finite path $\sigma$ with endpoints at vertices, and using the Corollary~\ref{CorollaryAltDecomp} decomposition of $\sigma$, we have
$$
L_u(\sigma) = \sum_{a=1}^A L_u(\mu_a) = \sum_{a=1}^A l_u(\mu_a), \qquad\qquad
L_\PF(\sigma) = \sum_{a=1}^A L_\PF(\mu_a) = \sum_{a=1}^A l_\PF(\mu_a) 
$$
Furthermore, letting $K = K_{\ref{LemmaUPFQI}}(f) \ge 1$, we have
$$\hphantom{\qed}\quad\quad\quad\quad\quad\quad\quad\quad\quad\frac{1}{K} L_\PF(\sigma) \le L_u(\sigma) \le K \cdot L_\PF(\sigma)\quad\quad\quad\quad\quad\quad\quad\quad\quad\qed
$$
\end{corollary}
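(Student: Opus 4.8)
The plan is to read both displayed identities directly off the bookkeeping already built into Lemma~\ref{LemmaNoOverlap} and Corollary~\ref{CorollaryAltDecomp}, and then to obtain the quasicomparability inequality by invoking Lemma~\ref{LemmaUPFQI} one $H_u$-edge at a time.

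First I would recall that, by construction, $L_u(\sigma)$ is the sum of $l_u(E_k)$ over exactly those edges $E_k$ of the edge path $\sigma = E_1 \ldots E_K$ that do not lie in any $\rho$ or $\bar\rho$ subpath of $\sigma$, and likewise for $L_\PF$. So the one structural point to verify is: \emph{the edges of $\sigma$ lying in some $\rho$ or $\bar\rho$ subpath are precisely the edges occurring in the $\nu_a$ subpaths of the Corollary~\ref{CorollaryAltDecomp} decomposition}. One inclusion is immediate, since each $\nu_a$ is an iterate of $\rho$ or $\bar\rho$, so every edge of $\nu_a$ lies in a $\rho$ or $\bar\rho$ subpath. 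For the reverse inclusion, apply Lemma~\ref{LemmaNoOverlap}: every $\rho$ or $\bar\rho$ subpath of $\sigma$ is a single term of the Lemma~\ref{LemmaNoOverlap} decomposition, and the Corollary~\ref{CorollaryAltDecomp} decomposition is a coarsening of it in which consecutive $\rho/\bar\rho$ terms are amalgamated into the $\nu_a$ while runs of ordinary edges become the $\mu_a$; in particular no $\rho/\bar\rho$ subpath straddles a boundary between a $\mu_a$ and an adjacent $\nu$-block, and no $\rho/\bar\rho$ subpath can lie inside a $\mu_a$ since each $\mu_a$ contains no such subpath (Corollary~\ref{CorollaryAltDecomp}). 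Hence every edge in a $\rho/\bar\rho$ subpath lies in some $\nu_a$, proving the claim.

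Granting this, the edges charged by $L_u(\sigma)$ are exactly the edges of the various $\mu_a$, and since none of those edges lies in a $\rho/\bar\rho$ subpath (of $\sigma$, nor of $\mu_a$), we get $L_u(\mu_a) = l_u(\mu_a)$, so that summing over all the terms $\mu_0,\ldots,\mu_A$ gives $L_u(\sigma) = \sum_a L_u(\mu_a) = \sum_a l_u(\mu_a)$; the identical argument with $l_\PF$ in place of $l_u$ gives the $L_\PF$ statement. For the ``Furthermore'' inequality, fix $a$: every edge of $\mu_a$ lying in $G_{u-1}$ contributes $0$ to both $l_u$ and $l_\PF$, and for each edge $E$ of $\mu_a$ lying in $H_u$, Lemma~\ref{LemmaUPFQI} gives $\tfrac1K\, l_\PF(E) \le l_u(E) \le K\, l_\PF(E)$; summing these over all $H_u$-edges of all the $\mu_a$ and using the identities just proved yields $\tfrac1K\, L_\PF(\sigma) \le L_u(\sigma) \le K\, L_\PF(\sigma)$.

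I expect no real obstacle: this corollary is pure bookkeeping, with all the substantive content already isolated in Lemma~\ref{LemmaNoOverlap}. The only step that deserves an explicit sentence is the verification that no copy of $\rho$ or $\bar\rho$ in $\sigma$ straddles a boundary between a $\mu_a$ and an adjacent $\nu$-block --- which is exactly the uniqueness-and-isolation content of Lemma~\ref{LemmaNoOverlap} --- after which every remaining step is a one-line consequence of the definitions of $l_u$, $l_\PF$, $L_u$, $L_\PF$.
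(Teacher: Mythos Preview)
Your proposal is correct and follows essentially the same approach as the paper, which simply declares the result an immediate consequence of Lemma~\ref{LemmaNoOverlap} and Corollary~\ref{CorollaryAltDecomp}, with Lemma~\ref{LemmaUPFQI} applied term-by-term for the ``Furthermore'' part. Your write-up supplies exactly the details the paper omits, and you correctly sum over all of $\mu_0,\ldots,\mu_A$ (the lower index $a=1$ in the displayed statement is evidently a typo for $a=0$, as confirmed by the paper's own later use of the formula).
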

\noindent 
The latter inequality of Corollary~\ref{CorollaryLFormulas} gives us the freedom to switch back and forth between $L_\PF$ and $L_u$, using $L_u$ in more combinatorial situations and $L_\PF$ in more geometric situations through much of the rest of the paper.

\paragraph{Coarse triangle inequalities for $L_u$ and $L_\PF$.} The path functions $l_u$ and $l_\PF$ satisfy a version of the triangle inequality: for any two paths $\gamma,\delta$ with endpoints at vertices such that the terminal vertex of $\gamma$ equals the initial vertex of $\delta$ we have
$$l_u[\gamma \delta] \le l_u(\gamma)+l_u(\delta), \quad l_\PF[\gamma\delta] \le l_\PF(\gamma) + l_\PF(\delta)
$$
where $[\cdot]$ denotes the operation that straightens an edge path rel endpoints to obtain a path. For $L_u$ and $L_\PF$ the best we can get are coarse triangle inequalities:
\begin{lemma}
\label{LemmaCTI}
There exists a constant $C=C_{\ref{LemmaCTI}}$ such that for any finite paths $\gamma,\delta$ in $G$ with endpoints at vertices such that the terminal point of $\gamma$ coincides with the initial point of $\delta$ we have
\begin{align*}
L_u[\gamma \delta] &\le L_u(\gamma) + L_u(\delta) + C \\
L_\PF[\gamma \delta] &\le L_\PF(\gamma) + L_\PF(\delta) + C
\end{align*} 
\end{lemma}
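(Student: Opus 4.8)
The plan is to reduce everything, via Corollary~\ref{CorollarySomeFormulas}, to a combinatorial bound on how the count $K_\rho$ of $\rho$- and $\bar\rho$-subpaths can change under tightening a concatenation. First I would write the tightened concatenation as $[\gamma\delta]=\gamma''\delta''$, where $\gamma''$ is an initial edge-subpath of $\gamma$ and $\delta''$ a terminal edge-subpath of $\delta$, obtained by cancelling a terminal subpath $\tau$ of $\gamma$ against the reversed initial subpath $\bar\tau$ of $\delta$ --- the standard picture for tightening a product of reduced edge paths, still valid when $\gamma''$ or $\delta''$ is degenerate, i.e.\ one path cancels entirely into the other. Since $l_u$ and $l_\PF$ are additive over edges, $l_u[\gamma\delta]=l_u(\gamma)+l_u(\delta)-2l_u(\tau)$ and likewise for $l_\PF$. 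Substituting this together with the identities $L_u(\sigma)=l_u(\sigma)-K_\rho(\sigma)\,l_u(\rho)$ and $L_\PF(\sigma)=l_\PF(\sigma)-K_\rho(\sigma)\,l_\PF(\rho)$ of Corollary~\ref{CorollarySomeFormulas}, both desired inequalities collapse to the single assertion that
$$\bigl(K_\rho(\gamma)+K_\rho(\delta)-K_\rho[\gamma\delta]\bigr)\cdot l_u(\rho)\ \le\ 2\,l_u(\tau)+C,\qquad\text{and the same with $\PF$ in place of $u$.}$$
When $\rho$ does not exist this is vacuous (indeed $L_u=l_u$ and $L_\PF=l_\PF$ then satisfy the exact triangle inequality, by the $\rho$-free clauses of Corollary~\ref{CorollarySomeFormulas}), so the work is confined to the para- and geometric cases.

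The heart of the argument will be a count of $\rho$-/$\bar\rho$-subpaths using the non-overlap property of Lemma~\ref{LemmaNoOverlap}: in any reduced edge path these subpaths are pairwise disjoint, so they can be counted unambiguously. Each $\rho$-/$\bar\rho$-subpath of $\gamma$ either lies entirely in $\gamma''$, lies entirely in $\tau$, or straddles the $\gamma''$--$\tau$ junction; at most one can straddle, since a straddling copy must contain the last edge of $\gamma''$ and two such would overlap. The copies lying in $\gamma''$ are exactly the $\rho$-/$\bar\rho$-subpaths of $[\gamma\delta]$ lying in $\gamma''$ (as $\gamma''$ is an initial edge-subpath of both $\gamma$ and $[\gamma\delta]$), and symmetrically for $\delta$ and $\delta''$; moreover the copies buried inside $\tau$ correspond bijectively, via the reversal $\tau\leftrightarrow\bar\tau$, to those of $\delta$ buried inside $\bar\tau$. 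Writing $c$ for the number of $\rho$-/$\bar\rho$-subpaths buried in $\tau$ and accounting for the at-most-one straddling copy of $[\gamma\delta]$ at the new $\gamma''$--$\delta''$ junction, these identities give $K_\rho(\gamma)+K_\rho(\delta)-K_\rho[\gamma\delta]\le 2c+2$. On the other hand, the $c$ disjoint copies of $\rho$ or $\bar\rho$ inside $\tau$ each contain $l_u(\rho)$ edges of $H_u$ (using $l_u(\bar\rho)=l_u(\rho)$, and likewise for $l_\PF$), so $c\,l_u(\rho)\le l_u(\tau)$ and $c\,l_\PF(\rho)\le l_\PF(\tau)$. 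Combining, the displayed assertion holds with $C:=2\max\{l_u(\rho),\,l_\PF(\rho)\}$ (and when negative the left side is trivially $\le$ the right side).

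I expect the only delicate point to be this $\rho$-/$\bar\rho$-subpath bookkeeping across the tightening: correctly isolating the at-most-one straddling copy at each of the three junctions, and pairing the copies buried in $\tau$ with those buried in $\bar\tau$ so that they contribute equally to $K_\rho(\gamma)$ and $K_\rho(\delta)$ but nothing to $K_\rho[\gamma\delta]$. Everything else --- the additivity of $l_u$ and $l_\PF$, the algebra reducing the two inequalities to the displayed one, and the degenerate cases where $\gamma''$ or $\delta''$ is trivial (where some straddling counts simply vanish) --- is routine. Once the ``$2c$'' term (absorbed by $2l_u(\tau)$, resp.\ $2l_\PF(\tau)$) and the ``$+2$'' term (from the two possibly-destroyed straddling copies) are separated out, the constant $C$ reads off immediately, and one sees in particular that $C$ may be taken to depend on $f$ only through $l_u(\rho)$ and $l_\PF(\rho)$.
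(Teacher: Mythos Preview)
Your proof is correct and arrives at the same constant $C = 2\max\{l_u(\rho),l_\PF(\rho)\}$ as the paper, but by a somewhat different route. The paper works directly with the Lemma~\ref{LemmaNoOverlap} decomposition: it writes $\gamma=\gamma_1\gamma_2\gamma_3$ and $\delta=\delta_1\delta_2\delta_3$ so that $[\gamma\delta]=\gamma_1\gamma_2\delta_2\delta_3$, where $\gamma_1$ and $\delta_3$ are concatenations of full terms of the respective decompositions and $\gamma_2,\delta_2$ are each either degenerate or a proper initial/terminal piece of a single $\rho$ or $\bar\rho$ term. Then $L_u[\gamma\delta]\le L_u(\gamma_1)+l_u(\gamma_2)+l_u(\delta_2)+L_u(\delta_3)\le L_u(\gamma)+L_u(\delta)+2l_u(\rho)$, and similarly for $L_\PF$.

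Your approach instead passes through Corollary~\ref{CorollarySomeFormulas} to convert the inequality into the combinatorial bound $K_\rho(\gamma)+K_\rho(\delta)-K_\rho[\gamma\delta]\le 2c+2$, and then absorbs the $2c$ into $2l_u(\tau)$. This is a clean and systematic reduction, and makes transparent exactly where the additive constant comes from (the two possibly-destroyed straddling copies). The paper's argument is shorter because it never needs to track the $\rho$-copies buried in the cancelled segment $\tau$ at all: by choosing $\gamma_1,\delta_3$ to consist of full decomposition terms, those buried copies simply disappear from both sides simultaneously. Either way the same non-overlap lemma is doing the work.
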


\begin{proof} Consider the Lemma~\ref{LemmaNoOverlap} decompositions of $\gamma$ and $\delta$. We may write $\gamma=\gamma_1\gamma_2\gamma_3$, $\delta=\delta_1\delta_2\delta_3$ so that $[\gamma\delta] = \gamma_1 \gamma_2 \delta_2 \delta_3$ and so that the following hold: $\gamma_1$ is a concatenation of terms of the Lemma~\ref{LemmaNoOverlap} decomposition of $\gamma$, and $\gamma_2$ is either degenerate or an initial subpath of a single $\rho$ or $\bar\rho$ term of that decomposition; $\delta_2$ is either degenerate or a terminal subpath of a single $\rho$ or $\bar\rho$ term of the Lemma~\ref{LemmaNoOverlap} decomposition of $\delta$, and $\delta_3$ is a concatenation of terms of that decomposition. It follows that
\begin{align*}
L_u[\gamma\delta] &\le L_u(\gamma_1) + l_u(\gamma_2) + l_u(\delta_2) + L_u(\delta_3) \\
 &\le L_u(\gamma) + L_u(\delta) + 2 l_u(\rho) \\
\intertext{and similarly}
L_\PF[\gamma\delta] &\le L_\PF(\gamma) + L_\PF(\delta) + 2 l_\PF(\rho)
\end{align*}
\end{proof}

\paragraph{Quasi-isometry properties of $f$.} The next lemma describes quasi-isometry properties of the relative train track map $f \from G \to G$ with respect to the path functions~$L_u$ and $L_\PF$.

\begin{lemma} \label{LemmaLuBasics}
There exist constants $D=D_{\ref{LemmaLuBasics}} > 1$, $E=E_{\ref{LemmaLuBasics}} > 0$ such that for any finite path $\gamma$ in $G$ with endpoints at vertices, the following hold:
\begin{align}
L_u(f_\#(\gamma)) \le D \cdot L_u(\gamma) + E \quad&\text{and}\quad L_u(\gamma) \le D \cdot L_u(f_\#(\gamma)) + E \label{ItemQiU}\\
L_\PF(f_\#(\gamma)) \le D \cdot L_\PF(\gamma) + E \quad&\text{and}\quad L_\PF(\gamma) \le D \cdot L_\PF(f_\#(\gamma)) + E \label{ItemQiPF}
\end{align}
\end{lemma}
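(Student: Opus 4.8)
The plan is to prove the four inequalities by reducing everything to a single-edge estimate, using the coarse triangle inequality of Lemma~\ref{LemmaCTI} to pass from edges to paths, and then the "big $L$" eigenvector equation $L_\PF(f(E)) = \lambda\, L_\PF(E)$ (together with Lemma~\ref{LemmaUPFQI} / Corollary~\ref{CorollaryLFormulas}) to control the effect of $f$ on individual edges. First I would observe that by the latter inequality of Corollary~\ref{CorollaryLFormulas}, $L_u$ and $L_\PF$ are comparable up to the multiplicative constant $K = K_{\ref{LemmaUPFQI}}(f)$, so it suffices to prove one of \pref{ItemQiU}, \pref{ItemQiPF} and deduce the other by absorbing $K$ into $D$; I will work with $L_\PF$ since the eigenvector equation is cleanest there.

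For the single-edge estimate: there are finitely many edges $E$ of $G$, so there is a uniform bound $M$ on $L_\PF(f(E))$ over all edges $E$, and dually (since $f_\#$ is a homotopy equivalence and each $f_\#(E)$ is a nontrivial reduced path, or at worst there are finitely many edges to inspect) a uniform bound relating $L_\PF(E)$ to $L_\PF(f_\#(E))$. The subtlety is that $f_\#(\gamma)$ is the reduced path obtained by straightening the edge-path $f(E_1)\cdots f(E_K)$ where $\gamma = E_1\cdots E_K$, so I cannot simply sum $L_\PF(f(E_k))$; cancellation between consecutive $f(E_k)$'s must be accounted for, and straightening can also \emph{create} new $\rho$ or $\bar\rho$ subpaths whose metric effect $L_\PF$ deletes. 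This is exactly what Lemma~\ref{LemmaCTI} is designed to absorb: writing $f(\gamma) = f(E_1)\cdots f(E_K)$ and applying the coarse triangle inequality $K-1$ times gives $L_\PF(f_\#(\gamma)) = L_\PF[f(E_1)\cdots f(E_K)] \le \sum_{k=1}^K L_\PF(f(E_k)) + (K-1)C_{\ref{LemmaCTI}}$. This is not yet of the desired form because of the linear-in-$K$ error term, so the key move is to split off those $E_k$ with $E_k \subset H_u$ from those with $E_k \subset G_{u-1}$: for $E_k \subset G_{u-1}$ both $L_\PF(E_k)$ and $L_\PF(f(E_k))$ vanish and contribute only to the additive error, while for $E_k \subset H_u$ we have $L_\PF(f(E_k)) = \lambda\, L_\PF(E_k)$ by the big-$L$ eigenvector equation, and $L_\PF(E_k) \ge c > 0$ is bounded below uniformly (finitely many edges of $H_u$). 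Hence the number of $H_u$-edges in $\gamma$ is at most $\tfrac{1}{c}L_\PF(\gamma)$, which lets me convert the $(K-1)C$ term — after discarding the harmless $G_{u-1}$-edges, whose count is unbounded but whose contribution to the straightening error can be handled by noting that a maximal run of $G_{u-1}$-edges straightens within $G_{u-1}$ and contributes no new $H_u$-content or $\rho$-content beyond its endpoints — into something bounded by a multiple of $L_\PF(\gamma)$ plus a constant. So $L_\PF(f_\#(\gamma)) \le \lambda\, L_\PF(\gamma) + C' \cdot L_\PF(\gamma)/c + E$, giving the first inequality of \pref{ItemQiPF} with $D = \lambda + C'/c$ (and suitable $E$). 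The reverse inequality $L_\PF(\gamma) \le D\, L_\PF(f_\#(\gamma)) + E$ follows symmetrically, using that each edge is crossed by some bounded iterate under a homotopy-inverse of $f$, or more directly by the same argument applied with the roles reversed and using that $f_\#(\gamma)$ crosses $f(E_k)$-worth of $H_u$-edges for each $H_u$-edge $E_k$ of $\gamma$, each such $f(E_k)$ being $H_u$-legal with $L_\PF = \lambda L_\PF(E_k) \ge \lambda c$, so cancellation between the blocks $f(E_k)$ removes at most a bounded fraction.

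The step I expect to be the main obstacle is the honest bookkeeping of cancellation and $\rho$-creation during straightening — specifically, justifying that a long run of $G_{u-1}$-edges in $\gamma$ does not, after straightening $f$ of that run, produce a disproportionately large deletion of $\rho$-subpaths or a large positive contribution, so that the error really is controlled by the number of $H_u$-edges (plus a constant) rather than by $K$. The cleanest way to dispatch this is to invoke Notations~\ref{NotationsFlaring}\pref{ItemCTiNP}, that at least one endpoint of $\rho$ lies outside $G_{u-1}$, so any $\rho$ or $\bar\rho$ subpath of $f_\#(\gamma)$ meets $H_u$; combined with the fact (Notations~\ref{NotationsFlaring}, relative train track property) that $f$ maps $H_u$-edges to $H_u$-legal paths and maps $G_{u-1}$ into $G_{u-1}$, one gets that the Lemma~\ref{LemmaNoOverlap} decomposition of $f(\gamma)$ has $\rho$-terms only in the vicinity of the images of $H_u$-edges of $\gamma$, bounding $K_\rho(f_\#(\gamma))$ by a linear function of the number of $H_u$-edges of $\gamma$; feeding this into the formulas of Corollary~\ref{CorollarySomeFormulas} converts the $l_\PF$-estimates (which \emph{are} additive and easy) into the desired $L_\PF$-estimates. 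Once this is in place, assembling the four inequalities and translating between $L_u$ and $L_\PF$ via Corollary~\ref{CorollaryLFormulas} is routine.
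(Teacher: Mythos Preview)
Your plan contains a genuine gap. The claim that ``the number of $H_u$-edges in $\gamma$ is at most $\tfrac{1}{c}L_\PF(\gamma)$'' is false: it confuses $l_\PF$ with $L_\PF$. The number of $H_u$-edges is $l_u(\gamma)$, and while $l_u(\gamma) \le \tfrac{1}{c}\, l_\PF(\gamma)$, the big-$L$ quantity $L_\PF(\gamma) = l_\PF(\gamma) - K_\rho(\gamma)\, l_\PF(\rho)$ can be arbitrarily smaller. Concretely, in the geometric case take $\gamma = \rho^n$: then $L_\PF(\gamma) = L_u(\gamma) = 0$ while $l_u(\gamma) = n\, l_u(\rho)$ is unbounded. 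Your iterated use of Lemma~\ref{LemmaCTI} on the edge decomposition (or on the $H_u$-edge decomposition after grouping $G_{u-1}$-runs) therefore produces an additive error of order $l_u(\gamma)$, which is not dominated by any multiple of $L_\PF(\gamma)$. Your final-paragraph patch, bounding $K_\rho(f_\#(\gamma))$ linearly in the number of $H_u$-edges of $\gamma$, does not repair this: that bound is again in terms of the uncontrolled quantity $l_u(\gamma)$.

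The paper's proof fixes exactly this by replacing the edge decomposition with the Corollary~\ref{CorollaryAltDecomp} decomposition $\gamma = \mu_0\,\nu_1\,\mu_1\cdots\nu_A\,\mu_A$, which has only $2A+1$ terms with $A \le 2L_u(\gamma)+3$ by item~\pref{ItemEveryOtherHasHu} of that corollary. One works with a map $g$ satisfying $g_\#(\rho)=\rho$; then $g_\#(\nu_a)=\nu_a$ with $L_u(\nu_a)=0$, so Lemma~\ref{LemmaCTI} applied $2A$ times gives $L_u(g_\#(\gamma)) \le \sum_a L_u(g_\#(\mu_a)) + 2A\, C_{\ref{LemmaCTI}}$, and now the error term \emph{is} controlled by $L_u(\gamma)$, while $\sum_a L_u(g_\#(\mu_a)) \le \sum_a l_u(g_\#(\mu_a)) \le D' \sum_a l_u(\mu_a) = D' L_u(\gamma)$. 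For the reverse inequality the paper takes $g = \bar f$, a homotopy inverse of $f$ in the category of pairs $(G,P)\to(G,P)$ where $P$ is the endpoint set of~$\rho$; this rel-$P$ condition is precisely what forces $\bar f_\#(\rho)=\rho$, and is the ingredient your ``homotopy-inverse'' remark leaves out.
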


\begin{proof} Once $D,E$ are found satisfying \pref{ItemQiU}, by applying Corollary~\ref{CorollaryLFormulas} we find $D,E$ satisfying \pref{ItemQiPF}, and by maximizing we obtain $D,E$ satisfying both.

If $\rho$ exists let $P$ be its endpoint set, a single point if $H_u$ is geometric and two points otherwise; and if $\rho$ does not exist let $P=\emptyset$. Each point of $P$ is fixed by $f$. By elementary homotopy theory (see e.g.\ Lemma~\ref{LemmaGraphHomotopy}~\pref{ItemGPHomInvExists} in Section~\ref{SectionGraphHomotopy}), the map of pairs $f \from (G,P) \to (G,P)$ has a homotopy inverse $\bar f \from (G,P) \to (G,P)$ in the category of topological pairs. By homotoping $\bar f$ rel vertices, we may assume that $\bar f$ takes vertices to vertices and edges to (possibly degenerate) paths. If $\rho$ exists then, using that $\rho = f_\#(\rho)$, and applying $\bar f_\#$ to both sides, we obtain $\bar f_\#(\rho)=\bar f_\#(f_\#(\rho)) = \rho$. 

We prove the following general fact: if $g \from (G,P) \to (G,P)$ is a homotopy equivalence of pairs which takes vertices to vertices and takes edges to paths, and if $g_\#(\rho)=\rho$ assuming $\rho$ exists, then there exist $D \ge 1$, $E \ge 0$ such that for each path $\delta$ with endpoints at vertices we have
\begin{equation}
L_u(g_\#(\delta)) \le D \cdot L_u(\delta) + E \label{ItemQiG}
\end{equation}
This obviously suffices for the first inequality of \pref{ItemQiU} taking $g=f$ and $\delta=\gamma$. It also suffices for the second inequality of \pref{ItemQiU} taking $g = \bar f$ and $\delta=\gamma$ because in that case we have $g_\#(\rho) = \bar f_\#(\rho) = \bar f_\#(f_\#(\rho)) = \rho$.

To prove~\pref{ItemQiG}, consider the Corollary~\ref{CorollaryAltDecomp} decomposition $\gamma = \mu_0 \, \nu_1 \, \mu_1 \, \ldots \nu_A \, \mu_A$. Applying Corollary~\ref{CorollaryLFormulas} we have
$$L_u(\gamma) = l_u(\mu_0) + \cdots + l_u(\mu_A)
$$
We also have
$$g_\#(\gamma) = [g_\#(\mu_0) \, \nu_1 \, g_\#(\mu_1) \, \ldots \, \nu_A \, g_\#(\mu_A)]
$$
By inductive application of Lemma~\ref{LemmaCTI} combined with the fact that each $L_u(\nu_a)=0$, it follows that
\begin{align*}
L_u(g_\#(\gamma)) &\le L_u(g_\#(\mu_0)) + \cdots + L_u(g_\#(\mu_A)) + 2 \, A \, C_{\ref{LemmaCTI}} \\
 &\le l_u(g_\#(\mu_0)) + \cdots + l_u(g_\#(\mu_A)) + 2 \, A  \, C_{\ref{LemmaCTI}} \\
 &\le D'(l_u(\mu_0) + \cdots + l_u(\mu_A)) + 2 \, A  \, C_{\ref{LemmaCTI}}
\end{align*}
where $D'$ is the maximum number of $H_u$ edges crossed by $g(E)$ for any edge $E \subset G$. If $\rho$ does not exist then $A=0$ and we are done. If $\rho$ exists then, by Corollary~\ref{CorollaryLFormulas}~\pref{ItemEveryOtherHasHu}, if $1 \le a \le a+1 \le A-1$ then at least one of $l_u(\mu_a),l_u(\mu_{a+1})$ is $\ge 1$. It follows that $A \le 2(l_u(\mu_0)+\cdots+l_u(\mu_A)) + 3$ and therefore
$$L_u(g_\#(\gamma)) \le (D' + 4 \, C_{\ref{LemmaCTI}}) L_u(\gamma) + 6 \, C_{\ref{LemmaCTI}}
$$
\end{proof}


\paragraph{Proof of Lemma \ref{LemmaNoOverlap}.} The lemma is equivalent to saying that two distinct $\rho$ or $\bar\rho$ subpaths of~$\sigma$ cannot overlap in an edge. Lifting to the universal cover $\wt G$ this is equivalent to saying that for any path $\ti\sigma \subset \wt G$, if $\ti \mu$ and $\ti \mu'$ are subpaths of $\ti \sigma$ each of which projects to either $\rho$ or $\bar \rho$, and if $\ti\mu$ and $\ti\mu'$ have at least one edge in common, then $\ti \mu = \ti \mu'$.

As a first case, assume that $\ti \mu$ and $\ti \mu'$ both project to $\rho$ or both project to $\bar \rho$; up to reversal we may assume the former. Consider the decomposition $\rho = \alpha \beta$ where $\alpha,\beta$ are legal and the turn $(\bar \alpha, \beta)$ is illegal and contained in $H_u$. There are induced decompositions $\ti \mu= \ti \alpha \ti \beta$ and $\ti \mu' = \ti \alpha' \ti \beta'$. If the intersection $\ti \mu \cap \ti \mu'$ contains the height $u$ illegal turn of $\ti \mu$ or $\ti \mu'$ then $\ti \mu = \ti \mu'$ so assume that it does not. After interchanging $\ti \mu$ and $\ti \mu'$ we may assume that $\ti \mu \cap \ti \mu' \subset \ti\beta$. Projecting down to $\rho$ we see that $\beta = \beta_1 \beta_2$ and $\alpha = \alpha_1 \alpha_2$ where $\beta_2=\alpha_1$ is the projection of $\ti\mu \cap \ti\mu'$. This implies the initial directions of $\bar \alpha_1$ and of $\beta_2$ (which are respectively equal to the terminal and initial directions of $\rho$) are fixed, distinct, and in~$H_u$, and so for all $k \ge 0$ the initial directions of $f^k_\#(\bar\alpha_1)$ and of $f^k_\#(\beta_2)$ are also fixed, distinct, and in $H_u$. Using the eigenlength function $l_\PF$ (Notations~\ref{NotationsFlaring}~\pref{ItemCTEigen}) we have $l_\PF(\alpha)=l_\PF(\beta)$ and $l_\PF(\alpha_1)=l_\PF(\beta_2)$ and so $l_\PF(\alpha_2)=l_\PF(\beta_1)$. Thus when the path $f^k_\#(\alpha_1) f^k_\#(\alpha_2)f^k_\#( \beta_1)f^k_\#(\beta_2)$ is tightened to form $f^k_\#(\rho) = \rho$ for large $k$, the subpaths $f^k_\#(\alpha_2)$ and $f^k_\#( \beta_1)$ cancel each other out, and so the concatenation $f^k_\#(\alpha_1) f^k_\#(\beta_2)$ tightens to $\rho$. But this contradicts that the two terms of the concatenation are $u$-legal and that the turn $\{f^k_\#(\bar\alpha_1),f^k_\#(\beta_2)\}$ taken at the concatenation point is also $u$-legal as seen above.

The remaining case is that orientations on the projections of $\ti\mu$ and $\ti\mu'$ do not agree. In this case there is either an initial or terminal subpath of $\rho$ that is its own inverse, which is impossible. \qed

\subsection{Flaring of the path functions $L_u$ and $L_\PF$}
\label{SectionPathFlaring}

In this section we continue to adopt Notations~\ref{NotationsFlaring}. 


For each path function $l$ on $G$ and each $\eta \ge 0$ we define a relation 
$\beta \sim \gamma$
on paths $\beta,\gamma$ in $G$ with endpoints at vertices: this relation means that there exist paths $\alpha,\omega$ with endpoints at vertices such that $\gamma = [\alpha \beta \omega]$ and such that $l(\alpha),l(\gamma) \le \eta$. Note that this relation is symmetric because $\beta = [\bar\alpha \gamma \bar\omega]$. When we need to emphasize the dependence of the relation on $l$ and $\eta$ we will write more formally $\beta \, \coarsesim{\eta}{l} \, \gamma$.

\begin{definition}
\label{DefPathFlaring}
A path function $l$ on $G$ is said to satisfy the \emph{flaring condition} with respect to $f \from G \to G$ if for each $\mu > 1$, $\eta \ge 0$ there exist integers $R \ge 1$ and $A \ge 0$ such that for any sequence of paths $\beta_{-R},\beta_{-R+1},\ldots,\beta_0,\ldots,\beta_{R-1},\beta_R$ in $G$ with endpoints at vertices, the flaring inequality 
$$ \mu \cdot l(\beta_0) \le \max\{ l(\beta_{-R}), l(\beta_R) \}
$$ 
holds if the following two properties hold:

\medskip

\textbf{Threshold property:} $l(\beta_0) \ge A$;

\medskip

\textbf{Pseudo-orbit property:} $\beta_r \coarsesim{\eta}{l} f_\#(\beta_{r-1})$ for each $-R < r \le R$.

\end{definition}

\noindent\textbf{Remark.} These two properties are translations into our present context of two requirements in the original treatment of flaring by Bestvina and Feighn \cite{BestvinaFeighn:combination}: the threshold property corresponds to the requirement of large ``girth''; and the ``pseudo-orbit property'' corresponds to the ``$\rho$-thin'' requirement.

\smallskip

\begin{proposition}[General Flaring]
\label{PropFlaringGeneral}
The path functions $L_u$ and $L_\PF$ each satisfy the flaring condition with respect to $f \from G \to G$, with constants $R = R_{\ref{PropFlaringGeneral}}(\mu,\eta)$ and $A = A_{\ref{PropFlaringGeneral}}(\mu,\eta)$.
\end{proposition}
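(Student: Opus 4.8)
The plan is to adapt the flaring theorem of Bestvina--Feighn--Handel \cite[Theorem 5.1 and Lemma 5.6]{\BookZeroTag}, which is exactly this statement in the special case $u = 1$ with $f$ an irreducible train track map having no \emph{closed} indivisible Nielsen path, while isolating the two new features of the present setting: the lower filtration element $G_{u-1}$, whose edges must contribute nothing, and the (possibly iterated) copies of the height-$u$ indivisible Nielsen path $\rho$, which must also contribute nothing. The path functions $L_u$, $L_\PF$ and the combinatorial lemmas of Section~\ref{SectionFlaringBasics} --- isolation of $\rho$ (Lemma~\ref{LemmaNoOverlap}), the formulas of Corollary~\ref{CorollaryLFormulas}, the coarse triangle inequality (Lemma~\ref{LemmaCTI}), and the quasi-isometry estimates (Lemma~\ref{LemmaLuBasics}) --- are designed precisely to reduce the proof to BFH's legality argument run inside the single \eg\ stratum $H_u$. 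First I would observe that it suffices to prove the flaring condition for $L_\PF$: by the two-sided comparison $\tfrac1K L_\PF \le L_u \le K L_\PF$ with $K = K_{\ref{LemmaUPFQI}}(f)$ (Corollary~\ref{CorollaryLFormulas}), a pseudo-orbit for $L_u$ with parameter $\eta$ is a pseudo-orbit for $L_\PF$ with parameter $K\eta$ (inserted pieces of bounded $L_u$ have bounded $L_\PF$), the two thresholds are comparable, and the resulting $L_\PF$-flaring inequality, applied with $K^2\mu$ in place of $\mu$, converts back to the desired $L_u$-inequality. One works with $L_\PF$ because the eigenvector equation $L_\PF(f(E)) = \lambda L_\PF(E)$ is exact.

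Next comes the \emph{$\rho$-free reduction}. Using the Corollary~\ref{CorollaryAltDecomp} decomposition $\sigma = \mu_0\nu_1\mu_1\cdots\nu_A\mu_A$ together with the identity $L_\PF(\sigma) = \sum_a l_\PF(\mu_a)$ of Corollary~\ref{CorollaryLFormulas}, the value $L_\PF(\sigma)$ only ``sees'' the $\rho$-free pieces $\mu_a$. Since $f_\#$ fixes each Nielsen insertion $\nu_a$ and tightening $f_\#(\mu_0)\,\nu_1\,f_\#(\mu_1)\cdots$ cancels only a bounded amount at each of the $\le 2A$ junctions (bounded cancellation for $f$), while the last clause of Corollary~\ref{CorollaryAltDecomp} bounds $A$ linearly in $L_u(\sigma)$ and hence in $L_\PF(\sigma)$, the pseudo-orbit relation $\beta_r \coarsesim{\eta}{L_\PF} f_\#(\beta_{r-1})$ descends --- after enlarging $\eta$ by a universal constant and invoking Lemma~\ref{LemmaCTI} --- to an analogous relation among the $\rho$-free pieces, with all extra error terms absorbable into the threshold constant $A$. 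Thus one may as well assume $\sigma$ contains no copy of $\rho$ or $\bar\rho$, so that $L_\PF(\sigma) = l_\PF(\sigma)$ genuinely counts $H_u$-content.

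With the $\rho$'s removed the argument is BFH's, run inside $H_u$: for a path $\sigma$ let its $u$-legality $\LEG(\sigma)$ be the largest $l_\PF$-length of a subpath all of whose $H_u$-turns are legal. One needs (i) a \emph{growth lemma}: $\LEG(f_\#(\sigma)) \ge \lambda\,\LEG(\sigma) - c$, since a legal piece stretches by exactly $\lambda$ under $f$ while bounded cancellation removes only a bounded amount at its two ends; hence once $\LEG(\sigma)$ exceeds a threshold one has $l_\PF(f_\#(\sigma)) \ge \LEG(f_\#(\sigma)) \ge \lambda_1\,\LEG(\sigma)$ for a fixed $\lambda_1 \in (1,\lambda)$, and this persists under iteration; and (ii) a \emph{legality-building lemma}: there is $N_0$ such that $f^{N_0}_\#(\sigma)$ has $\LEG$ at least a definite fraction of its $l_\PF$-length, because the number of $H_u$-illegal turns is non-increasing under $f_\#$ and every $H_u$-illegal turn surviving $N_0$ iterations lies in a copy of the unique height-$u$ indivisible Nielsen path $\rho$ (by the theory of \cts), which is absent after the previous reduction. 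Both facts hold equally for a relative train track representative of $\phi^\inv$ --- equivalently, for a homotopy inverse $\bar f$ as in the proof of Lemma~\ref{LemmaLuBasics} --- giving a backward version with its own rate $\lambda_1' > 1$. The resulting two-sided dichotomy is: for every $\rho$-free $\sigma$ with $l_\PF(\sigma)$ above a threshold there is a direction (forward or backward, depending on where the dominant legal core of $\sigma$ lies) in which $N_0$-step iteration thereafter multiplies $l_\PF$ by at least $\min(\lambda_1,\lambda_1')$ per step. Given $\mu > 1$ and the enlarged $\eta$, I would then choose $R$ so large that $\min(\lambda_1,\lambda_1')^{R-N_0}$ dominates $\mu$ even after an additive allowance for the $O(R\eta)$ error accumulated along the pseudo-orbit, and take $A$ large enough to swallow all additive errors (bounded cancellation, the $\coarsesim{\eta}{L_\PF}$-insertions, and the length possibly lost in the $N_0$-step transient in either direction). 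For a pseudo-orbit with $L_\PF(\beta_0) \ge A$, running the dichotomy for $\beta_0$ along $\beta_0,\beta_1,\dots,\beta_R$ or along $\beta_0,\beta_{-1},\dots,\beta_{-R}$ according to the winning direction yields $\max\{L_\PF(\beta_{-R}), L_\PF(\beta_R)\} \ge \mu\, L_\PF(\beta_0)$, which is the flaring inequality; translating back via the first reduction gives it for $L_u$ as well.

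The main obstacle --- and the reason the proposition is phrased with $L_u, L_\PF$ rather than naive edge-length --- is precisely the error-bookkeeping in the two reductions: one must verify that the metric effects of the $G_{u-1}$-connecting subpaths and of the iterated copies of $\rho$ enter only as additive, per-step-uniformly-bounded errors, so that the multiplicative growth coming from the \eg\ stratum eventually wins over a window of $R$ steps. Once the $L$-formalism of Section~\ref{SectionFlaringBasics} has done this and reduced matters to the $\rho$-free case inside $H_u$, the remaining content is BFH's legality/flaring argument and its symmetric backward counterpart.
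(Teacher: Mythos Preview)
Your outline has the right spirit but two of its load-bearing steps do not go through as written, and the paper takes a different route at precisely those points.

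The ``$\rho$-free reduction'' is not a well-defined operation. You cannot excise the $\nu_a$ pieces and run BFH's argument on ``the'' resulting $\rho$-free path, because the $\mu_a$'s do not concatenate to a path in $G$; and the iterates $f_\#(\mu_a)$ may themselves acquire new $\rho$-subpaths. Your own bookkeeping shows the reduction fails: you bound the number $A$ of junctions linearly in $L_u(\sigma)$, so the total cancellation error at a single step is proportional to $L_\PF(\sigma)$, not additive, and hence cannot be absorbed into a threshold. The paper never reduces to $\rho$-free paths. It proves a \emph{Special Flaring Lemma} (Lemma~\ref{LemmaSpecialFlaring}) for genuine $f$-orbits, stated directly in terms of $L_u$ (which already ignores $\rho$-content), and then reduces the pseudo-orbit statement to the true-orbit statement by choosing a true orbit $\gamma_r = f^{r+R}_\#(\gamma_{-R})$ with $\gamma_0 = \beta_0$ and bounding the discrepancies between $\gamma_r$ and $\beta_r$ inductively via Lemmas~\ref{LemmaCTI} and~\ref{LemmaLuBasics}; the bounds are polynomials in $C_{\ref{LemmaCTI}}, D_{\ref{LemmaLuBasics}}, E_{\ref{LemmaLuBasics}}, \eta$ depending on $R$ but not on the paths.

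The backward direction cannot be obtained from the homotopy inverse $\bar f$: it is not a relative train track map, so the legality growth lemma has no meaning for it; and a CT for $\phi^{-1}$ lives on a different marked graph, requiring a change-of-marking argument you do not supply. The paper's negative flaring (Lemma~\ref{flaring2}) uses a completely different mechanism: one studies subpaths of generic leaves of the \emph{repelling} lamination $\Lambda^-$ realized in $G$ and applies weak attraction theory (\SubgroupsThree) to show that $f^{-n}_\#$-preimages of such subpaths contain arbitrarily long $\Lambda^-$-leaf segments. The Special Flaring Lemma is then assembled in Section~\ref{SectionFlaringProof} by decomposing $f^{n_k}_\#(\gamma_k)$ into maximal $\Lambda^-$-leaf segments and complementary pieces, applying negative flaring to the former and the uniform splitting lemma \cite[Lemma~3.1]{HandelMosher:BddCohomologyI} together with positive flaring (Lemma~\ref{LemmaBddBelow}\pref{estimate3}) to the latter, with a ratio argument to show the latter dominate.
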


Much of the work is to prove the following, which is a version of Proposition~\ref{PropFlaringGeneral} under the special situation of Definition~\ref{DefPathFlaring} where $l=L_\PF$ and $\eta = 0$. Note that $\eta=0$ implies that $\beta_{r}=f_\#(\beta_{r-1})$, and so the choice of $\gamma = \beta_{-R}$ determines the whole sequence up to $\beta_R$.

\begin{lemma}[Special Flaring for $L_u$]
\label{LemmaSpecialFlaring}
For any $\nu>1$ there exist positive integers $N \ge 1$ and $A=A_{\ref{LemmaSpecialFlaring}} \ge 0$ so that if $\gamma$ is a finite path in $G$ with endpoints at vertices and if $L_u(f^N_\#(\gamma)) \ge A$ then 
$$\nu L_u(f^N_\#(\gamma)) \le \max\{ L_u(\gamma), L_u(f^{2N}_\#(\gamma)) \}
$$
\end{lemma}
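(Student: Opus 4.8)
The plan is to reduce everything to the classical flaring statement of \BookZero, applied not to $G$ itself but to a quotient graph that kills the penultimate stratum, and then to carefully control the two extra complications present in our setting: edges of $G_{u-1}$ and Nielsen-path subpaths $\rho,\bar\rho$. First I would pass to the Corollary~\ref{CorollaryAltDecomp} decomposition $\gamma = \mu_0\,\nu_1\,\mu_1\,\ldots\,\nu_A\,\mu_A$, so that by Corollary~\ref{CorollaryLFormulas} we have $L_u(\gamma) = \sum_{a} l_u(\mu_a)$, and the $\nu_a$ (iterates of $\rho$ or $\bar\rho$) contribute nothing. The key point established in Section~\ref{SectionFlaringBasics}, and which I would lean on repeatedly, is that $f_\#$ essentially acts diagonally with respect to this decomposition: $f^j_\#(\gamma)$ tightens to $[f^j_\#(\mu_0)\,\nu_1\,f^j_\#(\mu_1)\,\ldots]$ because each $\nu_a$, being a (power of a) Nielsen path, is fixed by $f_\#$, and because $\rho,\bar\rho$ subpaths never overlap (Lemma~\ref{LemmaNoOverlap}). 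Thus up to the uniformly bounded cancellation controlled by the coarse triangle inequality Lemma~\ref{LemmaCTI}, the quantity $L_u(f^j_\#(\gamma))$ is the sum over $a$ of $L_u$ of the individual iterated pieces $f^j_\#(\mu_a)$, and each such piece has \emph{no} $\rho$ or $\bar\rho$ subpath interior to it (again by isolation of $\rho$), so on it $L_u$ simply counts $H_u$-edges.

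Next I would invoke the underlying flaring result for legal-ish behavior of $f$ in the top \eg\ stratum --- namely the content of \cite[Theorem 5.1 and Lemma 5.6]{\BookZeroTag}, now in its relative form as announced in the remarks preceding this subsection. Concretely, for a path with no $\rho,\bar\rho$ subpath, the function $L_u$ measures the ``$H_u$-length'', and the standard train-track flaring dichotomy says that along the orbit $\mu, f_\#(\mu), f^2_\#(\mu),\ldots$ the $H_u$-length cannot stay roughly constant once it is large: either it has already grown by a definite factor after $N$ steps (looking forward), or it was a definite factor larger $N$ steps ago (looking backward), where $N$ depends only on the desired multiplier $\nu$. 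The mechanism is the usual one: in $f^N_\#(\mu)$, either a definite proportion of the $H_u$-length is carried on legal subsegments long enough to grow like $\lambda^N$, or else the illegal turns are dense enough that the cancellation bound forces $\mu$ itself (i.e.\ $N$ steps earlier) to have had comparably large $H_u$-length. Summing this dichotomy over the pieces $\mu_a$ --- and here is where I would be careful --- does not immediately give the lemma, because different pieces might flare in different directions; so the honest route is to apply the dichotomy to $\gamma$ as a whole via its eigenlength $L_\PF$ using the quasicomparability $\frac1K L_\PF \le L_u \le K L_\PF$ of Corollary~\ref{CorollaryLFormulas}, and to use the ``Big $L$'' eigenvector equation $L_\PF(f(E)) = \lambda L_\PF(E)$ together with the bounded-cancellation estimate Lemma~\ref{LemmaLuBasics} to get a single uniform statement for $L_\PF(f^N_\#(\gamma))$.

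The remaining bookkeeping: choose $N$ large enough that $\lambda^N$ beats $\nu$ by a wide enough margin to absorb both the quasicomparability constant $K$ and the additive bounded-cancellation error, and choose the threshold $A$ large enough that the additive error $E$ in Lemma~\ref{LemmaLuBasics} and the $A\cdot C_{\ref{LemmaCTI}}$-type terms in Lemma~\ref{LemmaCTI} are negligible compared to the multiplicative gain — the bound $A \le 2(l_u(\mu_0)+\cdots+l_u(\mu_A)) + 3 = 2L_u(\gamma)+3$ from the proof of Lemma~\ref{LemmaLuBasics} is exactly what makes those additive errors controllable in terms of $L_u(\gamma)$ itself. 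I expect the \textbf{main obstacle} to be the geometric case, where $\rho$ is closed and iterates $\rho^n$ of arbitrarily large exponent appear as the $\nu_a$: one must be sure that the presence of a long $\rho^n$ does not contaminate the $H_u$-length count of the adjacent pieces $\mu_{a-1},\mu_a$ and does not create spurious cancellation when $f^N_\#$ is applied and the result is tightened. This is handled precisely by Lemma~\ref{LemmaNoOverlap} (distinct $\rho^{\pm}$ subpaths are edge-disjoint) and Corollary~\ref{CorollaryAltDecomp}\pref{ItemEveryOtherHasHu} (in the geometric case every $\mu_a$, $1\le a\le A-1$, contains an $H_u$-edge, so the number of pieces is linearly bounded by $L_u(\gamma)$), but verifying that the tightening of $[f^N_\#(\mu_0)\,\nu_1\,f^N_\#(\mu_1)\,\ldots]$ really does respect the decomposition --- i.e.\ that no cancellation eats across a $\nu_a = \rho^{n_a}$ --- is the delicate step and will need the legality of the $u$-legal pieces $\alpha,\beta$ of $\rho$ and the fixed-direction argument already used in the proof of Lemma~\ref{LemmaNoOverlap}.
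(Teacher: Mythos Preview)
Your proposal has a genuine gap: the flaring input you want to invoke for the individual pieces $\mu_a$ does not exist as a black box. You write that you would ``invoke the underlying flaring result \ldots\ namely the content of \cite[Theorem 5.1 and Lemma 5.6]{\BookZeroTag}, now in its relative form as announced in the remarks preceding this subsection.'' But that relative form is precisely Lemma~\ref{LemmaSpecialFlaring} itself; the cited theorems of \BookZero\ treat only the absolute case $u=1$ with no lower stratum, so appealing to a ``relative version'' is circular. Your fallback---``apply the dichotomy to $\gamma$ as a whole via its eigenlength $L_\PF$'' using the Big~$L$ eigenvector equation and Lemma~\ref{LemmaLuBasics}---does not supply flaring either: the eigenvector equation holds only edgewise, and for a path with illegal turns of height~$u$ one has neither $L_\PF(f_\#(\gamma)) \ge \lambda L_\PF(\gamma)$ nor any backward inequality strong enough to force growth. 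You also correctly flag that different $\mu_a$ could flare in different directions, but you do not resolve it; summing piecewise flaring does not give flaring of the sum.

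The paper's proof uses a completely different decomposition, applied to $f^{N}_\#(\gamma)$ rather than to $\gamma$, and based not on $\rho$-subpaths but on overlap with the \emph{repelling} lamination $\Lambda^-$. Arguing by contradiction, one cuts $f^{n_k}_\#(\gamma_k)$ into maximal subpaths $\mu_{ik}$ that are long subpaths of leaves of $\Lambda^-$, and complementary pieces $\nu_{jk}$ with $\ell^-_u(\nu_{jk})$ bounded. The $\mu_{ik}$ pieces are handled by a separate ``negative flaring'' lemma (Lemma~\ref{flaring2}): their preimages in $\gamma_k$ are much longer, so if they carried a definite fraction of $L_u$ one would get $L_u(\gamma_k)$ too large. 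The $\nu_{jk}$ pieces are handled by a ``positive flaring'' lemma (Lemma~\ref{LemmaBddBelow}\pref{estimate3}): because $\ell^-_u(\nu_{jk})$ is bounded, a uniform splitting lemma forces $f^d_\#(\nu_{jk})$ to split into $u$-legal pieces and copies of $\rho^{\pm1}$, after which genuine exponential growth in $L_\PF$ kicks in and makes $L_u(f^{2n_k}_\#(\gamma_k))$ too large. The $\Lambda^-$ dichotomy, not the $\rho$-decomposition, is the organizing idea you are missing.
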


\smallskip

The proof of this special flaring condition, which takes up Sections~\ref{SectionNegativeFlaring}--\ref{SectionFlaringProof}, is similar in spirit to \cite[Theorem 5.1]{\BookZeroTag}, and is based on two special cases: a ``negative flaring'' result, Lemma~\ref{flaring2} in Section~\ref{SectionNegativeFlaring}; and a ``positive flaring'' result, Lemma~\ref{LemmaBddBelow}~\pref{estimate3} in Section~\ref{SectionPositiveFlaring}. These two special cases are united in Section~\ref{SectionFlaringProof}, using the uniform splitting property given in Lemma~3.1 of \cite{HandelMosher:BddCohomologyI}, in order to prove the Special Flaring Condition.

Before embarking on all that, we apply Lemma~\ref{LemmaSpecialFlaring} to:

\begin{proof}[Proof of Proposition~\ref{PropFlaringGeneral}] Applying Corollary~\ref{CorollaryLFormulas} it is easy to see that $L_u$ satisfies a flaring condition if and only if $L_\PF$ satisfies a flaring condition. Thus it suffices to assume that the special flaring condition for $L_u$ holds, and use it to prove the general flaring condition for $L_u$. The idea of the proof is standard: the exponential growth given by Proposition~\ref{PropFlaringGeneral} swamps the constant error represented by the relation $\coarsesim{\eta}{L_u}$ which we will denote in shorthand as $\sim$. The hard work is to carefully keep track of various constants and other notations.

Fixing $\mu>1$ and $\eta \ge 0$, consider the relation $\sim$ given formally as $\coarsesim{\eta}{L_u}$. Fix an integer $R \ge 1$ whose value is to be determined --- once an application of Lemma~\ref{LemmaSpecialFlaring} has been arranged, $R$ will be set equal to the constant $N$ of that lemma.

Consider a sequence of paths 
$$\beta_{-R},\beta_{-R+1},\ldots,\beta_0,\ldots,\beta_{R-1},\beta_R
$$ 
in~$G$ with endpoints at vertices, such that we have
$$\beta_r \sim f_\#(\beta_{r-1}) \quad\text{for}\quad -R < r \le R
$$ 
Choose a vertex $V \in G$ which is $f$-periodic, say $f^K(V)=V$ for some integer $K \ge 1$. We may assume that $\beta_0$ has endpoints at $V$: if not then there are paths $\alpha'_0,\omega'_0$ of uniformly bounded length such that $\beta'_0 = [\alpha'_0\beta_0\omega'_0]$ has endpoints at $V$, and replacing $\beta_0$ with $\beta'_0$ reduces the flaring property as stated to the flaring property with uniform changes to the constants $\mu$, $\eta$, and~$A$.

We choose several paths with endpoints at vertices, denoted $\alpha_r,\omega_r,\gamma_r,\alpha'_r,\omega'_r$, as follows. First, using that $f_\#(\beta_{r-1}) \sim \beta_r$, there exist $\alpha_r,\omega_r$ so that $L_u(\alpha_r),L_u(\omega_r) \le \eta$ and $\beta_r = [\alpha_r \, f_\#(\beta_{r-1}) \, \omega_r]$ and hence $f_\#(\beta_{r-1}) = [\bar\alpha_r \beta_r \bar \omega_r]$. Next, anticipating an application of Lemma~\ref{LemmaSpecialFlaring}, choose $\gamma_{-R}$ so that $f^R_\#(\gamma_{-R})=\beta_0$, and then for $-R \le r \le R$ let $\gamma_r = f^{r+R}_\#(\gamma_{-R})$, hence $\gamma_0=\beta_0$. Finally, choose $\alpha'_r,\omega'_r$ so that $\gamma_r = [\alpha'_r \beta_r \omega'_r]$ and hence $\beta_r = [\bar\alpha'_r \gamma_r \bar\omega'_r]$ (in particular $\alpha'_0,\omega'_0$ are trivial paths). We also require $\alpha'_r,\omega'_r$ to be chosen so that if $r > -R$ then we have
\begin{align*}
f_\#(\beta_{r-1}) &= f_\#(\bar\alpha'_{r-1} \, \gamma_{r-1} \, \bar\omega'_{r-1}) \\
 &= [f_\#(\bar\alpha'_{r-1}) \, \gamma_r \, f_\#(\bar\omega'_{r-1})] \\
 &= [f_\#(\bar\alpha'_{r-1}) \, \alpha'_r \, \beta_r \, \omega'_r \, f_\#(\bar\omega'_{r-1})] \\
 &= \bigl[\, \underbrace{[f_\#(\bar\alpha'_{r-1}) \, \alpha'_r]}_{= \, \bar\alpha_r} \, \beta_r \, \underbrace{[\omega'_r \, f_\#(\bar\omega'_{r-1})]}_{= \, \bar\omega_r}\, \bigr] 
\end{align*}
from which we record the following identities:
$$(*) \qquad \bar\alpha_r = [f_\#(\bar\alpha'_{r-1}) \alpha'_r] \qquad \bar\omega_r = [\omega'_r f_\#(\bar\omega'_{r-1})]
$$
To see why these choices of $\gamma_r,\alpha'_r,\omega'_r$ are possible, we work with a lift to the universal cover $\ti f \from \wt G \to \wt G$. First choose any lift $\ti\beta_{-R}$. By induction for $-R< r \le R$ the lifts $\ti\alpha_r$, $\ti\omega_r$, $\ti\beta_r$ are determined by the equation $\ti\beta_r = [\ti\alpha_r \ti f_\#(\beta_{r-1}) \ti\omega_r]$. Using $f$-periodicity of $V$, the $\ti f^R$-preimages of the initial and terminal vertices of $\ti\beta_0$ contain vertices of $\wt G$ amongst which we choose the initial and terminal vertices of $\tilde\gamma_{-R}$, respectively; it follows that $\ti f^R_\#(\ti\gamma_{-R}) = \ti\beta_0$. Then define $\ti\gamma_r = \ti f^{r+R}_\#(\ti\gamma_{-R})$, define $\ti\alpha'_r$ to be the path from the initial vertex of $\ti\gamma_r$ to the initial vertex of $\ti\beta_r$, and define $\ti\omega'_r$ to be the path from the terminal vertex of $\ti\beta_r$ to the terminal vertex of $\ti\gamma_r$. Projecting down from $\wt G$ to $G$ we obtain the paths $\gamma_r,\alpha'_r,\omega'_r$. The identities $(*)$ follow from the evident fact that the paths $\ti\alpha'_r$, $\ti\alpha_r$ may be concatenated to form a path $\ti\alpha'_r \ti\alpha_r$, that the two paths $\ti\alpha'_r \ti\alpha_r$ and $f_\#(\ti\alpha'_{r-1})$ have the same initial endpoint as $f_\#(\gamma'_{r-1})=\gamma_r$, and that those two paths have the same terminal endpoint as the initial endpoint of $\ti f_\#(\ti\beta_{r-1})$; and similarly for the $\omega$'s.  

We need new upper bounds on the quantities $L_u(\alpha'_r)$ and $L_u(\omega'_r)$ which represent the difference between $L_u(\beta_r)$ and $L_u(\gamma_r)$. These bounds will be expressed in terms of the known upper bound $\eta$ on $L_u(\alpha_r)$ and $L_u(\omega_r)$, and will be derived by applying applying Lemmas~\ref{LemmaCTI} and~\ref{LemmaLuBasics} inductively, starting from $L_u(\alpha'_0)=L_u(\omega'_0)=0$ and applying $(*)$ in the induction step. These new upper bounds will then be used to derive an expression for the threshold constant $A_{\ref{PropFlaringGeneral}}$, which will be so large so that the differences $L_u(\alpha'_r)$, $L_u(\omega'_r)$ become insignificant.

The new upper bounds have the form 
$$L_u(\alpha'_{r}), L_u(\omega'_{r}) \le F_r(C,D,E,\eta) \quad\text{for $-R \le r \le R$}
$$
where for each $r$ the expression $F_r(C,D,E,\eta)$ represents a certain polynomial with integer coefficients in the variables $C,D,E,\eta$ and where we substitute 
$$C = C_{\ref{LemmaCTI}}, \quad D=D_{\ref{LemmaLuBasics}}, \quad E=E_{\ref{LemmaLuBasics}}
$$ 
The proofs of these inequalities are almost identical for $\alpha'$ and~for~$\omega'$; we carry out the proof for the latter. To start the induction, since $\gamma_0=\beta_0$ the path $\omega'_0$ degenerates to a point and so $L_u(\omega'_0)=0 \equiv F_0(C,D,E,\eta)$. Inducting in the forward direction on the interval $1 \le r \le R$, and using from $(*)$ that 
\begin{align*}
\omega'_{r} &= [\bar \omega_{r} f_\#(\omega'_{r-1})] \\
\intertext{we have}
L_u(\omega'_{r}) &\le L_u(\omega_{r}) + L_u(f_\#(\omega'_{r-1})) + C \\
 &\le \eta + \bigl(D \cdot F_{r-1}(C,D,E,\eta) + E \bigr) + C \\ &\equiv F_r(C,D,E,\eta)
\end{align*}
Inducting in the backward direction on the interval $-R \le r \le -1$, and using from $(*)$ that 
$$f_\#(\omega'_{r}) = [\omega_{r+1} \, \omega'_{r+1}]
$$
we have
\begin{align*}
L_u(\omega'_{r}) &\le D \cdot L_u(f_\#(\omega'_{r})) + E \\
 &\le D \cdot (L_u(\omega_{r+1}) + L_u(\omega'_{r+1})+C) + E \\
 &\le D (\eta + F_{r+1}(C,D,E,\eta)+C) + E \\ &\equiv F_r(C,D,E,\eta)
\end{align*} 
To summarize, we have proved
\begin{align*}
L_u(\alpha'_{-R}), L_u(\omega'_{-R}) &\le F_{-R}(C,D,E,\eta) \\
L_u(\alpha'_{R}), L_u(\omega'_{R}) &\le F_R(C,D,E,\eta)
\end{align*}
From this it follows that
\begin{align*}
L_u(\gamma_{-R}) &\le L_u(\alpha'_{-R}) + L_u(\beta_{-R}) + L_u(\omega'_{-R}) + 2C \\
 &\le L_u(\beta_{-R}) + 2F_{-R}(C,D,E,\eta) + 2C
\intertext{and similarly}
L_u(\gamma_{R}) &\le L_u(\beta_{R}) + 2F_R(C,D,E,\eta) + 2C
\end{align*}

Let $M = 2\max\{F_{-R}(C,D,E,\eta) + F_R(C,D,E,\eta)\} + 2C$, which we use below in setting the value of the threshold constant $A_{\ref{PropFlaringGeneral}}$.

Now apply Lemma~\ref{LemmaSpecialFlaring}, the special flaring condition for $L_u$, with the constant $\nu = 2\mu-1 >1$, to obtain integers $N \ge 1$ and $A_{\ref{LemmaSpecialFlaring}}$. Setting $R=N$, from the threshold requirement $L_u(\beta_0) \ge A_{\ref{LemmaSpecialFlaring}}$ it follows that 
\begin{align*}
\nu L_u(\beta_0) &= \nu L_u(\gamma_0) \\ 
 &\le \max\{L_u(\gamma_{-R}),L_u(\gamma_{R})\} \\
 &\le \max\{L_u(\beta_{-R}),L_u(\beta_{R})\} + M \\
\intertext{With the additional threshold requirement $L_u(\beta_0) \ge \frac{2 M}{\nu-1}$ we have:}
\nu L_u(\beta_0) &\le \max\{L_u(\beta_{-R}),L_u(\beta_{R})\} + \frac{\nu-1}{2} L_u(\beta_0) \\
\mu L_u(\beta_0) = \frac{\nu+1}{2} L_u(\beta_0) &\le \max\{L_u(\beta_{-R}), L_u(\beta_{R})\}
\end{align*}
Thus we have proved the (general) flaring condition for $L_u$ given any $\mu \ge 1$, $\eta \ge 0$, using $R_{\ref{PropFlaringGeneral}}=N$ and $A_{\ref{PropFlaringGeneral}} = \max\{A_{\ref{LemmaSpecialFlaring}},\frac{2M}{\nu-1}\}$.
\end{proof}


\subsection{Negative flaring of $L_u$} 
\label{SectionNegativeFlaring}

We continue to adopt Notations~\ref{NotationsFlaring}. 

In the context of Lemma~\ref{LemmaSpecialFlaring} on Special Flaring for $L_u$, the next lemma establishes flaring in the ``negative direction''. 

For any path $\sigma \subset G$, define $\lum(\sigma)$ to be the maximum of $l_u(\tau)$ over all paths $\tau$ in $G$ such that that $\tau$ is a subpath both of $\sigma$ and of some leaf $\ell$ of $\Lambda^-$ realized in~$G$. In this definition we~may always assume that $\ell$ is a generic leaf of $\Lambda^-$, because every finite subpath of every leaf is a subpath of every generic leaf. Notice that if $\sigma$ is already a subpath of a leaf of $\Lambda^-$ then $\lum(\sigma)=l_u(\sigma)$.
 
\begin{lemma} \label{flaring2} There exists $L_{\ref{flaring2}} \ge 1$ such that for each $L \ge L_{\ref{flaring2}}$ and each $a>0$ there exists an integer $N \ge 1$, so that the following holds: for each $n \ge N$, for each finite subpath $\tau$ of a generic leaf $\ell$ of $\Lambda^-$ realized in $G$ such that $\tau$ has endpoints at vertices and such that $\lum(\tau) = l_u(\tau) = L$, and for each finite path $\sigma$ in $G$, if $f^n_\#(\sigma) = \tau$ then $\lum(\sigma) \ge aL$. 
\end{lemma}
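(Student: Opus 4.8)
The plan is to prove Lemma~\ref{flaring2} by a one-step-at-a-time backward induction, tracking a long $\Lambda^-$-leaf segment inside the successive $f_\#$-preimages of $\sigma$. Write $\sigma_k = f^{n-k}_\#(\sigma)$ for $0 \le k \le n$, so that $\sigma_0 = \tau$, $\sigma_n = \sigma$, and $\sigma_{k-1} = f_\#(\sigma_k)$. I would prove by induction on $k$ that $\sigma_k$ contains a subsegment $\mu_k$, with endpoints at vertices, which is a subsegment of some leaf of $\Lambda^-$ realized in $G$ and which satisfies $l_u(\mu_k) \ge \mu^k L - K(\mu^{k-1} + \cdots + 1)$, where $\mu > 1$ and $K \ge 0$ are constants depending only on $f$ (determined below); the base case is $\mu_0 = \tau$ itself, using that $\lum(\tau) = l_u(\tau) = L$. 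Granting the induction, one has $l_u(\mu_k) \ge \mu^k\bigl(L - \tfrac{K}{\mu-1}\bigr)$, so setting $L_{\ref{flaring2}} := \lceil 2K/(\mu-1)\rceil$ gives $l_u(\mu_n) \ge \tfrac12 \mu^n L$ whenever $L \ge L_{\ref{flaring2}}$; then choosing $N := \lceil \log_\mu(2a)\rceil$ yields $\lum(\sigma) = \lum(\sigma_n) \ge l_u(\mu_n) \ge aL$ for every $n \ge N$. Note that $L_{\ref{flaring2}}$ depends only on $f$ while $N$ depends on $a$, as required.

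The inductive step has two ingredients. The first is purely dynamical: leaf segments of the repelling lamination $\Lambda^-$ \emph{contract} under $f_\#$, i.e.\ there exist $\mu > 1$ and $c_1 \ge 0$, depending only on $f$, so that $l_u(f_\#(\delta)) \le \mu^{-1} l_u(\delta) + c_1$ for every subsegment $\delta$ of a leaf of $\Lambda^-$ realized in $G$ with endpoints at vertices --- equivalently, $f^{-1}_\#$ expands such segments. This is standard train track theory applied to a relative train track representative of $\phi^\inv$, for which $\Lambda^-$ is the attracting lamination corresponding to the relevant \eg\ stratum; see \BookZero. Using that $\Lambda^-$ is $\phi$-invariant, I would lift to $\wt G$, let $\wt\ell_{k-1}$ be the lift of the leaf carrying $\mu_{k-1}$, and let $\wt\ell_k = (\wt f_\#)^\inv(\wt\ell_{k-1})$ be the (uniquely determined) preimage leaf --- this makes sense because $\wt f_\#$ is a bijection on lines of $\wt G$ carrying lifts of $\Lambda^-$-leaves to lifts of $\Lambda^-$-leaves. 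Then there is a minimal subsegment $\nu_k \subseteq \wt\ell_k$ with $\wt f_\#(\nu_k) \supseteq \mu_{k-1}$, and the contraction estimate gives $l_u(\nu_k) \ge \mu\bigl(l_u(\mu_{k-1}) - c_1\bigr)$.

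The second ingredient --- and the step I expect to be the main obstacle --- is to show that $\sigma_k$ actually \emph{contains} a $\Lambda^-$-leaf segment of $l_u$-length at least $l_u(\nu_k) - c_2$ for a constant $c_2$ depending only on $f$. The natural candidate is $\sigma_k \cap \wt\ell_k$, which is a segment since $\wt G$ is a tree, and one reduces, via the single-step bounded cancellation lemma \BH\ together with the minimality of $\nu_k$, to controlling how much of $\mu_{k-1}$ can be covered by the $\wt f_\#$-images of the two portions of $\sigma_k$ lying \emph{off} the leaf $\wt\ell_k$. Here one must invoke the structure of $\Lambda^-$-leaves realized in $G$ --- they are quasigeodesics whose $H_u$-illegal turns occur in a controlled pattern preserved up to bounded error by $f_\#$ and $f^{-1}_\#$: either those off-leaf portions contribute only a bounded amount to $\mu_{k-1}$, in which case $l_u(\sigma_k \cap \wt\ell_k) \ge l_u(\nu_k) - c_2$ directly, or one of them maps along $\wt\ell_{k-1}$ for a long stretch, which (by the forward Lipschitz bounds of Lemma~\ref{LemmaLuBasics} and the leaf structure) forces that off-leaf portion itself to be a long subsegment of some \emph{other} leaf of $\Lambda^-$, again producing a $\mu_k \subseteq \sigma_k$ of the required length. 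Combining the two ingredients gives $l_u(\mu_k) \ge \mu\, l_u(\mu_{k-1}) - (\mu c_1 + c_2)$, so the induction closes with $K := \mu c_1 + c_2$. The delicate bookkeeping point throughout is to guarantee that every error term is a constant depending only on $f$ --- in particular that one invokes bounded cancellation only for $f$ itself, never for the high iterate $f^n$, whose cancellation constant grows like $\lambda^n$ and would be far too large for the threshold $L_{\ref{flaring2}}$ to absorb.
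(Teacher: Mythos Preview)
Your ingredient~(2) has a genuine gap. In the second branch of your dichotomy you assert that if an off-leaf portion $\alpha$ of $\sigma_k$ has $\wt f_\#$-image running along $\wt\ell_{k-1}$ for a long stretch, then $\alpha$ itself must be a long segment of some $\Lambda^-$-leaf. This is not supported by Lemma~\ref{LemmaLuBasics} (which concerns $L_u$, not leaf membership), and it is essentially the single-step instance of the statement you are trying to prove, so the reasoning is circular. Moreover, since $\ti f$ is a quasi-isometry of the tree $\wt G$, the induced map on lines is a bijection; hence $\wt\ell_k$ is the \emph{unique} line with image $\wt\ell_{k-1}$, and there is no ``other leaf'' of $\Lambda^-$ mapping to $\wt\ell_{k-1}$ along which $\alpha$ could lie for that reason. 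An arbitrary path whose $f_\#$-image happens to coincide with a long $\Lambda^-$-leaf segment need not itself be a leaf segment, and nothing you invoke rules this out. (A lesser issue: ingredient~(1) also needs more than a citation to \BookZero, since contraction of $\Lambda^-$-leaves must be established in~$G$, not in a train track graph for~$\phi^{-1}$; the transfer between the two graphs is where the work lies.)

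The paper takes a completely different route that sidesteps this difficulty. It argues by contradiction, extracting sequences $n_i \to \infty$, $\sigma_i$, $\tau_i$ with $l_u(\tau_i) = L$, $f^{n_i}_\#(\sigma_i) = \tau_i$, and $\lum(\sigma_i) < aL$. The constant $L_{\ref{flaring2}}$ is chosen via quasiperiodicity so that any $\Lambda^-$-leaf segment with $l_u \ge L_{\ref{flaring2}}$ has at least three height-$u$ illegal turns that are not absorbed into a copy of $\rho^{\pm 1}$. If $l_u(\sigma_i)$ stays bounded, a uniform splitting argument shows $\tau_i$ can have at most two such turns, a contradiction. If $l_u(\sigma_i)$ is unbounded, the paper passes to a weak limit line $M$ of the $\sigma_i$ crossing $H_u$ and applies the weak attraction results of \SubgroupsThree: either $\Lambda^- \subset \overline{M}$ (ruled out by $\lum(\sigma_i) < aL$), or $M$ is weakly attracted to $\Lambda^+$ (ruled out since $l_u(\tau_i) = L$ stays bounded), or in the geometric case $M = \rho^{\pm\infty}$ (ruled out by a separate argument). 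The point is that this trichotomy applies to \emph{arbitrary} lines, not just leaf segments, and supplies exactly the structural information that your one-step bounded-cancellation scheme cannot.
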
 

Intuitively, at least in the nongeometric case (see the proof of Proposition 6.0.8 of \BookOne, page 609), the point of this lemma is that in any leaf of $\Lambda^-$, the illegal turns have a uniform density with respect to the path function $l_u$, and those turns die off at an exponential rate under iteration of $f_\#$.

\begin{proof} For the proof, a height~$u$ illegal turn in a path $\sigma$ is said to be \emph{out of play} if it is the illegal turn in some $\rho$ or $\bar\rho$ subpath of $\sigma$, otherwise that turn is \emph{in play}.

The generic leaf $\ell$ of $\Lambda^-$, as realized in $G$, is birecurrent, is not periodic, and is not weakly attracted to $\Lambda^+$ under iteration of $f_\#$. It follows that $\ell$ does not split as a concatenation of $u$-legal paths and copies of $\rho$ and $\bar\rho$. By applying Lemma~\ref{LemmaNoOverlap} it follows that $\ell$ has an illegal turn that is in play. In addition, $\ell$ is quasiperiodic with respect to edges of $H_u$ \cite[Lemma 3.1.8]{\BookOneTag}: for any integer $L>0$ there exists an integer $L'>0$ such that for any finite subpaths $\alpha$, $\beta$ of $\ell$, if $l_u(\alpha) = L$ and $l_u(\beta) \ge L'$ then $\beta$ contains a subpath which is a copy of $\alpha$. It follows that there exists an integer $L_{\ref{flaring2}}>0$ so that if $\tau$ is a subpath of $\ell$ such that $l_u(\tau) \ge L_{\ref{flaring2}}$, then $\tau$ has at least three height $u$ illegal turns that are in play.
 
Arguing by contradiction, if the lemma fails using the value of $L_{\ref{flaring2}}$ just given then there exist $L \ge L_{\ref{flaring2}}$, $a>0$, positive integers $n_i \to +\infinity$, finite paths $\tau_i$ in $G$ with endpoints at vertices, and paths $\sigma_i$ in $G$, such that $\tau_i$ is a subpath of $\ell$, and $l_u(\tau_i) = L$, and $f^{n_i}_\#(\sigma_i) = \tau_i$, and $\lum(\sigma_i) < a L$. We derive contradictions in separate cases. 

\textbf{Case 1: $l_u(\sigma_i)$ has an upper bound $B$.} Decompose $\sigma_i$ as $\sigma_i = \epsilon^-_i \eta_i \epsilon^+_i$ where $\epsilon^\pm_i$ are each either partial edges or trivial, and where $\eta_i$ is a path with endpoints at vertices. There exists a positive integer $d$ depending only on $B$ such that for each $i$ the path $f^d_\#(\eta_i)$ has a splitting into terms each of which is either a single edge in $H_u$, a copy of $\rho$ or $\bar\rho$, or a subpath of $G_{u-1}$, and therefore each height~$u$ illegal turn in the interior of $f^d_\#(\eta_i)$ is out of play (see \recognition\ Lemma 4.25; also see \hbox{\SubgroupsOne} Lemma~1.53). For each $i$ such that $n_i \ge d$ the paths $f^{n_i}_\#(\epsilon^\pm_i) $ are each $u$-legal, and each height $u$ illegal turn of $f^{n_i}_\#(\eta_i)= f^{n_i-d}_\#(f^d_\#(\eta_i))$ is out of play. Since $\tau_i$ is obtained from $f^{n_i}_\#(\epsilon^-_i) f^{n}_\#(\eta_i) f^{n_i}_\#(\epsilon^+_i)$ by tightening, at most two illegal turns of~$\tau_i$ are in play, a contradiction to our choice of $L_{\ref{flaring2}}$. 

\textbf{Case 2: $l_u(\sigma_i)$ has no upper bound.} Consider a line $M$ in $G$ which is a weak limit of a subsequence $\sigma_{i_m}$ such that $M$ crosses at least one edge of~$H_u$. We apply to each such $M$ the weak attraction results of \SubgroupsThree\ as follows. If $H_u$ is non-geometric then there are two options for $M$: either the closure of $M$ contains $\Lambda^-$; or $M$ is weakly attracted to $\Lambda^+$ (see Lemma 2.18 of \SubgroupsThree). If $H_u$ is geometric --- and so a height $u$ closed indivisible Nielsen path $\rho$ exists --- then there is a third option, namely that $M$ is a bi-infinite iterate of $\rho$ or~$\bar \rho$ (see Lemma 2.19 of \SubgroupsThree). 

Since no $\sigma_i$ contains a subpath of a leaf of $\Lambda^-$ that crosses $aL$ edges of $H_u$, neither does~$M$. This shows that the closure of $M$ does not contain $\Lambda^-$. If $M$ were weakly attracted to $\Lambda^+$ then for any $K>0$ there would exist $l> 0$ such that $f^l_\#(M)$, and hence $f^l_\#(\sigma_{i_m})$ for all sufficiently large $m$, contains a $u$-legal subpath that crosses $2K+1$ edges of $H_u$. By \BookOne\ Lemma 4.2.2 we can choose $K$ so that $f^l_\#(\sigma_{i_m})$ splits at the endpoints of the middle $H_u$ edge of that subpath, and hence the number of edges of $H_u$ crossed by $\tau_{i_m} = f_\#^{n_{i_m}-l}(f^l_\#(\sigma_{i_m}))$ goes to infinity with $m$, contradicting that $l_u(\tau_i)=L$.

We have shown that each of the first two options leads to a contradiction for any choice of $M$ as above. This concludes the proof if $H_u$ is nongeometric. 

It remains to show that if $H_u$ is geometric then the third option can also be avoided by careful choice of~$M$, and hence the desired contradiction is achieved. That is, we show that there exists a weak limit $M$ of a subsequence of $\sigma_i$ such that $M$ contains at least one edge of $H_u$ and $M$ is not a bi-infinite iterate of the closed path $\rho$ or $\bar \rho$. This may be done by setting up an application of Lemma 1.11 of \SubgroupsThree, but it is just as simple to give a direct proof. Lift~$\sigma_i$ to the universal cover of $G$ and write it as an edge path $\ti \sigma_i = \wt E_{i1} \wt E_{i2} \ldots \wt E_{iJ_i} \subset \wt G$; the first and last terms are allowed to be partial edges. Let $b$ equal twice the number of edges in~$\rho$. Given $j \in \{1+b,\ldots,J_i-b\}$, we say that $\wt E_{ij}$ is \emph{well covered} if $\wt E_{i,j-b}$, $\wt E_{i,j+b}$ are full edges and there is a periodic line $\ti \rho_{ij} \subset \wt G$ that projects to $\rho$ or to $\bar \rho$ and that contains $\wt E_{i,j-b}\ldots \wt E_{ij}\ \ldots \wt E_{i,j+b}$ as a subpath. Since the intersection of distinct periodic lines cannot contain two fundamental domains of both lines, $\ti \rho_{ij}$ is unique if it exists. Moreover, if both $\wt E_{ij}$ and $\wt E_{i,j+1}$ are well covered then $\ti \rho_{ij}=\ti \rho_{i,j+1}$. It follows that if $\wt E_{ij}$ is well covered then we can inductively move forward and backward past other well covered edges of $\tilde\sigma_i$ all in the same lift of $\rho$, until either encountering an edge that is not well covered, or encountering initial and terminal subsegments of $\ti\sigma_i$ of uniform length. After passing to a subsequence, one of the following is therefore satisfied:
\begin{enumerate}
 \item \label{good weak limit} There exists a sequence of integers $K_i$ such that $1 < K_i < J_i$, and $K_i \to \infinity$, and $J_i - K_i \to \infty$, and such that $\wt E_{iK_i} \subset \wt H_u$ is not well covered. 
 \item \label{just rho} $\sigma_i = \alpha_i \rho^{p_i} \beta_i$ where the number of edges crossed by $\alpha_i$ and $\beta_i$ is bounded independently of $i$ and $\abs{p_i} \to \infty$.
\end{enumerate} 
If subcase~\pref{good weak limit} holds then the existence of a weak limit that crosses an edge of $H_u$ and is not a bi-infinite iterate of $\rho$ or $\bar \rho$ follows immediately. If subcase~\pref{just rho} holds, $\tau_i$ is obtained from $f^{n_i}_\#(\alpha_i) \rho^{p_i}f^{n_i}_\#(\beta_i)$ by tightening. Since the number of illegal turns of height $u$ in the first and last terms is uniformly bounded, the number of edges that are cancelled during the tightening is uniformly bounded, and it follows that $\tau_i$ contains $\rho^{q_i}$ as a subpath where $|q_i| \to \infty$, a contradiction to the fact that $\rho$ is not a leaf of~$\Lambda^-$. 
\end{proof} 

\subsection{Positive flaring and other properties of $L_u$}
\label{SectionPositiveFlaring}

In this section we continue to adopt Notations~\ref{NotationsFlaring}.

In the context of Lemma~\ref{LemmaSpecialFlaring} on Special Flaring for $L_u$, item~\pref{estimate3} of Lemma~\ref{LemmaBddBelow} establishes flaring in the ``positive direction''. 
Lemma~\ref{LemmaBddBelow} also includes several useful metrical/dynamical properties of $L_u$, which will be used in Section~\ref{SectionFlaringProof} where we tie together the negative and positive flaring results to prove Lemma~\ref{LemmaSpecialFlaring}.

The lemma and its applications make use of the path map $f_\shsh$ and its properties as laid out in \SubgroupsOne\ Section 1.1.6, particularly Lemma~1.6 which we refer to as the ``$\shsh$ Lemma''. Roughly speaking, the path $f_\shsh(\sigma)$ is defined for any finite path $\sigma$ in $G$ as follows: for any finite path $\tau$ containing $\sigma$ as a subpath, the straightened image $f_\#(\tau)$ contains a subpath of $f_\#(\sigma)$ that is obtained by deleting initial and terminal subpaths of $f_\#(\sigma)$ that are no longer than the bounded cancellation constant of~$f$; the path $f_\shsh(\sigma)$ is the longest subpath of $f_\#(\sigma)$ which survives this deletion operation for all choices of~$\tau$. 


\begin{lemma} \label{LemmaBddBelow} 
The following conditions hold:
\begin{enumerate}
\item \label{ItemAlmostAdditive} 
For each path $\sigma$ with endpoints at vertices and any decomposition into subpaths $\sigma = \sigma_0 \sigma_1 \ldots \sigma_K$ with endpoints at vertices we have

\smallskip
\centerline{$\sum_0^K L_u(\sigma_k) - K \, l_u(\rho) \le L_u(\sigma ) \le \sum_0^K L_u(\sigma_k)$}
and similarly with $L_\PF$ and $l_\PF$ in place of $L_u$ and $l_u$ respectively.
\item \label{estimate1} For all positive integers $d$ there exist $B = B_{\ref{LemmaBddBelow}}(d) > 0$ and $b = b_{\ref{LemmaBddBelow}}(d) > 0$ so that for all finite paths $\sigma$ with endpoints at vertices, if $L_u(\sigma) \ge B$ then $L_u(f^d_{\shsh}(\sigma)) \ge b \, L_u(\sigma)$.
\item \label{estimate2} There exist constants $A > 0$ and $\kappa  \ge 1$ so that for all subpaths $\tau$ of leaves of $\Lambda^-$ in $G$, if $l_u(\tau) \ge A$ then 
$$l_u(\tau) \le L_u(\tau) \le \kappa \, l_u(\tau)
$$
\item \label{estimate3} There exists a positive integer $A = A_{\ref{LemmaBddBelow}}$ and a constant $0< R = R_{\ref{LemmaBddBelow}} <1$, so that if a path $\alpha \subset G$ splits as a concatenation of $u$-legal paths and copies of $\rho$ or $\bar \rho$, and if $L_u(\alpha) \ge A$, then for all $m \ge 1$ we have
$$L_u(f^m_{\shsh}(\alpha)) \ge R \, \lambda^{m/2} \, L_u(\alpha)
$$
where $\lambda$ is the expansion factor for $f$ and $\Lambda^+$. 
\end{enumerate}
\end{lemma}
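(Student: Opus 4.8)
The four items are largely independent, and I would dispatch \pref{ItemAlmostAdditive}, \pref{estimate1}, \pref{estimate2} quickly using the machinery already assembled, reserving the real work for \pref{estimate3}. For \pref{ItemAlmostAdditive}, combine the formula of Corollary~\ref{CorollarySomeFormulas} (which expresses $L_u(\sigma) = l_u(\sigma) - K_\rho(\sigma)\,l_u(\rho)$, and likewise for $L_\PF$) with the genuine additivity of $l_u$ (and $l_\PF$) over edge-subdivisions. For a decomposition $\sigma = \sigma_0\cdots\sigma_K$, every copy of $\rho$ or $\bar\rho$ lying in some $\sigma_k$ is a copy of $\rho$ or $\bar\rho$ in $\sigma$, and conversely every copy of $\rho$ or $\bar\rho$ in $\sigma$ that does not straddle one of the $K$ junction points lies in a unique $\sigma_k$; by Lemma~\ref{LemmaNoOverlap} each junction point is interior to at most one such copy, so $K_\rho(\sigma) - K \le \sum_k K_\rho(\sigma_k) \le K_\rho(\sigma)$. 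Substituting into the formulas for $L_u(\sigma)$ and $\sum_k L_u(\sigma_k)$, and using $\sum_k l_u(\sigma_k) = l_u(\sigma)$, yields both inequalities; the $L_\PF$ version is identical.

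For \pref{estimate1}, iterate the inequality $L_u(\gamma) \le D\cdot L_u(f_\#(\gamma)) + E$ of Lemma~\ref{LemmaLuBasics}\pref{ItemQiU} to get $L_u(f^d_\#(\sigma)) \ge D^{-d}L_u(\sigma) - E/(D-1)$, and then invoke the $\shsh$ Lemma, which realizes $f^d_\shsh(\sigma)$ as $f^d_\#(\sigma)$ with initial and terminal segments of length bounded in terms of $d$ and $f$ removed; by Corollary~\ref{CorollarySomeFormulas} this alters $L_u$ by at most a constant $C(d)$. Choosing $B = B_{\ref{LemmaBddBelow}}(d)$ large enough that $D^{-d}L_u(\sigma) - E/(D-1) - C(d) \ge \tfrac12 D^{-d}L_u(\sigma)$ once $L_u(\sigma) \ge B$ gives the claim with $b = \tfrac12 D^{-d}$. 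For \pref{estimate2}, the inequality $L_u \le l_u$ is automatic, so only $l_u(\tau) \le \kappa L_u(\tau)$ needs proof; here I would reuse the opening observations of the proof of Lemma~\ref{flaring2}. A generic leaf of $\Lambda^-$ is quasiperiodic with respect to the edges of $H_u$ (\cite[Lemma 3.1.8]{\BookOneTag}) and is not a bi-infinite iterate of $\rho$ or $\bar\rho$, so there is a uniform constant $L'$ such that every subpath of a leaf of $l_u$-length $\ge L'$ contains an $H_u$-edge not lying in any $\rho/\bar\rho$-subpath; feeding this into the Corollary~\ref{CorollaryAltDecomp} decomposition $\tau = \mu_0\nu_1\cdots\nu_A\mu_A$ and using item~\pref{ItemEveryOtherHasHu} bounds $A$, hence $K_\rho(\tau)$, linearly in $L_u(\tau)$. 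Since the $\nu_a$ have uniformly bounded $l_u$-length (bi-infinite $\rho$-iterates not being sub-leaves of $\Lambda^-$), Corollary~\ref{CorollarySomeFormulas} then gives $l_u(\tau) \le \kappa L_u(\tau)$ once $l_u(\tau) \ge A$.

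The substance is item~\pref{estimate3}, and I would model it on \cite[Theorem~5.1 and Lemma~5.6]{\BookZeroTag}. The structural point is that $f_\#$ preserves the splitting of $\alpha$ into $u$-legal terms $\tau_i$ and copies of $\rho^{\pm}$ (using that $f_\#(\rho) = \rho$), so $f^m_\#(\alpha)$ is the concatenation, with no cancellation across the splitting points, of the $u$-legal paths $f^m_\#(\tau_i)$ and the fixed terms $\rho^{\pm}$. Since $u$-legal paths contain no $\rho/\bar\rho$-subpath, the eigenlength equation gives $L_\PF(f^m_\#(\tau_i)) = \lambda^m L_\PF(\tau_i)$, while $L_\PF(\rho^{\pm}) = 0$; applying item~\pref{ItemAlmostAdditive} to the Corollary~\ref{CorollaryAltDecomp} coarsening of this splitting — which amalgamates maximal $\rho$-runs into single terms so that the error count stays controlled — together with item~\pref{ItemEveryOtherHasHu} and the quasicomparability of $L_u$ and $L_\PF$ (Corollary~\ref{CorollaryLFormulas}), yields $L_\PF(f^m_\#(\alpha)) \ge \lambda^m L_\PF(\alpha) - c\,L_\PF(\alpha)$ for a constant $c$ depending only on $f$. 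The $\shsh$ Lemma then costs a further end-correction governed by the bounded cancellation constant of $f^m$. Combining this with item~\pref{estimate1} for the finitely many small values of $m$, enlarging the threshold $A_{\ref{LemmaBddBelow}}$ so as to absorb the additive and multiplicative error terms, converting back to $L_u$, and replacing the optimal exponent $\lambda^m$ by the uniformly valid $\lambda^{m/2}$ to cover both the small-$m$ and large-$m$ regimes with one inequality, produces the stated bound with some constant $0 < R_{\ref{LemmaBddBelow}} < 1$. I expect the main obstacle to be precisely this error-accounting in the geometric and parageometric cases: there the splitting terms include iterates $\rho^{\pm p}$, cancellation (in tightening $f^m$ and in applying $\shsh$) can reach into the legal halves $\alpha,\beta$ of $\rho$, and every loss must be arranged so that neither the threshold $A_{\ref{LemmaBddBelow}}$ nor the constant $R_{\ref{LemmaBddBelow}}$ depends on $m$.
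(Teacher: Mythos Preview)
Your treatments of \pref{ItemAlmostAdditive}, \pref{estimate1}, and \pref{estimate2} match the paper's closely. For \pref{ItemAlmostAdditive} the paper argues the case $K=1$ directly (either the junction is straddled by a unique $\rho^{\pm}$ or it is not) and then inducts, which is equivalent to your counting argument via $K_\rho$. For \pref{estimate1} the paper does exactly your iteration of Lemma~\ref{LemmaLuBasics} followed by the $\shsh$ end-correction. For \pref{estimate2} the paper packages the argument through the sets $\Sigma_C$ of paths containing no $\rho^{\pm C}$ subpath and the observation that $\rho^\infty$ is not a leaf of $\Lambda^-$; this is the same key fact you invoke, just organized differently.

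For \pref{estimate3} your route diverges from the paper's, and there is a real gap. You correctly note the exact equation $L_\PF(f^m_\#(\alpha)) = \lambda^m L_\PF(\alpha)$ (your extra error term ``$-\,c\,L_\PF(\alpha)$'' is unnecessary: the splitting is preserved by $f_\#$ and the $\rho^{\pm}$ terms are exactly the $\rho$-subpaths, so Corollary~\ref{CorollarySomeFormulas} gives equality). The gap is in the next step: passing from $f^m_\#(\alpha)$ to $f^m_\shsh(\alpha)$ costs the bounded cancellation constant of $f^m$, and you never quantify its growth. In edge-count this grows like $L^m$ where $L$ is the edge-Lipschitz constant of $f$, which can exceed $\lambda$; if the $l_\PF$-length of the truncated ends were that large, the error would swamp the main term and no threshold on $L_\PF(\alpha)$ independent of $m$ would save you. (One can in fact show the $l_\PF$-BCC of $f^m$ is $O(\lambda^m)$, for instance by projecting to the tree $T$ where $f_T$ is $\lambda$-Lipschitz in $d_\PF$, and then your direct approach does go through --- but that argument is missing from your proposal, and your ``small-$m$ via item~\pref{estimate1} / large-$m$ direct'' dichotomy does not address it.)

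The paper sidesteps this entirely by induction on $m$. Assuming $L_\PF(f^{m-1}_\shsh(\alpha)) \ge \lambda^{(m-1)/2} L_\PF(\alpha)$, one writes $f^{m-1}_\shsh(\alpha) = \zeta\hat\alpha\omega$ with $\hat\alpha$ again split into $u$-legal pieces and $\rho^{\pm}$'s and $l_\PF(\zeta), l_\PF(\omega) \le l_\PF(\rho)$; then $L_\PF(f_\#(\hat\alpha)) = \lambda L_\PF(\hat\alpha)$, one passes to $f_\shsh(\hat\alpha)$ losing only the BCC of a \emph{single} $f$ (a fixed constant $C_2$), and the $\shsh$ Lemma embeds $f_\shsh(\hat\alpha)$ in $f^m_\shsh(\alpha)$. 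This yields a recursion whose accumulated error is a fixed constant $C_1$ per step, and the gap between $\lambda^{m/2}$ and $\lambda^{(m+1)/2}$ absorbs it once $L_\PF(\alpha)$ exceeds $C_1/(\lambda^{1/2}-1)$. So the exponent $\lambda^{m/2}$ is not about separating small and large $m$ as you suggest; it is the slack deliberately built in so that a constant-per-step error can be absorbed uniformly in $m$.
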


\paragraph{Remark:} The notation $f^m_\shsh$ is disambiguated by requiring that the exponent binds more tightly than the $\shsh$-operator, hence $f^m_\shsh = (f^m)_\shsh$ --- this is how items~\pref{estimate1} and~\pref{estimate3} are applied in what follows. Note that this makes the statements of~\pref{estimate1} and~\pref{estimate3} weaker than if $f^m_\shsh$ were intepreted as $(f_\shsh)^m$, because $(f_\shsh)^m(\alpha)$ is a subpath of~$(f^m)_\shsh(\alpha)$.

\begin{proof} We prove~\pref{ItemAlmostAdditive} for $L_u$ when $K=1$; the proof for general $K$ follows by an easy induction, and the statement for $L_\PF$ is proved in the exact same manner. If there are non-trivial decompositions $\sigma_0 = \sigma'_0 \sigma''_0$ and $\sigma_1 =\sigma'_1 \sigma''_1$ such that $\sigma''_0\sigma'_1$ is a copy of $\rho$ or $\bar \rho$ then $H_u$-edges in $\sigma''_0\sigma'_1$ contribute to $L_u(\sigma_0) + L_u(\sigma_1)$ but not to $L_u(\sigma)$, and all other $H_u$-edges contribute to $L_u(\sigma_0) + L_u(\sigma_1)$ if and only if they contribute to $L_u(\sigma)$. In this case $L_u(\sigma) = L_u(\sigma_0) + L_u(\sigma_1) - l_u(\rho)$. If there are no such non-trivial decompositions then concatenating the decompositions of $\sigma_0$ and $\sigma_1$ given by Lemma~\ref{LemmaNoOverlap} produces the decomposition of $\sigma$ given by Lemma~\ref{LemmaNoOverlap} and $L_u(\sigma) = L_u(\sigma_0) + L_u(\sigma_1)$. This completes the proof of \pref{ItemAlmostAdditive}.
 
 \smallskip
 
We next prove \pref{estimate1}. Fix the integer~$d \ge 1$. The path $f^d_\shsh(\sigma)$ is obtained from $f^d_{\#}(\sigma)$ by removing initial and terminal segments that cross a number of edges that is bounded above, independently of $\sigma$, by the bounded cancellation constant of $f^d$. It follows that $\abs{L_u(f^d_{\shsh}(\sigma))-L_u(f^d_{\#}(\sigma))}$ is bounded above independently of $\sigma$, so it suffices to find $B,b > 0$ depending only on $d$ so that if $L_u(\sigma) > B$ then $L_u(f^d_{\#}(\sigma)) > b L_u(\sigma)$. Applying Lemma~\ref{LemmaLuBasics}~\pref{ItemQiU} we obtain $D > 1$, $E > 0$ so that $L_u(\gamma) \le D \, L_u(f_\#(\gamma)) + E$ for all finite paths $\gamma$ with endpoints at vertices, from which it follows by induction that
\begin{align*}
L_u(f^d_\#(\gamma)) &\ge \frac{1}{D^d} L_u(\gamma) - E\biggl( \frac{1}{D} + \cdots + \frac{1}{D^d} \biggr) \\
  &\ge \frac{1}{D^d} L_u(\gamma) - \frac{E}{D-1} \\
\intertext{and so if $L_u(\gamma) \ge \frac{2D^d E}{D-1}$ then}
L_u(f^d_\#(\gamma))   &\ge \frac{1}{D^d} L_u(\gamma) - \frac{1}{2D^d} L_u(\gamma) = \frac{1}{2D^d} L_u(\gamma)
\end{align*}

\smallskip

The first inequality of~\pref{estimate2} follows immediately from Corollary~\ref{CorollarySomeFormulas}, for any subpath $\tau$ of a leaf of $\Lambda^-$. To prepare for proving the second inequality, given a positive integer $C$ let $\Sigma_C$ be the set of paths in $G$ that do not contain a subpath of the form $\rho^{\epsilon C} $ where $\epsilon=\pm 1$. Since $\rho$ has an endpoint that is not contained in $G_{u-1}$ (\SubgroupsOne\ Fact 1.43), each maximal subpath of $\sigma$ of the form $\rho^k$ or $\bar \rho^k$ that is neither initial nor terminal in $\sigma$ is adjacent in $\sigma$ to an $H_u$-edge that contributes to $L_u(\sigma)$. Applying Corollary~\ref{CorollarySomeFormulas} it follows that
 \begin{description}
 \item [$(*)$] For any fixed $C$, amongst those paths $\sigma \in \Sigma_C$ for which $L_u(\sigma) > 0$, the ratio $l_u(\sigma) / L_u(\sigma)$ has a positive lower bound that is independent of $\sigma$. 
 \end{description} 
%
For proving \pref{estimate2}, the key observation is that there exists a positive integer $C$ so that each subpath $\tau$ of $\Lambda^-$ is contained in $\Sigma_C$ --- this is equivalent to saying that $\rho^\infty$ is not a weak limit of lines in $\Lambda^-$, which is equivalent to saying that $\rho^\infty$ is not a leaf of~$\Lambda^-$, which follows from Lemma~3.1.15 of \BookOne\ and the fact that $\Lambda^- \ne \rho^\infty$. Item \pref{estimate2} therefore follows by combining this observation with~$(*)$.

\smallskip

For proving \pref{estimate3}, we focus primarily on an analogue for $L_\PF$, connecting it with $L_u$ version stated in~\pref{estimate3} by applying Corollary~\ref{CorollaryLFormulas}. From the assumption on a splitting of $\alpha$ we have
$$L_\PF(f^m_\#(\alpha)) = \lambda^m L_\PF(\alpha)
$$
We shall show how to replace $f^m_\#$ by $f^m_\shsh$, at the expense of replacing $\lambda$ by its square root, and of requiring $L_\PF(\alpha)$ to exceed some threshold constant. To be precise, we have:
\begin{description}
\item[Claim:] There exists $A' \ge 0$ such that if $L_\PF(\alpha) \ge A'$ then for all $m \ge 0$ we have
$$L_\PF(f^m_\shsh(\alpha)) \ge \lambda^{m/2} L_\PF(\alpha)
$$
\end{description}
This suffices to prove~\pref{estimate3}, because if $L_u(\alpha) \ge K_{\ref{LemmaUPFQI}}(f) \cdot A' = A$ then from Corollary~\ref{CorollaryLFormulas} it follows that $L_\PF(\alpha) \ge A'$, from which using the Claim we obtain $L_\PF(f^m_\shsh(\alpha)) \ge \lambda^{m/2} L_\PF(\alpha)$, and then by two more applications of Corollary~\ref{CorollaryLFormulas} we obtain
$$
L_u(f^m_\shsh(\alpha)) \ge \frac{1}{K_{\ref{LemmaUPFQI}}(f)} L_\PF(\alpha) \ge \frac{1}{K_{\ref{LemmaUPFQI}}(f)} \lambda^{m/2} L_\PF(\alpha) \ge \frac{1}{(K_{\ref{LemmaUPFQI}}(f))^2} \lambda^{m/2} L_u(\alpha)
$$

To prove the claim, the case $m=0$ is evident, so suppose by induction that
$$L_\PF(f^{m-1}_\shsh(\alpha)) \ge \lambda^{(m-1)/2} L_\PF(\alpha)
$$
Since $f^{m-1}_\shsh(\alpha)$ is a subpath of $f^{m-1}_\#(\alpha)$, and since the latter splits into terms each of which is an edge of $H_u$, a copy of $\rho$ or $\bar\rho$, or a path in $G_{u-1}$, it follows that $f^{m-1}_\shsh(\alpha)$ may be deconcatenated in the form
$$f^{m-1}_\shsh(\alpha) = \zeta \hat\alpha \omega
$$
such that $\hat\alpha$ splits into terms exactly as above, and such that either $\zeta,\omega$ are both trivial, or $\rho$ exists and $\zeta,\omega$ are both proper subpaths of $\rho$ or $\bar\rho$; it follows that $l_\PF(\zeta),l_\PF(\omega) \le l_\PF(\rho)$.  Applying item~\pref{ItemAlmostAdditive} it follows that
\begin{align*}
L_\PF(f^{m-1}_\shsh(\alpha)) &\le L_\PF(\hat\alpha) + 2 l_\PF(\rho) \\
L_\PF(\hat\alpha) &\ge \lambda^{(m-1)/2} \, L_\PF(\alpha) -  2 l_\PF(\rho) \\
\intertext{Using the splitting of $\hat\alpha$ we obtain}
L_\PF(f_\#(\hat\alpha)) &= \lambda L_\PF(\hat\alpha) \ge \lambda^{(m+1)/2} \, L_\PF(\alpha) -  2 \, \lambda \, l_\PF(\rho)
\end{align*}
The path $f_\shsh(\hat\alpha)$ is obtained from $f_\#(\hat\alpha)$ by truncating initial and terminal segments no longer than the bounded cancellation constant of~$f$, and since this is a finite number of paths their $L_\PF$-values have a finite upper bound $C_2$, so by applying item~\pref{ItemAlmostAdditive} it follows that
$$L_\PF(f_\shsh(\hat\alpha)) \ge \lambda^{(m+1)/2} \, L_\PF(\alpha) -  2 \, \lambda \, l_\PF(\rho) - 2 C_2
$$
Now we apply the \shsh\ Lemma. Since $\hat\alpha$ is a subpath of $f^{m-1}_\shsh(\alpha)$ it follows that $f_\shsh(\hat\alpha)$ is a subpath of $f_\shsh(f^{m-1}_\shsh(\alpha))$ (\SubgroupsOne\ Lemma 1.6 (3)), which is a subpath of $f^m_\shsh(\alpha)$ (\SubgroupsOne\ Lemma 1.6 (4)). Thus we have $f^m_\shsh(\alpha) = \eta f_\shsh(\hat\alpha) \theta$ for some paths $\eta,\theta$, and hence by item~\pref{ItemAlmostAdditive} we have
\begin{align*}
L_\PF(f^m_\shsh(\alpha)) &\ge \lambda^{(m+1)/2} \, L_\PF(\alpha) -  2 \, \lambda \, l_\PF(\rho) - 2 C_2 - 2 \, l_\PF(\rho) \\
\intertext{To complete the induction we show that with appropriate threshold constant the quantity on the right is $\ge \lambda^{m/2} \, L_\PF(\alpha)$, equivalently}
\lambda^{(m+1)/2} \, L_\PF(\alpha) &\ge \lambda^{m/2} \, L_\PF(\alpha)   + \biggl(\underbrace{2 \, (\lambda+1) \, l_\PF(\rho) + 2 C_2}_{C_1}  \biggr) \\
 \lambda^{1/2} &\ge 1 + \frac{C_1}{\lambda^{m/2} \, L_\PF(\alpha)} 
\end{align*}
Since $\lambda>1$ is suffices to show
$$\lambda^{1/2} \ge 1 + \frac{C_1}{L_\PF(\alpha)}  \quad
\iff \quad L_\PF(\alpha) \ge \frac{C_1}{\lambda^{1/2} - 1}
$$
Taking the threshold constant to be 
$$A' = \frac{C_1}{\lambda^{1/2} - 1}
$$
the induction is complete.
\end{proof}

\subsection{Proof of Lemma \ref{LemmaSpecialFlaring}: the Special Flaring Condition for~$L_u$}
\label{SectionFlaringProof}
Once this proof is complete, the proof of the General Flaring Condition stated in Proposition~\ref{PropFlaringGeneral} will also be complete, as shown in Section~\ref{SectionPathFlaring}.

\newcommand\TwoNu{\marginpar{\tiny $2 \to \nu$ checked}}

If the Special Flaring Condition for $L_u$ fails then there exists a sequence $n_k \to \infty$, and there exist paths $\gamma_k \subset G$ with endpoints at vertices, such that $L_u(f^{n_k}_\#(\gamma_k)) \to \infty$ as $k \to \infty$, and such that 
$$(*) \qquad \nu \, L_u(f^{n_k}_\#(\gamma_k))  \ge \max\{ L_u(\gamma_k), L_u(f^{2 n_k}_\#(\gamma_k))\}
$$
Assuming this, we argue to a contradiction.

Consider the integer $L_{\ref{flaring2}} \ge 1$ satisfying the conclusions of Lemma~\ref{flaring2}. By Lemma~\ref{LemmaBddBelow}~\pref{estimate2} there is an integer $L_2$ so that if $\mu$ is a subpath of $\Lambda^-$ that crosses $\ge L_2$ edges of $H_u$ then $L_u(\mu) \ge 1$. Let $L_1 = \max\{L_{\ref{flaring2}},L_2\}$. Choose an integer $d \ge 1$ satisfying the conclusion of \cite[Lemma~3.1]{HandelMosher:BddCohomologyI}, the ``uniform splitting lemma'', with respect to the constant $L_1$. This conclusion says that for any finite path $\sigma$ in $G$ with endpoints at vertices of $H_u$, if $\ell^-_u(\sigma) < L_1$ then the path $f^d_\#(\sigma)$ splits into terms each of which is $u$-legal or a copy of $\rho$ or $\bar\rho$. In this context we shall refer to $d$ as the ``uniform splitting exponent''.
 
Let $\{\mu_{ik}\}$ be a maximal collection of subpaths of $f^{n_k}_\#(\gamma_k)$ with endpoints at vertices that have disjoint interiors, that are subpaths of $\Lambda^-$, and that cross $\ge L_1$ edges of~$H_u$. The complementary subpaths $\{\nu_{jk}\}$ of $f^{n_k}_\#(\gamma_k)$ all satisfy $\lum(\nu_{jk}) < L_1$ and all have endpoints at vertices as well.
 
Our first claim is that 
$$(i) \qquad\qquad \lim_{k \to \infty} \frac{\sum_i L_u(\mu_{ik})}{L_u(f^{n_k}_\#(\gamma_k))} = 0
$$
If not, then after passing to a subsequence we may assume that 
$$\sum_i L_u(\mu_{ik}) > \epsilon_1 L_u(f^{n_k}_\#(\gamma_k))
$$
for some $\epsilon_1> 0$ and all $k$. Choose subpaths $\sigma_{ik}$ of $\gamma_{k}$ with disjoint interiors such that $f_\#^{n_k}(\sigma_{ik}) = \mu_{ik}$. Since $l_u(\mu_{ik}) \ge L_1 \ge L_{\ref{flaring2}}$, and since $n_k \to +\infty$, we may apply ``Negative Flaring'', Lemma~\ref{flaring2}, to obtain subpaths $\sigma_{ik}'$ of $\sigma_{ik}$ which have endpoints at vertices and which are also subpaths of~$\Lambda^-$ such that for all $i$ we have
$$\lim_{k \to \infty} \frac{l_u(\sigma'_{ik})}{l_u(\mu_{ik})}= \infty
$$
The ratios $l_u(\sigma'_{ik})/L_u(\sigma'_{ik})$ and $l_u(\mu_{ik})/L_u(\mu_{ik})$ have positive upper and lower bounds independent of $i$ and $k$: the upper bound of $1$ follows from Corollary~\ref{CorollarySomeFormulas}; and the lower bound comes from Lemma~\ref{LemmaBddBelow}~\pref{estimate2}. For all $i$ we therefore obtain 
%
%
%
$$\lim_{k \to \infty} \frac{L_u(\sigma'_{ik})}{L_u(\mu_{ik})}= \infty
$$
Using this limit, and using that $L_u(\mu_{ik}) \ge 1$, it follows that for all sufficiently large $k$ we have
$$L_u(\gamma_k) \ge \sum_i (L_u(\sigma'_{ik}) - 2\abs{\rho}) > \frac{\nu}{\epsilon_1}\sum_i L_u(\mu_{ik}) > \nu \, L_u(f^{n_k}_\#(\gamma_k))
$$
where the first inequality follows by applying Lemma~\ref{LemmaBddBelow}~\pref{ItemAlmostAdditive}
to the subdivision of $\gamma_k$ into the paths $\sigma'_{ik}$ and their complementary subpaths. This contradicts $(*)$, verifying the first claim.

Our second claim is that for any constant $A$ (on which constraints will be placed below when this claim is applied) we have
$$(ii) \qquad\qquad \lim_{k \to \infty} \sum_{L_u(\nu_{jk})\ge A} \!\!\!\!\!\! L_u(\nu_{jk}) \!\! \biggm/ \!\!  L_u(f^{n_k}_\#(\gamma_k))= 1
$$
To see why, let $I_k$ be the number of $\mu_{ik}$ subpaths of $f^{n_k}_\#(\gamma_k)$, let $J_k$ be the number of $\nu_{jk}$ subpaths, and let $K_k = I_k + J_k$, and so $J_k \le I_k + 1$ and $K_k \le 2I_k+1$. By Lemma~\ref{LemmaBddBelow}~\pref{ItemAlmostAdditive} applied to $f^{n_k}_\#(\gamma_k)$ we obtain
\begin{align*}
L_u(f^{n_k}_\#(\gamma_k)) \, \le \, \sum_j L_u(\nu_{jk}) \, &+ \, \sum_i L_u(\mu_{ik}) \\ & \le \, L_u(f^{n_k}_\#(\gamma_k)) + \, K_k \, l_u(\rho) \\ \\
1 \le \frac{\sum_{L_u(\nu_{jk}) \ge A} L_u(\nu_{jk}) }{L_u(f^{n_k}_\#(\gamma_k))} &+ \underbrace{\frac{\sum_i L_u(\mu_{ik})}{L_u(f^{n_k}_\#(\gamma_k))}}_{\delta_k} + \underbrace{\frac{\sum_{L_u(\nu_{jk})<A}{ L_u(\nu_{jk})}}{L_u(f^{n_k}_\#(\gamma_k))}}_{\epsilon_k} \\ & \le 1 + \underbrace{\frac{K_k \, l_u(\rho)}{L_u(f^{n_k}_\#(\gamma_k))}}_{\zeta_k}
\end{align*}
From $(i)$ it follows that $\delta_k \to 0$ as $k \to +\infinity$. Multiplying the inequality 
$K_k \le 2I_k+1$ by $l_u(\rho) / L_u(f^{n_k}_\#(\gamma_k))$, and using that $L_u(\mu_{ik}) \ge 1$, it follows that 
$$0 \le \zeta_k \le 2 \, l_u(\rho) \, \delta_k + \frac{l_u(\rho)}{L_u(f^{n_k}_\#(\gamma_k))}
$$
and so $\zeta_k \to 0$ as $k \to +\infinity$. Multiplying the inequality $J_k \le I_k + 1$ by $A / L_u(f^{n_k}_\#(\gamma_k))$, it follows that 
$$0 \le \epsilon_k \le A \delta_k + \frac{A}{L_u(f^{n_k}_\#(\gamma_k))}
$$
and so $\epsilon_k \to 0$ as $k \to +\infinity$. This proves the second claim. 

In what follows we will be applying Lemma~\ref{LemmaBddBelow}~\pref{estimate3}, and we will use the constants $A_{\ref{LemmaBddBelow}}$, $R_{\ref{LemmaBddBelow}}$ involved in that statement.

By definition of $L_1$ and by the choice of the uniform splitting exponent $d$, since $\ell^-_u(\nu_{jk}) < L_1$ it follows that $f^d_\#(\nu_{jk})$ splits into terms each of which is either $u$-legal or a copy of $\rho$ or $\bar\rho$. Consider the constants $B = B_{\ref{LemmaBddBelow}}(d) > 0$ and $b = b_{\ref{LemmaBddBelow}}(d) > 0$ of Lemma~\ref{LemmaBddBelow}. Constraining $A \ge B$, we may combine $(ii)$ with Lemma~\ref{LemmaBddBelow}~\pref{estimate1} to obtain
$$(iii) \quad \frac{\sum_{L_u(\nu_{jk})\ge A} L_u(f^d_{\shsh}(\nu_{jk})) } {L_u(f^{n_k}_\#(\gamma_k))} \ge b \cdot \frac{\sum_{L_u(\nu_{jk})\ge A} L_u(\nu_{jk}) }{L_u(f^{n_k}_\#(\gamma_k))} > 3b/4
$$
for sufficiently large values of~$k$.

By construction, the paths $\{\nu_{jk}\}$ occur in order of the subscript $j$ as subpaths of $f^{n_k}_\#(\gamma_k)$ with disjoint interiors. By applying the $\shsh$ Lemma using $f^{n_k}$, it follows that the the paths $f^{n_k}_\shsh(\nu_{jk})$ occur in order as subpaths of the path $f^{2n_k}_\#(\gamma_k)$ with disjoint interiors (\SubgroupsOne\ Lemma 1.6~(5)). It then follows that $f^{n_k-d}_{\shsh}f^{d}_{\shsh}(\nu_{jk})$ is a subpath of $f^{n_k}_\shsh(\nu_{jk})$ (\SubgroupsOne\ Lemma 1.6~(4)). Putting these together we see that the paths $f^{n_k-d}_{\shsh}f^{d}_{\shsh}(\nu_{jk})$ occur in order as subpaths of the path $f^{2n_k}_\#(\gamma_k)$ with disjoint interiors. These subpaths being $J_k$ in number, together with their complementary subpaths one has a decomposition of $f^{2n_k}_\#(\gamma_k)$ into at most $2J_k+1$ paths. Ignoring the complementary subpaths, Lemma~\ref{LemmaBddBelow}~\pref{ItemAlmostAdditive} therefore implies 
\begin{align*} L_u(f^{2n_k}_\#(\gamma_k)) 
& \ge \sum_{\hphantom{L_u(\nu_{jk})\ge A}} L_u(f^{n_k-d}_{\shsh}f^{d}_{\shsh}(\nu_{jk})) \quad - \quad 2 \, J_k \, l_u(\rho) \\
& \ge \sum_{L_u(\nu_{jk})\ge A} L_u(f^{n_k-d}_{\shsh}f^{d}_{\shsh}(\nu_{jk})) \quad - \quad 2 \, J_k \, l_u(\rho) \\
 & \ge \sum_{L_u(\nu_{jk})\ge A} L_u(f^{n_k-d}_{\shsh}f^{d}_{\shsh}(\nu_{jk})) \quad - \quad l_u(\rho) \, L_u(f^{n_k}_\#(\gamma_k)) 
 \end{align*}
where the last inequality follows for sufficiently large $k$ by applying $(i)$ and the inequality $L_u(\mu_{ik}) \ge 1$ to conclude that 
$$L_u(f^{n_k}_\#(\gamma_k)) \ge 2\sum_i L_u(\mu_{ik})+2 \ge 2I_k+2 \ge 2J_k
$$
For sufficiently large $k$ we therefore have
$$(iv) \quad \frac{L_u(f^{2n_k}_\#(\gamma_k))}{L_u(f^{n_k}_\#(\gamma_k))} > 
\frac{\sum_{L_u(\nu_{jk})\ge A} L_u(f^{n_k-d}_{\shsh}f^{d}_{\shsh}(\nu_{jk}))}{L_u(f^{n_k}_\#(\gamma_k))} - l_u(\rho)
$$

We have already constrained $A$ so that $A \ge B$, and we now put one more constraint on~$A$. Applying Lemma~\ref{LemmaBddBelow}~\pref{estimate1} to $f^d$ it follows that if $L_u(\nu_{jk}) \ge B$ then $L_u(f^d_{\shsh}(\nu_{jk})) \ge b \, L_u(\nu_{jk})$, and so if $L_u(\nu_{jk}) \ge A = \max\{B,\frac{1}{b} A_{\ref{LemmaBddBelow}}\}$ it follows that $L_u(f^d_{\shsh}(\nu_{jk})) \ge A_{\ref{LemmaBddBelow}}$. This allows us to apply ``Positive Flaring'', Lemma~\ref{LemmaBddBelow}~\pref{estimate3}, with the conclusion that, letting $R = R_{\ref{LemmaBddBelow}}$,
$$(v) \quad L_u(f^{n_k-d}_{\shsh}f^{d}_{\shsh}(\nu_{jk})) \ge R \, \lambda^{(n_k - d)/2} L_u(f^d_{\shsh}(\nu_{jk})) 
$$
as long as $L_u(\nu_{jk}) \ge A$ and as long as $k$ is sufficiently large. 
Combining $(iv)$ and $(v)$, if $k$ is sufficiently large we obtain
\begin{align*}
\frac{L_u(f^{2n_k}_\#(\gamma_k))}{L_u(f^{n_k}_\#(\gamma_k))} & >
R \, \lambda^{(n_k - d)/2} \, \frac{\sum_{L_u(\nu_{jk})\ge A} L_u(f^d_{\#}(\nu_{jk})) }{L_u(f^{n_k}_\#(\gamma_k))} - l_u(\rho) \\
\intertext{and combining this with $(iii)$ we obtain}
\frac{L_u(f^{2n_k}_\#(\gamma_k))}{L_u(f^{n_k}_\#(\gamma_k))}   & > \frac{3 b R}{4} \lambda^{(n_k - d)/2} - l_u(\rho) \\ &> \nu
\end{align*}
where the second inequality holds for sufficiently large $k$. This gives us the final contradiction to $(*)$, which completes the proof of Lemma~\ref{LemmaSpecialFlaring}. 

\subsection{Appendix: The graph homotopy principle}
\label{SectionGraphHomotopy}
Lemma~\ref{LemmaGraphHomotopy} in this section was used earlier in the proof of Lemma~\ref{LemmaLuBasics}, and it will be used later in the construction of the ``homotopy semigroup action'' in Section~\ref{SectionHomotopyActionOnG}. It is an elementary result in homotopy theory; for precision we state the result in the language of category theory, and we give the complete proof.

Define the \emph{graph-point} category, a subcategory of the standard homotopy category of pairs, as follows. The objects are pairs $(G,P)$ where $G$ is a finite graph and $P \subset G$ is a finite subset. Each morphism, denoted $[f] \from (G,P) \mapsto (H,Q)$, is the homotopy class rel~$P$ of a homotopy equivalence $f \from G \to H$ that restricts to a bijection $f \from P \to Q$. Define the \emph{fundamental group} functor from the graph-point category to the category of \emph{indexed groups} as follows. To each pair $(G,P)$ we associate the indexed family of groups $\pi_1(G,P) = \bigl(\pi_1(G,p)\bigr)_{p \in P}$, and to each morphism $[f] \from (G,P) \to (H,Q)$ we associate the indexed family of group isomorphisms 
$$[f]_* \from \pi_1(G,P) \to \pi_1(H,Q) \,\, = \,\, \bigl(f_* \from \pi_1(G,p) \to \pi_1(H,f(p))\bigr)_{p \in P}
$$
The category and functor axioms implicit in this discussion are easily checked.

Let $\Aut^\gp(G,P)$ denote the group of automorphisms of $(G,P)$ in the graph-point category. Let $\Aut^\gp_0(G,P) \subgroup \Aut^\gp(G,P)$, which we call the \emph{pure automorphism group} of $(G,P)$ in the graph-point category, denote the finite index subgroup consisting of those $[f] \in \Aut^\gp(G,P)$ such that $f \from P \to P$ is the identity.




\begin{lemma}[The graph homotopy principle]
\label{LemmaGraphHomotopy}
\quad\hfill
\begin{enumerate}
\item\label{ItemGPHomInvExists}
The graph-point category is a groupoid: every morphism $[f] \from (G,P) \to (H,Q)$ has an inverse morphism $[g] \from (H,Q) \to (G,P)$, meaning that $g \circ f \from (G,P) \to (G,P)$ is homotopic to the identity rel~$P$ and $f \circ g \from (H,Q) \to (H,Q)$ is homotopic to the identity rel~$Q$.
\item\label{ItemGPFaithful}
The fundamental group functor is faithful: for any pair of morphisms \break $[f], [f'] \from (G,P) \to (H,Q)$, we have $[f]=[f']$ if and only if the restricted maps $f, f' \from P \mapsto Q$ are equal and the induced isomorphisms $f^{\vphantom\prime}_*, f'_* \from \pi_1(G,p) \to \pi_1(H,f(p))$ are equal for all $p \in P$.
\item\label{ItemGPInverse}
Two morphisms $[f] \from (G,P) \to (H,Q)$ and $[g] \from (H,Q) \to (G,P)$ are inverses if and only if their restrictions $f \from P \to Q$ and $g  \from Q \to P$ are inverses and the isomorphisms $f_* \from \pi_1(G,p) \to \pi_1(H,f(q))$ and $g_* \from \pi_1(H,f(q)) \to \pi_1(G,p)$ are inverses. 
\item\label{ItemGPAut}
The fundamental group functor restricts to an injective homomorphism defined on the pure automorphism group $\Aut^\gp_0(\pi_1(G,P)) \mapsto \oplus_{p \in P} \Aut(\pi_1(G,p))$.
\end{enumerate} 
\end{lemma}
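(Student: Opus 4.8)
The plan is to reduce everything to one geometric input: a connected finite graph $G$ is aspherical, its universal cover $\ti G$ being a tree, hence contractible and uniquely geodesic in its simplicial metric. All four items will then follow from a single ``engine'': if $G,H$ are connected finite graphs and $f,f'\from G\to H$ are homotopy equivalences restricting to the same bijection $P\to Q$ that induce the same homomorphism of fundamental groupoids on object sets $P,Q$ — i.e.\ $f_*=f'_*\from\pi_1(G,p)\to\pi_1(H,f(p))$ for all $p\in P$ and $[f(\gamma)]=[f'(\gamma)]$ rel endpoints for every edge-path $\gamma$ joining points of $P$ — then $f\simeq f'$ rel $P$. To prove the engine, fix $p_0\in P$ with a lift $\ti p_0\in\ti G$ and take the lifts $\ti f,\ti f'\from\ti G\to\ti H$ sending $\ti p_0$ to a common chosen lift of $f(p_0)=f'(p_0)$; the groupoid hypothesis forces $\ti f$ and $\ti f'$ to be equivariant with respect to the \emph{same} homomorphism of deck groups and to agree on the whole preimage of $P$. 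Interpolating along the unique geodesic of the tree $\ti H$ from $\ti f(x)$ to $\ti f'(x)$ produces a continuous $\ti F\from\ti G\times[0,1]\to\ti H$ that is equivariant (deck transformations are isometries, and a geodesic between coincident points is constant) and stationary over the preimage of $P$; it descends to the required homotopy rel $P$. When every $\pi_1(G,p)$, $p\in P$, is centerless — which holds in all applications here, where $\pi_1(G,p)\cong F_n$ — a short functoriality computation (write $f'_*[\gamma]=c_\gamma\cdot f_*[\gamma]$ and use that $f_*$ is onto) shows $c_\gamma$ is central, hence trivial, so that $f|_P=f'|_P$ together with equality of the vertex-group isomorphisms already forces equality of the groupoid maps; thus the engine is exactly the ``if'' direction of item~\pref{ItemGPFaithful}.

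For item~\pref{ItemGPHomInvExists} I would first build a graph map $g\from H\to G$ realizing the inverse of the groupoid isomorphism induced by $f$: send $Q$ to $P$ by $(f|_P)^\inv$, then send each edge of $H$ to a tightened edge-path in $G$ representing the prescribed path class. Since a map of finite graphs inducing a $\pi_1$-isomorphism on each component is a homotopy equivalence (asphericity again), $g$ is a morphism of the graph-point category; by construction $g\composed f$ fixes $P$ pointwise and induces the identity on the fundamental groupoid of $(G,P)$, so the engine gives $g\composed f\simeq\Id_G$ rel $P$, and symmetrically $f\composed g\simeq\Id_H$ rel $Q$.

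Items~\pref{ItemGPFaithful},~\pref{ItemGPInverse} and~\pref{ItemGPAut} are then formal. The ``only if'' directions of~\pref{ItemGPFaithful} and~\pref{ItemGPInverse} are functoriality of the fundamental group functor; the ``if'' direction of~\pref{ItemGPFaithful} is the engine, and in~\pref{ItemGPInverse} the hypotheses make $g\composed f$ and $\Id_G$ (resp.\ $f\composed g$ and $\Id_H$) agree on $P$ and induce the same groupoid automorphism, so the engine yields $[g]=[f]^\inv$, with the converse again functoriality. For~\pref{ItemGPAut}, the assignment $[f]\mapsto(f_*)_{p\in P}$ is a homomorphism on $\Aut^\gp_0(G,P)$ by functoriality together with $f|_P=\Id$, and it is injective: if $(f_*)_{p\in P}$ is trivial then $f$ fixes $P$ pointwise and, the vertex groups being centerless, induces the identity groupoid automorphism, whence $[f]=\Id$ by~\pref{ItemGPFaithful}.

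The main obstacle is the engine, and within it the bookkeeping that the chosen lifts $\ti f,\ti f'$ can be arranged to agree on the \emph{whole} preimage of $P$ while intertwining the \emph{same} deck-group homomorphism — this is what makes the tree-geodesic homotopy descend to a homotopy stationary on $P$ rather than merely on one lift of each point. The only other step needing care is the realization in item~\pref{ItemGPHomInvExists} of a prescribed groupoid isomorphism by an honest graph map, together with the standard asphericity argument that a $\pi_1$-isomorphism of finite graphs is a homotopy equivalence. Finally, if $G$ or $H$ is disconnected one argues componentwise, using that a homotopy equivalence permutes the components and restricts to a homotopy equivalence between matched components; since $P$ meets each component in a finite set, the connected-case homotopies assemble into one rel $P$.
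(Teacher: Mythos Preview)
Your proof is correct and takes a genuinely different route from the paper's. The paper proves item~\pref{ItemGPInverse} first by taking a \emph{free} homotopy from $g\circ f$ to $\Id_G$, showing via centerlessness of $\pi_1(G,p)$ that its track at each $p\in P$ is null-homotopic, and then using the homotopy extension property to make the homotopy stationary on~$P$; for item~\pref{ItemGPHomInvExists} it begins with an abstract homotopy inverse $g'$ and corrects it twice via the homotopy extension property --- once to arrange $g' \restrict Q=(f \restrict P)^{-1}$, once to kill the resulting inner automorphism $(g'\circ f)_*$ --- before invoking~\pref{ItemGPInverse}. Your ``engine'' replaces the downstairs homotopy-extension argument by an upstairs straight-line homotopy along tree geodesics in~$\ti H$; this is arguably cleaner and makes transparent why the \emph{groupoid} hypothesis is what the rel-$P$ homotopy really requires, with centerlessness used --- exactly as in the paper --- only to upgrade equality of the vertex-group isomorphisms to equality of the groupoid maps. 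One caution on your treatment of item~\pref{ItemGPHomInvExists}: ``the prescribed path class'' is literally defined only for edges of $H$ joining points of~$Q$; to realize the inverse groupoid isomorphism by an honest graph map you must either carry out the construction more carefully over a spanning tree (choosing the images of tree edges so that tree paths between points of $Q$ land in the correct classes, and then choosing the remaining edges to realize the correct map on $\pi_1(H,q_0)$), or simply adopt the paper's device of correcting an abstract homotopy inverse, which sidesteps this bookkeeping entirely.
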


\begin{proof} Once~\pref{ItemGPHomInvExists} and~\pref{ItemGPInverse} are proved, \pref{ItemGPFaithful} and~\pref{ItemGPAut} follow immediately.

\smallskip

To prove \pref{ItemGPInverse}, the ``only if'' direction is obvious, and for the ``if'' direction it suffices to prove this special case: for any self-morphism $[f] \from (G,P) \to (G,P)$, if $f$ fixes each $p \in P$ and induces the identity $\pi_1(G,p)$ for each $p \in P$, then $f$ is homotopic to the identity rel~$P$. For the proof, we know that $f$ is freely homotopic to the identity map~$\Id_G$, because $G$ is an Eilenberg-MacClane space and $f$ induces the identity on its fundamental group. Choose a homotopy $h \from G \times [0,1] \to G$ from $f$ to~$\Id_G$. 

We claim that for each $p \in P$ the closed path $\gamma_p(t) = h(p,t)$ $(0 \le t \le 1)$ is trivial in $\pi_1(G,p)$. Applying this claim, we alter the homotopy $h$ as follows: using the homotopy extension property, for each $p \in P$ we may homotope the map $h \from G \times [0,1] \to G$, keeping it stationary on $G \times 0$, stationary on $G \times 1$, and stationary outside of $U_p \times [0,1]$ for an arbitrarily small neighborhood $U_p$ of $p$, to arrange that $h(p \times [0,1])=p$; note that for a homotopy $X \times [0,1] \to Y$ to be ``stationary on $A \subset X$'' means that the restricted map $\{a\} \times [0,1] \to Y$ is constant for each $a \in A$. Doing this independently for each $p \in P$, we obtain a homotopy rel~$P$ from $f$ to the identity and we are done, subject to the claim.

To prove the claim, consider a closed path $\delta \from [0,1] \to G$ based at $p$, representing an arbitrary element $[\delta] \in \pi_1(G,p)$. We obtain a path homotopy $H_t \from [0,1] \to G$ from the path $H_0 = f \circ \delta$ to the concatenated path $H_1 = \gamma_p * \delta * \bar\gamma_p$ as follows: 
$$H_t(s) = \begin{cases}
\gamma_p(3s) &\quad\text{if $0 \le s \le t/3$} \\
h\biggl(\delta\bigl(\frac{3s \, - \, t}{3\, - \, 2t}\bigr),t\biggr) &\quad\text{if $t/3 \le s \le 1 - t/3$} \\
\gamma_p(3-3s) &\quad\text{if $1 - t/3 \le s \le 1$} 
\end{cases}
$$
Since for all $[\delta] \in \pi_1(G,p)$ we have $[\delta] = [f \circ \delta] = [\gamma_p] \cdot [\delta] \cdot [\gamma_p]^\inv$, it follows that $[\gamma_p]$ is in the center of $\pi_1(G,p) \approx F_n$, hence is trivial, completing the proof of~\pref{ItemGPInverse}.

\smallskip

To prove~\pref{ItemGPHomInvExists}, start with any homotopy inverse $g' \from H \to G$ of $f$. We may assume that the maps $f \from P \to Q$ and $g' \from Q \to P$ are inverses, because by the homotopy extension property we may homotope $g'$ to be stationary outside of a small neighborhood of $P$ so that for each $p \in P$ the track of the homotopy on the point $g'(f(p))$ moves it back to $p$. Since $g' \circ f \from G \to G$ fixes each point in $P$ and is homotopic to the identity, for each $p \in P$ the induced map $(g' \circ f)_* \from \pi_1(G,p) \to \pi_1(G,p)$ is an inner automorphism represented by some closed curve $\gamma_p$ based at $p$, and so for each element of $\pi_1(G,p)$ having the form $[\delta]$ for some closed curve $\delta$ based at $p$ we have $(g' \circ f)_*(\delta) = [\gamma_p * \delta * \bar\gamma_p]$. Let $h \from (G,p) \to (G,p)$ be the morphism obtained from the identity by a homotopy that is stationary outside a small neighborhood of $P$ and such that the track of the homotopy on each $p \in P$ is the closed curve $\bar\gamma_p$; again we are applying the homotopy extension property. Letting $g = h \circ g'$ we may apply \pref{ItemGPInverse} to conclude that the morphism $[f]$ is an isomorphism with inverse $[g]$.
\end{proof}

\paragraph{Remark.} Note that the proof of~\pref{ItemGPInverse} depends heavily on the fact that the center of $F_n$ is trivial. The proof breaks down, for instance, if $G$ is replaced by a torus; in fact the analogue of Lemma~\ref{LemmaGraphHomotopy}, where a graph is replaced by a torus and $P$ is a two-point subset of the torus, is false. On the other hand the analogue for any $K(\pi,1)$ space whose fundamental group has trivial center is true.

\section{Flaring in $T^*$ and hyperbolicity of $\S$.} 
\label{SectionPathFunctionsReT}

Throughout this section we continue with Notations~\ref{NotationsFlaring}~\pref{ItemCTEG}--\pref{ItemCTSplitSimple} regarding an outer automorphism $\phi \in \Out(F_n)$ and a relative train track representative $f \from G \to G$ having penultimate filtration element $G_{u-1}$ and top \eg\ stratum $H_u$. 

The main result of this section is the construction, carried out in Section~\ref{SectionHypOfS}, of the Gromov hyperbolic space $\S$ that is used in later sections for proving the multi-edge case of the Hyperbolic Action Theorem. The construction of~$\S$ is based on results found in Sections~\ref{SectionFreeSplittingT}--\ref{SectionThreeProps}, in particular Proposition~\ref{PropFlaringInTStar} which is a re-interpretation of the flaring result of Proposition~\ref{PropFlaringGeneral} expressed in the context of a certain natural free splitting. The statement of Proposition~\ref{PropFlaringInTStar} is found in Section~\ref{SectionThreeProps}, after preliminary work carried out in Sections~\ref{SectionFreeSplittingT}--\ref{SectionTStar}.

Once some of the definitions have been formulated, the reader may wish to pause to consider the ``Motivational Remarks'' found in Section~\ref{SectionMetricOnT} following Lemma~\ref{LemmaLittleLMetrics}.

\subsection{The free splitting $F_n \act T$ and its Nielsen lines.}
\label{SectionFreeSplittingT}

We begin with a description of the free splitting $F_n \act T$ associated to the marked graph $G$ and its subgraph $G_{u-1}$, together with a description of some features of $T$ associated to height~$u$ Nielsen paths in $G$.

Let $\F$ denote the free factor system corresponding to the subgraph $G_{u-1}$, having the form $\F = \{[A_1],\ldots,[A_K]\}$ where $G_{u-1}$ has noncontractible components $C_1,\ldots,C_K$ and $A_k \subgroup F_n$ is in the conjugacy class of the image of the injection $\pi_1(C_k) \inject \pi_1(G) \approx F_n$ (that injection determined up to inner automorphism of $F_n$ by appropriate choices of base points and paths between them).


\begin{definition}[The free splitting $F_n \act T$] \label{DefOfT}
Let $F_n \act T$ denote the free splitting corresponding to the subgraph $G_{u-1} \subset G$. What this means is that, starting from the deck action $F_n \act \wt G$ associated to the universal covering map $\wt G \mapsto G$, the tree $T$ is obtained from $\wt G$ by collapsing to a point each component of the total lift $\wt G_{u-1}$ of $G_{u-1}$. Let $p \from \wt G \to T$ denote the $F_n$-equivariant collapse map. Since $\wt G_{u-1}$ is $F_n$-invariant, the action $F_n \act \wt G$ induces via $p$ an action $F_n \act T$ which is evidently a free splitting, i.e.\ a minimal action on a simplicial tree with trivial edge stabilizers. 

Note that the set of conjugacy classes of nontrivial vertex stabilizers of this action is precisely the free factor system~$\F$ --- indeed the stabilizer of a vertex $v \in T$ equals the stabilizer of $p^\inv(v) \subset \wt G$ which is nontrivial if and only if $p^\inv(v)$ is a component of $\wt G_{u-1}$ covering some noncontractible component $C_k \subset G_{u-1}$, in which case the stabilizer of $v$ is conjugate to $A_k$.
\end{definition}

\begin{definition}[Lifting $f \from G \to G$ up to $\wt G$ and projecting down to $T$] 
\label{DefLiftingTTMap}
Fixing a standard isomorphism between the deck transformation group of the universal covering map $\wt G \mapsto G$ and the group $F_n \approx \pi_1(G)$, recall from covering space theory that the lifts $\wt G \to \wt G$ of $f \from G \to G$ are in bijective correspondence with the automorphisms $\Phi \in \Aut(F_n)$ that represent $\phi$, where the bijection $\Phi \leftrightarrow \ti f^\Phi$ is given by the following relation:
\begin{description}
\item[$\Phi$-twisted equivariance in $\wt G$:] \quad $\displaystyle \ti f^\Phi(\gamma \cdot x) = \Phi(\gamma) \cdot \ti f^\Phi(x) \quad\text{for all $\gamma \in F_n$, $x \in \wt G$}$
\end{description}
Since $f$ preserves $G_{u-1}$, any of its lifts $\ti f^\Phi$ preserves $\wt G_{u-1}$, and hence $\ti f^\Phi$ induces a map $f^\Phi_T \from T \to T$. The $\Phi$ twisted equivariance property in $\wt G$ implies a corresponding property in $T$:
\begin{description}
\item[$\Phi$-twisted equivariance in~$T$:] \quad $\displaystyle f^\Phi_T(\gamma \cdot x) = \Phi(\gamma) \cdot f^\Phi_T(x) \quad\text{for all $\gamma \in F_n$, $x \in T$}$
\end{description}
When the automorphism $\Phi$ is understood or is not important to the discussion, we will often drop it from the notations $\ti f^\Phi$~and~$f^\Phi_T$, writing simply $\ti f$ and $f_T$ instead.

In the next section we will impose an additional metric constraint on $f_T$; see under the heading ``Stretch properties of $f_T$''.
\end{definition}

\begin{definition}[Nielsen paths, the Nielsen set, and $\rho^*$ paths in $T$]
\label{DefNielsenSet}
In the geometric and parageometric cases, where $\rho,\bar\rho$ exist and are the unique inverse pair of Nielsen paths of height $u$, 
a \emph{Nielsen path} in $\wt G$ is any lift of $\rho$ or $\bar\rho$, and a \emph{Nielsen path} in $T$ is any projection to $T$ of any Nielsen path in $\wt G$. In the geometric case, where $\rho$ is closed and has distinct initial and terminal directions, a \emph{Nielsen line} in $\wt G$ is a line which projects to a bi-infinite iterate of $\rho$, and a \emph{Nielsen line} in $T$ is the projection of a Nielsen line in $\wt G$. 

The \emph{Nielsen set} $\cN$, a collection of subsets of $T$, is defined as follows. In the ageometric case, $\cN = \emptyset$; in the geometric case, $\cN$ is the set of Nielsen lines in~$T$; and in the parageometric case, $\cN$ is the set of Nielsen paths in~$T$. Furthermore, for each $N \in \cH$ its \emph{basepoint set} or \emph{base lattice}, denoted $Z(N)$, is defined as follows. In the geometric case, the Nielsen line $N$ has a unique decomposition as a bi-infinite concatenation of Nielsen paths, and $Z(N)$ is defined to be the set of concatenation points. In the parageometric case, where $N$ is just a single Nielsen path, $Z(N)$ is its endpoint pair.

Note that in the geometric case, basepoint sets of distinct Nielsen lines are disjoint --- for all $N \ne N' \in \cN$ we have $Z(N) \intersect Z(N') = \emptyset$. This follows from two facts about the base point $p$ of~$\rho$. First, each point $\ti p \in \wt G$ lying over $p$ is an endpoint of exactly two Nielsen paths in $\wt G$, both contained in the same Nielsen line in $\wt G$. Second, $p$ is not contained in~$G_{u-1}$ \cite[Corollary 4.19]{\recognitionTag}, so no lift $\ti p$ is contained in $\wt G_{u-1}$, and the projection map $\wt G \mapsto T$ is locally injective on the complement of $\wt G_{u-1}$.

Consider a finite path $\sigma_{\wt G}$ in $\wt G$ with endpoints at vertices, with projections $\sigma_G$ in $G$ and $\sigma_T$ in $T$. If $\sigma_G = \rho^i$ or $\bar\rho^i$ for some integer $i \ne 0$ --- in the parageometric case where $\rho$ is not closed, $i$ must equal~$1$ --- then we say that $\sigma_T$ is a \emph{$\rho^*$-path} in $T$, the superscript ${}^*$ representing the exponent. Note that $\rho^*$ paths in $T$ are precisely the paths of the form $\overline{QQ'}$ for which there exists $N \in \cN$ such that $Q,Q' \in Z(N)$. 
\end{definition}

\subsection{Path functions on $\wt G$ and $T$.} 
\label{SectionMetricOnT}

In a tree, a finite path with initial and terminal endpoints $V,W$ is determined by those endpoints and is denoted $\overline{VW}$.

Each of $l_u$, $l_\PF$, $L_u$, $L_\PF$ is a path function on $G$ that vanishes on paths in $G_{u-1}$ (see Section~\ref{SectionFlaringBasics}). Each lifts via the universal covering map $\wt G \to G$ to an $F_n$-invariant path function on $\wt G$ that vanishes on paths in $\wt G_{u-1}$, and hence projects via $q \from \wt G \to T$ to a well-defined and $F_n$-invariant path function on~$T$. We re-use the notations $l_u$, $l_\PF$, $L_u$, $L_\PF$ for these path functions on $\wt G$ and on $T$; the context should help to avoid ambiguities. For any path $\beta_{\wt G}$ in $\wt G$ with endpoints at vertices, letting $\beta_G$ be its projection to $G$ and $\beta_T$ its projection to $T$, it follows from the definitions that $l(\beta_G)=l(\beta_{\wt G})=l(\beta_T)$ for any of $l=l_u$, $l_\PF$, $L_u$, or $L_\PF$. 

\bigskip
\noindent
\textbf{Remark.} The point of ``well-definedness'' of, say, $L_\PF$ on $T$ is that for any vertices $V,W \in T$, if $V_1,V_2 \in \wt G$ both map to $V$ and if $W_1,W_2$ both map to $W$ then each of the paths $\overline{V_1V_2}$ and $\overline{W_1W_2}$ is either degenerate or is contained in $\wt G_{u-1}$, and hence $L_\PF(\overline{V_1W_1}) = L_\PF(\overline{V_2W_2}) = L_\PF(\overline{VW})$.

\smallskip

\begin{definition}\label{DefinitionMetricsOnT}
Associated to the path functions $l_u(\cdot)$, $l_\PF(\cdot)$, $L_u(\cdot)$, 
$L_\PF(\cdot)$ on $\wt G$ and on $T$, we have, respectively, $F_n$-equivariant functions $d_u(\cdot,\cdot)$, $d_\PF(\cdot,\cdot)$, $D_u(\cdot,\cdot)$, $D_\PF(\cdot,\cdot)$ on pairs of vertices in $\wt G$ and in $T$. For example
$$D_\PF(V,W) = L_\PF(\overline{VW})
$$
In particular, tracing back through the definitions one sees that for vertices $V,W \in T$ their distance $d_u(V,W)$ simply counts the number of edges of $T$ in the path $\overline{VW}$.
\end{definition}

For each of $l=l_u$, $l_\PF$, $L_u$, $L_\PF$ the quantity $l_\PF(E)=L_\PF(E)$ is bounded away from zero as $E \subset T$ varies over edges; this is an immediate consequence of the fact that as $e$ varies over the finite set of edges of $H_u$, the finite set of positive numbers $l(e)$ has a positive minimum. We record this fact as:
\begin{lemma}
\label{LemmaPFLowerBound}
There exists $\eta = \eta_{\ref{LemmaPFLowerBound}} > 0$ such that for each edge $E \subset T$ with endpoints $V \ne W \in T$, the values of $d_u$, $d_\PF$, $D_u$, $D_\PF$ on $V,W$ are all $\ge \eta$. \qed
\end{lemma}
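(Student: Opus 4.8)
The statement of Lemma~\ref{LemmaPFLowerBound} is elementary once one unwinds the definitions, so the proof proposal is short. The plan is to reduce the claim about the four path functions on $T$ to a statement about the finitely many edges of $H_u$ in $G$, where positivity is immediate.

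First I would recall that an edge $E \subset T$ with distinct endpoints $V \ne W$ lifts, via the collapse map $q \from \wt G \to T$, to an edge $\wt E \subset \wt G$ not contained in $\wt G_{u-1}$ (edges of $\wt G_{u-1}$ collapse to points), and hence $\wt E$ projects to an edge $e$ of $G$ lying in the top stratum $H_u$. By the definitions in Section~\ref{SectionFlaringBasics} and the discussion preceding Definition~\ref{DefinitionMetricsOnT}, the values $d_u(V,W)$, $d_\PF(V,W)$, $D_u(V,W)$, $D_\PF(V,W)$ equal $l_u(e)$, $l_\PF(e)$, $L_u(e)$, $L_\PF(e)$ respectively (a single edge cannot be, or contain, a $\rho$ or $\bar\rho$ subpath, so the ``big $L$'' functions agree with the ``little $l$'' functions on a single edge of $H_u$; this is already noted around the ``Big $L$'' eigenvector equation). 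Thus it suffices to bound $l_u(e)$, $l_\PF(e)$ below over the finitely many edges $e$ of $H_u$.

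Now $l_u(e) = 1$ for each edge $e \subset H_u$ by definition of $l_u$, so $d_u$ and $D_u$ are $\ge 1$ on adjacent vertices. For $l_\PF$, by Notations~\ref{NotationsFlaring}~\pref{ItemCTEigen}(a) we have $l_\PF(e) > 0$ for each edge $e \subset H_u$, since $e \not\subset G_{u-1}$; as $H_u$ has only finitely many edges, $\min\{l_\PF(e) : e \subset H_u\} > 0$. Setting $\eta_{\ref{LemmaPFLowerBound}}$ to be the minimum of this number and $1$ gives the required uniform lower bound for all four functions simultaneously. There is no real obstacle here; the only point requiring a moment's care is the observation that an edge of $T$ necessarily comes from an edge of $H_u$ (not from $G_{u-1}$), which is exactly the content of the collapse construction in Definition~\ref{DefOfT}.

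\begin{proof}
Let $E \subset T$ be an edge with endpoints $V \ne W$. By construction of $T$ (Definition~\ref{DefOfT}), $T$ is obtained from $\wt G$ by collapsing each component of $\wt G_{u-1}$, so $E$ lifts to an edge $\wt E \subset \wt G$ with $\wt E \not\subset \wt G_{u-1}$; its projection to $G$ is therefore an edge $e$ of the top stratum $H_u$. By the discussion preceding Definition~\ref{DefinitionMetricsOnT}, together with the fact that a single edge of $H_u$ is neither contained in nor contains a copy of $\rho$ or $\bar\rho$, we have
$$d_u(V,W) = l_u(e), \quad d_\PF(V,W) = l_\PF(e), \quad D_u(V,W) = L_u(e) = l_u(e), \quad D_\PF(V,W) = L_\PF(e) = l_\PF(e).
$$
Now $l_u(e) = 1$ for every edge $e \subset H_u$. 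Also $l_\PF(e) > 0$ for every edge $e \subset H_u$ by Notations~\ref{NotationsFlaring}~\pref{ItemCTEigen}(a), and since $H_u$ has only finitely many edges the quantity $m = \min\{\, l_\PF(e) \suchthat e \text{ an edge of } H_u \,\}$ is positive. Taking $\eta_{\ref{LemmaPFLowerBound}} = \min\{1, m\} > 0$, each of $d_u(V,W)$, $d_\PF(V,W)$, $D_u(V,W)$, $D_\PF(V,W)$ is $\ge \eta_{\ref{LemmaPFLowerBound}}$, as required.
\end{proof}
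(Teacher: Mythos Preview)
Your proof is correct and follows essentially the same approach as the paper: the paper's justification (given in the paragraph immediately preceding the lemma) is exactly that the values on an edge of $T$ reduce to $l(e)$ for an edge $e$ of $H_u$, and since $H_u$ has finitely many edges with $l(e)>0$ for each, the minimum is positive. Your write-up is more detailed but matches the paper's reasoning.
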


Each of the path functions $l_u$ and $l_\PF$ is additive, meaning that its value on an edge path is the sum of its values on individual edges. It follows that each of $d_u$ and $d_\PF$ is a path metric on $T$. Furthermore, $d_u$ and $d_\PF$ are quasicomparable to each other, because $H_u$ has only finitely many edges hence $T$ has only finitely many edge orbits under the action of~$F_n$, and the values of $d_u$ and $d_\PF$ on the endpoint pair of each edge is positive (Lemma~\ref{LemmaPFLowerBound}). The ``metrics'' $D_u$ and $D_\PF$ are also quasicomparable to each other, by application of Corollary~\ref{CorollaryLFormulas}. However, $D_u$ and $D_\PF$ are not actual metrics because they may violate the triangle inequality. Nonetheless $D_u$ and $D_\PF$ do satisfy a coarse version of the triangle inequality, as a consequence of~Lemma~\ref{LemmaCTI}, and we will refer to this by saying that $D_u$ and $D_\PF$ are \emph{coarse metrics} on the vertex set of ~$T$. We record these observations as:

\begin{lemma}\label{LemmaLittleLMetrics}
$d_u$ and $d_\PF$ are quasicomparable, $F_n$-equivariant metrics on vertices of~$T$. Also, $D_u$ and $D_\PF$ are quasicomparable, $F_n$-equivariant coarse metrics on vertices in~$T$.
\end{lemma}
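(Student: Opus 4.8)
The plan is to formalize the discussion immediately preceding the statement, organized into two halves. First I would treat $d_u$ and $d_\PF$. Since $l_u$ and $l_\PF$ are symmetric and vanish on trivial paths (they are path functions), and since in a tree the path $\overline{VW}$ joining two vertices is unique, $d_u$ and $d_\PF$ are symmetric and vanish on the diagonal. The essential point is that $l_u$ and $l_\PF$ are \emph{additive}: their value on an edge path is the sum of the values on its edges. Because in $T$ the geodesic $\overline{UW}$, obtained as the straightening $[\overline{UV}\cdot\overline{VW}]$, is an initial segment of $\overline{UV}$ followed by a terminal segment of $\overline{VW}$, additivity yields the triangle inequality $d_u(U,W) \le d_u(U,V) + d_u(V,W)$, and likewise for $d_\PF$; meanwhile Lemma~\ref{LemmaPFLowerBound} supplies a uniform positive lower bound $\eta$ on the value of any single edge, which forces $d_u(V,W)>0$ and $d_\PF(V,W)>0$ whenever $V \ne W$. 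Hence both are genuine metrics, and $F_n$-equivariance is inherited from $F_n$-invariance of the lifted path functions on $\wt G$ together with equivariance of the collapse map $q \from \wt G \to T$. For quasicomparability I would use that $H_u$ has only finitely many edges, so $F_n$ acts on $T$ with finitely many edge orbits; on each edge $E$ of $T$ one has $d_u(E)=l_u(E)=1$ while $d_\PF(E)=l_\PF(E)$ takes one of finitely many positive values, hence is bounded above and away from $0$. Additivity then propagates this per-edge comparability to all vertex pairs, producing a constant $C \ge 1$ with $\frac{1}{C}\,d_\PF \le d_u \le C\,d_\PF$.

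Second I would treat $D_u$ and $D_\PF$. Symmetry and vanishing on trivial paths again follow from the path-function axioms. These functions are \emph{not} additive, since they omit edges lying in $\rho$- or $\bar\rho$-subpaths, so they need not satisfy the exact triangle inequality; instead, for any vertices $U,V,W$, writing $[\overline{UV}\cdot\overline{VW}]=\overline{UW}$ and passing through a lift in $\wt G$ and its projection to $G$ (where the four path functions agree with their values on $T$), Lemma~\ref{LemmaCTI} gives $D_u(U,W) \le D_u(U,V)+D_u(V,W)+C_{\ref{LemmaCTI}}$ and similarly for $D_\PF$. This is precisely the coarse triangle inequality, so $D_u$ and $D_\PF$ are coarse metrics on the vertex set of $T$. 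Their quasicomparability follows at once by applying the inequality $\frac{1}{K}L_\PF(\sigma) \le L_u(\sigma) \le K\,L_\PF(\sigma)$ of Corollary~\ref{CorollaryLFormulas} to $\sigma=\overline{VW}$, and $F_n$-equivariance is once more inherited from $F_n$-invariance of the underlying path functions.

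I do not expect any serious obstacle: the lemma is a bookkeeping exercise assembling Lemma~\ref{LemmaPFLowerBound}, Lemma~\ref{LemmaCTI}, and Corollary~\ref{CorollaryLFormulas}. The one point that deserves a moment's care is that the coarse triangle inequality must hold on $T$, not merely on $G$; this is handled by the observation that a geodesic triangle in the tree $T$ lifts to $\wt G$ and projects to $G$ with the values of $l_u$, $l_\PF$, $L_u$, $L_\PF$ unchanged at each stage, so Lemma~\ref{LemmaCTI} — stated for paths in $G$ — transports verbatim to paths in $T$.
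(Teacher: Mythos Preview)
Your proposal is correct and follows essentially the same approach as the paper: the lemma is stated as a summary of the paragraph immediately preceding it, which invokes additivity of $l_u$ and $l_\PF$ together with Lemma~\ref{LemmaPFLowerBound} for the metric part, and Lemma~\ref{LemmaCTI} plus Corollary~\ref{CorollaryLFormulas} for the coarse-metric part. Your added remark about transporting Lemma~\ref{LemmaCTI} from $G$ to $T$ via $\wt G$ is a reasonable explicit justification of a step the paper leaves implicit in its earlier discussion of how the path functions agree across $G$, $\wt G$, and $T$.
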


\subparagraph{Motivational remarks.} The metrics $d_u$ and $d_\PF$ may fail to satisfy the desired flaring condition: if $H_u$ is geometric then for any Nielsen line $N \subset T$, iteration of $f_T=f_T^\Phi$ produces a sequence of Nielsen lines $N_i = (f_T^k)_\#(N)$ ($k \in \Z$), and furthermore the map $f^k_T$ takes the base lattice $Z(N)$ to the base lattice $Z(N_k)$ preserving path distance. Since the base lattice of a Nielsen line has infinite diameter in any invariant path metric on $T$, this demonstrates the failure of flaring. In hopes of averting a failure of flaring, we might instead consider using the coarse metrics $D_u$ and $D_\PF$: when two vertices $V,W$ are contained in the base lattice of the same Nielsen line we have the equations $D_u(V,W)=D_\PF(V,W)=0$ which are precisely designed to correct the failure of flaring. Although using $D_u$ or $D_\PF$ creates its own problem because they are not actually metrics, in Section~\ref{SectionTStar} we shall solve that problem by using the coning construction often employed in studies of relative hyperbolicity, coning off those paths in $T$ which exhibit nonflaring behavior to obtain a graph $T^*$. Furthermore, this graph will come equipped with an actual path metric $d^*$ such that the inclusion $T \inject T^*$ is a quasi-isometry from $D_\PF$ to $d^*$; see Proposition~\ref{PropConeQI} in Section~\ref{SectionThreeProps}.

\bigskip

Next we translate several results on path functions in Section~\ref{SectionFlaringBasics} into the context of the coarse metric $D_\PF$ on $T$:


\begin{proposition}
\label{PropDecompInT}
For any vertices $V \ne W \in T$ there is a unique decomposition of the path $\overline{VW} = \mu_0 \nu_1 \mu_1 \cdots \nu_A \mu_A$ such that the following properties hold:
\begin{enumerate}
\item\label{ItemAIsZero}
If $\rho$ does not exist then $A=0$.
\item\label{ItemRhoStarSubpaths}
If $\rho$ exists then the $\nu_a$'s are precisely all of the maximal $\rho^*$ paths of $\overline{VW}$, and so each $\mu_a$ contains no $\rho^*$ subpath.
\item\label{ItemOneNotDeg}
If $\rho$ exists and if $1 \le a < a+1 \le A-1$ then at least one of the subpaths $\mu_a,\mu_{a+1}$ is nondegenerate; in the geometric case, all $\mu_a$-subpaths are nondegenerate. 
\item\label{ItemDPFFormula}
$D_\PF(V,W) = l_\PF(\mu_0) + \cdots + l_\PF(\mu_A)$
\qed
\end{enumerate}
Furthermore, given any path $\gamma$ in $T$ --- finite, singly infinite, or bi-infinite --- whose endpoints, if any, are at vertices, and assuming that $\rho$ exists, there is a unique decomposition of $\gamma$ as an alternating concatenation of its maximal $\rho^*$ paths (the $\nu$-subpaths) and paths that contain no $\rho^*$ subpath (the $\mu$-subpaths) such that for any two consecutive $\mu$-subpaths at least one is nondegenerate, all $\mu$-subpaths being nondegenerate in the geometric case.
\end{proposition}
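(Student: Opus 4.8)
The strategy is to pull everything back to $\wt G$, apply the already-established decomposition results from Section~\ref{SectionFlaringBasics} (Lemma~\ref{LemmaNoOverlap}, Corollary~\ref{CorollaryAltDecomp}, Corollary~\ref{CorollaryLFormulas}), and then push the resulting decompositions forward along the collapse map $q \from \wt G \to T$, checking that each clause survives the projection. Concretely: given vertices $V \ne W \in T$, choose lifts $\wt V, \wt W \in \wt G$ of $V,W$ and consider the path $\overline{\wt V \wt W}$ in $\wt G$, which projects to $\overline{VW}$ in $T$. Lift further — or rather, the path $\overline{\wt V\wt W}$ already projects to a path $\sigma_G$ in $G$ with endpoints at vertices. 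Apply Corollary~\ref{CorollaryAltDecomp} to $\sigma_G$ to get the unique decomposition $\sigma_G = \mu_0^G\,\nu_1^G\,\mu_1^G\cdots\nu_A^G\,\mu_A^G$ where each $\nu_a^G$ is an iterate of $\rho$ or $\bar\rho$ and each $\mu_a^G$ contains no $\rho$ or $\bar\rho$ subpath, and then lift this decomposition to $\overline{\wt V\wt W}$ and project it down to $\overline{VW}$ via $q$. The images $\nu_a = q(\nu_a^{\wt G})$ are exactly the $\rho^*$ paths of Definition~\ref{DefNielsenSet}, and the images $\mu_a = q(\mu_a^{\wt G})$ contain no $\rho^*$ subpath. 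This gives existence of the decomposition; clauses~\pref{ItemAIsZero}, \pref{ItemRhoStarSubpaths}, and~\pref{ItemOneNotDeg} are then read off from the corresponding clauses of Corollary~\ref{CorollaryAltDecomp} (clause~\pref{ItemOneNotDeg} from Corollary~\ref{CorollaryAltDecomp}\pref{ItemEveryOtherHasHu}, using that $L_u$-positivity in $G$ translates to nondegeneracy in $T$ because $q$ collapses precisely the $G_{u-1}$-part), and clause~\pref{ItemDPFFormula} is exactly Corollary~\ref{CorollaryLFormulas} transported to $T$ via the identity $L_\PF(\beta_G) = L_\PF(\beta_T)$ recorded at the start of Section~\ref{SectionMetricOnT}. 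For the ``Furthermore'' clause concerning an arbitrary (possibly infinite or bi-infinite) path $\gamma$ in $T$, I would invoke the infinite version of Lemma~\ref{LemmaNoOverlap} (which is explicitly stated there to hold without the finiteness assumption), lift $\gamma$ to $\wt G$, decompose there, and project down, with the same bookkeeping.

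The one genuinely delicate point — and the step I expect to be the main obstacle — is \textbf{uniqueness} of the decomposition, together with the subtle difference between ``$\mu_a$ contains no $\rho^*$ subpath'' stated locally in $T$ and the pulled-back condition ``$\mu_a^G$ contains no $\rho$ or $\bar\rho$ subpath'' in $G$. The subtlety is that $q$ is not injective on $\wt G$: a path in $T$ may have a $\rho^*$ subpath that does \emph{not} lift to an honest $\rho$-subpath of the chosen lift in $\wt G$, if the relevant lifted subpath of $\overline{\wt V\wt W}$ wanders into $\wt G_{u-1}$. However, this cannot happen here because Definition~\ref{DefNielsenSet} already records (via \cite[Corollary 4.19]{\recognitionTag}) that at least one endpoint of $\rho$ is not in $G_{u-1}$, so $q$ is locally injective on the relevant portions; I would need to spell out that a $\rho^*$-subpath of $\overline{VW}$ corresponds bijectively, along $q$, to a genuine $\rho^{\pm i}$-subpath of $\overline{\wt V\wt W}$, hence the decompositions match up exactly and uniqueness in $G$ (Lemma~\ref{LemmaNoOverlap}) forces uniqueness in $T$. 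For uniqueness itself, I would argue directly: any decomposition of $\overline{VW}$ with the stated properties, when lifted to $\wt G$, yields a decomposition satisfying the hypotheses of Lemma~\ref{LemmaNoOverlap}/Corollary~\ref{CorollaryAltDecomp} in $\wt G$ (and then $G$), where uniqueness is already known; projecting back and using the $q$-bijectivity on $\rho^*$-subpaths just noted gives uniqueness in $T$.

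Finally I would remark that clause~\pref{ItemDPFFormula} for $D_u$ in place of $D_\PF$ holds by the identical argument with $l_u$ replacing $l_\PF$ and Corollary~\ref{CorollaryLFormulas} applied in its $L_u$ form, though the statement as written only asserts the $D_\PF$ version. No new ideas beyond Section~\ref{SectionFlaringBasics} are required; the content of the proposition is purely that the Section~\ref{SectionFlaringBasics} decomposition machinery is compatible with the collapse $\wt G \to T$, and the write-up is essentially a careful transcription of that compatibility.
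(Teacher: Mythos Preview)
Your proposal is correct and, for items~\pref{ItemAIsZero}--\pref{ItemDPFFormula}, takes essentially the same approach as the paper: lift to $\wt G$, invoke Corollary~\ref{CorollaryAltDecomp} and Corollary~\ref{CorollaryLFormulas}, and push forward along the collapse map, with the paper simply declaring these steps ``immediate from those results combined with the definitions'' rather than spelling out the $\rho^*$-subpath bijection that you (rightly) flag as the point needing care.

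For the ``Furthermore'' clause the paper takes a slightly different tack than you do. Rather than lifting an infinite path to $\wt G$ and invoking the infinite-path version of Lemma~\ref{LemmaNoOverlap} directly, the paper argues intrinsically in $T$ via a nesting/stabilization observation: for any nested pair of finite subpaths $\overline{VW} \subset \overline{V'W'}$, every maximal $\rho^*$ subpath of $\overline{VW}$ whose $d_\PF$-distance from both $V$ and $W$ exceeds $l_\PF(\rho)$ is already a maximal $\rho^*$ subpath of $\overline{V'W'}$, so the decomposition of any exhausting sequence of finite subpaths stabilizes on compacta. Your lifting approach is equally valid and arguably more uniform with the finite case; the paper's approach avoids having to think about lifting infinite paths and keeps the argument entirely within $T$. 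Both are short.
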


\begin{proof} Items~\pref{ItemAIsZero}--\pref{ItemOneNotDeg} are translations of Corollary~\ref{CorollaryAltDecomp}, and item~\pref{ItemDPFFormula} is a translation of Corollary~\ref{CorollaryLFormulas}; the proofs are immediate from those results combined with the definitions. 

The ``Furthermore\ldots'' clause is a quick consequence of the following observations that hold for any nested pair of finite subpaths $\overline{VW} \subset \overline{V'W'}$ in $T$. First, every $\rho^*$ subpath of $\overline{VW}$ is a $\rho^*$ subpath of $\overline{V'W'}$. Also, every maximal $\rho^*$ subpath of $\overline{VW}$ whose $d_\PF$ distances from $V$ and from $W$ are greater than $l_\PF(\rho)$ is a maximal $\rho^*$ subpath of~$\overline{V'W'}$.
\end{proof}

\subparagraph{Remark on item~\pref{ItemOneNotDeg}.} Consider the paths $\mu_a$ for $1 \le a \le A-1$. In the geometric case each such $\mu_a$ is nondegenerate, and this can be used to improve the constants in some applications of~\pref{ItemOneNotDeg} (underlying nondegeneracy of $\mu_a$ is the fact that the base point of the closed Nielsen path $\rho$ is disjoint from $G_{u-1}$). In the parageometric case, on the other hand, one of the paths $\mu_a$, $\mu_{a+1}$ may be degenerate. This happens for $\mu_a$ only if, up to orientation reversal, the Nielsen path $\rho$ has initial vertex $p \in G_{u-1}$ (the terminal vertex is necessarily disjoint from~$G_{u-1}$, the fact which underlies item~\pref{ItemOneNotDeg}), in which case $\mu_a$ is degenerate if and only if $\nu_a \mu_a \nu_{a-1}$ lifts to a path in $\wt G$ that projects to a path in $G$ of the form $\bar\rho \mu \rho$ where $\mu$ is a nondegenerate closed path in $G_{u-1}$ based at~$p$.

\medskip




\subsection{Constructing $T^*$ by coning off Nielsen axes of $T$.} 
\label{SectionTStar}
Embed the tree $T$ into a graph denoted $T^*$, and extend the action $F_n \act T$ to an action $F_n \act T^*$, as follows. Index the Nielsen set as $\cN = \{N_j\}_{j \in J}$, letting $Z_j$ be the basepoint set of $N_j$ (Definition~\ref{DefOfT}). For each $j \in J$ we cone off $Z_j$ by adding a new vertex $P_j=P(N_j)$ and attaching a unique edge $\overline{P_j Q}$ for each $Q \in Z_j$. The points $P_j$ are called \emph{cone points} and the edges $\overline{P_j Q}$ are called \emph{cone edges}. Since the simplicial action $F_n \act T$ takes Nielsen paths to Nielsen paths and hence induces a basepoint preserving permutation of the Nielsen set $\cN$, this action extends uniquely to $F_n \act T^*$ permuting the cone points and the cone edges. 

Let $V^\infty \subset T^*$ be the set of vertices $v \in T^*$ whose stabilizer subgroup $\Stab(v)$ is infinite. 

\begin{lemma}
\label{LemmaVertexStabilizers}
The formula $v \mapsto \Stab(v)$ defines an injection from $V^\infty$ to the set of nontrivial subgroups of $F_n$. A subgroup $S \subgroup F_n$ is equal to $\Stab(v)$ for some $v \in V^\infty$ if and only if $S$ is conjugate to the fundamental group of some noncontractible component of $G_{u-1}$ in $\pi_1(G) \approx F_n$, or $H_u$ is a geometric stratum and $S$ is conjugate to the infinite cyclic subgroup $\<\rho\> \subgroup F_n$.
\end{lemma}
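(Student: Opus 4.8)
The statement has two halves: (a) that $v \mapsto \Stab(v)$ is injective on $V^\infty$, and (b) the explicit identification of which subgroups arise. I would structure the proof in three parts: first analyze $V^\infty$ for the original tree $T$ (the vertices of $T$ come in two kinds — those that are collapsed components of $\wt G_{u-1}$, whose stabilizer is conjugate to some $\pi_1(C_k) \approx A_k$, and those that are images of single points of $\wt G \setminus \wt G_{u-1}$, which have trivial stabilizer, since the deck action on $\wt G$ is free and $\wt G \to T$ is locally injective off $\wt G_{u-1}$). This already proves the claim for $T$ and identifies the nontrivial $T$-stabilizers as exactly the conjugates of the $A_k$, via Definition~\ref{DefOfT}; injectivity here follows because the $A_k$ are the components of a free factor system (distinct roots, malnormal), so distinct vertices with infinite stabilizer have non-conjugate — in particular distinct — stabilizer subgroups.

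**The cone points.** The second part handles the new cone vertices $P_j = P(N_j)$ of $T^*$. Only in the geometric case is $\cN$ nonempty. Here I would argue: the stabilizer of $P_j$ in $F_n$ equals the setwise stabilizer of the basepoint set $Z_j = Z(N_j) \subset T$, since the $F_n$-action on $T^*$ permutes cone edges according to the action on Nielsen lines and their base lattices. An element $\gamma$ fixing $P_j$ must preserve $Z_j$, hence preserve the unique Nielsen line $N_j$ (it is determined by $Z_j$), hence (lifting to $\wt G$) $\gamma$ conjugates the Nielsen line of $\rho$ to itself, which forces $\gamma \in \<\rho\>$ up to the usual identifications — conversely $\rho$ itself, acting as translation along its own axis, fixes $Z_j$ setwise and hence fixes $P_j$. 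So $\Stab(P_j)$ is an infinite cyclic conjugate of $\<\rho\>$. For injectivity among the cone points: distinct Nielsen lines have disjoint basepoint sets (this is stated just before Definition~\ref{DefNielsenSet}, following from $p \notin G_{u-1}$ and local injectivity of $\wt G \to T$ off $\wt G_{u-1}$), and the maximal infinite cyclic subgroup $\<\rho'\>$ associated to a Nielsen line determines that line's axis and hence the line, so distinct cone points have distinct (indeed, generating distinct maximal cyclic) stabilizers. One must also check no cone point shares a stabilizer with an old vertex of $T$: a conjugate of $\<\rho\>$ is infinite cyclic, and if it equalled some conjugate of $A_k$ then $A_k$ would be cyclic and carried by $G_{u-1}$, but $\rho$ has an endpoint $p \notin G_{u-1}$, so the iterates $\rho^k$ are not carried by $G_{u-1}$ — contradiction. (Alternatively: $H_u$ being an \eg\ geometric stratum, $\<\rho\>$ is a maximal cyclic subgroup not carried by $\F$, by the standard structure theory.)

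**Assembly and the main obstacle.** Combining: $V^\infty$ consists of the infinite-stabilizer vertices of $T$ (stabilizers = conjugates of the $A_k$) together with the cone points (stabilizers = conjugates of $\<\rho\>$, only in the geometric case), these two families are disjoint as subgroup-sets, injectivity holds within each family, so $v \mapsto \Stab(v)$ is injective on all of $V^\infty$; and a subgroup $S$ is such a stabilizer iff it is conjugate to some $\pi_1(C_k)$ or ($H_u$ geometric and) conjugate to $\<\rho\>$. I expect the main obstacle to be the cone-point stabilizer computation — specifically, pinning down that $\Stab(P_j)$ is exactly $\<\rho'\>$ and not something larger: one needs that an element preserving the whole bi-infinite Nielsen line and its basepoint lattice must translate along it by a multiple of the $\rho$-period, which rests on the fact that the Nielsen line has a \emph{unique} decomposition into Nielsen paths (hence the base lattice is rigid) together with malnormality/primitivity of the cyclic stabilizer of that axis in $F_n$. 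The disjointness of basepoint sets of distinct Nielsen lines, already established in the excerpt, does most of the remaining work, so the writeup should be short once that lemma is invoked.
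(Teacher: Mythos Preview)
Your approach is essentially the same as the paper's: partition $V^\infty$ into the $T$-vertices with nontrivial stabilizer and the cone points, identify the former stabilizers via the collapse map $\wt G \to T$ as conjugates of $\pi_1(C_k)$, identify the latter (in the geometric case) as conjugates of $\<\rho\>$ via the Nielsen line/axis correspondence, and check disjointness of the two families using that $\rho$ is not carried by $G_{u-1}$. The paper's writeup differs only in emphasis: it uses the equivariant bijection $\wt N_j \leftrightarrow N_j \leftrightarrow P_j$ directly and computes $\Stab(\wt N_j)$ by covering space theory (using that $\rho$ is root free because it has a unique height-$u$ illegal turn), whereas you route through the basepoint set $Z_j$; and for injectivity among $T$-vertices the paper argues via disjointness of subtrees in $\wt G$ rather than malnormality of free factors.

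One small slip: you write ``Only in the geometric case is $\cN$ nonempty.'' In fact $\cN$ is also nonempty in the parageometric case (it consists of the finite Nielsen paths), but this is harmless for your argument since in that case the $F_n$-action on cone points is free, so no cone point lies in $V^\infty$ --- which is all you need. Your anticipated obstacle (ruling out a larger stabilizer for $P_j$) is exactly what root-freeness of $\rho$ dispatches.
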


\begin{proof} This fact may be extracted from Theorem F of \SubgroupsThree. But a direct proof is easy; here is a sketch.

We have a partition 
$$
V^\infty = V^\infty_{u-1} \coprod V^\infty_\cN \quad\text{where}\quad V^\infty_{u-1} = V^\infty \intersect T \quad\text{and}\quad V^\infty_\cN = \{\text{cone points}\} = \{P_j\}
$$
The collapse map $\wt G \mapsto T$ induces an equivariant and hence stabilizer preserving bijection between $V^\infty_{u-1}$ and the set of components of $\wt G_{u-1}$ having nontrivial stabilizer. Using covering space theory, the stabilizers of the latter components are precisely those subgroups of $F_n$ conjugate to the fundamental group of some noncontractible component of $G_{u-1}$. Furthermore, since distinct components of $\wt G_{u-1}$ are disjoint subtrees of $\wt G$, the intersections of their stabilizers are trivial, and so the stabilizers are unequal if they are nontrivial. This completes the proof of $H_u$ is nongeometric.

If $H_u$ is geometric then we have equivariant and hence stabilizer preserving bijections $\wt N_j \leftrightarrow N_j \leftrightarrow P_j$ where $\wt N_j$ is the Nielsen line in $\wt G$ mapping to $N_j$ under the collapse map $\wt G \mapsto T$. By definition the $\wt N_j$ are precisely those lines in $\wt G$ that cover the closed Nielsen path $\rho$. The element of $\pi_1(G)$ represented by $\rho$ (and denoted by $\rho$) is root free in $\pi_1(G)$ because $\rho$ has a unique $u$-illegal turn (Notations~\ref{NotationsFlaring}~\pref{ItemCTiNP}), and so by covering space theory the stabilizers of the lines $\wt N_j$ are precisely the infinite cyclic subgroups in the conjugacy class of the group $\<\rho\> \subgroup \pi_1(G) \approx F_n$. And as before, two different such lines have distinct stabilizers. 

The proof is completed by noting that $\<\rho\>$ is not conjugate in $F_n$ to the fundamental group of a noncontractible component of $G_{u-1}$, because $\rho$ is a circuit not contained in $G_{u-1}$ (Notations~\ref{NotationsFlaring}~\pref{ItemCTiNP}).
\end{proof}

\begin{definition}[Piecewise Riemannian metric $ds^*$, and path metric $d^*$, on~$T^*$] 
\label{DefLittleDStar}
\quad\\ We may construct an $F_n$-equivariant piecewise Riemannian metric on the tree $T$, denoted $ds$, such that for vertices $V,W \in T$ we have $d_\PF(V,W)=\int_{\overline{VW}} ds$. We may extend $ds$ to an $F_n$-equivariant piecewise Riemannian metric denoted $ds^*$ on $T^*$ as follows. In the nongeometric case there is nothing to do. In the geometric case the group $F_n$ acts freely and transitively on the set of cone edges $\{\overline{P_j Q} \suchthat j \in J, \,\, Q \in Z(N_j)\}$; extend $ds$ over a single cone edge $\overline{P_j Q}$, then extend it over all other cone edges equivariantly to obtain $ds^*$; note that the length $\int_{\overline{P_jQ}} ds^*$ is independent of $j$ and $Q$. In the parageometric case, the group $F_n$ acts freely on the set of cone edges, and there are two orbits of cone edges corresponding to the two endpoints of $\rho$; we extend $ds$ over a single cone edge in each of the two orbits, then extend equivariantly to obtain $ds^*$; also, we require that length $\int_{\overline{P_j Q}} ds^*$ be the same on both orbits of cone edges. Define the \emph{cone height} to be the length of any cone edge in the metric $ds^*$.

Next let $d^*(\cdot,\cdot)$ be the path metric on $T^*$ obtained by minimizing path lengths: $d^*(x,y)$ equals the minimum of $\int_\gamma ds^*$ over all continuous paths $\gamma$ in $T^*$ having endpoints $x,y$. The infimum is evidently minimized by some embedded edge path in $T^*$ having endpoints $x,y$. Note that since $T^*$ is not a tree, embedded edge paths need not be determined by their endpoints. 
\end{definition}

\smallskip\noindent\textbf{Bypasses in $T^*$.} For any $\rho^*$ path $\overline{QQ'}$ we let $\widehat{QQ'}$ denote the path $\overline{QP(N)} * \overline{P(N)Q'}$ in $T^*$, called the \emph{bypass} of $\overline{QQ'}$. Note that a path in the graph $T^*$ is a bypass if and only if it is a two-edge path having a cone point as its midpoint, and furthermore a bypass is completely determined by its endpoints. We thus have a one-to-one correspondence $\nu \leftrightarrow \wh\nu$ between the set of $\rho^*$ paths and the set of bypasses. 

\smallskip\noindent\textbf{Extending the map $f^\Phi_T \from T \to T$ to $f^\Phi_{T^*} \from T^* \to T^*$.} Following Definition~\ref{DefLiftingTTMap}, and choosing $\Phi \in \Aut(F_n)$ representing $\phi$, lift $f \from G \to G$ to $\wt G$ and project to $T$ obtaining $\Phi$-twisted equivariant maps
$$\ti f = \ti f^\Phi \from \wt G \to \wt G \qquad f_T = f^\Phi_T \from T \to T
$$
With respect to the inclusion $T \inject T^*$ we extend $f^\Phi_T$ to a $\Phi$-twisted equivariant map 
$$\fTstar = \fTstar^\Phi \from T^* \to T^*
$$
as follows. As noted, for convenience we will often suppress $\Phi$ from the notation for these maps.

The action of $f_T$ on $T$ induces a well-defined action on the Nielsen set $\cN$, and so we can extend $f_T$ over the set of cone points by setting $\fTstar(P(N)) = P(f_T(N))$ for each $N \in \cN$. Furthermore, for each $N \in \cN$ the map $f_T$ restricts to a bijection of basepoint sets $f_T \from Z(N) \to Z(f_T(N))$, and so for each $Q \in Z(N)$ we can extend the endpoint map $(P(N),Q) \mapsto (P(f_T(N)), f_T(Q))$ uniquely to an isometry between cone edges $\fTstar \from \overline{P(N),Q}  \to \overline{P(f_T(N)),f_T(Q)}$. 

For each edge $E \subset T$ we have the following equation that follows from the ``big $L$'' eigenlength equation in Section~\ref{SectionFlaringBasics}:
\begin{description}
\item[Eigenlength equation in $T$:] \quad $\displaystyle \int_{f_T(E)} ds = \lambda \int_E ds$
\end{description}
It follows that by equivariantly homotoping $f_T$ relative to endpoints of edges we may arrange that $f_T$ stretches each edge of $T$ by a uniform factor~$\lambda$. Since its extension $f_{T*}$ is an isometry on edges of $T^* \setminus T$, it follows that $f_{T*}$ is $\lambda$-Lipschitz. These conditions constitute additional constraints on the maps $f_T$ and $f_{T^*}$ which we record here:

\medskip
\noindent
\textbf{Stretch properties of $f_T$ and $f_{T^*}$:}
\begin{itemize}
\item The maps $f_T,f_{T^*}$ stretch each edge $E \subset T$ by a constant factor $\lambda$ over the path $f_T(E)=f_{T^*}(E)$.
\item The map $T^*$ permutes cone edges, taking each cone edge isometrically to its image.
\end{itemize}
These stretch properties are the first step of Lemma~\ref{LemmaTStarQI} to follow.

\medskip

We recall here some basic definitions. Given constants $k \ge 1$, $c \ge 0$, a map of metric spaces $f \from X \to Y$ is a \emph{$(k,c)$-quasi-isometric embedding} if for all $p,q \in X$ we have
$$\frac{1}{k} \, d(p,q)-c \le d(f(p),f(q)) \le k \, d(p,q) + c
$$
If in addition each point of $Y$ has distance $\le c$ from some point of $f(X)$ then $f$ is a \emph{$(k,c)$-quasi-isometry}. If the domain $X$ is a subinterval of $\reals$ then we say that $f$ is a \emph{$(k,c)$-quasigeodesic}. Sometimes we conflate the constants $k,c$ by setting $k=c$ and using terminology like ``$k$-quasi-isometries'' etc. Sometimes we ignore $k,c$ altogether and use terminology like ``quasi-isometries'' etc. 

\begin{lemma}
\label{LemmaTStarQI}
The map $f_{T^*} \from T^* \to T^*$ is a quasi-isometry.
\end{lemma}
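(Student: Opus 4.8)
The plan is to produce a coarse inverse for $f_{T^*}$. One direction is free: by the Stretch properties of $f_T$ and $f_{T^*}$ (each edge of $T$ stretched by $\lambda$, each cone edge carried isometrically to a cone edge), the map $f_{T^*}$ is $\lambda$-Lipschitz for $d^*$. So the task reduces to building $g\from T^*\to T^*$ that is coarsely Lipschitz for $d^*$ and satisfies $g\circ f_{T^*}\approx\Id_{T^*}$ and $f_{T^*}\circ g\approx\Id_{T^*}$ (bounded $d^*$-displacement); then $f_{T^*}$ is a quasi-isometry with coarse inverse $g$.

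On $T$ itself I would use a homotopy inverse. Let $\bar f\from G\to G$ be a filtration-preserving topological representative of $\phi^{-1}$ (e.g.\ a relative train track representative with respect to the same filtration, which exists because the free factor system $\F$ of $G_{u-1}$, being $\phi$-invariant, is $\phi^{-1}$-invariant). Since $f\circ\bar f$ and $\bar f\circ f$ represent the identity of $\Out(F_n)$, lifting $\phi$ to an automorphism $\Phi$ and $\phi^{-1}$ to $\Phi^{-1}$ and taking the corresponding lifts $\ti f,\ti{\bar f}\from\wt G\to\wt G$, the compositions $\ti{\bar f}\circ\ti f$ and $\ti f\circ\ti{\bar f}$ are equivariant self-maps of $\wt G$, hence (by cocompactness) within bounded simplicial distance of $\Id_{\wt G}$. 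Since $\bar f$ preserves $G_{u-1}$, the lift $\ti{\bar f}$ preserves $\wt G_{u-1}$ and descends to $\bar f_T\from T\to T$; collapsing only decreases distances, so $\bar f_T\circ f_T$ and $f_T\circ\bar f_T$ are within bounded $d_u$-distance — hence, by quasicomparability (Lemma~\ref{LemmaLittleLMetrics}), bounded $d_\PF$-distance — of $\Id_T$. A one-line estimate ($\bar f_T$ sends each edge of $T$ to a fixed edge path, $l_\PF$ is additive, and tightening only removes edges) shows that $\bar f_T$ is $d_\PF$-Lipschitz. In the ageometric case $\cN=\emptyset$, so $T^*=T$ and $d^*=d_\PF$, and the facts just listed already finish the proof; so assume from now on that $\rho$ exists.

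I would then extend $\bar f_T$ to $g\from T^*\to T^*$ by sending each cone point $P(N)$ to $P\bigl((f_T|_{\cN})^\inv N\bigr)$ and each cone edge isometrically onto its $f_{T^*}$-preimage cone edge; this is legitimate because $f_T$ permutes $\cN$ bijectively (it comes from a $\Phi$-twisted equivariant lift with $\Phi$ an automorphism, and $f_\#(\rho)=\rho$, so $\ti f$ permutes the lifts of $\rho,\bar\rho$ bijectively), and hence $f_{T^*}$ permutes cone points and cone edges bijectively and isometrically. Then $g\circ f_{T^*}$ and $f_{T^*}\circ g$ are literally $\Id$ on cone points and cone-edge interiors, and equal $\bar f_T\circ f_T$ and $f_T\circ\bar f_T$ on $T$, hence lie within bounded $d^*$-distance of the identity (using $d^*\le d_\PF$ on $T$). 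For coarse Lipschitzness of $g$, decompose an embedded $d^*$-geodesic $\gamma^*$ into its maximal subarcs in $T$ (each an embedded arc in the tree $T$, hence a $d_\PF$-geodesic whose $ds^*$-length equals its $d_\PF$-length) and its cone edges (which enter geodesics only as two-edge detours through a cone point, plus at most one cone edge at each end). The map $g$ multiplies the length of each $T$-subarc by a bounded factor, maps cone edges and cone points among themselves isometrically, and the images fail to concatenate only at the $O(\ell(\gamma^*)/h)$ junctions, where consecutive images are a bounded $d^*$-distance apart (at a junction the two relevant points are $\bar f_T(Q)$ and the point $Q'$ with $f_T(Q')=Q$; since $f_T\bar f_T(Q)$ is $d_\PF$-close to $Q=f_T(Q')$, applying $\bar f_T$ and using $\bar f_T\circ f_T\approx\Id$ shows $\bar f_T(Q)$ and $Q'$ are $d_\PF$-close, hence $d^*$-close). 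Since the cone height $h$ is a fixed positive number, bridging the junctions adds only $O(\ell(\gamma^*))$ to the length, giving $d^*(g(x),g(y))\le(\text{const})\,d^*(x,y)+(\text{const})$.

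The main obstacle is the cone edges. The natural map $g$ cannot be taken continuous — its two prescriptions (descend $\bar f_T$ on $T$ versus invert the cone-edge permutation) disagree at the base vertices of cone edges — so coarse Lipschitzness and the coarse-inverse property must be verified by hand via the geodesic decomposition into $T$-arcs and bypass detours together with the uniform junction-gap bound above; the facts that cone edges have fixed positive length and occur in geodesics only in bounded detours are exactly what keep the number of junctions linear in the distance.
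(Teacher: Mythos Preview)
Your proof is correct, but it follows a different path from the paper's. You construct the coarse inverse concretely by taking a filtration-preserving topological representative $\bar f$ of $\phi^{-1}$, lifting and descending to $\bar f_T$, and then patching on cone edges by inverting the $f_{T^*}$-permutation; the price is the junction bookkeeping you carry out at the end. The paper instead runs a ``twisted Milnor--\v{S}varc'' argument that avoids all of this: it partitions the vertex set as $V^* = V^0 \coprod V^\infty$ (trivial vs.\ infinite stabilizer), observes via Lemma~\ref{LemmaVertexStabilizers} that $f_{T^*}$ restricts to a bijection of $V^\infty$ with $\Phi(\Stab v)=\Stab(f_{T^*}(v))$, defines $\bar f_{T^*}\restrict V^\infty$ to be the literal inverse of that bijection, defines $\bar f_{T^*}\restrict V^0$ by choosing one value per $F_n$-orbit and extending by $\Phi^{-1}$-twisted equivariance, and then extends over edges by constant stretch. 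Lipschitzness is immediate from finiteness of edge orbits, and the coarse-inverse property follows because the two compositions are genuinely (untwisted) $F_n$-equivariant self-maps of a cocompact $F_n$-space, hence move points a bounded distance. Your approach has the virtue of producing an explicit coarse inverse tied to a relative train track map for $\phi^{-1}$; the paper's approach is shorter and makes transparent that only the twisted-equivariance structure and cocompactness are being used, with no need to decompose $d^*$-geodesics or bound junction gaps.
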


\begin{proof} Let $\Phi \in \Aut(F_n)$ be the representative of $\phi$ corresponding to $f_{T^*}$, and so $f_{T^*}$ satisfies the $\Phi$-twisted equivariance equation (see Definition~\ref{DefLiftingTTMap}). 

The map $f_{T^*}$ is Lipschitz, by the ``Stretch properties'' noted above. To complete the proof it suffices to show that there is a Lipschitz map $\bar f_{T^*} \from T^* \to T^*$ such that $f_{T^*}$ and $\bar f_{T^*}$ are \emph{coarse inverses}, meaning that each of the composed maps $\bar f_{T^*} \circ f_{T^*}$ and $f_{T^*} \circ \bar f_{T^*}$ moves each point of $T^*$ a uniformly bounded distance. We construct $\bar f_{T^*}$ by taking advantage of twisted equivariance of $f_{T^*}$ combined with the fact that the action $F_n \act T^*$ has finitely many vertex and edge orbits (this is a kind of ``twisted equivariant'' version of the Milnor-Svarc lemma).

Consider the vertex set $V^*$ of $T^*$ and its partition $V^* = V^0 \coprod V^\infty$ into points whose $F_n$-stabilizers are trivial and infinite, respectively. Using $\Phi$-twisted equivariance it follows that for any vertex $v \in V$ we have a subgroup inclusion $\Phi(\Stab(v)) \subset \Stab(f_{T^*}(v))$. It follows that $f_{T^*}(V^\infty) \subset V^\infty$. Furthermore, that subgroup inclusion is an equation $\Phi(\Stab(v)) = \Stab(f_{T^*}(v))$ --- this is a consequence of Lemma~\ref{LemmaVertexStabilizers} combined with the fact that $f \from G \to G$ restricts to a homotopy equivalence of the union of noncontractible components of $G_{u-1}$ and with the fact that $f_\#(\rho)=\rho$. It follows that the restricted map $f_{T^*} \from V^\infty \to V^\infty$ is a bijection of the set $V^\infty$. 

Define the restriction $\bar f_{T^*} \restrict V^\infty$ to equal the inverse of the restriction $f_{T^*} \restrict V^\infty$; this map $\bar f_{T^*} \restrict V^\infty$ is automatically $\Phi^\inv$-twisted equivariant. Define the restriction $\bar f_{T^*} \restrict V^0$ as follows: choose one representative $v \in V^0$ of each orbit of the action $F_n \act V^0$, choose $\bar f_{T^*}(v) \in V^0$ arbitrarily, and extend over all of $V^0$ by $\Phi^\inv$-twisted equivariance. Having defined a $\Phi^\inv$-twisted equivariant map $\bar f_{T^*} \from V^* \to V^*$, extend $\bar f_{T^*}$ over each edge to stretch distance by a constant factor, and hence we obtain a $\Phi^\inv$-twisted equivariant map $\bar f_{T^*} \from T^* \to T^*$. Since there are only finitely many orbits of edges, $\bar f_{T^*}$ is Lipschitz. Since $f_{T^*}$ is $\Phi$-twisted equivariant and $\bar f_{T^*}$ is $\Phi^\inv$-twisted equivariant, it follows that the two compositions $f_{T^*} \composed \bar f_{T^*}$ and $\bar f_{T^*} \composed f_{T^*}$ are equivariant in the ordinary untwisted sense. Each of these compositions therefore moves each point a uniformly bounded distance, hence $f_{T^*}$ and $\bar f_{T^*}$ are coarse inverses.
\end{proof}

\subsection{Geometry and dynamics on $T^*$.} 
\label{SectionThreeProps}
In this section we prove several propositions regarding $T^*$, including Proposition~\ref{PropFlaringInTStar} which is our interpretation of flaring in~$T^*$. The proofs will follow after stating all of the propositions.

\begin{proposition}[Quasicomparibility of $D_\PF$ and $d^*$]
\label{PropConeQI}
The inclusion of the vertex set of $T$ into the vertex set of $T^*$ is a quasi-isometry from the coarse metric $D_\PF$ to the metric $d^*$: there exist constants $K = K_{\ref{PropConeQI}} \ge 1$, $C=C_{\ref{PropConeQI}} \ge 0$ such that for all vertices $V,W \in T$ we have
$$\frac{1}{K} \, d^*(V,W) - C \le D_\PF(V,W) \le K \, d^*(V,W) + C
$$
\end{proposition}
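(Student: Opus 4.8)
The plan is to compare the coarse metric $D_\PF$ on $T$ with the path metric $d^*$ on $T^*$ by working directly with the decomposition of geodesic segments given by Proposition~\ref{PropDecompInT}. Fix vertices $V,W \in T$ and write $\overline{VW} = \mu_0 \nu_1 \mu_1 \cdots \nu_A \mu_A$ as in that proposition, so that $D_\PF(V,W) = l_\PF(\mu_0) + \cdots + l_\PF(\mu_A)$ and each $\nu_a$ is a maximal $\rho^*$ path. The point of introducing $T^*$ is that each $\rho^*$ path $\nu_a = \overline{Q_{a-1}Q_a}$ has a bypass $\wh\nu_a = \widehat{Q_{a-1}Q_a}$ of $d^*$-length exactly twice the cone height, a constant independent of $a$. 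So one candidate competitor path in $T^*$ from $V$ to $W$ is $\mu_0 \wh\nu_1 \mu_1 \cdots \wh\nu_A \mu_A$, whose $d^*$-length is at most $\sum_a (\text{length}_{ds} \mu_a) + 2A\cdot(\text{cone height})$. Since each $\mu_a$ lies entirely in $T$ where $ds$ integrates to $d_\PF$, and since $l_\PF$ on a path in $T$ without $\rho^*$ subpaths equals $D_\PF$ on that path up to the $\rho$-term corrections of Corollary~\ref{CorollarySomeFormulas} (here in fact $L_\PF(\mu_a) = l_\PF(\mu_a)$ because $\mu_a$ has no $\rho$ subpath), the $ds$-length of $\mu_a$ is comparable to $l_\PF(\mu_a)$. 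This gives the upper bound $d^*(V,W) \le K\,D_\PF(V,W) + C$ provided $A$ is itself controlled by $D_\PF(V,W)$, which is exactly what Proposition~\ref{PropDecompInT}\pref{ItemOneNotDeg} delivers: consecutive $\mu_a$'s cannot both be degenerate, and a nondegenerate $\mu_a$ has $l_\PF(\mu_a) \ge \eta_{\ref{LemmaPFLowerBound}}$ by Lemma~\ref{LemmaPFLowerBound}, so $A \le C' \cdot D_\PF(V,W) + C''$.

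For the reverse inequality $D_\PF(V,W) \le K\,d^*(V,W) + C$, I would take a $d^*$-geodesic edge path $\gamma^*$ in $T^*$ from $V$ to $W$ and analyze how it uses cone vertices. Each maximal subpath of $\gamma^*$ lying in $T$ contributes its $ds$-length, i.e.\ its $d_\PF$-length, and each excursion through a cone point $P(N)$ enters and leaves along cone edges at two points $Q,Q' \in Z(N)$, contributing $2\cdot(\text{cone height})$ while "saving" the $d_\PF$-distance $d_\PF(Q,Q')$ inside $T$. The key geometric fact needed is that replacing each such cone excursion by the corresponding $\rho^*$ path $\overline{QQ'}$ (which is a genuine subpath of the tree $T$) produces an edge path in $T$ from $V$ to $W$ whose $d_\PF$-length differs from $\text{length}_{ds^*}(\gamma^*)$ by a bounded multiplicative-plus-additive amount per excursion, and — crucially — such that when one passes to $D_\PF$ (which annihilates $\rho^*$ subpaths entirely, by Corollary~\ref{CorollarySomeFormulas}), those $\rho^*$ pieces cost nothing. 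Since a $d^*$-geodesic has no backtracking, the number of cone excursions is bounded by $\text{length}_{ds^*}(\gamma^*)/(2\cdot \text{cone height})$, so all the per-excursion errors sum to something linear in $d^*(V,W)$. Assembling the in-$T$ pieces and the $\rho^*$ pieces using the coarse triangle inequality Lemma~\ref{LemmaCTI} (applied $A$-many times, with the same count-control as above) gives $D_\PF(V,W) \le K d^*(V,W) + C$.

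The main obstacle I anticipate is the bookkeeping in the reverse direction: one must verify that a $d^*$-geodesic in $T^*$ does not do anything pathological — e.g.\ it should not pass through a cone point $P(N)$ entering and leaving at points $Q,Q'$ that are "far apart" in $T$ in a way not controlled by the $ds^*$-length spent, and it should not interleave $T$-excursions and cone-excursions so as to defeat the count $A \le C'D_\PF + C''$. The safeguard is precisely the nondegeneracy statement Proposition~\ref{PropDecompInT}\pref{ItemOneNotDeg} together with Lemma~\ref{LemmaPFLowerBound}: between any two consecutive maximal $\rho^*$ subpaths of a path in $T$ there is a $\mu$-subpath, and at least one of two consecutive such $\mu$-subpaths is nondegenerate hence has $d_\PF$-length $\ge \eta_{\ref{LemmaPFLowerBound}}$, which bounds $A$ linearly in $D_\PF(V,W)$ and symmetrically bounds the number of cone excursions of a $d^*$-geodesic linearly in $d^*(V,W)$. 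Once those two counts are in hand, both inequalities follow by term-by-term comparison using Lemma~\ref{LemmaCTI}, Corollary~\ref{CorollarySomeFormulas}, and the constancy of the cone height. I would also note the parageometric caveat from the Remark after Proposition~\ref{PropDecompInT} — a $\mu_a$ may be degenerate there — but the "one of two consecutive is nondegenerate" statement still holds, which is all that the counting argument requires.
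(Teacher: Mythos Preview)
Your proposal is correct and follows essentially the same route as the paper: build a competitor path in $T^*$ by replacing each maximal $\rho^*$ subpath $\nu_a$ of $\overline{VW}$ with its bypass, control $A$ via Proposition~\ref{PropDecompInT}\pref{ItemOneNotDeg} and Lemma~\ref{LemmaPFLowerBound}; and conversely decompose a $d^*$-geodesic into $T$-subarcs and cone excursions, swap each excursion for the corresponding $\rho^*$ path, and bound $D_\PF$ using Lemma~\ref{LemmaCTI} together with $L_\PF(\nu_a)=0$.

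One small difference worth noting: for the reverse inequality the paper imposes a secondary optimization on the $d^*$-geodesic (among length minimizers, also minimize $\int_{\gamma\cap T} ds$), and uses this to argue that each $\mu_a$ contains no $\rho^*$ subpath, hence $L_\PF(\mu_a)=l_\PF(\mu_a)$ exactly. You skip this and rely implicitly on the inequality $L_\PF(\mu_a)\le l_\PF(\mu_a)$, which suffices; your bound $A\le d^*(V,W)/(2h)$ on the number of cone excursions then handles the accumulated $C_{\ref{LemmaCTI}}$-error. That is a legitimate shortcut. Two cosmetic points: the $ds$-length of $\mu_a$ is \emph{equal} to $l_\PF(\mu_a)$, not merely comparable; and in your final paragraph the bound on the number of cone excursions comes from the fixed cone-edge length (as you said earlier), not from Proposition~\ref{PropDecompInT}\pref{ItemOneNotDeg}.
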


\bigskip

Given an end $\xi \in \bdy T$, for any vertices $V,W \in T$ the intersection of the two rays $\overline{V\xi}$, $\overline{W\xi}$ is a subray of each. It follows that as we hold $\xi$ fixed and let $V$ vary, one of two alternatives holds: each ray $\overline{V\xi}$ has infinite $d^*$ diameter in which case we say that $\xi$ is \emph{infinitely far in $T^*$}; or each ray $\overline{V\xi}$ has finite $d^*$ diameter in which case we say that $\xi$ is \emph{finitely far in $T^*$}.

Recall (e.g.\ \cite{KapovichRafi:HypImpliesHyp}) that a path $f \from I \to T^*$ defined on an interval $I \subset \reals$ is a \emph{reparameterized quasigeodesic} if there exists a monotonic, surjective function $\mu \from J \to I$ defined on an interval $J \subset \reals$ such that the composition $f \circ \mu \from J \to T^*$ is a quasigeodesic. 

\begin{proposition}[Hyperbolicity and quasigeodesics of $T^*$]
\label{PropTStarHyp}
The graph $T^*$ with the metric $d^*$ is Gromov hyperbolic. Each geodesic segment, ray, or line in $T$ is a reparameterized quasigeodesic in $T^*$ with respect to $d^*$, with uniform quasigeodesic constants. Furthermore, there is an injection that assigns to each $\xi \in \bdy T$ which is infinitely far in $T^*$ a point $\xi^* \in \bdy T^*$, so that for any two points $\xi \ne \eta \in \bdy T$ both of which are infinitely far in $T^*$, the line $\overline{\xi\eta} \subset T$ is the unique line in $T$ which is a reparameterized quasigeodesic line in $T^*$ with ideal endpoints $\xi^*,\eta^*$.
%
%
\end{proposition}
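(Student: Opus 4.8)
I would deduce all three assertions from a "guessing geodesics'' criterion (Bowditch's criterion, in the form packaged in \cite{KapovichRafi:HypImpliesHyp}) applied to the $\reals$-tree $(T,d_\PF)$, the coning inclusion $T\inject T^*$, and an explicit family of candidate paths: for vertices $V,W\in T$ let $h(V,W)$ be the path obtained from the $T$-geodesic $\overline{VW}$ by replacing each of its maximal $\rho^*$-subpaths $\nu_a$ (the $\nu$-subpaths of Proposition~\ref{PropDecompInT}) by its bypass $\wh\nu_a$; for a cone point of $T^*$, first push to an adjacent basepoint (an error of one cone height). Recall $(T,d_\PF)$ is an $\reals$-tree, so $0$-hyperbolic with a unique line between any two distinct ideal points. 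The two conditions to verify are: (G1) $h$-paths between $d^*$-nearby points have uniformly bounded diameter; and (G2) $h(V,W)\subset N_C\bigl(h(V,U)\cup h(U,W)\bigr)$ for a uniform $C$.

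The workhorse for (G1), and for the reparameterized-quasigeodesic claim, is that $h$-paths are \emph{efficient}: for any two vertices $V',W'$ lying on $h(V,W)$ the $ds^*$-length of the subpath of $h(V,W)$ between them is comparable, with uniform constants, to $d^*(V',W')$. Indeed by Proposition~\ref{PropDecompInT}\itemref{ItemDPFFormula} the $ds^*$-length of $h(V',W')$ is $D_\PF(V',W')+2(\text{cone height})\cdot(\#\,\text{bypasses})$; by Proposition~\ref{PropDecompInT}\itemref{ItemOneNotDeg} no two consecutive interior $\mu_a$-pieces are both degenerate, and each nondegenerate $\mu_a$ contains an edge, so by Lemma~\ref{LemmaPFLowerBound} the number of bypasses is at most a uniform multiple of $D_\PF(V',W')$ plus a constant; the subpath of $h(V,W)$ between $V'$ and $W'$ differs from $h(V',W')$ only by truncating its two extreme bypasses, since $\overline{V'W'}$ is a subgeodesic of $\overline{VW}$. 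Comparing $D_\PF$ with $d^*$ via Proposition~\ref{PropConeQI} then gives efficiency; (G1) is the special case of nearby endpoints, and the standard length argument of \cite{KapovichRafi:HypImpliesHyp} upgrades efficiency to: every $h$-path is a uniform reparameterized quasigeodesic in $T^*$. For (G2), use that $T$ is a tree: the three $T$-geodesics $\overline{VW},\overline{VU},\overline{UW}$ meet at a tripod center $O$ (a vertex of $T$), and $\overline{VO}$ is a common sub-arc of $\overline{VW}$ and $\overline{VU}$ while $\overline{OW}$ is common to $\overline{VW}$ and $\overline{UW}$; hence $h(V,O)$ and $h(O,W)$ are, up to a bounded modification near $O$, common sub-arcs, and $h(V,W)=h(V,O)\cdot h(O,W)$ lies in the $C$-neighborhood of $h(V,U)\cup h(U,W)$. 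I expect the main technical obstacle to be exactly this bounded bookkeeping: $O$ may sit in the interior of a $\rho^*$-subpath (forcing a detour to its cone point), and in the parageometric case a $\mu_a$-piece flanked by two bypasses can degenerate (cf.\ the Remark on Proposition~\ref{PropDecompInT}\itemref{ItemOneNotDeg}); carrying these error terms through (G1) and (G2) with honest uniform constants is the delicate part. Granted (G1) and (G2), the criterion yields that $(T^*,d^*)$ is Gromov hyperbolic; the segment case of the reparameterized-quasigeodesic claim is the efficiency statement with $V,W\in T$, and the ray and line cases follow because efficiency is inherited from finite subpaths.

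For the boundary map: if $\xi\in\bdy T$ is infinitely far in $T^*$, then for any vertex $V$ the reparameterization of $\overline{V\xi}$ is an unbounded uniform quasigeodesic ray in the hyperbolic space $T^*$, hence converges to a point $\xi^*\in\bdy T^*$; it is independent of $V$ because two rays of $T$ to the same end eventually coincide, so their reparameterizations are asymptotic. For injectivity, given distinct infinitely-far $\xi,\eta$, let $w$ be the branch vertex of $\overline{\xi\eta}\subset T$; the reparameterization of $\overline{\xi\eta}$ is a bi-infinite uniform quasigeodesic passing within bounded $d^*$-distance of $w$ (it leaves $\overline{\xi\eta}$ unchanged away from $\rho^*$-subpaths, and if $w$ lies inside one it is within $l_\PF(\rho)+(\text{cone height})$ of the corresponding bypass), whose two ends escape every bounded set since $\xi,\eta$ are infinitely far; a bi-infinite quasigeodesic has two distinct ideal points, and these are $\xi^*$ and $\eta^*$, so $\xi^*\ne\eta^*$.

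Finally the line statement. Existence: $\overline{\xi\eta}$ is a geodesic line in $T$, so its reparameterization is a uniform quasigeodesic line in $T^*$ whose ideal endpoints are $\xi^*,\eta^*$ by the construction of the boundary map. Uniqueness: if $\ell\subset T$ is any line whose reparameterization is a quasigeodesic line in $T^*$ with ideal endpoints $\xi^*,\eta^*$, then that reparameterization is unbounded at both ends, so the ideal endpoints $\xi_1,\xi_2\in\bdy T$ of $\ell$ are both infinitely far; hence $\xi_1^*,\xi_2^*$ are defined and are the two ideal endpoints of the reparameterization of $\ell$, giving $\{\xi_1^*,\xi_2^*\}=\{\xi^*,\eta^*\}$; by injectivity of the boundary map $\{\xi_1,\xi_2\}=\{\xi,\eta\}$, and since a line in the $\reals$-tree $T$ is determined by its pair of ideal endpoints, $\ell=\overline{\xi\eta}$.
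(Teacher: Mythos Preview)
Your approach is correct and close in spirit to the paper's, but the execution differs in one notable way. The paper does not verify a guessing-geodesics criterion directly in $T^*$. Instead it introduces an auxiliary graph $T^{**}$, obtained from $T$ by attaching a single edge $\overline{QQ'}$ for every unordered pair $Q\ne Q'$ in the same basepoint set $Z(N)$, equipped with the simplicial metric. The point of $T^{**}$ is that the inclusion $T\hookrightarrow T^{**}$ is surjective on vertices, so hypotheses~(1) and~(2) of \cite[Proposition~2.5]{KapovichRafi:HypImpliesHyp} hold trivially, and only hypothesis~(3) needs checking: that the $T$-arc between the endpoints of any added edge has bounded $T^{**}$-diameter, which is immediate from the structure of the basepoint lattices. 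Hyperbolicity and the reparameterized-quasigeodesic property are then read off from the conclusion of that proposition and transferred to $T^*$ via an evident quasi-isometry $T^{**}\to T^*$.

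Your route instead unpacks the guessing-geodesics argument by hand in $T^*$ using the explicit $h$-paths (these are exactly the paths $\widehat{VW}$ from the proof of Proposition~\ref{PropConeQI}); the efficiency estimate you invoke is precisely the content of that proof, and your (G2) tripod argument is the natural analogue of what the Kapovich--Rafi machine does internally. What the paper's approach buys is brevity and a cleaner separation of concerns: one short diameter check replaces your (G1)--(G2) bookkeeping, and all the delicate error terms near the tripod center and degenerate $\mu_a$'s are absorbed into the cited black box. What your approach buys is self-containment and an explicit description of the quasigeodesics, at the cost of tracking those bounded errors yourself (the obstacles you flagged are real but, as you say, uniformly bounded). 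Your treatment of the boundary map and the uniqueness of the line $\overline{\xi\eta}$ matches the paper's essentially line for line.
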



\bigskip

We now state our re-interpretation of the earlier flaring result Proposition~\ref{PropFlaringGeneral} in the setting of $T^*$. 
Given $\eta>0$, and given a sequence of vertices $V_r \in T^*$ defined for $r$ in some interval of integers $a \le r \le b$, we say that this sequence is an \emph{$\eta$-pseudo-orbit} of $\fTstar$ if $d^*(\fTstar(V_r),V_{r+1}) \le \eta$ for all $a \le r < r+1 \le b$.

\begin{proposition}[Flaring in $T^*$]
\label{PropFlaringInTStar}
For each $\mu > 1$, $\eta \ge 0$ there exist integers $R \ge 1$ and $A \ge 0$ such that for any pair of $\eta$-pseudo-orbits $V_r$ and $W_r$ of $f_{T^*}$ defined on the integer interval $-R \le r \le R$, if $d^*(V_0,W_0) \ge A$ then
$$\mu \cdot d^*(V_0,W_0) \le \max \{d^*(V_{-R},W_{-R}),d^*(V_R,W_R)\}
$$
\end{proposition}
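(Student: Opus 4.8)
The plan is to deduce Proposition~\ref{PropFlaringInTStar} from the General Flaring result Proposition~\ref{PropFlaringGeneral} for $L_\PF$ by translating $\eta$-pseudo-orbits of $f_{T^*}$ in the metric $d^*$ into pseudo-orbits of $f_\#$ in the path function $L_\PF$. The bridge between the two settings is Proposition~\ref{PropConeQI}, which says the inclusion of the vertex set of $T$ into $T^*$ is a quasi-isometry from the coarse metric $D_\PF$ to $d^*$, together with the identification $D_\PF(V,W) = L_\PF(\overline{VW})$ coming from Definition~\ref{DefinitionMetricsOnT} and the fact that these path functions on $T$ lift to the path functions $L_\PF$ on $G$ via $\wt G$.

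First I would reduce to vertices of $T$. Given an $\eta$-pseudo-orbit $V_r \in T^*$, each $V_r$ lies within bounded $d^*$-distance of a vertex of $T$ (cone edges have fixed length, so any cone point $P(N)$ is within the cone height of $T$), so after moving each $V_r$ to a nearby vertex of $T$ at the cost of enlarging $\eta$ by a uniform additive constant, I may assume all $V_r, W_r$ are vertices of $T$. Then by Proposition~\ref{PropConeQI} the $d^*$-pseudo-orbit condition $d^*(f_{T^*}(V_r), V_{r+1}) \le \eta$ becomes a $D_\PF$-pseudo-orbit condition $D_\PF(f_T(V_r), V_{r+1}) \le \eta'$ for a uniformly adjusted constant $\eta'$ (using also that $f_{T^*}$ restricted to $T$ coincides with $f_T$, and that $f_T$ has the stretch properties making it compatible with $f_\#$ on $G$). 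Lifting to $\wt G$ and projecting to $G$, the vertices $V_r, W_r$ become endpoints of paths, and the bound $D_\PF(f_T(V_r),V_{r+1}) \le \eta'$ translates into: if $\beta_r := \overline{V_r W_r}$ is read off in $G$, then $\beta_r$ and $f_\#(\beta_{r-1})$ differ by the relation $\coarsesim{\eta''}{L_\PF}$ for a uniform $\eta''$. Here I need to be a little careful to produce genuine paths $\beta_r$ in $G$ with matching endpoints and to check that the $D_\PF$-pseudo-orbit bounds on the endpoints give exactly the $L_\PF$-pseudo-orbit relation from Definition~\ref{DefPathFlaring}; this is the technical heart of the translation, essentially the same bookkeeping with $\alpha$'s and $\omega$'s carried out in the proof of Proposition~\ref{PropFlaringGeneral}.

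Once the sequences $\beta_r$ in $G$ form an $L_\PF$-pseudo-orbit in the sense of Definition~\ref{DefPathFlaring}, I apply Proposition~\ref{PropFlaringGeneral} for $L_\PF$ with parameters $\mu' > 1$ and $\eta''$ chosen in advance, obtaining constants $R, A'$ such that the flaring inequality $\mu' L_\PF(\beta_0) \le \max\{L_\PF(\beta_{-R}), L_\PF(\beta_R)\}$ holds whenever $L_\PF(\beta_0) \ge A'$. Translating back via $D_\PF(V_r,W_r) = L_\PF(\beta_r)$ and then via Proposition~\ref{PropConeQI} converts this into $\mu \cdot d^*(V_0,W_0) \le \max\{d^*(V_{-R},W_{-R}), d^*(V_R,W_R)\}$ for the originally prescribed $\mu$, provided $\mu'$ and the threshold $A$ are chosen large enough to absorb the multiplicative and additive quasi-isometry distortions of Proposition~\ref{PropConeQI}; this final constant-chasing is routine, exactly parallel to the argument at the end of the proof of Proposition~\ref{PropFlaringGeneral} where exponential growth swamps additive error.

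The main obstacle I expect is the careful passage between the ``coarse metric'' world on $T$ and the ``pseudo-orbit of paths'' world on $G$: one must verify that an $\eta$-pseudo-orbit of $f_{T^*}$, after the reduction to vertices of $T$ and the $D_\PF$-to-$L_\PF$ identification, really does satisfy the pseudo-orbit property $\beta_r \coarsesim{\eta''}{L_\PF} f_\#(\beta_{r-1})$ — this requires choosing the connecting paths $\alpha_r, \omega_r$ in $G$ lifting the endpoint discrepancies in $T$ and checking their $L_\PF$-lengths are controlled by the $D_\PF$-pseudo-orbit bound, which uses both the coarse triangle inequality (Lemma~\ref{LemmaCTI}) and the quasi-isometry of Proposition~\ref{PropConeQI}. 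Apart from that, everything is a matter of propagating uniform constants, which is standard.
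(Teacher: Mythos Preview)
Your proposal is correct and takes essentially the same approach as the paper: convert the $d^*$-pseudo-orbit condition to an $L_\PF$-pseudo-orbit condition via Proposition~\ref{PropConeQI}, apply Proposition~\ref{PropFlaringGeneral} with a boosted $\mu'$, and convert back absorbing the quasi-isometry constants. The $\alpha_r,\omega_r$ bookkeeping here is actually simpler than you anticipate --- a single direct bound $L_\PF(\alpha_r),L_\PF(\omega_r)\le K\eta+C$ from Proposition~\ref{PropConeQI}, with no induction as in the proof of Proposition~\ref{PropFlaringGeneral} --- and your preliminary reduction to vertices of $T$ handles a point the paper's proof tacitly assumes.
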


\paragraph{Proof of Proposition \ref{PropConeQI}: Quasicomparibility in $T^*$.} 
\medskip\noindent
In the ageometric case where $\rho$ does not exist then $T=T^*$ and $D_\PF = d^*$ and we are done. Henceforth we assume that $\rho$ exists. 

Letting $h$ denote the cone height, it follows that each bypass has length $2h$. 

Given vertices $V,W \in T$, using Proposition~\ref{PropDecompInT} we obtain a decomposition of the path $\overline{VW}$ and an accompanying formula for $D_\PF(V,W)$:
\begin{align*}
\overline{VW} &= \mu_0 \, \nu_1 \, \mu_1 \, \ldots \, \nu_{A-1} \, \mu_{A-1} \, \nu_A \, \mu_A \\
D_\PF(V,W) &= l_\PF(\mu_0) + \cdots + l_\PF(\mu_A)
\end{align*}
Each $\nu_i$ is a subpath of some element $N \in \cN$ of the Nielsen set and the endpoints of $\nu_i$ are distinct points in $Z(N)$, and so the corresponding bypass $\wh\nu_i$ is defined. We thus obtain a path in $T^*$ and a length calculation as follows:
\begin{align*}
\widehat{VW} &=  \mu_0 \, \wh\nu_1 \, \mu_1 \, \ldots \, \wh\nu_{A-1} \, \mu_{A-1} \, \wh\nu_A \, \mu_A \\
\Length(\widehat{VW}) &= l_\PF(\mu_0) + \cdots + l_\PF(\mu_A) + 2  h  A
\end{align*}
Applying Proposition~\ref{LemmaPFLowerBound} with its constant $\eta = \eta_{\ref{LemmaPFLowerBound}}$, and applying Proposition~\ref{PropDecompInT}~\pref{ItemOneNotDeg}, it follows that if $0 \le a < a+1  \le A$ then at least one of $l_\PF(\mu_a)$, $l_\PF(\mu_{a+1})$ is~$\ge \eta$, and hence
$$l_\PF(\mu_0) + \cdots + l_\PF(\mu_A) \ge  (A-1) \, \eta / 2
$$
We therefore have
\begin{align*}
d^*(V,W) &\le \Length(\widehat{VW}) \\
  &\le l_\PF(\mu_0) + \cdots + l_\PF(\mu_A) + 2h\left( \frac{2 (l_\PF(\mu_0) + \cdots + l_\PF(\mu_A))}{\eta}  + 1 \right) \\
  &= \bigl(1 + 4h/\eta\bigr) D_\PF(V,W)  + 2h
\end{align*}
This proves the first inequality using any $K \ge 1 + 4h/\eta$ and $C = 2h/K$.
%

\medskip\noindent

We turn to the opposite inequality. Before starting, we shall normalize the choice of cone height to be $h = \frac{1}{2} l_\PF(\rho)$ and so each bypass $\wh\nu$ has length $l_\PF(\rho)$. Proving Proposition~\ref{PropConeQI} with $h$ normalized in this fashion implies the proposition for any value of $h$, because the normalized version of $d^*$ is bi-Lipschitz equivalent to any other version.

Given vertices $V,W \in T$, choose an embedded edge path $\gamma^* \subset T^*$ with endpoints $V,W$ satisfying two optimization conditions:
\begin{description}
\item[(i)] $\int_\gamma ds^*$ is minimal, and so $d^*(V,W) = \int_\gamma ds^*$.
\item[(ii)] Subject to $(i)$, $\int_{\gamma \intersect T} ds^* = \int_{\gamma \intersect T} ds$ is minimal.
\end{description}
There is a unique decomposition of $\gamma$ as an alternating concatenation of subpaths in $T$ and bypasses:
$$\gamma^* = \mu_0 \, \wh\nu_1 \, \mu_1 \, \ldots \, \mu_{A-1} \wh\nu_A \, \mu_A
$$
from which we obtain the formula
$$d^*(V,W) = l_\PF(\mu_0) + \cdots + l_\PF(\mu_A) + A \, l_\PF(\rho)
$$
We claim that each $\mu_a$ has no $\rho^*$-subpath. Otherwise we can decompose $\mu_a = \mu' \nu' \mu''$ where $\nu'$ is a Nielsen path in $T$. In the case that each of $\mu',\mu''$ is nondegenerate, or that $a=0$ and $\mu''$ is nondegenerate, or that $a=A$ and $\mu'$ is nondegenerate, construct a path $\gamma'$ from $\gamma$ by replacing $\nu'$ with the corresponding bypass; from the choice of normalization we have $\int_{\gamma'} ds^* = \int_\gamma ds^*$ but $\int_{\gamma' \intersect T} ds < \int_{\gamma \intersect T} ds$, a contradiction. The other cases all lead to a path $\gamma'$ exhibiting the same contradiction, and are described as follows. In the case that $a \ge 1$ and $\mu'$ is degenerate, replace the subpath $\wh\nu_{a} \nu'$ of $\gamma$ with the unique bypass having the same endpoints. And in the case that $a \le A-1$ and $\mu''$ is degenerate, replace the subpath $\nu' \wh\nu_{a+1}$ with the unique bypass having the same endpoints. And in the last remaining case, where $a=A=0$ and $\mu',\mu''$ are both degenerate, we have $\nu'=\gamma^*$ and we let $\gamma' = \wh\nu'$ be the corresponding bypass. 

Consider the concatenated edge path in $T$ denoted
$$\mu_0 \, \nu_1 \, \mu_1 \, \ldots \, \mu_{A-1} \, \nu_A \, \mu_A
$$
which is obtained from $\gamma^*$ by replacing each bypass $\wh\nu_a$ with the corresponding $\rho^*$-subpath $\nu_a \subset T$. Straightening this concatenation in $T$ produces the path $\overline{VW}$ to which we may inductively apply the coarse triangle inequality for $L_\PF$ given in Lemma~\ref{LemmaCTI}, with the conclusion that
\begin{align*}
D_\PF(V,W) &= L_\PF(\overline{VW}) \\
 &\le L_\PF(\mu_0) + L_\PF(\nu_1) + L_\PF(\mu_1) + \cdots \\ & \qquad\qquad +L_\PF(\mu_{A-1}) + L_\PF(\nu_A) + L_\PF(\mu_A) + (2A-1) C_{\ref{LemmaCTI}} \\
\intertext{Since $\mu_a$ has no $\rho^{\pm i}$ subpath we have $L_\PF(\mu_a)=l_\PF(\mu_a)$, and since $L_\PF(\rho^{\pm i}) = 0$ we have $L_\PF(\nu_a)=0$, and therefore}
D_\PF(V,W) &\le l_\PF(\mu_0) + \cdots + l_\PF(\mu_A) + 2AC_{\ref{LemmaCTI}} \\
 &\le l_\PF(\mu_0) + \cdots + l_\PF(\mu_A) + \frac{2C_{\ref{LemmaCTI}}}{l_\PF(\rho)} \cdot A l_\PF(\rho)  \\
 & \le K d^*(V,W)
\end{align*}
for any $K \ge \max\{1,2 C_{\ref{LemmaCTI}} / l_\PF(\rho)\}$. This completes the proof of Proposition~\ref{PropConeQI}. \qed

\paragraph{Proof of Proposition \ref{PropTStarHyp}: Hyperbolicity of $T^*$.} If $\rho$ does not exist, i.e.\ in the ageometric case, since $T^*=T$ is a tree we are done. The parageometric case is similarly easy, but it is also subsumed by the general proof when $\rho$ does exist for which purpose we will apply a result of Kapovich and Rafi \cite[Proposition~2.5]{KapovichRafi:HypImpliesHyp}. 

Let $T^{**}$ be the graph obtained from $T$ by attaching an edge $\overline{Q\,Q'}$ for each unordered pair $Q \ne Q' \in Z(N)$ for each element $N \in \cN$ of the Nielsen set. We put the simplicial metric on $T^{**}$, assigning length~$1$ to each edge. We have a map $T^{**} \to T^*$ extending the inclusion $T \inject T^*$, defined to take each attached edge $\overline{Q\,Q'} \subset T^{**}$ to the corresponding bypass $\wh{QQ'}$. This map is evidently a quasi-isometry from the vertices of $T^{**}$ to the vertices of $T^*$, and this quasi-isometry commutes with the inclusions of~$T$ into $T^*$ and into $T^{**}$. The conclusions in the first two sentences of Proposition~\ref{PropTStarHyp}, namely hyperbolicity of $T^*$ and the fact that geodesics in $T$ are uniform reparameterized quasigeodesics in $T^*$, will therefore follow once we demonstrate the same conclusions with $T^{**}$ in place of $T^*$. Those conclusions for $T^{**}$ are identical to the conclusions of \cite[Proposition~2.5]{KapovichRafi:HypImpliesHyp} applied to the inclusion map $T \inject T^{**}$: the graph $T^{**}$ is hyperbolic; and each arc $\overline{VW} \subset T$ is uniformly Hausdorff close in $T^{**}$ to each geodesic in $T^{**}$ with the same endpoints $V,W$. So we need only verify that the inclusion map $T \inject T^{**}$ satisfies the hypotheses of \cite[Proposition~2.5]{KapovichRafi:HypImpliesHyp} with respect to the simplicial path metrics on $T$ and $T^{**}$ that assign length~$1$ to each edge.

One hypothesis \cite[Proposition~2.5]{KapovichRafi:HypImpliesHyp} is that $T$ be hyperbolic, which holds for all path metrics on trees. Another hypothesis is that the inclusion $T \inject T^{**}$ be a Lipschitz graph map which means that it takes each edge of $T$ to a bounded length edge path of $T^*$, but this is immediate since the inclusion is an isometry onto its image. 

The remaining hypotheses of \cite[Proposition~2.5]{KapovichRafi:HypImpliesHyp} are numbered (1), (2), (3), the first two of which are trivially satisfied using that the inclusion map from vertices of $T$ to vertices $T^{**}$ is surjective (this is why we use $T^{**}$ instead of $T^*$). The last hypothesis (3) says that there exists an integer $M > 0$ so that for any vertices $V \ne W \in T$, \, if $V,W$ are connected by an edge in $T^{**}$ then the diameter in $T^{**}$ of the path $\overline{VW} \subset T$ is uniformly bounded. The only case that needs attention is when $\overline{VW}$ is not a single edge in $T$ but $V,W$ are connected by an edge in $T^{**}$. This happens only if $\overline{VW}$ is a $\rho^*$ subpath of some element $N \in \cN$ of the Nielsen set, and so $\overline{VW}$ is a concatenation of a sequence of Nielsen paths $\overline{Q_{k-1}Q_k}$ in $N$, where the concatenation points form a consecutive sequence in $Z(N) \intersect \overline{VW}$ of the form
$$V=Q_0,Q_1,\ldots,Q_K=W
$$
Each pair $Q_i,Q_j$ is connected by an edge $\overline{Q_iQ_j} \subset T^{**}$ $(i \le j =0,\ldots,K)$. Since each vertex of $T$ along $\overline{VW}$ is contained in one of the Nielsen paths $\overline{Q_{k-1}Q_k}$ and hence its distance to one of $Q_{k-1}$ or $Q_k$ is at most $l_u(\rho) / 2$, it follows that the diameter of $\overline{VW}$ in $T^{**}$ is bounded above by $M = 1 + l_u(\rho)$.

\smallskip

It remains to prove the ``Furthemore'' sentence. Given $\xi \in \bdy T$, for any vertex $V \in T$ the ray $\overline{V\xi}$ is a reparameterized quasigeodesic in $T^*$. If $\xi$ is infinitely far in $T^*$ then the reparameterization of $\overline{V\xi}$ defines a quasigeodesic ray in $T^*$ which therefore limits on a unique $\xi^* \in \bdy T^*$. This point $\xi^*$ is well-defined because for any other vertex $W \in T$ the intersection of the two rays $\overline{V\xi}$, $\overline{W\xi}$ has finite Hausdorff distance in $T^*$ from each of them, hence the reparameterizations of those two rays limit on the same point in $\bdy T^*$. For any two points $\xi \ne \eta \in \bdy T$ that are both infinitely far in $T^*$, consider the line $\overline{\xi\eta}$. Choose a vertex $V \in T$, and choose sequences $x_i$ in $\overline{V\xi}$ and $y_i$ in $\overline{V\eta}$ so that in $T$ the sequence $x_i$ limits to $\xi$ and $y_i$ limits to $\eta$. It follows that in $T^*$ the sequence $x_i$ limits to $\xi^*$ and $y_i$ limits to $\eta^*$. Since $d^*(x_i,y_i) \to \infty$ and since $\overline{\xi\eta}$ is a reparameterized quasigeodesic, this is only possible if $\xi^* \ne \eta^*$, proving injectivity of the map $\xi \mapsto \xi^*$. To prove the required uniqueness property of $\overline{\xi\eta}$, consider any other two points $\xi' \ne \eta' \in T$. If one or both of $\xi'$ or $\eta'$ is finitely far in $T^*$ then the reparameterization of $\overline{\xi'\eta'}$ is a quasigeodesic ray or segment, hence has infinite Hausdorff distance in $T^*$ from the quasigeodesic line $\overline{\xi\eta}$. If both of $\xi',\eta'$ are infinitely far in $T^*$, and if $\overline{\xi\eta}$ and $\overline{\xi'\eta'}$ have finite Hausdorff distance in $T^*$, then it follows that $\{\xi^*,\eta^*\}=\{\xi'{}^*,\eta'{}^*\}$, and hence by injectivity we have $\{\xi,\eta\} = \{\xi',\eta'\}$ and therefore $\overline{\xi\eta}=\overline{\xi'\eta'}$. 

\qed

\paragraph{Proof of Proposition~\ref{PropFlaringInTStar}: Flaring in $T^*$.} We denote the constants of Proposition~\ref{PropConeQI} in shorthand as $K=K_{\ref{PropConeQI}}$, $C=C_{\ref{PropConeQI}}$.

Fix $\mu>1$ and $\eta \ge 0$. Consider $R \ge 1$, to be specified, and a pair of $\eta$-pseudo-orbits $V_r$, $W_r$ for $f_{T^*}$ defined for $-R \le r \le R$. Choose vertices $\wt V_r$, $\wt W_r \in \wt G$ projecting to $V_r$, $W_r$ respectively. In $\wt G$ denote the path $\ti\beta_r = \overline{V_r W_r}$, and let $\beta_r$ be its projection to $G$. Also, for $-R<r\le R$ denote $\ti\alpha_r = \overline{V_r, \ti f(V_{r-1})}$ and $\ti\omega_r = \overline{\ti f(W_{r-1}), W_r}$, which are the unique paths such that $\ti\beta_r = [\ti\alpha_r \ti f_\#(\ti\beta_{r-1}) \ti\omega_r]$. Let $\alpha_r,\omega_r$ be their projections to $G$. By assumption we have $d^*(f_{T^*}(V_{r-1}),V_r) \le \eta$ and $d^*(f_{T^*}(W_{r-1}),W_r) \le \eta$, and applying Proposition~\ref{PropConeQI} it follows that
$$L_\PF(\alpha_r),L_\PF(\omega_r) \le K \, \eta + C \equiv \eta'
$$
and so $f_\#(\beta_{r-1}) \coarsesim{\eta'}{L_\PF} \beta_r$. 

Let $\mu' = 2K^2\mu$.
By Proposition~\ref{PropFlaringGeneral} the path function $L_\PF$ satisfies the flaring condition of Definition~\ref{DefPathFlaring} with respect to $f$, from which using $\mu'$ and $\eta'$ we obtain constants $R' \ge 1$, $A' \ge 1$. We now specify $R = R'$. Also let 
$$A = \max\{KA' + C, 2K^2C + \frac{2C}{\mu}\}
$$
Applying Proposition~\ref{PropConeQI} it follows that if $d^*(V_0,W_0) \ge A$ then $L_\PF(\beta_0) \ge A'$. The flaring condition of $L_\PF$ therefore applies with the conclusion that
\begin{align*}
\mu' L_\PF(\beta_0) &\le \max\{L_\PF(\beta_{-R}), L_\PF(\beta_R)\} \\
\intertext{and so}
\mu \, d^*(V_0,W_0) &\le K \mu \, L_\PF(\beta_0) + K C \mu \\
 &\le \frac{K \mu}{\mu'} \max\{L_\PF(\beta_{-R}),L_\PF(\beta_R)\} + K C \mu\\
\intertext{Applying Proposition~\ref{PropConeQI} again we have}
 \mu \, d^*(V_0,W_0) 
      &\le \frac{K^2 \mu}{\mu'} \max\{d^*(V_{-R},W_{-R}),d^*(V_R,W_R)\} + K^2C\mu+C \\
 &\le \frac{1}{2} \max\{d^*(V_{-R},W_{-R}),d^*(V_R,W_R)\} + \frac{1}{2} \mu \, A \\
 &\le \frac{1}{2} \max\{d^*(V_{-R},W_{-R}),d^*(V_R,W_R)\} + \frac{1}{2} \mu \, d^*(V_0,W_0) \\
 \frac{1}{2} \mu \, d^*(V_0,W_0) &\le \frac{1}{2} \max\{d^*(V_{-R},W_{-R}),d^*(V_R,W_R)\}
\end{align*}
which completes the proof.
\qed

\subsection{Construction of $\S$.}
\label{SectionHypOfS}

We continue to fix the choice of $\Phi \in \Aut(F_n)$ representing $\phi$, and we consider the corresponding $\Phi$-twisted equivariant map  $f_{T^*} = f^\Phi_{T^*} \from T^* \to T^*$. Our definition of the suspension space $\S$ will formally depend on this choice (but see remarks at the end of the section regarding dependence on $\Phi$ of the constructions to follow).


\begin{definition}[The suspension space $\S$, its slices, fibers, and semiflow]
\label{DefSuspensionSpace}
 We define $\S$ to be the suspension space of $T^*$, namely the quotient of $T^* \times \Z \times [0,1]$ modulo the gluing identification $(x,k,1) \approx (f_{T^*}(x),k+1,0)$ for each $k \in \Z$ and $x \in T^*$. Let $[x,k,r] \in \S$ denote the equivalence class of $(x,k,r)$. We have a well-defined and continuous projection map $p \from \S \to \reals$ given by $p[x,k,r] = k+r$. For each closed connected subset $J \subset \reals$ we denote $\S_J = p^\inv(J)$ which we refer to as a \emph{slice} of~$\S$. In the special case of a singleton $s \in \reals$ we refer to $\S_s = p^\inv(s)$ as a \emph{fiber}. Each fiber may be regarded as a copy of~$T^*$, in the sense that if $k = \lfloor s \rfloor$ and $r = s-k$ then we obtain a homeomorphism $j_s \from T^* \to S_s$ given by $j_s(x)=[x,k,r]$; in the special case that $s$ is an integer we have $j_s(x)=[x,s,0]$. 
 
We have an \emph{action} $F_n \act \S$ which is induced by the action $F_n \act T^*$ as follows: for each $\gamma \in F_n$ and each $[x,k,r] \in \S$ we have
$$\gamma \cdot [x,k,r] = [\Phi^k(\gamma) \cdot x, k, r]
$$
This action is well-defined because, using $\Phi$-twisted equivariance of $f_{T^*} \from T^* \to T^*$, we have
\begin{align*}
\gamma \cdot [x,k,1] &= [\Phi^k(\gamma) \cdot x,k,1] = [f_{T^*}(\Phi^{k}(\gamma) \cdot x),k+1,0] =  \\
 &=  [\Phi(\Phi^{k}(\gamma)) \cdot f_{T^*}(x),k+1,0] \\
 &= [\Phi^{k+1}(\gamma) \cdot f_{T^*}(x),k+1,0] \\
&= \gamma \cdot [f_{T^*}(x),k+1,0] 
\end{align*}
Note that the homeomorphism $j_0 \from T^* \to \S_0$ is equivariant with respect to $F_n$ actions. For generally, for each integer $k$ the homeomorphism $j_k \from T^* \to \S_k$ is $\Phi^{-k}$-twisted equivariant, because
$$j_k(\gamma \cdot x) = [\gamma \cdot x,k,0] = \Phi^{-k}(\gamma) \cdot [x,k,0] = \Phi^{-k}(\gamma) \cdot j_k(x)
$$

We have a \emph{semiflow} $\S \times [0,\infinity) \to \S$, which is partially defined by the formula 
$$[x,k,s] \cdot t = [x,k,s+t] \qquad \text{($x \in T^*$, \, $k \in \Z$, \, $0 \le s \le 1$, \, $0 \le t \le 1-s$)}
$$
and which uniquely extends to all $t \ge 0$ by requiring the semiflow equation $(p \cdot t) \cdot u = p \cdot (t+u)$ to hold for all $t,u \ge 0$. In particular $\S_s \cdot t = \S_{s+t}$ for all $s \in \reals$, $t \ge 0$. For each $b \in \reals$ we define the \emph{first hitting} map $h_b \from \S_{(-\infinity,b]} \mapsto \S_b$ by letting $\xi \in \S_{(-\infty,b]}$ flow forward from $\xi \in \S_{p(\xi)}$ to $\S_b$ along the flow segment $\xi \cdot [0,b-p(\xi)]$, thus obtaining the formula $h_b(\xi) = \xi \cdot (b-p(\xi))$. 

This completes Definition~\ref{DefSuspensionSpace}.
\end{definition}

\begin{definition}[Piecewise Riemannian metric and geodesic metric on $\S$]
\label{DefSuspensionMetric}
We define a piecewise Riemannian metric on $\S$. Recall the piecewise Riemannian metric $ds^*$ on~$T^*$ (Section~\ref{SectionTStar}). For each edge $E \subset T^*$ and each integer $n$ we define a Riemannian metric $d_E$ on $E \times n \times [0,1]$ ($\approx E \times [0,1]$ and not depending on $n$), in two cases. In the case $E \subset T$, use the metric $d_E^2=\lambda^{2t} (ds^*)^2 + dt^2$; note that $f_{T^*}$ stretches the length of $E$ by a constant factor~$\lambda$, and hence the gluing map $(x,n,1) \mapsto (f_{T^*}(x),n+1,0)$ takes $E \times n \times 1$ isometrically onto $f_{T^*}(E) \times (n+1) \times 0$. In the case where $E \subset T^* \setminus T$, equivalently we are in the geometric or parageometric case and $E$ is a cone edge, use the metric $d_E^2 = (ds^*)^2 + dt^2$; note that $E'=f_{T^*}(E)$ is also a cone edge and that $f_{T^*}$ maps $E$ isometrically to $E'$, and so once again the gluing map $(x,n,1) \mapsto (f_{T^*}(x),n+1,0)$ takes $E \times n \times 1$ isometrically onto $f_{T^*}(E) \times (n+1) \times 0$. These metrics on all of the rectangles $E \times n \times [0,1]$ glue up isometrically along their common boundaries in~$\S$, as follows. First, for any two edges $E,E' \subset T^*$ having a common endpoint $x = E \intersect E' = \bdy E \intersect \bdy E'$, the restrictions to $[0,1] \approx x \times n \times [0,1]$ of the metric $d_E$ on $E \times [0,1] \approx E \times n \times [0,1]$ and the metric $d_{E'}$ on $E' \times [0,1] \approx E' \times n \times [0,1]$ are both equal to $dt$. Fixing $n$ and letting $E \subset T^*$ vary, this allows us to glue up all of the rectangles $E \times n \times [0,1]$ to get a well-defined piecewise Riemannian metric on $T^* \times n \times [0,1]$. Next, as noted above the gluing map from $T^* \times n \times 1$ to $T^* \times (n+1) \times 0$ maps each $E \times n \times 1$ isometrically onto $f_{T^*}(E) \times (n+1) \times 0$; letting $n$ vary this allows us to glue up $T^* \times n \times [0,1]$ to get the desired piecewise Riemannian metric on~$\S$.

By minimizing path lengths using the above piecewise Riemannian metric on~$\S$, we obtain a geodesic metric $d_\S(\cdot,\cdot)$ on~$\S$. We may similarly define a geodesic metric $d_J(\cdot,\cdot)$ on any slice $\S_J$, by minimizing path length with respect to the restricted piecewise Riemannian metric on $\S_J$. In the special case of a fiber $\S_s$, letting $n = \lfloor s \rfloor$, for each edge $E \subset T^*$ with image edge $j_s(E) \subset \S_s$ the metrics $d^*$ on $E$ and $d_s$ in $j_s(E)$ are related so that if $E \subset T$ then $j_s$ stretches the metric by a factor of $\lambda^{s-n}$, whereas if $E \subset T^* \setminus T$ then $j_s$ preserves the metric. In particular the map $j_s \from T^* \to \S_s$ is a $\lambda^{s-n}$ bilipschitz homeomorphism; it is therefore an isometry if $s$ is an integer, and a $\lambda$-bilipschitz homeomorphism in general.

This completes Definition~\ref{DefSuspensionMetric}.
\end{definition}

For use at the very end of the proof of the Hyperbolic Action Theorem, when verifying the WWPD conclusions, we shall need the following metric property of $\S$:

\begin{lemma}
\label{LemmaProjectNonIncrease}
For any two fibers $\S_s,\S_t \subset \S$ and any $x \in \S_s$, $y \in \S_t$ we have $d_\S(x,y) \ge \abs{s-t}$. 
\end{lemma}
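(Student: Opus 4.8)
The plan is to exploit the bundle structure of $\S$ directly: the projection $p \from \S \to \reals$ is built to be "distance non-increasing with slope $1$ along the flow," and we just need to see that any path realizing $d_\S(x,y)$ projects under $p$ to a path in $\reals$ whose length is at least $\abs{s-t}$. First I would recall from Definition~\ref{DefSuspensionMetric} that the piecewise Riemannian metric on each rectangle $E \times n \times [0,1]$ has the form $d_E^2 = \lambda^{2t}(ds^*)^2 + dt^2$ (for $E \subset T$) or $d_E^2 = (ds^*)^2 + dt^2$ (for a cone edge). In either case the $dt$-component of the metric is exactly the pullback of the standard metric on $[0,1]$ under the coordinate $t$, and under the gluing identification $(x,n,1)\sim(f_{T^*}(x),n+1,0)$ the function $(x,n,t)\mapsto n+t$ descends to the well-defined continuous function $p$. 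Thus on each rectangle, $d_E^2 \ge dt^2 = (dp)^2$, which says precisely that $p \from \S \to \reals$ is $1$-Lipschitz, and moreover that along any piecewise-smooth path $\gamma \from [0,1] \to \S$ we have $\Length(\gamma) \ge \Length(p \circ \gamma) \ge \abs{p(\gamma(1)) - p(\gamma(0))}$.

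Next I would assemble this into the statement. Given $x \in \S_s$ and $y \in \S_t$, by definition $d_\S(x,y)$ is the infimum of $\Length(\gamma)$ over piecewise-smooth (equivalently, rectifiable) paths $\gamma$ from $x$ to $y$ in $\S$. For any such $\gamma$, the composed path $p \circ \gamma$ runs from $p(x) = s$ to $p(y) = t$ in $\reals$, so $\Length(p\circ\gamma) \ge \abs{s-t}$; combined with the pointwise inequality $d_E \ge \abs{dp}$ on every rectangle (hence $\Length(\gamma) \ge \Length(p \circ \gamma)$), we get $\Length(\gamma) \ge \abs{s-t}$. Taking the infimum over $\gamma$ yields $d_\S(x,y) \ge \abs{s-t}$, as desired. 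One small bookkeeping point to address carefully is that a length-minimizing or near-minimizing path need not lie in the smooth locus — it may pass through the edges $\{x\} \times n \times [0,1]$ shared by adjacent rectangles, or through the vertex fibers — but since the metric is piecewise Riemannian and $p$ is globally continuous, any rectifiable path can be decomposed into finitely many sub-arcs each contained in a single closed rectangle, and the inequality is additive over such a decomposition; alternatively one simply notes that $p$ is $1$-Lipschitz for $d_\S$ because it is $1$-Lipschitz on each piece and the pieces cover $\S$.

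The main (and only real) obstacle is being precise about why $p$ is $1$-Lipschitz across the gluing seams, i.e.\ checking that the function $(x,n,t)\mapsto n+t$ really is compatible with the identification $(x,n,1)\sim(f_{T^*}(x),n+1,0)$ — which it is, since $n+1 = (n+1)+0$ — and that the $dt^2$ term of the metric is genuinely preserved by the gluing maps, which was already verified in Definition~\ref{DefSuspensionMetric} where it is noted that each gluing map takes $E \times n \times 1$ isometrically onto $f_{T^*}(E)\times(n+1)\times 0$. Once that compatibility is in hand the argument is a two-line consequence of $d_E^2 \ge dt^2$. I would write this up as a short paragraph: restate that $p$ is $1$-Lipschitz, deduce $\Length(\gamma) \ge \Length(p\circ\gamma) \ge \abs{p(x)-p(y)}$ for every rectifiable $\gamma$ joining $x$ to $y$, and conclude by taking the infimum.
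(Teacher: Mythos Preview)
Your proof is correct and follows essentially the same approach as the paper: both arguments observe that on each rectangle $E \times [0,1]$ the Riemannian metric satisfies $d_E^2 \ge dt^2$, so the projection $p$ is $1$-Lipschitz, and hence any path from $x$ to $y$ has length at least $\abs{p(x)-p(y)}=\abs{s-t}$. The paper compresses this into a single sentence (with the inequality $\abs{Dp(v)} \ge \abs{v}$ written in the wrong direction, evidently a typo for $\le$), while you spell out the details about gluing seams and piecewise paths more carefully.
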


\begin{proof}
The lemma follows by noting that for each edge $E \subset T^*$, the projection function $p \from E \times [0,1] \to [0,1]$ has the property that for each tangent vector $v \in T(E \times [0,1])$ we have $\abs{Dp(v)} \ge \abs{v}$, using the Riemannian metric $d_E$ on the right hand side of the inequality. 
\end{proof}

The following metric property will be used later to study how the inclusion map of fibers $\S_i \inject \S$ can distort distance.

\begin{lemma}
\label{LemmaSectionQI}
For each integer $m$ there exist constants $k_m \ge 1$, $c_m \ge 0$ such that for each integer $a$ and each $s \in J = [a,a+m]$ the inclusion map $i \from \S_s \inject \S_J$ is a $k_m,c_m$ quasi-isometry.
\end{lemma}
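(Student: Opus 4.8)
The plan is to reduce the statement to properties of the first hitting map $h_b \from \S_{(-\infty,b]} \mapsto \S_b$ onto the top fiber of the slice, where $b = a+m$. One direction of the quasi-isometry inequality is free: since $\S_s \subseteq \S_J$, any path in $\S_s$ is a path in $\S_J$, so $i \from \S_s \inject \S_J$ is $1$-Lipschitz, $d_J(i(x),i(y)) \le d_s(x,y)$. Thus only the lower bound and coarse density of $\S_s$ in $\S_J$ need work, and both will be obtained by comparing $\S_J$ with the fiber $\S_b$ through $h_b$.

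First I would record three facts about $h_b$, each with constants depending only on $m$ (and the fixed data of $f$), not on $a$ or $s$. (1) The map $h_b \from \S_J \to \S_b$ is $\lambda^{m+1}$-Lipschitz: in the piecewise Riemannian metric of Definition~\ref{DefSuspensionMetric} the flow direction is a unit vector orthogonal to the fiber directions and is annihilated by $Dh_b$, while on a fiber $\S_r$ with $r \in [n,n+1)$ the map $h_b$ agrees with $j_b \circ f_{T^*}^{\,N_r} \circ j_r^\inv$, where $N_r = a+m-n \le m$; since $j_r$ and $j_b$ are $\lambda$-bilipschitz and $f_{T^*}$ is $\lambda$-Lipschitz (it stretches each $T$-edge by exactly $\lambda$ and is an isometry on cone edges), $\|Dh_b\| \le \lambda^{m+1}$ pointwise, and hence $h_b$ is $\lambda^{m+1}$-Lipschitz from $d_J$ to $d_b$. (2) The restriction is $h_b|_{\S_s} = j_b \circ f_{T^*}^{\,N_s} \circ j_s^\inv$ with $N_s = a+m-\lfloor s\rfloor \in \{0,\dots,m\}$; since $j_s,j_b$ are $\lambda$-bilipschitz and $f_{T^*}$ is a quasi-isometry of $T^*$ (Lemma~\ref{LemmaTStarQI}), the composition $f_{T^*}^{\,N_s}$ of at most $m$ copies of $f_{T^*}$ is again a quasi-isometry of $T^*$ with constants depending only on $m$, so $h_b|_{\S_s}$ is a quasi-isometry onto $\S_b$ with respect to $d_s$ and $d_b$, with constants depending only on $m$. (3) For every $\xi \in \S_J$ one has $d_J(\xi,h_b(\xi)) \le m$, since the forward flow segment from $\xi$ to $h_b(\xi)$ has length $b-p(\xi) \le m$ and lies in $\S_{[p(\xi),b]} \subseteq \S_J$.

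Granting these, the lemma follows quickly. For the lower bound: given $x,y \in \S_s$, fact (2) gives $d_s(x,y) \le k' \, d_b(h_b(x),h_b(y)) + c'$ with $k',c'$ depending only on $m$, and fact (1) gives $d_b(h_b(x),h_b(y)) \le \lambda^{m+1} d_J(i(x),i(y))$, so $\frac{1}{\lambda^{m+1}k'}\bigl(d_s(x,y)-c'\bigr) \le d_J(i(x),i(y))$. For coarse density: given $\xi \in \S_J$, fact (3) puts $\xi$ within $m$ (in $d_J$) of $h_b(\xi) \in \S_b$; by coarse surjectivity of $f_{T^*}^{\,N_s}$ (part of fact (2)), $h_b(\xi)$ lies within an $m$-dependent $d_b$-distance — hence, since $d_J \le d_b$ on $\S_b$, an $m$-dependent $d_J$-distance — of some point $h_b(z)$ with $z \in \S_s$; and fact (3) again puts $h_b(z)$ within $m$ of $z$ in $d_J$. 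Thus $\S_s$ is coarsely dense in $\S_J$ with an $m$-dependent constant, and altogether $i$ is a $(k_m,c_m)$-quasi-isometry.

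The main obstacle is fact (1): that the first hitting map $h_b$ is Lipschitz with a constant depending only on $m$. This requires unpacking the piecewise Riemannian metric of Definition~\ref{DefSuspensionMetric} — the orthogonal splitting into fiber and flow directions, and the observation that flowing forward for time at most $m$ scales $T$-edge lengths by at most $\lambda^m$ while fixing cone-edge lengths — together with the bookkeeping that the number $N_r = a+m-\lfloor r\rfloor \le m$ of applications of $f_{T^*}$ encoded in $h_b$ on the fiber $\S_r$ stays uniformly bounded for $r \in J$. Once this uniform Lipschitz estimate and the matching quasi-isometry statement for $h_b|_{\S_s}$ are in hand, the rest is formal.
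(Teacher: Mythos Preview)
Your proof is correct and rests on the same two inputs as the paper's: the first hitting map $h_{a+m}$ is Lipschitz on the slice with an $m$-dependent constant, and $f_{T^*}$ is a quasi-isometry (Lemma~\ref{LemmaTStarQI}). The paper assembles these differently: it builds a four-map cycle
\[
\S_s \xrightarrow{\ i\ } \S_J \xrightarrow{\ h_{a+m}\ } \S_{a+m} \xrightarrow{\ \bar h\ } \S_a \xrightarrow{\ h_s\ } \S_s,
\]
where $\bar h = j_a \circ \bar f^m_{T^*} \circ j_{a+m}^{-1}$ is a Lipschitz coarse inverse to $h_{a+m}\restrict\S_a$ obtained from Lemma~\ref{LemmaTStarQI}, and then observes that both round-trips are $F_n$-equivariant self-maps of cocompact spaces and hence move points an $m$-bounded distance; this makes $i$ and $h_s \circ \bar h \circ h_{a+m}$ Lipschitz coarse inverses. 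Your route is more direct: you read the lower bound straight from the factorization $h_b\restrict\S_s = (h_b\restrict\S_J)\circ i$ together with your facts (1) and (2), and you get coarse density from the flow-segment bound $d_J(\xi,h_b(\xi)) \le m$ plus coarse surjectivity of $h_b\restrict\S_s$, avoiding both the detour through $\S_a$ and the separate equivariance argument. Either works; yours makes the constants more explicit, while the paper's cycle-plus-equivariance template is a reusable pattern. (Your pointwise Lipschitz analysis of $h_b$ in fact gives $\lambda^m$ rather than $\lambda^{m+1}$, matching the paper's stated constant, but this is immaterial.)
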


\begin{proof} We construct a cycle of equivariant Lipschitz maps as follows:
$$\xymatrix{
\S_s \ar[r]^{i} & \S_J \ar[d]^{h_{a+m}} \\
\S_a \ar[u]^{h_r} & \S_{a+m} \ar[l]^{\bar h}
}$$
The inclusion map $i$ is $1$-Lipschitz. The equivariant maps $h_{a+m}$ and $h_r$ are first hitting maps, each of which is $\lambda^m$ Lipschitz. The map $\bar h$ will be an equivariant coarse inverse to the equivariant map $h_{a+m} \from \S_a \to \S_{a+m}$, for the construction of which we consider the commutative diagram
$$\xymatrix{
T^* \ar[r]^{j_a} \ar[d]_{f^m_{T^*}} 
                                            & \S_a \ar[d]^{h_{a+m}} \\
T^* \ar[r]_{j_{a+m}} \ar@/^3pc/@{.>}[u]^{\bar f^m_{T^*}}       
                                            & \S_{a+m} \ar@/_3pc/@{.>}[u]_{\bar h}
}$$
In this diagram the map $f^m_{T^*}$ is $\Phi^m$-twisted equivariant, and the map $h_{a+m}$ is (untwisted) equivariant. The top and bottom maps are the instances $k=a$ and $k=a+m$ of the $\Phi^{-k}$-twisted equivariant isometry $j_k \from T^* \to \S_k$, whose inverse $j^\inv_k \from \S_k \to T^*$ is $\Phi^k$ twisted equivariant. By Lemma~\ref{LemmaTStarQI} the map $f_{T^*}$ has a $\Phi^{-1}$-twisted equivariant Lipschitz coarse inverse $\bar f_{T^*}$. It follows that $f^m_{T^*}$ has a $\Phi^{-m}$-twisted equivariant Lipschitz coarse inverse $\bar f^m_{T^*}$ whose Lipschitz and coarse inverse constants depend on $m$. We may therefore fill in the diagram with the map $\bar h = j_a \circ \bar f^m_{T^*} \circ j_{a+m}^\inv$ which makes the diagram commute and which is an untwisted equivariant Lipschitz coarse inverse for $h_{a+m}$, with Lipschitz and coarse inverse constants depending on~$m$. 

Going round the cycle from $\S_s$ to itself and from $\S_J$ to itself we obtain two equivariant self-maps both of which move points uniformly bounded distances depending on $m$. The maps $i \from \S_s \to \S_J$ and $h_r \circ \bar h \circ h_{a+m} \from \S_J \to \S_s$ are therefore Lipschitz coarse inverses and hence quasi-isometries with constants depending on~$m$.
\end{proof}

\paragraph{Remark.} The definition of the suspension space $\S$ depends ostensibly on the choice of representative $\Phi \in \Aut(F_n)$ of $\phi$, but in fact different choices of $\Phi$ produces suspension spaces which are $F_n$-equivariantly isometric, as can easily be checked using the fact that distinct choices of $\Phi$ differ by an inner automorphism of~$F_n$.

\subsection{Proof of hyperbolicity of $\S$.}
\label{SectionSHypProof}

We prove that $\S$ is a hyperbolic metric space by applying the Mj-Sardar combination theorem \cite{MjSardar:combination}, a descendant of the Bestvina--Feighn combination theorem \cite{BestvinaFeighn:combination}. The hypotheses of the Mj-Sardar theorem consist of an opening hypothesis and four numbered hypotheses which we must check. The last of those four is the ``flaring condition'' which we prove by application of Proposition~\ref{PropFlaringInTStar}.

\smallskip

\newcommand\hbullet[1]{\hphantom{x}\quad$\bullet$ \emph{#1}}

\emph{Opening hypothesis of \cite[Theorem 4.3]{MjSardar:combination}: A metric bundle.} This hypothesis says that $\S$ is a metric bundle over $\reals$ with respect to the map $p \from \S \to \reals$. We must therefore verify that the map $p \from \S \to \reals$ satisfies the definition of a metric bundle as given in  \cite[Definition 1.2]{MjSardar:combination}. First, $p$ must be Lipschitz map, and each fiber $\S_s$ must be a geodesic metric space with respect to the path metric induced from~$\S$, each of which we have already verified. Also, item 2) of \cite[Definition 1.2]{MjSardar:combination}, when translated into our setting, requires that for each interval $[a,b] \subset \reals$ such that $b-a \le 1$, and for each $s \in [a,b]$ and $\xi \in \S_s$, there exists a path in $\S$ of uniformly bounded length that passes through $\xi$ and has endpoints on $\S_a$ and $\S_b$ respectively. To obtain this path choose $\eta \in \S_a$ so that $\eta \cdot (s-a)=\xi$ and take the path $t \mapsto \eta \cdot t$ defined on $a \le t \le b$, whose length equals $b-a \le 1$.

The remaining verification needed for $\S$ to be a metric bundle is that the set of inclusions $\S_s \inject \S$ ($s \in \reals$) is uniformly proper, meaning that these inclusions are uniformly Lipschitz --- in fact they are all $1$-Lipschitz by construction --- and that there exists a single nondecreasing ``gauge'' function $\delta \from [0,\infinity) \to [0,\infinity)$ for these inclusions, having the following property:
\begin{itemize}
\item[$(*)$] for any $s \in \reals$, any $x,y \in \S_s$, and any $D \ge 0$, if $d_\S(x,y) \le D$ then $d_s(x,y) \le \delta(D)$. 
\end{itemize}
To define $\delta$ consider $x,y \in \S_s$ connected by a geodesic path $\gamma$ in $\S$ with $\Length(\gamma) = d_\S(x,y) \le D$. The projection $p \composed \gamma$ in $\reals$ has length $\le D$ and so, letting $m = \lfloor D+2 \rfloor$, there are integers $a$ and $b=a+m$ such that $\image(p \composed \gamma) \subset [a,b]$, implying that $\image(\gamma) \subset \S_{[a,b]}$ which implies in turn that $d_\S(x,y) = d_{[a,b]}(x,y)$. Applying Lemma~\ref{LemmaSectionQI} we have $\frac{1}{k_m} \, d_s(x,y) - c_m \le d_{[a,b]}(x,y)$ and hence $d_s(x,y) \le k_m d_\S(x,y) + k_m \, c_m$. We may assume that $k_m$ and $c_m$ are nondecreasing functions of $m$ and hence $k_{\lfloor D \rfloor}$ and $c_{\lfloor D \rfloor}$ are nondecreasing functions of $D$, and so using $\delta(D) = k_{\lfloor D \rfloor}(D + c_{\lfloor D \rfloor})$ we are done.

We record for later use the following:

\begin{lemma}\label{LemmaUnifProp}
For each $s \in \reals$ the inclusion $\S_s \inject \S$ is uniformly proper. In particular, the inclusion $T^* = \S_0 \inject \S$ is uniformly proper.
\qed\end{lemma}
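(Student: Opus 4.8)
\textbf{Proof proposal for Lemma~\ref{LemmaUnifProp}.}

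The plan is simply to harvest what was already established in the paragraphs immediately preceding the statement. Recall that ``uniformly proper'' for an inclusion $\S_s \inject \S$ means two things: the inclusion is uniformly Lipschitz, and there is a single nondecreasing gauge function $\delta \from [0,\infty) \to [0,\infty)$, independent of $s$, satisfying property~$(*)$: for all $s \in \reals$, all $x,y \in \S_s$, and all $D \ge 0$, if $d_\S(x,y) \le D$ then $d_s(x,y) \le \delta(D)$. Both of these were verified in the course of checking the opening hypothesis of the Mj--Sardar theorem, so the proof consists of pointing to that verification.

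First I would record that each inclusion $\S_s \inject \S$ is $1$-Lipschitz, which is immediate from the construction of the piecewise Riemannian metric on $\S$ (Definition~\ref{DefSuspensionMetric}): any path in the fiber $\S_s$ is in particular a path in $\S$, so the induced fiber metric $d_s$ dominates $d_\S$ restricted to $\S_s$. Hence the inclusions are uniformly Lipschitz with constant~$1$, in particular this holds for $s=0$, where $\S_0$ is identified with $T^*$ via the isometry $j_0$ of Definition~\ref{DefSuspensionMetric}.

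Next I would reproduce the gauge function argument. Given $x,y \in \S_s$ with $d_\S(x,y) \le D$, choose a geodesic $\gamma$ in $\S$ joining them, so $\Length(\gamma) \le D$; then $p \composed \gamma$ has length $\le D$, so setting $m = \lfloor D+2 \rfloor$ there are integers $a$ and $b = a+m$ with $\image(\gamma) \subset \S_{[a,b]}$ and $s \in [a,b]$, whence $d_\S(x,y) = d_{[a,b]}(x,y)$. Applying Lemma~\ref{LemmaSectionQI} with this $m$ gives $\tfrac{1}{k_m}\,d_s(x,y) - c_m \le d_{[a,b]}(x,y) \le D$, so $d_s(x,y) \le k_m D + k_m c_m$. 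Since $k_m, c_m$ may be taken nondecreasing in $m$, the function $\delta(D) = k_{\lfloor D+2\rfloor}\bigl(D + c_{\lfloor D+2\rfloor}\bigr)$ is nondecreasing and works uniformly in $s$. Specializing to $s=0$ and using the isometry $T^* \approx \S_0$ gives the final sentence. There is no real obstacle here: the only mild point of care is that the constants $k_m, c_m$ from Lemma~\ref{LemmaSectionQI} genuinely depend only on $m$ (equivalently, on a ceiling of $D$) and not on $a$ or $s$, which is exactly the content of that lemma, so the gauge $\delta$ is independent of the fiber.
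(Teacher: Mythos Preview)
Your proposal is correct and is essentially identical to the paper's approach: the lemma is stated with a \qed\ because it simply records what was established in the paragraph immediately preceding it, and you have reproduced that argument faithfully. Your gauge function $\delta(D) = k_{\lfloor D+2\rfloor}(D + c_{\lfloor D+2\rfloor})$ is in fact slightly more careful than the paper's $\delta(D) = k_{\lfloor D\rfloor}(D + c_{\lfloor D\rfloor})$, since $m = \lfloor D+2\rfloor$ is the value actually used; both work.
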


%
%

\smallskip

\emph{Hypotheses (1), (2) of \cite[Theorem 4.3]{MjSardar:combination}: Base and fiber hyperbolicity.} These hypotheses require that the base space $\reals$ is hyperbolic which is evident, and that the fibers $\S_s$ are hyperbolic with uniform hyperbolicity constant. Proposition~\ref{PropTStarHyp} gives us a constant $\delta' \ge 0$ such that $T^*$ is $\delta'$-hyperbolic. Since each fiber $\S_s$ is $\lambda$-bilipschitz homeomorphic to $T^*$, it follows that $\S_s$ is hyperbolic with a uniform hyperbolicity constant $\delta$ depending only on $\delta'$ and $\lambda$.

\smallskip

\emph{Hypothesis (3) of \cite[Theorem 4.3]{MjSardar:combination}: Barycenters.} This hypothesis says that the barycenter maps $\bdy^3 \S_s \to \S_s$ are uniformly coarsely surjective as $s$ varies. We review what this means from \cite[Section 2 around the heading ``The barycenter map'']{MjSardar:combination}. Given a $\delta$-hyperbolic geodesic metric space~$X$ with Gromov boundary $\bdy X$, consider 
the triple space 
$$\bdy^3 X = \{(\xi_1,\xi_2,\xi_3) \in (\bdy X)^3 \suchthat \xi_i \ne \xi_j \,\text{if}\, i \ne j\}
$$
The barycenter map $\bdy^3 X \to X$ is a coarsely well-defined map as follows. There exists constants $K \ge 1$, $C \ge 0$ depending only on $\delta$ such that for any two points $\xi_1 \ne \xi_2 \in \bdy X$ there exists a $K,C$-quasigeodesically embedded line in $X$ having endpoints $\xi_1,\xi_2$; we use the notation $\overline{\xi_1\xi_2}$ for any such quasigeodesic line. By \cite[Lemma 2.7]{MjSardar:combination} there exist constants $D,L \ge 0$ depending only on $\delta$ such that for each triple $\xi=(\xi_1,\xi_2,\xi_3) \in \bdy^3 X$ there exists a point $b_\xi \in X$ which comes within distance $D$ of any of the lines $\overline{\xi_i\xi_j}$, $i \ne j \in \{1,2,3\}$, and for any other such point $b'_\xi$ the distance between $b_\xi$ and $b'_\xi$ is~$\le L$. Once the constants $K, C, D, L$ have been chosen, any such point $b_\xi$ is called a \emph{barycenter} of $\xi$, and any map $\bdy^3 X \to X$ taking each triple $\xi$ to a barycenter $b_\xi$ is called a \emph{barycenter map} for $X$.

To say that the barycenter maps $\bdy^3\S_s \to \S_s$ are uniformly coarsely surjective means that there exists a ``coboundedness constant'' $E \ge 0$ such that for each $s \in \reals$ the image of each barycenter map $\bdy^3\S_s \to \S_s$ comes within distance $E$ of each point of~$\S_s$. For the hyperbolic space $T^*$, the action $F_n \act T^*$ has a fundamental domain $\tau \subset T^*$ of bounded diameter, and so $E'' = \diam(\tau)$ is a coboundedness constant for any $F_n$-equivariant barycenter map $\bdy^3 T^* \to T^*$, hence there is a uniform coboundedness constant $E'$ for all barycenter maps $\bdy^3 T^* \to T^*$. Since each of the fibers $\S_s$ comes equipped with a $\lambda$-bilipschitz homeomorphism $j_s \from T^* \to \S_s$, their barycenter maps have a uniform coboundedness constant $E = \lambda E'$.

\smallskip

\emph{Hypothesis (4) of \cite[Theorem 4.3]{MjSardar:combination} aka \cite[Definition 1.12]{MjSardar:combination}.} Here is a slight restatement of the hypothesis specialized to our present context.

\begin{description}
\item[Flaring 1:] For all $k_1 \ge 1$, $\nu_1 > 1$, there exist integers $A_1, R \ge 0$ such that for any~$s \in \reals$ and any two $k_1$-quasigeodesics 
$$\gamma_1,\gamma_2 \from [s-R,s+R] \to \S
$$
which are sections of the projection map $p$ --- meaning that $p \composed \gamma_i$ is the identity on the interval~$[s-R,s+R]$ --- the following implication holds:

\quad $\bf{(F_1)}$ if $d_s(\gamma_1(s),\gamma_2(s)) \ge A_1$ then
$$\nu_1 \,\cdot\, d_s\bigl(\gamma_1(s),\gamma_2(s)\bigr) \le \max\{ d_{s-R}\bigl(\gamma_1(s-R),\gamma_2(s-R)\bigr), d_{s+R}\bigl(\gamma_1(s+R),\gamma_2(s+R)\bigr) \}
$$
\end{description}
This statement tautologically implies the flaring hypothesis given in \cite[Definition 1.12]{MjSardar:combination}, the difference being that in the latter statement the quantifier order starts out as ``For all $k \ge 1$ there exist $\mu > 1$ and integers $A,r \ge 0$ such that\ldots'' with the remainder of the statement unchanged (a simple geometric estimation argument yields the converse implication, but we will not need this).

For proving Flaring 1 we first reduce it to a ``discretized'' version taking place solely in the fibers $\S_r$ for integer values of $r$, as follows:

\begin{description}
\item[Flaring 2:] For all $k_2 \ge 1$, $\nu_2 > 1$, there exist integers $A_2, R \ge 0$ such that for any integer~$m \in \Z$ and any two $k_2$-quasigeodesic maps
$$\delta_1,\delta_2 \from \{m-R,\ldots,m,\ldots,m+R\} \to \S
$$
which are sections of the projection map $p$, the following implication holds:

\quad $\bf{(F_2)}$ if $d_m(\delta_1(m),\delta_2(m)) \ge A_2$ then
\end{description}
\vspace*{-2mm}
$$\nu_2 \,\cdot\, d_m\bigl(\delta_1(m),\delta_2(m)\bigr) \le \max\{d_{m-R}\bigl(\delta_1(m-R),\delta_2(m-R)\bigr), d_{m+R}\bigl(\delta_1(m+R),\delta_2(m+R)\bigr) \}
$$

\medskip\noindent
To show that Flaring 2 implies Flaring 1, choose $k_1,\nu_1$. Consider any integer $R \ge 0$, any $s \in \reals$, and any pair $\gamma_1,\gamma_2 \from [s-R,s+R] \to \S$ of $k_1$-quasigeodesic sections of the projection map~$p$. Let $m = \lfloor s \rfloor$ and let $t = s - m$. The semiflow restricts to $\lambda$-bilipschitz homeomorphisms $h_r \from \S_r \mapsto \S_{r+t}$ defined by $h_r[x,r,0] = [x,r,t]$ for any integer~$r$, having the property that the distance from each $[x,r,0]$ to its $h_r$-image $[x,r,t]$ in $\S$ is at most~$1$. It follows that the functions
$$\delta_j \from \{m-R,\ldots,m+R\} \to \S, \quad \delta_j(r) = h_r^\inv(\gamma_j(r+t))
$$
are $k_2$-quasi-isometric sections of~$p$, where the constant $k_2$ depends only on $k_1$ and~$\lambda$. Applying Flaring~2, for any $\nu_2 > 1$ there exist integers $A_2,R \ge 0$ such that the implication $\bf{(F_2)}$ holds. Again using that the maps $h_r$ are $\lambda$-bilipschitz homeomorphisms, if we take $\nu_2 = \nu_1 \cdot \lambda^2$ and $A_1 = A_2 \cdot \lambda$ then the hypothesis of $\bf{(F_1)}$ implies the hypothesis of $\bf{(F_2)}$ and the conclusion of $\bf{(F_2)}$ implies the conclusion of $\bf{(F_1)}$. 

It remains to verify Flaring 2 by applying Proposition~\ref{PropFlaringInTStar} which we restate here for convenience in a form matching that of Flaring~2: 
\begin{description}
\item[Flaring 3:] For each $\mu > 1$, $\eta \ge 0$ there exist integers $A_2 \ge 0$, $R \ge 1$ such that for any pair of $\eta$-pseudo-orbits $V_r$ and $W_r$ of $f_{T^*} \from T^* \to T^*$ defined for integers $-R \le r \le R$, the following implication holds:

\quad$\bf{(F_3)}$ if $d^*(V_0,W_0) \ge A_2$ then 
\vspace*{-2mm}
$$\mu \cdot d^*(V_0,W_0) \le \max\{ d^*(V_{-R},W_{-R}) , d^*(V_R,W_R) \}
$$
\end{description}
For the proof that Flaring~3$\implies$Flaring~2, for each $r \in \Z$ we have a commutative diagram
$$\xymatrix{
T^* \ar[r]^{f_{T^*}}  \ar[d]_{j_{r-1}} & T^* \ar[d]^{j_r} \\
\S_{r-1} \ar[r]_{h_r} & \S_r
}$$
where $h_r$ is the first hitting map. Note that in this diagram the bottom is equivariant, the top is $\Phi$-twisted equivariant, the left is a $\Phi^{1-r}$-twisted equivariant isometry, and the right is $\Phi^{-r}$-twisted equivariant isometry.

Choose $k_2 \ge 1$, $\nu_2>1$, consider integers $R \ge 0$ and $m$, and consider a pair of $k_2$-quasigeodesic maps $\delta_i \from \{m-R,\ldots,m+R\} \to \S$ which are sections of $p$, for $i=1,2$. It follows that if $m-R \le r-1 < r \le m+R$ then $d_\S(\delta_i(r-1),\delta_i(r)) \le 2 k_2$ (recall that ``$k_2$-quasigeodesic'' is synonymous with ``$(k_2,k_2)$-quasigeodesic''). For each $x \in \S_{r-1}$ we have $d_\S(h_r(x),x)) \le 1$ and hence $d_\S(h_r(\delta_i(r-1)),\delta_i(r)) \le 2k_2 + 1$. For $r \in \{-R,\ldots,+R\}$ denote $V_r = j_{m+r}^\inv(\delta_1(m+r))$, $W_r = j_{m+r}^\inv(\delta_2(m+r))$. It follows that $d_*(f_{T^*}(V_{r-1}),V_r), d_*(f_{T^*}(W_{r-1},W_r) \le 2k_2+1$, and so the sequences $(V_r)$ and $(W_r)$ are both $\eta$-pseudo-orbits of $f_{T^*}$, defined for $m-R \le r \le m+R$ and with $\eta = 2k_2+1$. Since $j_m$ is an isometry we have $d_m(\delta_1(m),\delta_2(m))=d^*(V_0,W_0)$, and so the hypothesis of $\bf{(F_2)}$ implies the hypothesis of $\bf{(F_3)}$. Similarly since $j_{m-R}$ and $j_{m+R}$ are isometries, the conclusion of $\bf{(F_3)}$ implies the conclusion of $\bf{(F_2)}$ using $\mu=\nu_2$. We have therefore proved that Flaring~3$\implies$Flaring~2.

\medskip

This completes the verification of the hypotheses of the Mj-Sardar combination theorem, and so the space $\S$ is therefore hyperbolic.

\section{Abelian subgroups of $\Out(F_n)$}
\label{SectionAbelianSubgroups}
This section reviews some background material needed for the rest of the paper. Section~\ref{SectionBackgroundReview} contains basic material from \recognition\ regarding automorphisms and outer automorphisms (see also \SubgroupsOne\ for a comprehensive overview). Section~\ref{SectionDisintegration} reviews elements of the theory of abelian subgroups of $\Out(F_n)$ developed in \cite{FeighnHandel:abelian}, focussing on disintegration subgroups. 


\subsection{Background review}
\label{SectionBackgroundReview}

\subsubsection{More about \cts} 
\label{SectionMoreCTs}

In Notations~\ref{NotationsFlaring} we reviewed features of a \ct\ $f \from G \to G$ with associated $f$-invariant filtration $G_1 \subset \cdots \subset G_u$ under the assumption that the top stratum $H_u$ is \eg. In studying disintegration groups we shall need some further defining properties and derived properties of~\cts\ (for this section only the reader may ignore the assumption that the top stratum is \eg). We shall refer to \recognition\ for material specific to \cts, to \BookOne\ for more general material, and to \SubgroupsOne\ for certain ``compilation'' results with multiple sources in \recognition\ or \BookOne. One may also consult \SubgroupsOne\ for a comprehensive overview.

\paragraph{General properties of strata.} \cite[Section 2.6]{\recognitionTag} \quad Each stratum $H_i = G_i \setminus G_{i-1}$ is either an \emph{irreducible stratum} meaning that its transition matrix $M_i$ is an irreducible matrix, or a \emph{zero stratum} meaning that $M_i$ is a zero matrix. Each irreducible stratum satisfies one of the following:

\smallskip\emph{$H_i$ is an \eg\ stratum:} \cite[Remark 3.20]{\recognitionTag} The matrix $M_i$ is a $k \times k$ Perron-Frobenius matrix for some $k \ge 2$, having eigenvalue $\lambda>1$; or

\smallskip\emph{$H_i$ is an \neg\ stratum:} \cite[Section 4.1]{\recognitionTag} $H_i = E_i$ is a single edge with an orientation such that $f(E_i) = E_i u$ where $u$ is either trivial or a closed path in $G_{i-1}$ having distinct initial and terminal directions. 
%
%
An \neg\ stratum $H_i$ is a \emph{fixed stratum} if $u$ is trivial, a \emph{linear stratum} if $u$ is a Nielsen path (equivalently $u$ is a periodic Nielsen path), and a \emph{superlinear stratum} otherwise. 

\paragraph{Properties of \neg-linear strata:} An \neg-linear stratum $H_i = E_i$ will also be referred to as a \emph{linear edge} of~$G$. The linear edges of $G$ have the following features:

\smallskip\emph{Twist path and twist coefficient:} \cite[Section 4.1]{\recognitionTag} \, For each linear edge $E_i$ we have $f(E_i) = E_i w_i^{d_i}$ for a unique closed Nielsen path $w_i$ which is \emph{root free} meaning that $w_i$ is not an iterate of any shorter closed path (equivalently, if $p$ is the base point of $w_i$, then the element of the group $\pi_1(G,p)$ represented by $w_i$ is root free). We say that $w_i$ is the \emph{twist path} of $E_i$ and that the integer $d_i \ne 0$ is its \emph{twist coefficient}. If $E_j \ne E_i$ is another linear edge having twist path $w_j$, and if $w_i$ and $w_j$ determine the same conjugacy class up to inversion, then $w_i=w_j$ and $d_i \ne d_j$.

\smallskip\emph{\neg\ Nielsen paths:} \cite[Definition 4.7]{\recognitionTag}  \, For each \neg\ edge $E_i$, if there is an indivisible Nielsen path contained in $G_i$ but not in $G_{i-1}$ then $E_i$ is a linear edge, and every such Nielsen path has the form $E_i w_i^k \overline E_i$ for some $k \ne 0$.

\smallskip\emph{Exceptional paths:} \cite[Definition 4.1]{\recognitionTag} \, These are paths of the form $E_i w^k \overline E_j$ where $E_i \ne E_j$ are linear edges having the same twist path $w=w_i=w_j$ and having twist coefficients $d_i,d_j$ of the same sign.

\paragraph{Properties of \eg\ strata:} These properties were stated in Notations~\ref{NotationsFlaring} with respect to the top \eg\ stratum $H_u$, but we go over them again here for an arbitrary \eg-stratum~$H_i$, and with a somewhat different emphasis. 

\smallskip\emph{Lines:} \cite[Section 2.2]{\BookOneTag} \, Recall the spaces of lines in $F_n$ and in $G$ and the canonical homeomorphism between them:
\begin{align*}
\B(F_n) & = \{ \text{2 point subsets of $\bdy F_n$} \} / F_n \\
\B(G)  & = \{ \text{bi-infinite paths in $G$} \} / \text{reparameterization} 
\end{align*}
where the topology on $\B(F_n)$ is induced by the Hausdorff topology on compact subsets of $\bdy F_n$, and the topology on $\B(G)$ has a basis element for each finite path $\gamma$ consisting of all bi-infinite paths having $\gamma$ as a subpath. The homeomorphism $\B(F_n) \leftrightarrow \B(G)$ is induced by the universal covering map $\wt G \to G$ and the natural bijection $\bdy F_n \approx \bdy \wt G$. We refer to this homeomorphism by saying that a line in $F_n$ is \emph{realized} by the corresponding line in~$G$.
 
\smallskip\emph{Attracting laminations:} \cite[Section 3.1]{\BookOneTag}, \cite[Remark 3.20]{\recognitionTag} \, Associated to $H_i$ is its \emph{attracting lamination} $\Lambda_i \subset \B(F_n)$ which is the set of all lines in $F_n$ whose realization $\ell$ in $G$ has the property that for each finite subpath $\gamma$ of $\ell$ and each edge $E \subset H_i$ there exists $k \ge 1$ such that $\gamma$ is a subpath of $f^k_\#(E)$. For distinct \eg\ strata $H_i, H_j$ ($i \ne j$) the corresponding laminations $\Lambda_i,\Lambda_j$ are distinct. The set of laminations $\L(\phi)=\{\Lambda_i\}$ is independent of the choice of \ct\ representative $f \from G \to G$. 

\smallskip\emph{EG Nielsen paths:} \cite[Corollary 4.19]{\BookOneTag}, \SubgroupsOne\ Fact 1.42. \, Up to inversion there exists at most one indivisible periodic Nielsen path $\rho$ contained in $G_i$ but not in $G_{i-1}$. Its initial and terminal directions are distinct, and at least one endpoint of $\rho$ is not contained in $G_{i-1}$. 

\smallskip\emph{Geometricity:} \SubgroupsOne, Fact 2.3. \, The stratum $H_i$ is \emph{geometric} if and only if $\rho$ exists and is a closed path.

\smallskip\emph{Fixed circuits:} \SubgroupsOne, Fact 1.39. \, If $\sigma$ is a circuit fixed by $f_\#$ then $\sigma$ is a concatenation of fixed edges and indivisible Nielsen paths.

\paragraph{Properties of zero strata:} \cite[Definition 2.18, Definition 4.4, Definition 4.7]{\recognitionTag}

Each zero stratum $H_i \subset G$ has the following properties: 

\smallskip\emph{Envelopment:} There exist indices $s<i<r$ such that $H_s$ is an irreducible stratum, $H_r$ is an \eg\ stratum, each component of $G_r$ is noncontractible, and each $H_j$ with $s<j<r$ is a zero stratum and a contractible component of $G_{r-1}$. We say that the zero strata $H_j$ with $s<j<r$ are \emph{enveloped} by $H_r$, and we denote $H^z_r$ to be the union of $H_r$ with its enveloped zero strata. The filtration element $G_s$ is the union of the noncontractible components of $G_{r-1}$, and $H^z_r = G_r \setminus G_s$. 

\smallskip\emph{Taken paths.}  These are the paths $\mu$ in $H_i$ for which there exists an edge $E$ of some irreducible stratum $H_q$ with $q>i$, and there exists $k \ge 1$, such that $\mu$ is a maximal subpath in $H_i$ of the path $f^k_\#(E)$; we say more specifically that the path $\mu$ is \emph{$q$-taken}. If $H_r$ is the \eg\ stratum that envelopes $H_i$ then every edge $E \subset H_i$ is an $r$-taken path, from which it follows that the endpoints of $E$ are vertices of $H_r$ not contained in $G_{r-1}$.

\paragraph{Properties of complete splittings:} \cite[Section 4]{\BookOneTag}, \cite[Definition 4.4]{\recognitionTag}

A \emph{splitting} of a path $\gamma$ in $G$ is a concatenation expression $\gamma = \gamma_1 \cdot \ldots \cdot \gamma_J$ such that for all $k \ge 1$ we have $f^k_\#(\gamma) = f^k_\#(\gamma_1) \cdot \ldots \cdot f^k_\#(\gamma_J)$.  The characteristic property of~a~\ct\ --- short for ``completely split relative train track map'' --- is the following:

\smallskip\emph{Complete splitting:} For each edge $E$ there is a unique splitting $f(E) = \sigma_1 \cdot \ldots \cdot \sigma_n$ which is \emph{complete}, meaning that each term $\sigma_i$ is either an edge in an irreducible stratum, an \eg\ indivisible Nielsen path, an \neg\ indivisible Nielsen path, an exceptional path, or a maximal taken subpath in a zero stratum.

\subsubsection{Principal automorphisms and rotationless outer automorphisms.} 
\label{SectionPrincipal}
Consider an automorphism $\Phi \in \Aut(F_n)$,  with induced boundary homeomorphism denoted $\wh\Phi \from \bdy F_n \to \bdy F_n$, and with fixed subgroup denoted $\Fix(\Phi) \subgroup F_n$. Denote the sets of periodic points and fixed points of $\wh\Phi$ as $\Per(\wh\Phi)$ and $\Fix(\wh\Phi) \subset \bdy F_n$, respectively. Consider $\xi \in \Per(\wh\Phi)$ of period $k \ge 1$. We say $\xi$ is an \emph{attractor} for $\wh\Phi$ if it has a neighborhood $U \subset \bdy F_n$ such that $\wh\Phi^k(U) \subset U$ and the collection $\{\wh\Phi^{ki}(U) \suchthat i \ge 0\}$ is a neighborhood basis of $\xi$.  Also, $\xi$ is a \emph{repeller} for $\wh\Phi$ if it is an attractor for $\wh\Phi^\inv$.
Within $\Per(\wh\Phi)$ and $\Fix(\wh\Phi)$ denote the sets of attracting, repelling, and nonrepelling points, respectively, as 
\begin{align*}
\Per_+(\wh\Phi), \, \Per_-(\wh\Phi), \, \Per_N(\wh\Phi) &\subset \Per(\wh\Phi) \\
\Fix_+(\wh\Phi), \, \Fix_-(\wh\Phi), \, \Fix_N(\wh\Phi) &\subset \Fix(\wh\Phi)
\end{align*}
For each $c \in F_n$ associated inner automorphism $i_c(a)=cac^\inv$ we use the following special notations 
$$\bdy c = \Fix(\,\wh i_c) = \{\bdy_-c, \bdy_+c\}, \quad \{\bdy_- c\} = \Fix_-(\,\wh i_c), \quad \{\bdy_+ c\} = \Fix_+(\,\wh i_c)
$$
The following equivalences are collected in \cite[Lemma 2.1]{\recognitionTag} based on results from \cite{GJLL:index} and~\cite{BFH:Solvable}.

\begin{fact}\label{FactFixPhi}
For each $\Phi \in \Aut(F_n)$ and each nontrivial $c \in F_n$ we have
\begin{align*}
\Phi \in \Stab(c) &\iff i_c \,\, \text{commutes with} \,\, \Phi  \,\, \iff \Fix(\wh\Phi) \,\, \text{is invariant under} \,\, \hat i_c \\
  &\iff \bdy c \subset \Fix(\wh\Phi)
\end{align*}
\end{fact}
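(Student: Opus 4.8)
The statement to prove is Fact~\ref{FactFixPhi}, which asserts a chain of equivalences for $\Phi \in \Aut(F_n)$ and nontrivial $c \in F_n$: that $\Phi \in \Stab(c)$ (i.e.\ $\Phi(c)=c$), that $i_c$ commutes with $\Phi$, that $\Fix(\wh\Phi)$ is $\wh i_c$-invariant, and that $\partial c \subset \Fix(\wh\Phi)$. Since the excerpt explicitly says this fact is collected in \cite[Lemma 2.1]{\recognitionTag} based on \cite{GJLL:index} and \BookThree, the honest proof is a citation; but I will sketch the elementary argument that makes the equivalences transparent.

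\textbf{Plan.} The cycle of implications I would establish is: $\Phi(c)=c \iff \Phi i_c \Phi^{-1} = i_c \iff \Phi i_c = i_c \Phi$, then $\Phi i_c \Phi^{-1} = i_c \implies \widehat{\Phi} \,\wh i_c\, \widehat{\Phi}^{-1} = \wh i_c$ on $\bdy F_n$ (functoriality of the boundary), hence $\widehat\Phi$ conjugates the fixed set of $\wh i_c$ to itself, i.e.\ $\Fix(\wh\Phi)$ is $\wh i_c$-invariant; and finally the closing implication $\partial c \subset \Fix(\wh\Phi)$ together with the structure of $\wh i_c$ forces $\Phi(c)=c$.

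\textbf{Key steps.} First, the algebraic equivalence: $\Phi i_c \Phi^{-1} = i_{\Phi(c)}$ is a direct computation, so $\Phi i_c \Phi^{-1} = i_c$ iff $i_{\Phi(c)} = i_c$ iff $\Phi(c) c^{-1}$ is central in $F_n$ iff (triviality of the center of $F_n$) $\Phi(c) = c$; and $\Phi i_c \Phi^{-1} = i_c$ is equivalent to $\Phi i_c = i_c \Phi$. Second, passing to boundaries: the map $\Aut(F_n) \to \Homeo(\bdy F_n)$, $\Psi \mapsto \widehat\Psi$, is a homomorphism, so $\Phi i_c = i_c \Phi$ gives $\widehat\Phi\, \wh i_c = \wh i_c\, \widehat\Phi$; an element commuting with $\wh i_c$ preserves $\Fix(\wh i_c) = \{\bdy_- c, \bdy_+ c\} = \partial c$, but more importantly $\wh i_c$ preserves $\Fix(\widehat\Phi)$, which is the invariance statement. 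For the equivalence with $\partial c \subset \Fix(\widehat\Phi)$: if $\Phi(c) = c$ then $\wh i_c$ is a homeomorphism of $\bdy F_n$ commuting with $\widehat\Phi$ whose fixed points $\bdy_\pm c$ are a global attractor/repeller pair; a standard north-south dynamics argument shows a homeomorphism commuting with such a map must fix $\bdy_\pm c$ individually, so $\partial c \subset \Fix(\widehat\Phi)$. Conversely, if $\partial c \subset \Fix(\widehat\Phi)$, then $\widehat\Phi$ and $\wh i_c$ share the ordered attractor--repeller pair $(\bdy_- c, \bdy_+ c)$; since $\wh i_{\Phi(c)} = \widehat\Phi\, \wh i_c\, \widehat\Phi^{-1}$ has attractor--repeller pair $(\widehat\Phi(\bdy_- c), \widehat\Phi(\bdy_+ c)) = (\bdy_- c, \bdy_+ c) = $ that of $\wh i_c$, and a nontrivial inner automorphism's boundary action is determined by its attractor--repeller pair together with the requirement that it be an inner automorphism of $F_n$ (here one invokes that $c$ and any root of $c$ with the same axis differ only up to the root, and one uses that $c$, $\Phi(c)$ have the same axis, combined with $\Phi$ preserving word length up to the standard bound, forcing $\Phi(c) = c^{\pm 1}$ and then orientation of the pair forcing the $+$ sign), we conclude $\Phi(c) = c$. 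This last step is where the cited results of \cite{GJLL:index} enter, so in the write-up I would simply cite \cite[Lemma 2.1]{\recognitionTag}.

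\textbf{Main obstacle.} The genuinely nontrivial implication is the one concluding $\Phi(c) = c$ from $\partial c \subset \Fix(\widehat\Phi)$: it is not purely formal because a priori $\widehat\Phi$ fixing the two boundary points $\bdy_\pm c$ only tells us $\Phi(c)$ has the same axis in $\partial F_n$ as $c$, and one must rule out $\Phi(c)$ being a nontrivial power or root of a conjugate of $c$. This is exactly the content extracted from \cite{GJLL:index} and \BookThree, and in a self-contained treatment it would require recalling the structure of $\Fix(\widehat\Phi)$ relative to attracting fixed points. Accordingly, in the paper I would present the algebraic equivalences ($\Phi(c)=c \iff i_c$ commutes with $\Phi$) and the boundary-invariance equivalence directly, and cite \cite[Lemma 2.1]{\recognitionTag} for the full chain including the delicate implication, exactly as the excerpt already does.
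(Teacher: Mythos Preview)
Your bottom line matches the paper exactly: the paper gives no argument whatsoever for this fact, merely prefacing it with the sentence ``The following equivalences are collected in \cite[Lemma 2.1]{\recognitionTag} based on results from \cite{GJLL:index} and~\cite{BFH:Solvable}.'' So your conclusion to present it as a citation is precisely what the paper does.

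That said, your supporting sketch has a logical gap and misidentifies the delicate implication. Labelling the four conditions $(1)$ $\Phi(c)=c$, $(2)$ $i_c$ commutes with $\Phi$, $(3)$ $\Fix(\wh\Phi)$ is $\wh i_c$-invariant, $(4)$ $\bdy c \subset \Fix(\wh\Phi)$, you establish $(1)\iff(2)$, $(2)\Rightarrow(3)$, and $(1)\iff(4)$, but your cycle never returns from $(3)$; as written, $(3)$ is a dead end. Moreover, the step you flag as the ``main obstacle'', namely $(4)\Rightarrow(1)$, is in fact elementary: if $\wh\Phi$ fixes $\bdy_\pm c$ then $\Phi(c)$ and $c$ have the same oriented axis in $\bdy F_n$; writing $c=r^p$ with $r$ root free, the same reasoning gives that $\Phi(r)$ is root free with the same oriented axis as $r$, hence $\Phi(r)=r$ and so $\Phi(c)=c$. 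The implication that genuinely needs input from \cite{GJLL:index} is $(3)\Rightarrow(4)$: north--south dynamics of $\wh i_c$ shows that any closed $\wh i_c$-invariant subset of $\bdy F_n$ containing at least two points must contain $\bdy c$, but one must know that $\Fix(\wh\Phi)$ always has at least two points, and that is the nontrivial content drawn from the cited sources.
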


\paragraph{Principal automorphisms.} \cite[Definition 3.1]{\recognitionTag}. \quad We say that $\Phi$ is a \emph{principal automorphism} if $\abs{\Fix_N(\Phi)} \ge 2$, and furthermore if $\abs{\Fix_N(\Phi)}=2$ then $\Fix_N(\Phi)$ is neither equal to $\bdy c$ for any nontrivial $c \in F_n$, nor equal to the set of endpoints of a lift of a generic leaf of an attracting lamination of the outer automorphism representing $\Phi$. For each $\phi \in \Out(F_n)$ let $P(\phi) \subset \Aut(F_n)$ denote the set of principal automorphisms representing $\phi$. 

For each $\phi \in \Out(F_n)$ let $P^\pm(\phi)$ denote\footnote{In \abelian\ the definition of $P^\pm(\phi)$ was incorrectly stated as $P(\phi) \union P(\phi^\inv)$. The definition given here should replace the one in \abelian. No further changes are required in \abelian, because the arguments there which use $P^\pm(\phi)$ are all written using the current, correct definition.} the set of all $\Phi \in \Aut(F_n)$ representing $\phi$ such that either $\Phi$ or $\Phi^\inv$ is principal, equivalently, 
$$P^\pm(\phi) = P(\phi) \union (P(\phi^\inv))^\inv
$$
%
%
The symmetry equation $P^\pm(\phi^\inv)=(P^\pm(\phi))^\inv$ is useful in situations where one is trying to prove a certain property of $\Phi \in P^\pm(\phi)$ that is symmetric under inversion of~$\Phi$: one may reduce to the case that $\Phi$ is principal by replacing $\phi \in \Out(F_n)$ and $\Phi$ by their inverses in the case where $\Phi$ is not already principal. We use this reduction argument with little comment in many places.

\paragraph{Rotationless outer automorphisms.}
\cite[Definition 3.1 and Remark 3.2]{\abelianTag}. \quad The concept of \emph{forward rotationless} outer automorphisms is defined in \recognition, where it is proved that the forward rotationless outer automorphisms are precisely those outer automorphisms which have \ct\ representatives. Here we use the stricter property of \emph{rotationless} defined in \abelian, which is symmetric under inversion, and which is better adapted to the study of abelian subgroups. 

We say that $\phi \in \Out(F_n)$ is \emph{rotationless} if two conditions hold. First, for each $\Phi \in P(\phi)$ we have $\Per(\Phi)=\Fix(\Phi)$ (in ``forward rotationless'' this condition is replaced by the weaker $\Per_N(\Phi)=\Fix_N(\Phi)$). Second, for each integer $k \ge 1$ the map $\Phi \mapsto \Phi^k$ is a bijection between $P^\pm(\phi)$ and $P^\pm(\phi^k)$ --- recall from \cite[Remark 3.2]{\abelianTag} that injective is always true, and so bijective holds if and only if surjective holds.

\paragraph{Expansion factor homomorphisms.} \cite[Section 3.3]{\BookOneTag} \, Consider $\phi \in \Out(F_n)$ and an attracting lamination $\Lambda \in \L(\phi)$. Under the action of $\Out(F_n)$ on the set of lines $\B(F_n)$, consider the subgroup $\Stab(\Lambda) \subgroup \Out(F_n)$ that stabilizes $\Lambda$. The \emph{expansion factor homomorphism} of $\Lambda$ is the unique surjective homomorphism $\PF_\Lambda \from \Stab(\Lambda) \to \Z$ such that for each $\psi \in \Stab(\Lambda)$ we have $\PF_\Lambda(\psi) \ge 1$ if and only if $\Lambda \in \L(\psi)$. Furthermore, there exists $\mu > 1$ such that if $\psi \in \Stab(\Lambda)$ and if $\PF_\Lambda(\psi) > 1$ then any relative train track representative $f \from G \to G$ of~$\psi$ has an \eg-aperiodic stratum corresponding to $\Lambda$ on which the Perron-Frobenius eigenvalue of the transition matrix equals~$\mu^{\PF_\Lambda(\psi)}$.

\paragraph{Twistors (aka Axes).} 

\cite[Lemma 4.40 and the preceding page]{\recognitionTag}. Recall that two elements $a,b \in F_n$ are said to be \emph{unoriented conjugate} if $a$ is conjugate to $b$ or to~$b^\inv$. The ordinary conjugacy class of $a$ is denoted $[a]$ and the unoriented conjugacy class is denoted $[a]_u$. For any marked graph $G$, nontrivial conjugacy classes of $F_n$ correspond bijectively with circuits $S^1 \mapsto G$ up to orientation preserving homeomorphisms of the domain $S^1$. Note that $a$ is root free in $F_n$ if and only if the oriented circuit $S^1 \mapsto G$ representing $[a]$ does not factor through a nontrivial covering map of $S^1$, and $a,b$ are unoriented conjugate if and only if the oriented circuits representing $[a],[b]$ differ by an arbitrary homeomorphism of $S^1$.

Consider $\phi \in \Out(F_n)$ and a nontrivial, unoriented, root free conjugacy class $\mu = [c]_u$, $c \in F_n$. For any two representatives $\Phi_1 \ne \Phi_2 \in \Aut(F_n)$ of $\phi$, the following are equivalent: 
\begin{align*}
\Phi_1, \Phi_2 \in \Stab(c) & \iff \bdy c = \Fix(\wh\Phi_1) \intersect \Fix(\wh\Phi_2)
\end{align*}
Furthermore, if these equivalent conditions hold then $\Phi_2^\inv \Phi^{\vphantom\inv}_1 = i_c^d$ for some integer $d \ne 0$. If there exists a pair $\Phi_1 \ne \Phi_2 \in P(\phi)$ such that the above equivalent conditions hold, then $\mu$ is said to be a \emph{twistor} for~$\phi$, and the number of distinct elements of the set $P(\phi) \intersect \Stab(c)$ is called the \emph{multiplicity} of $\mu$ as a twistor for $\phi$; these properties are independent of the choice of $c \in F_n$ representing $\mu$. 

The number of twistors of $\phi$ and the multiplicity of each twistor is finite, as follows. First, for any \ct\ $f \from G \to G$ representing $\phi$, for $\mu$ to be a twistor it is equivalent that some \neg\ linear edge $E$ twists around $\mu$, meaning that $E$ has twist path $w$ such that either $w$ or $w^\inv$ represents $\mu$. Furthermore, the multiplicity of $\mu$ is one more than the number of linear edges that twist around~$\mu$ \cite[Lemma 4.40]{\recognitionTag}.

\subsubsection{Rotationless abelian subgroups}
\label{SectionRotationlessAbelian}
\paragraph{Principal sets for rotationless abelian subgroups.} \quad\cite[Definition 3.9, Corollary 2.13]{FeighnHandel:recognition}. 
An abelian subgroup $A \subgroup \Out(F_n)$ is \emph{rotationless} if each of its elements is rotationless, equivalently $A$ has a rotationless generating set. Every abelian subgroup contains a rotationless subgroup of index bounded uniformly by a constant depending only on~$n$, namely the subgroup of $k^{\text{th}}$ powers where $k \ge 1$ is an integer such that the $k^{\text{th}}$ power of every element of $\Out(F_n)$ is rotationless \cite[Lemma 4.42]{\recognitionTag}.

Assuming $A \subgroup \Out(F_n)$ is rotationless, a subset $\X \subset \partial F_n$ with $\ge 3$ elements is a \emph{principal set} for $A$ if each $\psi \in A$ is represented by $\Psi \in \Aut(F_n)$ such that $\X \subset \Fix(\wh \Psi)$ (which determines $\Psi$ uniquely amongst representatives of $\Psi$) and such that $\Psi \in P^\pm(\psi)$. When the principal set $\X$ is fixed, the map $\psi \mapsto \Psi$ defines a homomorphism $s \from A \to \Aut(F_n)$ that is a section of the canonical map $\Out(F_n) \to \Aut(F_n)$ over the subgroup~$A$. Also, the set
$$\Y = \cap_{\psi \in A}\Fix(\wh{s(\psi)})
$$ 
is the unique principal set which is maximal up to inclusion subject to the property that $\Y$ contains $\X$; this set $\Y$ defines the same section $s \from A \to \Aut(F_n)$ as $\X$ \cite[Remark 3.10]{FeighnHandel:recognition}.



\paragraph{Comparison homomorphisms of rotationless abelian subgroups.} \cite[Definition 4.1 and Lemma 4.3]{\abelianTag}. \\ Consider a rotationless abelian subgroup~$A$. 
Consider also two principal sets $\X_1,\X_2$ for $A$ that define lifts $s_1,s_2 \from A \to \Aut(F_n)$. Let $\Y_1,\Y_2$ be the maximal principal sets containing $\X_1,\X_2$ respectively, and so $s_1,s_2$ are also the lifts defined by $\Y_1,\Y_2$. Suppose that $s_1 \ne s_2 \from A \to \Aut(F_n)$ (this if and only if $\Y_1 \ne \Y_2$), and suppose also that $\X_1 \cap \X_2 \ne \emptyset$ and hence $\Y_1 \cap \Y_2 \ne \emptyset$. It follows that the set ${\cal Y}_1 \cap {\cal Y}_2$ is fixed by distinct automorphisms representing the same element of $ A$, and so $\Y_1 \intersect \Y_2 = T^\pm_c$ for some nontrivial root free $c \in F_n$. In this situation there is an associated \emph{comparison homomorphism} $\omega \from A \to \Z$ which is characterized by the equation
$$s_2(\psi) = i_c^{\omega(\psi)} s_1(\psi) \quad\text{for all $\psi \in A$.}
$$
The number of distinct comparison homomorphisms $A \to \Z$ is finite.

\paragraph{The coordinate homomorphism.} \cite[Lemma 4.4, Definition 4.5, Lemma~4.6]{\abelianTag}. \quad Every abelian subgroup $A \subgroup \Out(F_n)$ is a finite lamination group, that is, the set $\L(A) = \union_{\phi \in A} \L(\phi)$ is finite. If in addition $A$ is rotationless then each $\Lambda \in \L(A)$ is $\psi$-invariant for all $\psi \in A$, and so 
$$A \subgroup \bigcap_{\Lambda \in \L(A)} \Stab(\Lambda)
$$
We thus obtain a finite collection of expansion factor homomorphisms defined on $A$, namely 
$$\PF_\Lambda \from A \to \Z, \quad \Lambda \in \L(A)
$$
Choosing an enumeration $\Omega_1,\ldots,\Omega_N$ of the expansion homomorphisms and the comparison homomorphisms, we obtain the \emph{coordinate homomorphism} 
$$\Omega \from A \to \Z^N
$$
which is injective.

\subsection{Disintegration subgroups}
\label{SectionDisintegration}

In this section we fix a rotationless $\phi \in \Out(F_n)$ and a \ct\ representative $f \from G \to G$ with corresponding $f$-invariant filtration $G_1 \subset \cdots \subset G_u = G$ (that is all we need from Notations~\ref{NotationsFlaring} for this section).

Using the \ct\ structure of $f$ (see Section~\ref{SectionMoreCTs}) one builds up to the definition of the \emph{disintegration subgroup} associated to~$f$, a certain abelian subgroup $\D(f) \subgroup \Out(F_n)$ constructed in \cite[Section 6]{\abelianTag} by an explicit combinatorial procedure that we review here, using which one also obtains a description of the coordinate homomorphism $\Omega \from \D(f) \to \Z^N$ (see Section~\ref{SectionRotationlessAbelian}). Noting that $\D(f)$ depends on the choice of $f$, all definitions in this section depend on~$f$.



\subsubsection{QE-paths and QE-splittings} 

We describe structures associated to the collection of linear edges of~$G$, augmenting the structures described under the heading ``Properties of \neg-linear strata'' in Section~\ref{SectionMoreCTs}.

\smallskip
\noindent
\textbf{\cite[Def.~4.7]{FeighnHandel:recognition}} 
\quad Two linear edges $E_i,E_j \subset G$ with a common twist path $w$ are said to be \emph{linearly equivalent}, abbreviated to \emph{LIN-equivalent}, and the associated LIN-equivalence class is denoted $\lin_w$. Recall that distinct elements $E_i \ne E_j \in \lin_w$ have distinct twist coefficients $d_i \ne d_j$. 

\smallskip
\noindent
\textbf{\cite[Lemma~6.1 and Def.~6.2]{FeighnHandel:abelian}} \quad Given a twist path $w$ and distinct linear edges $E_i \ne E_j \in \lin_w$, a path of the form $E_i w^p \overline E_j$ is called a \emph{quasi-exceptional path} or \emph{QE-path} (it is an exceptional path if and only if the twist coefficients $d_i,d_j$ have the same sign). For every completely split path~$\sigma$, any QE-subpath $E_i w^p \overline E_j$ of $\sigma$ is a concatenation of terms of the complete splitting of $\sigma$: either $d_i,d_j$ have the same sign and $E_i w^p \overline E_j$ is an exceptional path and hence a single term; or the terms consist of the initial $E_i$, the terms of the complete splitting of $w^p$, and the terminal~$E_j$. No two QE-subpaths can overlap in an edge, and so there is a uniquely defined \emph{QE-splitting} of $\sigma$ which is obtained from the complete splitting by conglomerating each QE-subpath of $\sigma$ into a single term. 

Consider a twist path $w$. For each $E_i,E_j \in \lin_w$, the associated \emph{quasi-exceptional family} is the set of paths
$$E_i w^* \overline E_j = \{E_i w^p \overline E_j \suchthat p \in \Z\}
$$
Also, associated to $w$ itself is its \emph{linear family}, the set
$$\lin_w \,\, \union \!\!\!\!\!\! \bigcup_{\quad E_i \ne E_j \in \lin_w} \!\!\!\!\!\! E_i w^* \overline E_j
$$ 
Every quasi-exceptional path belongs to a unique quasi-exceptional family, and every \neg\ linear edge or quasi-exceptional path belongs to a unique linear family. 

\subsubsection{Almost invariant subgraphs} 

The intuition behind the ``almost invariant subgraphs'' of $G$ is that they are a partition of the collection of non-fixed strata determined by the following requirement: if one perturbs $f \from G \to G$ by letting its restrictions $f \restrict H_i$ to non-fixed strata be replaced by iterates $f^{a_i} \restrict H_i$ with varying integer exponents $a_i \ge 0$, and if one wishes to do this perturbation so that the resulting outer automorphisms commute with the outer automorphism represented by~$f$, then the exponents $a_i$ should be constant on the edges contained in each almost invariant subgraph.

\smallskip\noindent
\textbf{\cite[Def.~6.3]{FeighnHandel:abelian}} \quad
Define an equivalence relation on the non-fixed irreducible strata $\{H_i\}$ of $G$ as follows. Given $H_i$, a path $\mu$ is a \emph{short path for $H_i$} if $\mu$ an edge in $H_i$, or if $H_i$ is \eg\ and $\mu$ is a taken connecting path in a zero stratum enveloped by $H_i$. Define a relation amongst the non-fixed irreducible strata denoted $H_i \dg H_j$, meaning that there exists a short path $\mu$ for $H_i$ such that some term of the QE-splitting of $f_\#(\mu)$ is an edge of $H_j$; note that if $H_i \dg H_j \dg \cdots \dg H_k$ then $i \ge j \ge \cdots \ge k$. Let $B$ be the directed graph whose vertex set is the set of non-fixed irreducible strata, with a directed edge for each relation $H_i \dg H_j$. Let $B_1,\ldots, B_S$ denote the connected components of the graph~$B$. For each $s = 1,\ldots,S$ define the \emph{almost invariant subgraph} $X_s \subset G$ to be the union of the strata $H_i$ comprising the vertices of $B_s$, together with any zero stratum enveloped by one of these $H_i$. Note that the set of almost invariant subgraphs $\{X_1,\ldots,X_S\}$ partitions the set of the nonfixed strata.

\subsubsection{Admissible $S$-tuples; quasi-exceptional families} 
\label{SectionAdmissible}

\newcommand\aTup{{\bf{a}}}
\newcommand\bTup{{\bf{b}}}

\noindent
\textbf{\cite[Defn.~6.6, Lemma~6.7, Defn.~6.8]{FeighnHandel:abelian}} \quad For each $S$-tuple $\aTup =(a_1,\ldots, a_S)$ of non-negative integers define $f^\aTup : G \to G$ on each edge $E \subset G$ as follows:\footnote{In \abelian\ the notation $f_\aTup$ was used for what we here denote $f^\aTup$.} if $E \subset X_s$ for some $s$ then $f^\aTup(E) = f^{a_s}_\#(E)$; otherwise $E$ is fixed by $f$ and then $f^\aTup(E) = E$. Each such $f^\aTup$ is a homotopy equivalence representing an outer automorphism denoted~$\phi^\aTup$. By construction $f^\aTup$ preserves the given filtration~$G_1 \subset \cdots \subset G_u=G$.

One would like to arrange (among other things) that $(f^\aTup)^k_\#(E) = f_\#^{a_s k}(E)$ for all $k \ge 1$, all $s=1,\ldots,S$, and all edges $E \subset X_s$. This need not hold for general $S$-tuples, but it does hold when a certain relation amongst linear edges and QE-splitting terms is satisfied. 

Given an almost invariant subgraph~$X_r$ and linear edges $E_i$, $E_j$, we say that the triple $(X_r,E_i,E_j)$ is \emph{quasi-twist related} if there exists a stratum $H_k \subset X_r$, a short path $\mu$ for $H_k$, a twist path $w$, and an integer $p \ge 1$ such that $E_i, E_j \in \lin_w$, and such that $E_i w^p \overline E_j$ is a term in the QE-splitting of~$f_\#(\mu)$.

We say that an $S$-tuple $\aTup$ is \emph{admissible} if for all quasi-twist related triples $(X_r,E_i,E_j)$, letting $d_i,d_j$ be the twist coefficients of $E_i,E_j$ respectively, and letting $X_s,X_t$ be the almost invariant subgraphs containing $E_i,E_j$ respectively, the following ``twisting relation'' holds:
$$a_r(d_i - d_j) = a_s d_i - a_t d_j
$$

\smallskip
\noindent
\textbf{\cite[Notation~6.12]{\abelianTag}} \quad Consider an almost invariant graph $X_r$. For each pair of linear edges $E_i,E_j$ such that the triple $(X_r,E_i,E_j)$ is quasi-twist related, the associated \emph{quasi-exceptional family} is defined to be the set of all paths of the form $E_i w^* \overline E_j$. We let $\cQ_r$ denote the set of all quasi-exceptional families associated to quasi-twist related triples $(X_r,E_i,E_j)$. 

\smallskip
\noindent
\textbf{QE-equivalence of linear edges.} We say that a pair of linear edges $E_i,E_j$ is \emph{QE-related} if there exists an almost invariant subgraph $X_r$ such that $(X_r,E_i,E_j)$ is quasi-twist related. Equivalently $E_i,E_j$ are QE-related if and only if they are in the same linear family and, letting $w$ be the unique twist path for which $E_i, E_j \in \lin_w$, there exists~$r$ such that the family $E_i w^* \overline E_j$ is in the set~$\cQ_r$. 

The equivalence relation on linear edges generated by being QE-related is called \emph{QE-equivalence} and is written $\sim_{QE}$. Note that the QE-equivalence relation amongst linear edges is a refinement of the LIN-equivalence relation. 

Note that if an exceptional path $E_i w^p \overline E_j$ occurs as a term in the complete splitting of some iterate of some short path in $X_r$ then the quasi-exceptional family $E_i w^* \overline E_j$ is an element of $\cQ_r$ and the linear edges $E_i$ and $E_j$ are QE-related.

\subsubsection{$X_s$ paths.} 
\label{SectionXsPaths}
\textbf{\cite[Notation 6.12, Corollary 6.14]{FeighnHandel:abelian}} \quad
For each almost invariant subgraph $X_s$, we use the terminology \emph{$X_s$-paths} to refer to the subpaths of $G$ which form the elements of the set $\mathcal{P}_s$ that is defined in \cite[Notation 6.12]{FeighnHandel:abelian} --- namely, the completely split paths $\gamma$ such that each term of the QE-splitting of $\gamma$ is one of the following: 
\begin{enumerate}
\item a Nielsen path; or
\item a short path in a stratum contained in~$X_s$, for example any edge in $H^z_i$ for any \eg\ stratum $H_i \subset X_s$; or
\item a quasi-exceptional path in the family~$\cQ_s$. 
\end{enumerate}
%
%
Furthermore, for any admissible $S$-tuple $\aTup$, any almost invariant subgraph $X_s$, and any $X_s$ path~$\gamma$, we have the following:
\begin{enumeratecontinue}
\item\label{ItemXsPathIterates}
Each iterate $f^k_\#(\gamma)$ is also an $X_s$ path;
\item\label{ItemXsPathImages}
$f^\aTup_\#(\gamma) = f^{a_s}_\#(\gamma)$.
\end{enumeratecontinue}

\subsubsection{The disintegration subgroup $\D(f)$
}  
\label{SectionDisintSubgpDef}

Here we recall the definition of the disintegration subgroup $\D(f) \subgroup \Out(F_n)$. We also recall the ``admissible semigroup'' $\S(f)$, 
which we will use as an aid to understanding properties of $\D(f)$.

\paragraph{The subgroup $\D(f)$.} 
\textbf{\cite[Defn.~6.10, Cor.~6.16, Lem.~6.18, Cor.~3.13]{\abelianTag}} \quad This is the subgroup of $\Out(F_n)$ generated by the set of elements 
$$\{\phi^\aTup \suchthat \,\text{$\aTup$ is an admissible $S$-tuple}\}
$$
The subgroup $\D(f)$ is abelian and rotationless. 
The dependence of $\D(f)$ on $f$ rather than just $\phi$ was suppressed in \cite[Definition 6.10]{FeighnHandel:abelian} where the notation $\D(\phi)$ was used; see \cite[Example~6.11]{FeighnHandel:abelian}.

\paragraph{The admissible semigroup $\S(f)$.} \textbf{\cite[Corollary 7.6 and its proof]{FeighnHandel:abelian}} \break Let~$\S(f) \subset \Z^S$ denote the set of admissible $S$-tuples, which forms a sub-semigroup of~$\Z^S$. Let $\L(f) \subgroup \Z^S$ be the subgroup generated by~$\S(f)$. The map $\S(f) \mapsto \D(f)$ defined by $\aTup \mapsto \phi^\aTup$ is an injective semigroup homomorphism, and it extends to an isomorphism $\L(f) \mapsto \D(f)$. Every element of $\L(f)$ can be written as the difference $\aTup - \bTup$ of two elements $\aTup,\bTup \in \S(f)$, and so every $\psi \in \D(f)$ can be written in the form $\psi = (\phi^\bTup)^\inv \phi^\aTup$ for some $\aTup,\bTup \in \S(f)$.

We record here a simple consequence of these definitions for later use:

\begin{fact}\label{FactSharpHomomorphism}
The function which assigns to each $\aTup \in \S(f)$ the map $f^\aTup \from G \to G$ satisfies the following ``homomorphic'' property: 
$$f^\aTup_\# (f^\bTup_\#(E)) = f^{\aTup+\bTup}_\#(E) \quad \text{for each $\aTup,\bTup \in \S(f)$ and each edge $E \subset G$}
$$
\end{fact}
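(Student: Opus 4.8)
The plan is to prove Fact~\ref{FactSharpHomomorphism} directly from the semigroup structure established in the preceding paragraph, namely that $\aTup \mapsto \phi^\aTup$ is an injective semigroup homomorphism from $\S(f)$ to $\D(f)$, together with the defining formula for $f^\aTup$ on edges and the $X_s$-path properties \pref{ItemXsPathIterates}--\pref{ItemXsPathImages} of Section~\ref{SectionXsPaths}. The statement to be verified, $f^\aTup_\#(f^\bTup_\#(E)) = f^{\aTup+\bTup}_\#(E)$, is an equality of tightened edge paths, so it suffices to check it edge-by-edge according to the type of edge $E$.

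First I would dispose of the case that $E$ is a fixed edge of $f$ (i.e.\ $E$ is not contained in any $X_s$): then $f^\bTup(E) = E$ by definition, $f^\aTup_\#(E) = E$ likewise, and $f^{\aTup+\bTup}_\#(E) = E$, so both sides equal $E$. Next, suppose $E \subset X_s$ for some $s$. By the defining formula, $f^\bTup_\#(E) = f^{b_s}_\#(E)$, and since $E$ is a short path for a stratum in $X_s$ it is in particular an $X_s$-path; by property \pref{ItemXsPathIterates} each iterate $f^{b_s}_\#(E)$ is again an $X_s$-path. Now I apply property \pref{ItemXsPathImages} with the $X_s$-path $\gamma = f^{b_s}_\#(E)$ and the admissible tuple $\aTup$ (here using that $\S(f)$ is a semigroup, so $\aTup$ and $\aTup+\bTup$ are both admissible) to get $f^\aTup_\#\bigl(f^{b_s}_\#(E)\bigr) = f^{a_s}_\#\bigl(f^{b_s}_\#(E)\bigr) = f^{a_s+b_s}_\#(E)$. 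On the other hand $f^{\aTup+\bTup}_\#(E) = f^{a_s+b_s}_\#(E)$ directly from the defining formula applied to the tuple $\aTup+\bTup$. Comparing, both sides equal $f^{a_s+b_s}_\#(E)$, which completes the edge-by-edge check and hence the proof.

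I do not expect any serious obstacle here; the only subtlety is making sure the hypotheses of property \pref{ItemXsPathImages} are genuinely met at the second application, i.e.\ that $f^{b_s}_\#(E)$ is literally an $X_s$-path (supplied by \pref{ItemXsPathIterates}) and that $\aTup$ is admissible (it is, since $\aTup \in \S(f)$). One should also note that the equality is to be read as an equality of paths after tightening, which is consistent with the notation $f^\aTup_\#$ throughout; no additional cancellation argument is needed because \pref{ItemXsPathImages} already asserts equality of the tightened images. An alternative, even shorter route would be to observe that $\phi^\aTup \phi^\bTup = \phi^{\aTup+\bTup}$ as outer automorphisms (semigroup homomorphism property), lift to automorphisms and hence to the homotopy equivalences, and then invoke uniqueness of the tightened representative on each edge; but the direct edge-by-edge verification above is cleaner and avoids worrying about basepoints and inner-automorphism ambiguities, so that is the route I would write up.
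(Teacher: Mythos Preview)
Your proof is correct and follows essentially the same approach as the paper's own proof: split into the fixed-edge case (both sides equal $E$) and the case $E \subset X_s$, then use that $E$ is an $X_s$-path, that $f^{b_s}_\#(E)$ remains an $X_s$-path by \pref{ItemXsPathIterates}, and apply \pref{ItemXsPathImages} to conclude both sides equal $f^{a_s+b_s}_\#(E)$. The paper's version is simply a more compressed one-sentence rendering of the same argument.
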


\begin{proof}
If $E$ is a fixed edge of $f$ then both sides equal~$E$. Otherwise $E$ is an edge in some almost invariant graph $X_s$, hence $E$ is an $X_s$-path, hence $f^\bTup_\#(E) = (f^{b_s})_\#(E)$ is an $X_s$ path, hence both sides equal $(f^{a_s+b_s})_\#(E)$.
\end{proof}

\bigskip

We repeat here the theorem cited in Section~\ref{SectionMultiEdgeIntro}:

\begin{disintegrationtheorem}[\protect{\cite[Theorem 7.2]{\abelianTag}}]
For every rotationless abelian subgroup $\cH \subgroup \Out(F_n)$ there exists $\phi \in \cH$ such that for every \ct\ $f \from G \to G$ representing $\phi$, the intersection $\cH \intersect \D(f)$ has finite index in $\cH$.
\end{disintegrationtheorem}

\noindent\emph{Remark.} In this statement of the Disintegration Theorem we have made explicit the fact that \cite[Theorem 7.2]{\abelianTag} holds for any choice of \ct\ representing $\phi$. This was already implicit in the notation $\D(\phi)$ used for disintegration groups in \cite[Definition 6.10]{\abelianTag} where any representative \ct\ is allowed.

\subsubsection{The coordinate homomorphism of a disintegration group $\D(f)$} 
\label{SectionCoordHomDf}
The disintegration group $\D(f)$, being rotationless and abelian, has an injective coordinate homomorphism $\Omega \from \D(f) \mapsto \Z^N$ as defined at the end of Section~\ref{SectionRotationlessAbelian}. The individual coordinate functions of $\Omega$ are the comparison homomorphisms  and the expansion factor homomorphisms of the abelian group $\D(f)$. 

We review here how one may use the structure of the \ct\ $f$ to sift through the coordinate functions of $\Omega$ --- keeping a subset of the comparison homomorphisms, and keeping a normalized version of each expansion factor homomorphism --- to obtain a homomorphism denoted $\Omega^f \from \D(f) \to \Z^\I$ which is still injective. We will \emph{not} normalize comparison homomorphisms, because it would screw up the ``twisting relation'' described in Section~\ref{SectionAdmissible}.

\medskip
\noindent\textbf{\cite[Lemmas~7.4 and 7.5 and preceding paragraphs]{\abelianTag}.} \, \break
Let $\I = \{i \suchthat \, \text{$H_i$ is either \neg\ linear or \eg}\}$. The homomorphism $\Omega^f \from \D(f) \to \Z^\I$ has coordinate functions denoted $\omega_i \from \D(f) \to \Z$ for each $i \in \I$. For each $i \in \I$, the function $\omega_i$ is characterized in one of two ways, depending on whether $H_i$  is \neg\ linear or \eg. Let $X_s$ be the almost invariant subgraph containing~$H_i$. 
\begin{description}
\item[\neg\ linear coordinate homomorphism (a component of $\Omega$ and of $\Omega^f$):] \quad\break If $H_i = E_i$ is \neg\ linear, with twist path $w$ and twist coefficient $d_i$ satisfying $f(E_i) = E_i w^{d_i}$, then for each admissible $S$-tuple~$\aTup$ we set $\omega_i(\phi^\aTup) = a_s d_i$.
\end{description}
Consider also the difference homomorphisms $\omega_{i,j} \from \D(f) \to \Z$, one for each LIN-equivalent pair of linear edges $E_i,E_j$, defined by:
$$\omega_{i,j} = \omega_i - \omega_j
$$
As shown in \cite[Section 7]{\abelianTag} just following Theorem 7.2, each of these difference homomorphisms is a comparison homomorphism on $\D(f)$ (Section~\ref{SectionRotationlessAbelian}), and furthermore the set of comparison homomorphisms on $\D(f)$ is exactly the set of functions consisting of the \neg\ linear coordinate homomorphisms $\omega_i$, their additive inverses $-\omega_i$, and their differences $\omega_{i,j} = \omega_i - \omega_j$ for LIN-equivalent linear edges $E_i,E_j$.

\begin{description}
\item[Unnormalized \eg\ coordinate homomorphism (a component of $\Omega$):] \quad\break If $H_i$ is \eg\ with associated attracting lamination $\Lambda_i \in \L(\phi)$ then for each admissible $S$-tuple $\aTup$ we set $\omega_i(\phi^\aTup) = \PF_{\Lambda_i} (\phi^\aTup)$. 
\end{description}
As alluded to in Section~\ref{SectionPrincipal}, this definition makes sense because $\D(f) \subgroup \Stab(\Lambda_i)$.

\smallskip

In both the \neg\ linear case and the \eg\ case, for each admissible $S$-tuple $\aTup$ the following equation holds  \cite[Lemma 7.5]{\abelianTag}:
$$\omega_i(\phi^{\aTup}) = a_s \omega_i(\phi)
$$

Noting that the constant sequence $\aTup = (1,1,\ldots,1)$ is admissible and satisfies the equation $\phi^{\aTup} = \phi$, it follows that the subgroup $\image(\omega_i) \subgroup \Z$ is generated by the integer $\omega_i(\phi)$. If $H_i$ is \eg\ then we normalize the function $\omega_i$ by dividing it by the integer $\omega_i(\phi)$; the preceding equation still holds. After this normalization we have:
\begin{description}
\item[Normalized \eg\ coordinate homomorphism (a component of $\Omega^f$):] \quad\break If $H_i$ is \eg\ then, with notation as above, we have
$$\omega_i(\phi^\aTup) = a_s
$$
and in particular $\omega_i(\phi) = 1$. It follows that if $\lambda_i$ is the Perron-Frobenius eigenvalue of the transition matrix of $f$ on $H_i$, then the expansion factor of $\phi^\aTup$ on the lamination $\Lambda_i$ is equal to $\lambda_i^{\omega_i(\phi^\aTup)}  = \lambda_i^{a_s}$.
\end{description}
%
%
%
%
%



For later reference we summarize the properties of $\Omega^f \from \D(f) \to \Z^\I$ as follows:

\smallskip\noindent\textbf{$\bullet$} The homomorphism $\Omega^f \from \D(f) \to \Z^\I$ is injective.

\smallskip\noindent\textbf{$\bullet$}  For each $i \in \I$ and each admissible $S$-tuple~$\aTup$ we have $\omega_i(\phi^\aTup) = a_s \omega_i(\phi)$.

\smallskip\noindent\textbf{$\bullet$} The coordinate function $\omega_i$ of $\Omega^f$ associated to an \eg\ stratum $H_i$ is the normalized version of $\PF_{\Lambda_i}$ satisfying $\omega_i(\phi)=1$.

\section{A train track semigroup action}
\label{SectionTTSemiAction}

Throughout this section we continue to adopt Notations~\ref{NotationsFlaring} regarding a rotationless $\phi \in \Out(F_n)$ with a  \ct\ representative $f \from G \to G$ whose top stratum $H_u$ is \eg\ with associated attracting lamination $\Lambda$. 

Consider the disintegration group $\D=\D(f)$, reviewed in Section~\ref{SectionDisintegration}. Recall that each element of $\D$ stabilizes $\Lambda$. We shall focus on the subsemigroup $\D_+ \subgroup \D$ consisting of all $\psi \in \D$ for which the value of top coordinate homomorphism $\omega(\psi)$ is non-negative. The value of $\omega(\psi)$ is a logarithm of the asymptotic stretch factor of $\psi$ on $\Lambda$, hence for $\omega(\psi)$ to be non-negative means that either $\psi$ does not stretch $\Lambda$ or $\Lambda$ is an attracting lamination of $\psi$.

Using the theory of disintegration groups, for each $\psi \in \D_+$ we construct a topological representative $f^\psi \from G \to G$ whose action on the edges of $H_u$ agrees with the action of the appropriate iterate of the \ct\ $f \from G \to G$, namely~$f^{\omega(\psi)}_\#$. Then, by lifting these topological representatives to the universal cover~$\wt G$, projecting to the tree~$T$, and extending to the coned off graph~$T^*$, we obtain semigroup actions of $\D_+$ on $T$ and on $T^*$, detailed properties of which are described in Section~\ref{SectionActionOnT}. An important feature of the action $\D_+ \act T$ is that it is \emph{not} an isometric action, in fact the action of each $\Psi \in \D_+$ will stretch lengths in $T$ by a uniform amount depending on the appropriate value of the coordinate homomorphism~$\omega$ (see Section~\ref{SectionActionOnT}~\pref{Item_f_PsiEdges}). On the other hand, by restricting the semigroup actions $\D_+ \act T,T^*$ to the subgroup $\D_0$ we obtain true isometric actions $\D_0 \act T,T^*$, some properties of which are studied in Section~\ref{SectionTStarDynamics}. 

The actions constructed in this section will be the basis for the construction in Section~\ref{SectionSuspension} of an isometric group action of $\D$ on the hyperbolic suspension complex~$\S$.

\medskip

For use throughout Sections~\ref{SectionTTSemiAction} and Section~\ref{SectionSuspension} we establish some further notations.


\begin{notations} 
\label{NotationsActionOnT}
Given a rotationless $\phi \in \Out(F_n)$ with \ct\ representative $f \from G \to G$ having associated filtration $G_1 \subset \cdots \subset G_u$ with its top stratum $H_u$ being \eg, and with all associated objects and notations as described in Notations~\ref{NotationsFlaring}, we have the following additional objects and notations:
\begin{enumerate}
\item \label{ItemHzu}
[Section~\ref{SectionMoreCTs}, ``Properties of zero strata''] \,
$H^z_u$ is the union of $H_u$ with all zero strata enveloped by~$H_u$.
For some $t \le u-1$ this set of zero strata has the form $\{H_i \suchthat t+1 \le i \le u-1\}$ (which is empty if $t=u-1$), where 
\begin{enumerate}
\item $H_t$ is the highest irreducible stratum below $H_u$, 
\item $G_t$ is the union of noncontractible components of $G_{u-1}$, the contractible components being the zero strata $H_i$, $t+1 \le i \le u-1$. 
\end{enumerate}
\item \label{ItemDefinitionOfT}
[Definitions~\ref{DefOfT}, \ref{DefNielsenSet}] \, 
$F_n \act T$ denotes the free splitting corresponding to the marked graph pair $(G,G_{u-1})$, with associated Nielsen set $\{N_j\}_{j \in J}$, each $N_j$ having basepoint set $Z_j \subset N_j$. Also, $F_n \act T^*$ denotes the action obtained from $F_n \act T$ by coning off each basepoint set $Z_j$ to a cone point $P_j$ with edges $\overline{P_n Q}$~($Q \in Z_j$).
\item \label{ItemLiftChoices}
[Definition~\ref{DefLiftingTTMap}] \,
Associated to each automorphism $\Phi \in \Aut(F_n)$ representing $\phi$ are unique $\Phi$-twisted equivariant maps as follows: 
\begin{enumerate}
\item $\ti f^\Phi  \from \wt G \to \wt G$, a lift of $f$; 
\item\label{ItemFSubPhi} $f^\Phi_T \from T \to T$, induced by $\ti f^\Phi$ with respect to the collapse map $\wt G \mapsto T$.
\item $f_{T^*} \from T^* \to T^*$, an extension of $f^\Phi_T$ that permutes cone points and cone edges.
\end{enumerate}
The maps $f^\Phi_T$, $f^\Phi_{T^*}$ satisfy the ``Stretch properties'' recorded in Section~\ref{SectionTStar}. 
\item \label{ItemDisintGroup}
The disintegration group of $f$ is denoted $\D = \D(f)$. Its full pre-image in $\Aut(F_n)$ is denoted $\wh\D$ and is called the \emph{extended disintegration group}. 
\item Associated to the top \eg\ stratum $H_u$ we have the following objects: 
\begin{enumerate}
\item The almost invariant subgraph $X_s \subset G$ containing $H^z_u$; 
\item\label{ItemCoordHomDf}
 (Section~\ref{SectionCoordHomDf}) The coordinate homomorphism $\omega \from \D \to \Z$ associated to~$X_s$, a scaled copy of the expansion factor homomorphism $\PF_\Lambda$ that is normalized so that
$$\omega(\phi)=1
$$
In particular, $\omega$ is surjective.
\item The \emph{lifted coordinate homomorphism} $\wh\omega \from \wh\D \to \Z$, obtained by pre-composing $\omega$ with the projection map $\wh\D \mapsto\D$.
\item The kernels of these homomorphisms denoted
$$\D_0 = \ker(\omega) \qquad \wh\D_0 = \ker(\wh\omega)
$$
\end{enumerate}
We thus have a commutative diagram with short exact rows:
$$\xymatrix{
1 \ar[r] & \ker(\wh\omega) = \wh\D_0 \ar[r] 
   & \wh\D \ar[r]^{\wh\omega} \ar@{=}[d] 
   & \Z \ar[r] & 1 \\
1 \ar[r] & F_n \approx \Inn(F_n) \ar[r] \ar@{=}[d] \ar[u]_{\subset}
           & \wh\D \ar[r] \ar[d]^{\subset} 
           & \D \ar[r] \ar[d]^{\subset} \ar[u]_{\omega}
           &1 \\
1 \ar[r] & F_n \approx \Inn(F_n) \ar[r] & \Aut(F_n) \ar[r] & \Out(F_n) \ar[r] & 1 \\
}$$
\item\label{ItemSubsemigroups}
Letting $[0,\infty) = \{0,1,2,\ldots\}$ we have subsemigroups $D_+$ and $\wh\D_+$ and inclusions as follows:
$$\D_0 = \omega^\inv(0) \,\, \subgroup \,\, \D_+ = \omega^\inv[0,\infty) \,\, \subgroup \,\, \D 
$$
$$\wh\D_0 = \wh\omega^\inv(0) \,\,\subgroup\,\, \wh\D_+ = \wh\omega^\inv[0,\infty) \,\,\subgroup\,\, \wh\D
$$
\end{enumerate}
\end{notations}

%
\subsection{A ``homotopy semigroup action'' of $\D_+$ on $G$.} 
\label{SectionHomotopyActionOnG}

To prepare for the construction of the semigroup action $\wh\D_+ \act T$, in this section we work downstairs in~$G$ and construct a ``homotopy semigroup action'' of~$\D_+$ on~$G$. What this means will be clear from the construction, but the intuitive idea is we associate a topological representative $f^\psi \from G \to G$ to each $\psi \in \D_+$ so that although the action equation $f^\psi \circ f^{\psi'} = f^{\psi\psi'}$ does not hold exactly, it does hold ``up to homotopy relative to~$G_t$''. The values of $f^\psi$ on edges of $H^z_n$ are determined by appropriate iterates of $f$ itself, and the values on $G_t$ are determined, up to homotopy, by the ``graph homotopy principle'' of Section~\ref{SectionGraphHomotopy}.

\renewcommand\Vert{\text{Vert}}

Letting $\Vert(G)$ denote the set of vertices of $G$, we define subsets
$$P \subset Fix \subset V \subset \Vert(G)
$$
as follows. First, $V = \Vert(G) \intersect H_u = \Vert(G) \intersect H^z_u$, the latter equation holding because each edge $E \subset H^z_u \setminus H_u$ has both endpoints in $H_u$ \cite[Definition 4.10 (Zero Strata)]{\recognitionTag}. Next, $P = H^z_u \intersect G_t = H_u \intersect G_t$; note that $P \subset \Vert(G)$ because $H^z_u$ and $G_t$ are subgraphs having no edges in common; also $P$ is the frontier both of $H^z_u$ and of $G_t$ in $G = H^z_u \union G_t$. Finally, $Fix$ denotes the set of points in $V$ fixed by $f$, and the inclusion $P \subset Fix$ follows from \cite[Remark 4.9]{\recognitionTag}. 

\smallskip

Here is a summary of the construction of the ``homotopy semigroup action'': 
%
%
%
\begin{enumerate}
\item\label{ItemOnHzN}
For each $\psi \in \D_+$ we define a topological representative $f^\psi \from G \to G$ such that the following hold:
\begin{enumerate}
\item\label{ItemOnIdentity}
We have $f^{\Id_{\D_+}} = \Id_G$ (where $\Id_{\D_+} \in \D_+$ denotes the identity group element, and $\Id_G \from G \to G$ the identity map).
\item\label{ItemOnGNMinusOne} $f^\psi(G_{u-1}) \subset G_{u-1}$ and $f^\psi(G_t) \subset G_t$.
\item\label{ItemOnE}
For each edge $E \subset H^z_u$ we have $f^\psi_{\vphantom\#}(E) = f^{\omega(\psi)}_\#(E)$. In particular, if $\psi \in \D_0$ then $f^\psi \restrict H^z_u$ is the identity.
\item\label{ItemOnFrontier} $f^\psi \restrict V = f^{\omega(\psi)} \restrict V$. In particular $f^\psi(V) \subset V$ and $f^\psi$ fixes each point of $Fix$ and of its subset $P$.
\item\label{ItemOnNielsen} If a height~$u$ indivisible Nielsen path $\rho$ exists then $f^\psi$ fixes each endpoint of $\rho$ and $f^\psi_\#(\rho)=\rho$.
\end{enumerate}
\item\label{ItemSemiUnique}
For each $\psi,\psi' \in \D_+$ we define a homotopy $h^{\psi,\psi'} \from G \times [0,1] \to G$ between $f^\psi \composed f^{\psi'}$ and $f^{\psi\psi'}$ such that the following hold:
\begin{enumerate}
\item\label{ItemPsiPsiPrimeEEquation}
For each edge $E \subset H_u$ the homotopy $h^{\psi,\psi'}$ straightens the edge path \break $f^\psi \composed f^{\psi'} \restrict E$ relative to its endpoints to form the path $f^{\psi\psi'} \restrict E$ by cancelling only edges of $G_{u-1}$, without ever cancelling any edges of $H_u$.
\item\label{ItemVNStationary}
$h^{\psi,\psi'}$ is stationary on $V$. 
\item\label{ItemHomotopyPreservesGNMinusOne}
$h^{\psi,\psi'}$ preserves $G_t$.
\end{enumerate}
\end{enumerate}
Recall that for a homotopy $h \from A \times [0,1] \to A$ to ``preserve'' a subset $B \subset A$ means that $h(B \times [0,1]) \subset B$, and for $h$ to be ``stationary'' on $B$ means that $h(x \times [0,1])$ is constant for each $x \in B$.

First we will construct the maps $f^\psi$ and then the homotopies $h^{\psi,\psi'}$, along the way proving the various requirements of~\pref{ItemOnHzN} and~\pref{ItemSemiUnique}.

\smallskip\textbf{Constructing $f^\psi$.} We being with the construction of $f^\psi_t = f^\psi \restrict G_t \from G_t \to G_t$ by applying the graph homotopy principle Lemma~\ref{LemmaGraphHomotopy}.
Recall from Section~\ref{SectionDisintSubgpDef} the abelian semigroup of admissible $S$-tuples $\S(f)$. Recall also the injection $\S(f) \approx \L(f) \subgroup \D$ defined by $\aTup \mapsto \phi^\aTup$ with topological representative $f^\aTup \from G \to G$; this injection is a semigroup homomorphism whose image $\L(f)$ generates~$\D$. 
Since the restricted map $f^\aTup_t = f^\aTup \restrict G_t \from G_t \to G_t$ is a homotopy equivalence that fixes each point of $P$, by Lemma~\ref{LemmaGraphHomotopy}~\pref{ItemGPHomInvExists} the map of pairs $f^\aTup_t \from (G_t,P) \to (G_t,P)$ is a homotopy equivalence in the category of pairs, and hence its homotopy class rel~$P$ is an element $[f^\aTup_t]$ of $\Aut^\gp_0(G_t,P)$, the pure automorphism group of $(G_t,P)$ in the graph point category (Lemma~\ref{LemmaGraphHomotopy}~\pref{ItemGPAut}). By Fact~\ref{FactSharpHomomorphism}, for each $\aTup,\bTup \in \S(f)$ the maps $f^\aTup_t \circ f^\bTup_t$ and $f^{\aTup+\bTup}_t \from (G_t,P) \to (G_t,P)$ are homotopic rel~$P$, and so the map $\A \from \L(f) \mapsto \Aut^\gp_0(G_t,P)$ defined by $\A(\phi^\aTup) = [f^\aTup_t]$ is a semigroup homomorphism. Since the commutative group $\D$ is generated by its subsemigroup $\L(f)$, a simple semigroup argument shows that $\A$ extends uniquely to a group homomorphism $\A \from \D \to \Aut^\gp_0(G_t,P)$. For each $\psi \in \D$ choose $f^\psi_t \from (G_t,P) \to (G_t,P)$ to be a representative of $\A(\psi)$; if $\psi = \phi^\aTup$ is already in $\L(f)$ then we choose $f^\psi_t=f^\aTup_t$, and so if $\psi$ is the identity then $f^\psi_t$ is the identity. Notice that no straightening is carried out when $f^\psi_t$ is applied, and so there is no need for the ``$\#$'' operator in the definition of~$f^\psi_t$.

For later use, for each $\aTup \in \S(f)$ we denote $\bar f^\aTup_t = f^{(\phi^\aTup)^\inv}_t \from (G_t,P) \to (G_t,P)$, which is the homotopy inverse of $f^\aTup_t$ that represents $\A((\phi^\aTup)^\inv) = \A(\phi^\aTup)^\inv \in \Aut^\gp_0(G_t,P)$.

For each $\psi \in \D_+$ we may now define $f^\psi \from G \to G$ as follows: 
\begin{align*}
f^\psi(E) &= f^{\omega(\psi)}_\#(E) \quad\text{for each $E \subset H^z_u$, and} \\
f^\psi \restrict G_t & = f^\psi_t
\end{align*}
If $\psi \in \D$ is the identity then clearly $f^\psi \from G \to G$ is the identity, verifying~\pref{ItemOnIdentity}. By construction $f^\psi$ satisfies~\pref{ItemOnGNMinusOne} and~\pref{ItemOnE}. For item~\pref{ItemOnFrontier}, note first that for each $x \in V$ there exists an oriented edge $E \subset H_u$ with initial vertex $x$, and for each such $E$ the initial vertex of the path $f^\psi(E) = f^{\omega(\psi)}_\#(E)$ equals $f^{\omega(\psi)}(x)$; and if in addition $x \in P$ then both of $f^{\omega(\psi)}$ and $f^\psi_t$ fix $x$. This shows that $f^\psi$ is well-defined on $V$, and that it restricts to $f^{\omega(\psi)}$ on $V$, completing the proof of item~\pref{ItemOnFrontier}; it also follows that $f^\psi$ is continuous. 

The proof that $f^\psi$ is a homotopy equivalence and is a topological representative of $\psi$ will be delayed to the very end.

\smallskip\textbf{$X_s$-paths under $f^\psi$.} Item~\pref{ItemOnNielsen} is encompassed in the following generalization of~\pref{ItemOnE}, which will also be used for item~\pref{ItemSemiUnique}:
\begin{enumeratecontinue}
\item\label{Item_f_psiOnXsPath} For each $\psi \in \D_+$ and each $X_s$-path $\gamma$ with endpoints in $V$ we have \hbox{$f^\psi_\#(\gamma) = f^{\omega(\psi)}_\#(\gamma)$.}
\end{enumeratecontinue}
To see why this holds, the general $X_s$-path $\gamma$ with endpoints in $V$ is a concatenation of three special types, and it suffices to check~\pref{Item_f_psiOnXsPath} only for these types: 
\begin{description}
\item[Type (a):] edges in $H^z_u$; 
\item[Type (b):] $X_s$-paths in $G_t$ having endpoints in the set $P$. 
\item[Type (c):] a height~$u$ indivisible Nielsen path of $f$;
\end{description}
Type~(a) is the special case handled already in item~\pref{ItemOnE}. 

If $\gamma$ is of type~(b), first note that for $\aTup \in \S(f)$ and $\psi = \phi^\aTup \in \L(f)$ we have 
$$f^\psi_\#(\gamma) = (f^\psi_t)_\#(\gamma)=(f^\aTup_t)_\#(\gamma)=f^\aTup_\#(\gamma)=f^{a_s}_\#(\gamma) = f^{\omega(\psi)}_\#(\gamma)
$$
where the second-to-last equation follows from Section~\ref{SectionXsPaths}~\pref{ItemXsPathImages}. For more general $\psi \in \D_+$, choose $\aTup,\bTup \in \S(f)$ so that $\psi = (\phi^{b})^\inv \phi^\aTup$, and note that $a_s - b_s = \omega(\phi^\aTup) - \omega(\phi^\bTup) = \omega(\psi) \ge 0$. In the group $\Aut^\gp(G_t,P)$ we have the equation $[f^\psi_t] = [\bar f^\bTup_t] \, [f^\aTup_t]$ and so we may calculate
%
\begin{align*}
f^\psi_\#(\gamma)  = (f^\psi_t)_\#(\gamma) &\,=\, (\bar f^\bTup_t \composed f^\aTup_t)_\#(\gamma) \,=\, (\bar f^\bTup_t)_\#((f^\aTup_t)_\#(\gamma)) \\
&\,=\, (\bar f^\bTup_t)_\#(f^\aTup_\#(\gamma)) \,=\, (\bar f^\bTup_t)_\#(f^{a_s}_\#(\gamma)) \\
&\,=\, (\bar f^\bTup_t)_\#(f^{b_s}_\#(f^{a_s-b_s}_\#(\gamma))) \,=\, (\bar f^\bTup_t)_\#(f^\bTup_\#(f^{\omega(\psi)}_\#(\gamma))) \\
&\,=\, (\bar f^\bTup_t)_\#((f^\bTup_t)_\#(f^{\omega(\psi)}_\#(\gamma))) \,=\, (\bar f^\bTup_t \composed f^\bTup_t)_\#(f^{\omega(\psi)}_\#(\gamma)) \\
&\,=\, f^{\omega(\psi)}_\#(\gamma)
\end{align*}
where the second equations of the second and third lines follow from Section~\ref{SectionXsPaths} \pref{ItemXsPathIterates} and~\pref{ItemXsPathImages}.

For type~(c), let $\gamma$ be a height $u$ indivisible Nielsen path of $f$. We may write $\gamma$ as an alternating concatenation of nontrivial paths of the form
$$\gamma = \eta_0 \, \mu_1 \, \eta_1 \, \cdots \, \eta_{K-1} \, \mu_K \, \eta_K
$$
where each $\eta_k$ is a path in $H^z_u$ and each $\mu_k$ is a Nielsen path of $f$ in $G_t$ with endpoints in $P$ \cite[Lemma 4.24]{\recognitionTag}. By definition of $f^\psi$ we have
$$f^\psi_\#(\gamma) = [f^{\omega(\psi)}_\#(E_1) \,\, (f^\psi_t)_\#(\mu_1) \, f^{\omega(\psi)}_\#(\eta_1) \,\, \cdots \,\, f^{\omega(\psi)}_\#(\eta_{K-1}) \,\, (f^\psi_t)_\#(\mu_K) \,\, f^{\omega(\psi)}_\#(\eta_K)]
$$
We claim that each $\mu_k$ is a Nielsen path of $f^\psi_t$ for each $\psi \in D$, and so $(f^\psi_t)_\#(\mu_k)=\mu_k$. To prove this claim, it holds if $\psi = \phi^\aTup \in \L(f)$ for some $\aTup \in \S(f)$, because in that case the left hand side equals $(f^\aTup)_\#(\mu_k) = \mu_k$. Using that $[\bar f^\psi_t] = [f^\psi_t]^\inv \in \Aut^\gp(G,P)$, we have $\mu_k = (\bar f^\psi_t)_\#(f^\psi_t(\mu_k)) = (\bar f^\psi_t)_\#(\mu_k)$, and so the claim holds if $\psi = (\phi^\aTup)^\inv$ for some $\aTup \in \S(f)$. The general case holds because $\psi = (\phi^\bTup)^\inv \phi^\aTup$ for some $\aTup,\bTup \in \S(f)$. Applying the claim we have:
\begin{align*}
f^\psi_\#(\gamma) &= [f^{\omega(\psi)}_\#(E_1) \,\, \mu_1 \, f^{\omega(\psi)}_\#(\eta_1) \,\, \cdots \,\, f^{\omega(\psi)}_\#(\eta_{K-1}) \,\, \mu_K \,\, f^{\omega(\psi)}_\#(\eta_K)] \\ 
&= [f^{\omega(\psi)}_\#(E_1) \,\, f^{\omega(\psi)}_\#(\mu_1) \, f^{\omega(\psi)}_\#(\eta_1) \,\, \cdots \,\, f^{\omega(\psi)}_\#(\eta_{K-1}) \,\, f^{\omega(\psi)}_\#(\mu_K) \,\, f^{\omega(\psi)}_\#(\eta_K)] \\
 &= f^{\omega(\psi)}_\#(\gamma)
\end{align*}
This complete the proof of~\pref{Item_f_psiOnXsPath}. 

This also completes the construction of $f^\psi \from G \to G$ and the proof of~\pref{ItemOnHzN} for each $\psi \in \D_+$, \emph{except} that we will delay further the proof that $f^\psi$ is a homotopy equivalence and a topological representative of $\psi$.

\smallskip\textbf{Constructing $h^{\psi,\psi'}$.} Given $\psi,\psi' \in \D_+$ we turn to the construction of the homotopy $h^{\psi,\psi'} \from G \times [0,1] \to G$ from $f^\psi \circ f^{\psi'}$ to $f^{\psi\psi'}$. Let $\theta = \psi\psi' \in \D_+$. First, using the homomorphism $\A \from \D \to \Aut^\gp_0(G,P)$ we have $\A(\psi\psi')=\A(\psi)\A(\psi')$ which translates to $[f^{\psi\psi'}_t] = [f^\psi_t] \, [f^{\psi'}_t]$ and so there exists a homotopy rel $P$ from $f^\psi_t \circ f^{\psi'}_t$ to $f^{\psi\psi'}_t$ denoted $h^{\psi,\psi'}_t \from G_t \times [0,1] \to G_t$. Second, for each $E \subset H^z_u$ we have 
$$f^{\vphantom\prime\psi}_\# (f^{\psi'}_\#(E)) = f^\psi_\#(f^{\omega(\psi')}_\#(E)) = f^{\omega(\psi)}_\#(f^{\omega(\psi')}_\#(E)) = f^{\omega(\psi\psi')}_\#(E) = f^{\psi\psi'}_\#(E)
$$
where the second equation holds by applying~\pref{Item_f_psiOnXsPath} together with the fact that $f^{\omega(\psi')}_\#(E)$ is an $X_s$-path (Section~\ref{SectionXsPaths}~\pref{ItemXsPathIterates}). Putting these together, we may define $h^{\psi,\psi'}$ so that for each edge $E \subset H^z_u$ its restriction to $E \times [0,1]$ is a homotopy rel endpoints from $f^{\vphantom\prime\psi} \circ f^{\psi'} \restrict E$ to $f^{\psi\psi'} \restrict E$, and its restriction to $G_t \times[0,1]$ is equal to $h^{\psi,\psi'}_t$. Items \pref{ItemVNStationary} and \pref{ItemHomotopyPreservesGNMinusOne} are evident from the construction. Item~\pref{ItemPsiPsiPrimeEEquation} follows the definition of a relative train track map, which tells us that for each edge $E \subset H_u$ and each integer $i \ge 0$ the path $f^i_\#(E)$ is $u$-legal, and that for each $u$-legal path $\gamma$ the homotopy that straightenes $f^i(\gamma)$ to produce $f^i_\#(\gamma)$ cancels no edges of $H_u$. 

\medskip\textbf{Topological representatives.} For each $\psi \in \D_+$, since $\L(f)$ generates $\D$ we may choose $\aTup,\bTup \in \S(f)$ so that $\psi = (\phi^\bTup)^\inv \phi^\aTup$, and hence $\phi^\bTup \psi = \phi^\aTup$. Since all three of $\phi^\bTup$, $\psi$, $\phi^\aTup$ are in $\D_+$, we have a homotopy $h^{\phi^\bTup,\psi}$ from $f^\bTup \circ f^\psi$ to $f^\aTup$. Since $f^\aTup,f^\bTup$ are homotopy equivalences it follows that $f^\psi$ is also, and since $f^\aTup,f^\bTup$ are topological representatives of $\phi^\aTup,\phi^\bTup$ respectively it follows that $f^\psi$ is a topological representative of $(\phi^\bTup)^\inv \phi^\aTup=\psi$.

\subsection{Semigroup actions of $\wh\D_+$ on $T$ and $T^*$}
\label{SectionActionOnT}
We turn now to the construction of the action $\wh\D_+ \act T$, deriving various properties of the construction, and then we extend the action to $\wh\D_+ \act T^*$. 

Associated to each $\Psi \in \wh \D_+$ we define a map $f^\Psi \from T \to T$ by carrying out the following steps. First let $\psi \in \D_+$ be the image of $\Psi$ under the homomorphism $\Aut(F_n) \mapsto \Out(F_n)$, and consider the map $f^\psi \from (G,G_{u-1}) \to (G,G_{u-1})$, part of the ``homotopy semigroup action'' constructed in Section~\ref{SectionHomotopyActionOnG}. Let $\ti f^\Psi \from (\wt G,\wt G_{u-1}) \to (\wt G,\wt G_{u-1})$ be the unique $\Psi$-twisted equivariant lift of $f^\psi$ to $\wt G$. Let $f^\Psi \from T \to T$ be the $\Psi$-twisted equivariant map induced from $\ti f^\Psi$ by collapsing each component of $\wt G_{u-1}$ to a point and then straightening on each edge $E \subset T$ so that $f^\Psi \restrict E$ stretches length by a constant factor. 

We record several properties of this semigroup action:
\begin{enumerate}
\item \emph{Twisted equivariance:}
\label{Item_f_Twisted}
The map $f^\Psi$ is $\Psi$-twisted equivariant. \qed
\item \emph{Semigroup action property:}
\label{Item_f_PsiComp}
$f^\Psi \composed f^{\Psi'} = f^{\Psi\Psi'}$ for all $\Psi,\Psi' \in \wh\D_+$. 
\end{enumerate} 
\vspace{-4pt}
Property~\pref{Item_f_PsiComp} follows from the fact that $f^\Psi(f^{\Psi'}(E)) = f^{\Psi\Psi'}(E)$ for each edge $E \subset T$, which is a consequence of Section~\ref{SectionHomotopyActionOnG} item~\pref{ItemPsiPsiPrimeEEquation}  applied to edges of $H_u$. \qed

\begin{enumeratecontinue}
\item \emph{Vertex Action Property:}
\label{Item_f_PsiVertices}
$f^\Psi$ takes vertices to vertices, and it restricts to a bijection of the set of vertices having nontrivial stabilizer.
\end{enumeratecontinue}
\vspace{-8pt}
For the proof, denote vertex sets by $V(G) \subset G$ and $V(T) \subset T$, and let $V^\nt(T)$ denote the subset of all $v \in V(T)$ such that $\Stab_{F_n}(v)$ is nontrivial. By Section~\ref{SectionHomotopyActionOnG} items~\pref{ItemOnFrontier} and~\pref{ItemOnGNMinusOne}, the map $f^\psi$ takes the set $G_{u-1} \union V(G)$ to itself, and since $f^\psi$ is a topological representative it follows that $f^\psi$ restricts further to a homotopy equivalence amongst the noncontractible components of $G_{u-1} \union V(G)$. Letting $\wt G_{u-1} \subset \wt G$ and $V(\wt G) \subset \wt G$ be the total lifts of $G_{u-1}$ and $V(G)$, it follows that $\ti f^\Psi \from \wt G \to \wt G$ restricts to a self-map of the components of $\wt G_{u-1} \union V(\wt G)$, and it restricts further to a bijection amongst the components having nontrivial stabilizer. Under the quotient map $\wt G \to T$, the set of components of $\wt G_{u-1} \union V(\wt G)$  corresponds bijectively and $F_n$-equivariantly to the vertex set $V(T)$. It follows that $f^\Psi \from T \to T$ restricts to a self map of $V(T)$, and that it restricts further to a bijection of $V^\nt(T)$, proving property~\pref{Item_f_PsiVertices}. \qed

\begin{enumeratecontinue}
\item \emph{Stretch Property:}
\label{Item_f_PsiEdges}
$f^\Psi$ maps each edge $E \subset T$ to an edge path $f^\Psi(E) \subset T$, stretching length by a uniform factor of~$\lambda^{\wh\omega(\Psi)}$. 
\end{enumeratecontinue}
\vspace{-6pt}
This follows from the definition of the piecewise Riemannian metric on $T$ in Section~\ref{SectionMetricOnT}, the eigenlength equation in $T$ in Section~\ref{SectionTStar}, and Section~\ref{SectionHomotopyActionOnG} item~\pref{ItemOnE}. \qed

\begin{enumeratecontinue}
\item\emph{Train Track Property:}
\label{Item_f_PsiTTProp}
For each $\Psi,\Psi' \in \wh\D$ and each edge $E$ of $T$, the restriction of $f^\Psi$ to the edge path $f^{\Psi'}(E)$ is injective. 
\end{enumeratecontinue}
\vspace{-4pt}
This follows from property~\pref{Item_f_PsiComp} together with property~\pref{Item_f_PsiEdges} as applied to $\Psi\Psi'$. \qed

\medskip

For the statement of property~\pref{Item_f_KernelActs}, recall from Section~\ref{SectionFreeSplittingExtension} the subgroup $\Stab[T] \subgroup \Out(F_n)$ and its pre-image $\wtStab[T] \subgroup \Aut(F_n)$. Recall particularly Lemma~\ref{SectionFreeSplittingExtension} of that section, which says that each subgroup of $\wtStab[T]$ containing $\Inn(F_n)$ has a unique action on $T$ extending the given action of the free group $F_n \approx \Inn(F_n)$ such that each element of the action satisfies a twisted equivariance property.

\begin{enumeratecontinue}
\item \emph{Restricted action of $\wh\D_0$:}
\label{Item_f_KernelActs}
We have $\D_0 \subgroup \Stab[T]$ and hence $\wh\D_0 \subgroup \wtStab[T]$. Furthermore, the restriction to $\wh\D_0$ of the semi-action $\wh\D_+ \act T$ is identical to the action $\wh\D_0 \act T$ given by Lemma~\ref{SectionFreeSplittingExtension}: the unique isometric action of $\wh\D_0$ on~$T$ such that each $\Psi \in \wh\D_0$ satisfies $\Psi$-twisted equivariance.
\end{enumeratecontinue}
\vspace{-4pt}
For the proof, consider $\Psi \in \wh\D_0$ with projected image $\psi \in \D_0$. The map $f^\psi \from G \to G$ restricts to the identity on each edge $E \subset H_u$, by Section~\ref{SectionHomotopyActionOnG} item~\pref{ItemOnE}. The map $\ti f^\Psi \from \wt G \to \wt G$ therefore permutes the edges of $\wt H_u$, mapping each isometrically to its image. It follows that the map $f^\Psi \from T \to T$ is an isometry. Since $f^\Psi$ satisfies $\Psi$-twisted equivariance (property~\pref{Item_f_Twisted}), it follows that $\psi \in \Stab[T]$ (Lemma~\ref{LemmaTwistedUniqueness}). Since $\psi \in \D_0$ is arbitrary, we have proved $\D_0 \subgroup \Stab[T]$ and that $\wh\D_0 \subgroup \wtStab[T]$. Applying, Lemma~\ref{LemmaAutTreeAction} we have also proved that the map $\Psi \mapsto f^\Psi$ is the same as the restriction to $\D_0$ of the action $\wtStab[T] \act T$ given in that lemma, namely the unique action assigning to $\Psi$ the unique $\Psi$-twisted equivariant isometry of~$T$. \qed

\begin{enumeratecontinue}
\item \emph{Invariance of Nielsen data:}
\label{Item_f_PsiNielsen} 
For each $\Psi \in \wh\D$ and each $j \in J$ there exists $j' \in J$ such that $N_{j'} = (f^\Psi)_\#(N_j)$, and $f^\Psi$ restricts to an order preserving bijection of basepoint sets $Z_j \xrightarrow{f^\Psi} Z_{j'}$. In particular $f^\Psi_\#$ induces a bijection of the Nielsen paths in~$T$ (Definition~\ref{DefNielsenSet}).
\end{enumeratecontinue}
\vspace{-4pt}\nobreak
This follows immediately from Section~\ref{SectionHomotopyActionOnG} item~\pref{ItemOnNielsen}. \qed

\medskip

Finally, as an immediate consequence of property~\pref{Item_f_PsiNielsen} we have:
\begin{enumeratecontinue}
\item \label{Item_f_StarExtension}
\emph{Extension to $\wh\D_+ \act T^*$:} The semigroup action $\wh\D_+ \act T$ extends uniquely to a semigroup action $\wh\D_+ \act T^*$, denoted \hbox{$f^\Psi_* = f^\Psi_{T^*} \from T^* \to T^*$} for each $\Psi \in \wh\D_+$, such that $f^\Psi_*$ permutes the cone points $P_j$, and $f^\Psi_*$ permutes the cone edges $\overline{P_j Q}$ by isometries. In particular, for each $j \in J$ and $Q \in Z_j$, letting $N_{j'}=f^\Psi_\#(N_j)$ and $Q' = f^\Psi(Q)$, we have $f^\Psi_*(P_j)=P_{j'} \quad\text{and}\quad \Psi\bigl( \overline{P_jQ} \bigr) = \overline{P'_jQ'}$. \qed
\end{enumeratecontinue}

\subsection{Dynamics of the group action $\wh\D_0 \act T^*$}
\label{SectionTStarDynamics}

Isometries of a simplicial tree equipped with a geodesic metric satisfy a dichotomy: each isometry is either elliptic or loxodromic. This dichotomy does not hold for all isometries of Gromov hyperbolic spaces --- in general, there is a third category of isometries, namely the parabolics. In this section we prove Lemma~\ref{LemmaTStarDynamics} which says in part that the dichotomy \emph{does} hold for the action $\wh\D_0 \act T^*$. 

\begin{lemma}
[Dynamics on $T^*$]
\label{LemmaTStarDynamics}
Under the action $\wh\D_0 \act T^*$, each element $\Delta \in \wh\D_0$ acts either loxodromically or elliptically on $T^*$. More precisely, $\wh\D_0$ acts loxodromically on $T^*$ if and only if $\wh\D_0$ acts loxodromically on $T$ and its axis $A \subset T$ is not a Nielsen line (Definition~\ref{DefNielsenSet}), in which case $A$ is a quasi-axis for $\Delta$ in $T^*$. Furthermore, the loxodromic elements of $\wh\D_0$ have ``uniform uniformly positive stable translation length'' in the following sense: there exist constants $\eta,\kappa>0$ such that for each $\Delta \in \wh\D_0$ acting loxodromically on $T^*$, and for each $x \in T^*$ and each integer $k \ge 0$, we have
$$d_*(\Delta^k(x),x) \ge k \, \eta - \kappa
$$
\end{lemma}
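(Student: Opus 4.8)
The plan is to reduce the dynamics of $\wh\D_0 \act T^*$ to the already-understood dynamics of $\wh\D_0 \act T$ and to the geometry of Nielsen lines established in Proposition~\ref{PropTStarHyp}. First I would recall that by property~\pref{Item_f_KernelActs} of Section~\ref{SectionActionOnT}, the action $\wh\D_0 \act T$ is by simplicial isometries, so each $\Delta \in \wh\D_0$ is either elliptic or loxodromic on the tree $T$. If $\Delta$ is elliptic on $T$, then $\Delta$ fixes a vertex $v \in T$; if $\Stab(v)$ is nontrivial then this is a vertex of $V^\infty \subset T^*$ fixed by $\Delta$ (using property~\pref{Item_f_PsiVertices}, the vertex action property), and if $\Stab(v)$ is trivial the $\Delta$-orbit of $v$ is contained in $p^\inv(v)$ which is a single point, again giving a fixed vertex; in either case $\Delta$ is elliptic on $T^*$ since it fixes a point. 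So the only interesting case is when $\Delta$ is loxodromic on $T$ with axis $A \subset T$.

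Next I would split the loxodromic-on-$T$ case according to whether $A$ is a Nielsen line. If $A$ is a Nielsen line $N \in \cN$, then by construction of $T^*$ the basepoint set $Z(N)$ is coned off to a cone point $P(N)$, and $\Delta$ preserves $N$ hence (by the equivariance in property~\pref{Item_f_StarExtension} together with uniqueness of cone points) fixes $P(N)$, so $\Delta$ is elliptic on $T^*$. If $A$ is \emph{not} a Nielsen line, then the two ideal endpoints $\bdy_\pm A \in \bdy T$ are both infinitely far in $T^*$: this is where I would invoke Proposition~\ref{PropConeQI} (quasicomparability of $D_\PF$ and $d^*$) together with Proposition~\ref{PropDecompInT}, to argue that a geodesic ray in $T$ whose eventual direction is not trapped inside a single Nielsen line picks up $D_\PF$-length at a definite linear rate, hence unbounded $d^*$-length. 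Granting that, Proposition~\ref{PropTStarHyp} tells us $A$ is a reparameterized quasigeodesic line in $T^*$ with distinct ideal endpoints $(\bdy_-A)^*, (\bdy_+A)^* \in \bdy T^*$, and $\Delta$ permutes $A$ translating it toward $\bdy_+ A$; a standard argument then shows $\Delta$ is loxodromic on $T^*$ with quasi-axis $A$.

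For the final ``uniform uniformly positive stable translation length'' claim, the key point is that there are only finitely many $F_n$-orbits of non-Nielsen loxodromic axes modulo the coning, in a sense made precise by the $D_\PF$ estimate: I would show that for any $\Delta \in \wh\D_0$ loxodromic on $T^*$, the translation length of $\Delta$ along $A$ measured in $D_\PF$ is at least some universal $\eta_0 > 0$. Concretely, the fundamental domain for $\Delta$ on $A$ is a segment of $A$ that is not an iterate of $\rho$ (since $A$ is not a Nielsen line), and by Proposition~\ref{PropDecompInT}~\pref{ItemOneNotDeg} combined with Lemma~\ref{LemmaPFLowerBound} such a segment has $D_\PF$-length bounded below by a constant depending only on $\eta_{\ref{LemmaPFLowerBound}}$ and $l_\PF(\rho)$. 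Feeding this through the quasi-isometry of Proposition~\ref{PropConeQI} gives a uniform lower bound on the stable translation length of $\Delta$ in $d^*$, and then the inequality $d_*(\Delta^k(x),x) \ge k\eta - \kappa$ for arbitrary $x \in T^*$ follows by the usual comparison between displacement of an arbitrary point and $k$ times the stable translation length, absorbing the $d^*$-distance from $x$ to the quasi-axis and the quasigeodesic constants into $\kappa$.

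I expect the main obstacle to be the verification that the endpoints of a non-Nielsen loxodromic axis are infinitely far in $T^*$, i.e.\ that a geodesic in $T$ not eventually contained in a Nielsen line accumulates unbounded $d^*$-length. This requires a careful argument that such a geodesic cannot be covered by a bounded chain of $\rho^*$-subpaths, exploiting that distinct Nielsen lines have disjoint basepoint sets (Definition~\ref{DefNielsenSet}) and that $\rho$ has an endpoint disjoint from $G_{u-1}$ (Notations~\ref{NotationsFlaring}~\pref{ItemCTiNP}), so that a genuinely non-periodic geodesic is forced to leave every Nielsen line and thereby cross $H_u$-edges that contribute positively to $D_\PF$. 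The secondary technical point is the passage from stable translation length to the displacement bound for an \emph{arbitrary} base point $x$, uniformly over all loxodromic $\Delta$; this is routine hyperbolic geometry once the uniform lower bound on stable translation length and uniform quasigeodesic constants for the quasi-axes are in hand, both of which come from the preceding propositions.
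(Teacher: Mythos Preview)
Your treatment of the elliptic case and the Nielsen-line case is fine and matches the paper. The classification argument via Proposition~\ref{PropTStarHyp} (showing the axis endpoints are infinitely far in $T^*$, hence $A$ is a quasigeodesic line) would work, though the paper takes a more direct route.

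The genuine gap is in your derivation of the uniform displacement bound. You propose to bound the $D_\PF$-length of a \emph{single} fundamental domain below by some $\eta_0>0$ and then pass through Proposition~\ref{PropConeQI} to get a lower bound on stable translation length. But $D_\PF$ is only a coarse metric: by Lemma~\ref{LemmaBddBelow}~\pref{ItemAlmostAdditive} the concatenation of $k$ fundamental domains satisfies only $L_\PF \ge k\eta_0 - (k-1)\,l_\PF(\rho)$, which is useless unless $\eta_0 > l_\PF(\rho)$, and there is no reason for that. Likewise, transferring a single-step bound to $d^*$ via the quasi-isometry gives $d^*(x,\Delta x)\ge \eta_0/K - C$, which may be nonpositive, so you cannot sum in $d^*$ either. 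The ``usual comparison'' you invoke does not apply here.

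The paper avoids this by working directly with the $k$-step path $[x,\Delta^k(x)]$ and exploiting one key fact you do not mention: $\Delta$ takes \emph{maximal} $\rho^*$-subpaths to maximal $\rho^*$-subpaths (Section~\ref{SectionActionOnT}~\pref{Item_f_PsiNielsen}). Hence the Proposition~\ref{PropDecompInT} decomposition of $[x,\Delta^k(x)]\cap A$ is $\Delta$-periodic: if it contains one $\nu$-subpath it contains $k-1$ translates of it, forcing at least $k-2$ nondegenerate $\mu$-subpaths, each contributing an edge. This gives $D_\PF(x,\Delta^k(x))\ge (k-2)\eta'$ directly, and then Proposition~\ref{PropConeQI} yields $d_*(x,\Delta^k(x))\ge k\eta-\kappa$. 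Loxodromicity and the uniform bound fall out simultaneously, with no need for a separate ``endpoints infinitely far'' step.
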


\subparagraph{Remarks.}
In this lemma, the terminology ``uniform uniformly positive stable translation length'' refers to the corollary that the stable translation length 
$$\lim_{k \to \infty} \frac{1}{k} d_*(\Delta^k(x),x))
$$
has a uniform positive lower bound $\eta>0$ independent of the choice of a loxodromic element $\Delta \in \wh\D_0$, and the rate at which this positive lower bound is approached is uniform. This property will be applied in Section~\ref{SectionMultiEdgeExtension}, for purposes of characterizing the loxodromic elements of the action $\wh\D \act \S$ that is described in Section~\ref{SectionSuspConstr}.

\smallskip

Lemma~\ref{LemmaTStarDynamics} and proof could already have been presented almost word-for-word back in Section~\ref{SectionThreeProps} for the restricted action $\Inn(F_n) \approx F_n \act T^*$. Other than the methods available in Section~\ref{SectionThreeProps}, the additional things needed to prove the lemma are that the larger action $\wh\D_0 \act T^*$, like its restriction, satisfies twisted equivariance and preserves Nielsen paths and associated objects (Section~\ref{SectionActionOnT}~\pref{Item_f_PsiNielsen}, \pref{Item_f_StarExtension}).

\begin{proof} Throughout the proof we will apply Section~\ref{SectionActionOnT}~\pref{Item_f_PsiNielsen}, \pref{Item_f_StarExtension} regarding the action of $\wh\D_0$ on Nielsen paths, elements of the Nielsen set $\{N_i\}$, the basepoint sets $Z_i \subset N_i$, the cone points $P_i$, and the cone edges $\overline{P_i Q}$, $Q \in Z_i$. In particular, because the action of each $\Delta \in \wh\D_0$ on $T$ preserves Nielsen paths, it takes maximal $\rho^*$-subpaths of any path $\alpha$ to maximal $\rho^*$-subpaths of the path $\Delta(\alpha)$.

Consider $\Delta \in \wh\D_0$. If $\Delta$ acts elliptically on $T$ then it fixes a vertex of $T$; and if in the geometric case $\Delta$ acts loxodromically on $T$ and its axis is a Nielsen line $N_j$, then $\Delta$ fixes the corresponding cone point $P_j$; in either case $\Delta$ acts elliptically on~$T^*$.

Suppose that $\Delta$ acts loxodromically on $T$ and its axis $A \subset T$ is not a Nielsen line (this always happens in the nongeometric case, but the terminology of our proof applies to all cases). For each vertex $x \in T$ and each integer $k \ge 1$ consider the Proposition~\ref{PropDecompInT} decomposition of the path $[x,\Delta^k(x)]$ into ``$\nu$-subpaths'' meaning maximal $\rho^*$ subpaths, alternating with ``$\mu$-subpaths'' each having no $\rho^*$ subpath. Since the intersection $[x,\Delta^k(x)] \intersect A$ contains a concatenation of $k$ fundamental domains for the action of $\Delta$ on $A$, it follows that if $[x,\Delta^k(x)] \intersect A$ contains a $\nu$-subpath of $[x,\Delta^k(x)]$ then it must contain $k-1$ distinct $\nu$-subpaths of $[x,\Delta^k(x)]$, between which there are $k-2$ distinct $\mu$-subpaths of $[x,\Delta^k(x)]$; here we are applying the fact that $\Delta$ takes maximal $\rho^*$ paths to maximal $\rho^*$ paths. As a consequence, the collection of $\mu$-subpaths of $[x,\Delta^k(x)]$ contains at least $k-2$ distinct edges of $T$. Applying Proposition~\ref{PropDecompInT}~\pref{ItemDPFFormula} we obtain $D_\PF(x,\Delta^k(x)) \ge (k-2) \eta'$ where $\eta' = \min\{l_\PF(E) \suchthat \text{$E$ is an edge of $T$}\}$. Applying Proposition~\ref{PropConeQI}, in $T^*$ we have
$$d_*(x,\Delta^k(x)) \ge k \underbrace{\frac{\eta'}{K}}_{= \, \eta} - \biggl(\underbrace{\frac{2\eta'}{K} + C}_{= \, \kappa'}\biggr)
$$
It immediately follows that $\Delta$ acts loxodromically on $T^*$. This estimate is for vertices of $T$, but a similar estimate holds for arbitrary points $x \in T^*$, replacing $\kappa'$ by $\kappa = \kappa' + 2 \delta$ where $\delta$ is an upper bound for the distance in $T^*$ from an arbitrary point of $T^*$ to the vertex set of $T$.
\end{proof}

\section{The suspension action.} 
\label{SectionSuspension}

In this section we complete the proof of the multi-edge case of the Hyperbolic Action Theorem. 

Throughout this section we adopt Notations~\ref{NotationsFlaring} and~\ref{NotationsActionOnT} concerning a rotationless $\phi \in \Out(F_n)$ and a \ct\ representative $f \from G \to G$ with top \eg\ stratum $H_u$, with disintegration group $\D=\D(f) \subgroup \Out(F_n)$, and with free splitting $F_n \act T$ associated to the pair $(G,G_u)$. In particular, in Section~\ref{SectionPathFunctionsReT} we used the coned off free splitting $F_n \act T^*$ to construct the suspension space $\S$, proving that $\S$ is Gromov hyperbolic by using flaring properties of the action $F_n \act T^*$. In Section~\ref{SectionTTSemiAction}  we studied the extended disintegration group $\wh\D \subgroup \Aut(F_n)$ and its subsemigroup $\wh\D_+ \subgroup \wh\D$, and we constructed the train track semigroup action $\wh\D_+ \act T^*$.

In Section~\ref{SectionSuspConstr} we shall show how to suspend the semigroup action $\wh\D_+ \act T^*$ to an isometric action $\wh\D \act \S$ (this is a completely general construction that applies to the disintegration group of any \ct\ having top \eg\ stratum).

In Section~\ref{SectionMultiEdgeExtension} we put the pieces together to complete the proof.


\subsection{The suspension action $\wh\D \act \S$.} 
\label{SectionSuspConstr}

Recall from Notations~\ref{NotationsActionOnT} the short exact sequence
$$1 \mapsto \wh\D_0 \mapsto \wh\D \xrightarrow{\wh\omega_u} \Z \to 1
$$
Choose an automorphism $\Phi \in \wh\D$ representing $\phi \in \D$, and so $\wh\omega_u(\Phi)=\omega(\phi)=1$. It follows that $\Phi$ determines a splitting of the above short exact sequence, and hence a semidirect product structure $\wh\D \approx \wh\D_0 \semidirect_{\Phi} \Z$ expressed in terms of the inner automorphism on $\wh\D$ determined by $\Phi$, namely $I(\Psi) = I_\Phi(\Psi) = \Phi\Psi\Phi^\inv$.  
Thus each element of $\wh\D$ may be written in the form $\Delta \Phi^m$ for unique $\Delta \in \wh\D_0$ and $m \in \Z$, and the group operation in this notation is defined by
$$(\Delta_1 \Phi^{m_1})(\Delta_2 \Phi^{m_2}) = (\Delta_1 I^{m_1}(\Delta_2)) \Phi^{m_1+m_2}
$$


Recall from Definition~\ref{DefSuspensionSpace} the construction of $\S$: using the chosen automorphism $\Phi$ representing $\phi$, the corresponding $\Phi$-twisted equivariant map of $T^*$ is denoted in abbreviated form as $f_* = f^\Phi_{T^*}$, and $\S$ is the suspension space of $f_*$, namely the quotient of $T \times \Z \times [0,1]$ with identifications $[x,k,1] \sim [f_*(x),k+1,0]$. Recall also various other notations associated to $\S$ that are introduced in Definitions~\ref{DefSuspensionSpace} and~\ref{DefSuspensionMetric}.

To define the action on $\S$ of each element $\Psi = \Delta\Phi^k \in \wh\D$ it suffices to carry out the following steps: first we define the group action $\wh\D_0 \act \S$; next we define the action of the element $\Phi \act \S$; then we define the action of each element $\Delta \Phi^k$ by composition; and finally we verify that the two sides of the semidirect product relation $\Phi \Delta = I(\Delta) \Phi$ act identically. 

Let each $\Delta \in \wh\D_0$ act on $[x,k,t] \in \S$ by the equation
$$\Delta \cdot [x,k,t] = [f^{I^k(\Delta)}_*(x), k, t] = [f^{\Phi^k \Delta \Phi^{-k}}_*(x),k,t]
$$
and note that this formula is well defined because, using the properties of the semigroup action $\wh\D_+ \act T$ from Section~\ref{SectionActionOnT}, we have
\begin{align*}
\Delta \cdot [x,k,1] &= [f^{\Phi^k \Delta \Phi^{-k}}_*(x),k,1] 
 = [f^\Phi_* \circ f^{\Phi^k \Delta \Phi^{-k}}_*(x) ,k+1,0] \\
 &= [f^{\Phi^{k+1} \Delta \Phi^{-(k+1)}}_* \circ f^\Phi_*(x),k+1,0] \\
 &= \Delta \cdot [f^\Phi_*(x),k+1,0] \\
 &= \Delta \cdot [f_*(x),k+1,0]
\end{align*}
Each $\Delta \in \wh\D_0$ evidently acts by an isometry. Again using the semigroup action properties, the action equation is satisfied for each $\Delta',\Delta \in \wh\D_0$:
\begin{align*}
\Delta' \cdot (\Delta \cdot [x,k,t]) &= [f^{I^k(\Delta')}_{*} \circ f^{I^k(\Delta)}_{*}(x),k,t] \\
  &= [f^{I^k(\Delta') \circ I^k(\Delta)}_*(x),k,t] \\
  &= [f^{I^k(\Delta'\Delta)}_*(x),k,t] \\
  &= \Delta' \Delta \cdot [x,k,t]
\end{align*}
This completes the definition of the isometric action $\wh\D_0 \act \S$.
%
%

We note that the restriction to $F_n \approx \Inn(F_n)$ of the action $\wh\D_0 \act \S$ agrees with the action $F_n \act \S$ given in Definition~\ref{DefSuspensionSpace}, defined in general by $\gamma \cdot [x,k,r] = [\Phi^k(\gamma) \cdot x, k, r]$ for each $\gamma \in F_n$. In the special case $k=0$, $r=0$, the equation $\gamma \cdot x = f^{i_\gamma}_*(x)$ holds by Lemma~\ref{LemmaTwistedUniqueness}~\pref{ItemLTUInner}. The extension to the general case follows easily.

Next, let $\Phi$ act by shifting downward,
$$\Phi \cdot [x,k,t] = [x,k-1,t]
$$
which is evidently a well-defined isometry. 

As required, for each $\Delta \in \wh\D_0$ the two sides of the semidirect product equation act in the same way:
\begin{align*}
\Phi \cdot \Delta \cdot [x,k,t] &= \Phi \cdot [f^{I^k(\Delta)}_*(x),k,t] \\
 &= [f^{I^k(\Delta)}_*(x),k-1,t] \\
I(\Delta) \cdot \Phi \cdot [x,k,t] &= I(\Delta) \cdot [x,k-1,t] \\
 &= [f^{I^{k-1}(I(\Delta))}_*(x),k-1,t] \\
 &= [f^{I^k(\Delta)}_*(x),k-1,t]
\end{align*}
Notice that since the action of each $\Delta \in \wh\D_0$ preserves each ``horizontal level set'' $\S_s$, and since the action of $\Phi$ has ``vertical translation'' equal to $-1=-\wh\omega(\Phi)$ meaning that $\Phi(\S_s) = \S_{s-\wh\omega(\Phi)}$, it follows that for each $\Psi \in \wh\D$ the integer $-\wh\omega(\Psi)$ is the ``vertical translation length'' for the action of $\Psi$ in $\S$. We record this for later use as:

\begin{lemma}\label{LemmaVertTranslOnS}
For each $\Psi \in \wh\D$ and each fiber $\S_s$ ($s \in \reals$), we have 
$$\Psi(\S_s) = \S_{s - \wh\omega(\Psi)} \quad\text{for any $s \in \reals$}
$$
\qed\end{lemma}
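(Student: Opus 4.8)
The plan is to exploit the semidirect product decomposition $\wh\D \approx \wh\D_0 \semidirect_\Phi \Z$ established just above the statement, together with the explicit formulas for how the two types of generators act on $\S$. Every $\Psi \in \wh\D$ can be written uniquely as $\Psi = \Delta\,\Phi^m$ with $\Delta \in \wh\D_0$ and $m \in \Z$, and since $\wh\omega$ is a homomorphism with $\wh\D_0 = \ker(\wh\omega)$ and $\wh\omega(\Phi) = \omega(\phi) = 1$, we have $\wh\omega(\Psi) = \wh\omega(\Delta) + m\,\wh\omega(\Phi) = m$. Thus it suffices to show $\Psi(\S_s) = \S_{s-m}$ for $\Psi = \Delta\Phi^m$.

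First I would record the fiber behavior of the two building blocks. For $\Delta \in \wh\D_0$, the defining formula $\Delta \cdot [x,k,t] = [f^{I^k(\Delta)}_*(x), k, t]$ leaves the $k$ and $t$ coordinates untouched, hence $\Delta$ fixes every fiber $\S_s$ setwise; equivalently, $\Delta(\S_s) = \S_s = \S_{s - \wh\omega(\Delta)}$ since $\wh\omega(\Delta) = 0$. For $\Phi$, the defining formula $\Phi \cdot [x,k,t] = [x,k-1,t]$ gives $\Phi(\S_s) = \S_{s-1} = \S_{s - \wh\omega(\Phi)}$, and iterating gives $\Phi^m(\S_s) = \S_{s-m}$ for all $m \in \Z$ (using that the action of $\Phi$ is invertible, so negative powers make sense as isometries of $\S$).

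Combining these, $\Psi(\S_s) = \Delta\bigl(\Phi^m(\S_s)\bigr) = \Delta(\S_{s-m}) = \S_{s-m} = \S_{s - \wh\omega(\Psi)}$, which is the claim. I expect no serious obstacle here: the only point requiring a word of justification is that the action of $\wh\D$ on $\S$ really is well-defined by these formulas on the semidirect product generators in a way compatible with the relation $\Phi\Delta = I(\Delta)\Phi$, but that was already verified in the construction of the suspension action $\wh\D \act \S$ preceding the lemma, so it may simply be cited. The argument therefore reduces to the two one-line formulas for the generator actions and the additivity of $\wh\omega$.
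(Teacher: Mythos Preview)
Your proposal is correct and follows exactly the same approach as the paper: the paper's ``proof'' is simply the sentence preceding the lemma, which notes that each $\Delta \in \wh\D_0$ preserves every fiber $\S_s$, that $\Phi$ has vertical translation $-1 = -\wh\omega(\Phi)$, and that these combine via the semidirect product decomposition $\Psi = \Delta\Phi^m$ to give the stated formula. Your write-up is a slightly more detailed version of the same argument.
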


\subsection{Hyperbolic Action Theorem, Multi-edge case: Proof.}
\label{SectionMultiEdgeExtension}

First we set up the notation, based on the formulation of the multi-edge case in Section~\ref{SectionOneEdgeExtension} and the outline of the proof in Section~\ref{SectionMultiEdgeOutline}, and we apply the results of Section~\ref{SectionSuspConstr} to obtain a hyperbolic action $\wh\cH \act \S$. After that, Conclusions~\pref{ItemThmF_EllOrLox}, \pref{ItemThmF_Nonelem} and~\pref{ItemThmF_WWPD} of the Hyperbolic Action Theorem are proved in order. 

\paragraph{Setup of the proof.} \quad
We have a subgroup $\wh\cH \subgroup \Aut(F_n)$, having image $\cH \subgroup \Out(F_n)$ under the quotient homomorphism $\Aut(F_n) \mapsto \Out(F_n)$, and having kernel $J = \kernel(\wh\cH \mapsto \cH)$, which satisfies the hypotheses of the Hyperbolic Action Theorem:  $\cH$ is abelian;  $\wh\cH$ is finitely generated and not virtually abelian;  and no proper, nontrivial free factor of $F_n$ is $\wh\cH$-invariant. 

The conclusion of the Hyperbolic Action Theorem is about the existence of a certain kind of hyperbolic action of a finite index normal subgroup of $\wh\cH$, and so we are free to replace $\wh\cH$ with any finite index subgroup $\wh\cH' \subgroup \wh\cH$, because once the conclusion is proved using some hyperbolic action $\cN' \act \S$ of some finite index normal subgroup $\cN' \normal \wh\cH'$, the same conclusion follows for the restriction of $\cN' \act \S$ to the action $\cN \act \S$ where $\cN$ is the intersection of all of its $\wh\cH$ conjugates of $\cN'$; one need only observe that the conclusions of the Hyperbolic Action Theorem for the action $\cN' \act S$ imply the same conclusions when restricted to any finite index subgroup of~$\cN'$.

We may therefore assume that $\cH$ is a rotationless abelian subgroup, using the existence of an integer constant $k$ such that the $k^{\text{th}}$ power of each element of $\Out(F_n)$ is rotationless, and replace the abelian group $\cH$ by its finite index subgroup of $k^{\text{th}}$ powers.

We have a maximal, proper, $\cH$-invariant free factor system $\B$ of co-edge number $\ge 2$ in $F_n$. Applying the Disintegration Theorem, we obtain $\phi \in \cH$ such that for any \ct\ representative $f \from G \to G$ of $\phi$ with disintegration group $\D=\D(f)$, the subgroup $\cH \intersect \D$ has finite index in~$\cH$. We choose $f$ so that the penultimate filtration element~$G_{u-1}$ represents the free factor system~$\B$. Since the co-edge number of $\B$ is $\ge 2$, the top stratum $H_u$ is \eg. Replacing $\cH$ with $\cH \intersect \D$, we may thus assume $\cH \subgroup \D$. 

We now adopt Notations~\ref{NotationsFlaring} and~\ref{NotationsActionOnT}, regarding the free splitting $F_n \act T$ associated to the marked graph pair $(G,G_{u-1})$, the action $F_n \act T^*$ obtained by coning off the elements of the Nielsen set, the disintegration group $\D=\D(f)$, and its associated extended disintegration group $\wh\D$. Using that $\cH \subgroup \D$ it follows that $\wh\cH \subgroup\wh\D$. Setting $J = \kernel(\wh\cH \mapsto \cH)$ and $K = \kernel(\wh\cH \xrightarrow{\wh\omega_u} \Z)$ we may augment the commutative diagram of Notations~\ref{NotationsFlaring}~\pref{ItemDisintGroup} to obtain the commutative diagram with short exact rows shown in Figure~\ref{FigCommutativeKJ}.
\begin{figure}\label{FigCommutativeKJ}
$$\xymatrix{
1 \ar[r] & K =   \wh\cH \intersect \wh\D_0 \ar[r] & \wh\cH \ar[r]^{\wh\omega_u} \ar@{=}[d] & \Z \ar[r] & 1 \\
1 \ar[r] & J =   \wh\cH \intersect \Inn(F_n) \ar[r] \ar[d]^{\subset} \ar[u]_{\subset} & \wh \cH \ar[r] \ar[d]^{\subset}  & \cH \ar[r] \ar[d]^{\subset} \ar[u]_{\omega_u} & 1 \\
1 \ar[r] & F_n \approx \Inn(F_n) \ar[r] \ar@{=}[d] & \wh\D \ar[r] \ar[d]^{\subset} & \D \ar[r] \ar[d]^{\subset} &1 \\
1 \ar[r] & F_n \approx \Inn(F_n) \ar[r] & \Aut(F_n) \ar[r] & \Out(F_n) \ar[r] & 1 \\
}$$
\caption{The group $\wh\cH$ and associated objects in a commutative diagram with short exact rows. The automorphism $\Phi \in \wh\cH \in \Aut(F_n)$ is a chosen pre-image of $\phi \in \cH$ and hence satisfies $\wh\omega_u(\Phi)=\omega_u(\phi)=1$.}
\end{figure}

From Notations~\ref{NotationsActionOnT}~\pref{ItemSubsemigroups} we also have the subgroups and subsemigroups $\D_0  \subgroup \D_+ \subgroup \D$ and $\wh\D_0 \subgroup \wh\D_+ \subgroup \wh\D$. \

Let $\wh\D_+ \act T^*$ be the semigroup action described in Section~\ref{SectionActionOnT}, associating to each $\Psi \in \wh\D_+$ a map $f^\Psi_{T^*} \from T^* \to T^*$. Pick $\Phi \in \wh\cH \subgroup \wh\D$ to be any pre-image of $\phi \in \cH \subgroup \D$, and so $\wh \omega_u(\Phi) = \omega_u(\phi)=1$ (see Notations~\ref{NotationsActionOnT}~\pref{ItemCoordHomDf}). Let $\S$ be the suspension space of $f^\Phi_{T^*} \from T^* \to T^*$ as constructed in Definition~\ref{DefSuspensionSpace}. Let $\wh\D \act \S$ be the isometric action constructed in Section~\ref{SectionSuspConstr}. We will make heavy use of the integer sections $\S_j$ ($j \in \Z$) and of the $\wh\D_0$-equivariant identification $\S_0 \leftrightarrow T^*$.

\medskip

We shall abuse notation freely by identifying $F_n \approx \Inn(F_n)$ given by $\gamma \leftrightarrow i_\gamma$, where $i_\gamma(\delta)=\gamma\delta\gamma^\inv$. For example, using this identification we often think of $J \subgroup K$ as subgroups of $F_n$. We also note the equation
$$ i_{\Psi(\gamma)} = \Psi \circ i_\gamma \circ \Psi^\inv, \quad \Psi \in \Aut(F_n), \gamma \in F_n
$$
which expresses the fact that the isomorphism $F_n \approx \Inn(F_n)$ is equivariant with respect to two actions of $\Aut(F_n)$: its standard action on $F_n$; and its action by inner automorphisms on its normal subgroup $\Inn(F_n)$.  Combining this equation with~\pref{Item_f_Twisted} \emph{Twisted equivariance} of Section~\ref{SectionActionOnT}, it follows that under the isomorphism $F_n \approx \Inn(F_n)$, the action $F_n \act T$ agrees with the action $\Inn(F_n) \act T$ obtained by restricting the action $\wh\D_0 \act T$. 

\medskip

We turn to the proof of the three conclusions of the Hyperbolic Action Theorem for the action of $\cN = \wh\cH$ on~$\S$.

\paragraph{Proof of Conclusion \pref{ItemThmF_EllOrLox}: Every element of $\wh\cH$ acts either elliptically or loxodromically on $\S$.} We show more generally that every element of $\wh\D$ acts either elliptically or loxodromically on~$\S$. From Section~\ref{SectionSuspConstr} the general element of $\wh\D$ has the form $\Psi = \Delta \Phi^m$ for some $\Delta \in \wh\D_0$, and
$\Psi(\S_j) = \S_{j-m}$ for any $j \in \Z$. If $m \ne 0$ then, by Lemma~\ref{LemmaProjectNonIncrease}, for each $k \in \Z$ and each $x \in \S$ we have $d(x,\Psi^k(x)) \ge \abs{km}$, and so $\Psi$ is loxodromic. Suppose then that $m=0$ and so $\Psi=\Delta$ preserves $\S_j$ for each $j \in \Z$. Consider in particular the restriction $\Delta \restrict \S_0 \approx T^*$ and the further restriction $\Delta \restrict T$. If $\Delta$ is elliptic on $T$ then it fixes a point of $T$, hence $\Delta$ fixes a point of $T^* \approx \S_0 \subset \S$, and so $\Delta$ is elliptic on $T^*$ and on $\S$. If $\Delta$ is loxodromic on $T$ with axis $L_\Delta \subset T$, and if $L_\Delta$ is a Nielsen line $N_j$ then $\Delta$ fixes the corresponding cone point $P_j \in T^* \in \S_0 \subset \S$, and so $\Delta$ is elliptic on $T^*$ and on $\S$. 

It remains to consider those $\Delta \in \wh\D_0$ which act loxodromically on $T$ and whose axes in $T$ are not Nielsen lines. Applying Lemma~\ref{LemmaTStarDynamics}, each such $\Delta$ acts loxodromically on $T^*$ and for each $x \in T^*$ and each integer $k \ge 1$ we have
$$(*) \qquad\qquad d_*(x,\Delta^k(x)) \ge k \, \eta - \kappa \qquad\qquad \hphantom{(*)}
$$
where the constants $\eta,\kappa > 0$ are independent of $\Delta$ and $x$.

Consider the function which assigns to each integer $k$ the minimum translation distance of vertices $v \in \S$ under the action of~$\Delta^k$:
$$\sigma(k) = \inf_{v \in \S} d_\S(v,\Delta^k(v))
$$
To prove that $\Delta$ acts loxodromically on $\S$ we apply the classification of isometries of Gromov hyperbolic spaces \cite{Gromov:hyperbolic}, which says that every isometry is elliptic, parabolic, or loxodromic. But if $\Delta$ is elliptic or parabolic then the function $\sigma(k)$ is bounded. Thus it suffices to prove that $\lim_{k \to \infty} \sigma(k) = \infty$. 

For each integer $i$, consider $\Delta'_i = \Phi^i \Delta \Phi^{-i} \in \wh\D_0$. Consider also the $\Phi^{-i}$-twisted equivariant map $j_i \from T^* = \S_0 \to S_i$ given by $j_i[x,0,0] = [x,i,0]$, which is an isometry from the metric $d_*=d_0$ to the metric $d_i$. This map $j_i$ conjugates the action of $\Delta^k$ on $\S_i$ to the action of $(\Delta'_i)^k = \Phi^i \Delta^k \Phi^{-i} \in \wh\D_0$ on $\S_0$, because
\begin{align*}
\Delta^k j_i[x,0,0] &= \Delta^k[x,i,0] = [f_*^{\Phi^i \Delta^k \Phi^{-i}}(x),i,0] \\
  &= [f_*^{\Delta'_i}(x),i,0]  = j_i [f_*^{\Delta'_i}(x),0,0] \\
  &= j_i \Delta'_i [x,0,0]
\end{align*}
Applying the inequality $(*)$ to $\Delta'_i$, and then applying the twisted equivariant isometric conjugacy $j_i$ between $\Delta^k$ and $(\Delta'_i)^k$, for each vertex $p \in \S_i$ we have 
$$(**) \qquad\qquad d_i(p,\Delta^k(p)) \ge k \, \eta - \kappa \qquad\qquad \vphantom{(**)}
$$
As seen earlier, the uniformly proper maps $\S_i \inject \S$ have a uniform gauge function $\delta \from [0,\infty) \to [0,\infty)$ (see $(*)$ in Section~\ref{SectionSHypProof} just before Lemma~\ref{LemmaUnifProp}). If $\sigma(k)$ does not diverge to $\infty$ then there is a constant $M$ and arbitrarily large values of $k$ such that $d_\S(p,\Delta^k(p)) \le M$ holds for some $i$ and some vertex $p \in \S_i$, implying by that $d_i(p,\Delta^k(p)) \le \delta(M)$, contradicting $(**)$ for sufficiently large~$k$. 

This completes the proof of Conclusion~\pref{ItemThmF_EllOrLox}. 

\medskip

Furthermore, we have proved the following which will be useful later:
\begin{itemize}
\item For each $\Delta \in \wh\D_0$ the following are equivalent: $\Delta$~acts loxodromically on~$\S$; $\Delta$~acts loxodromically on $T^*$; \, $\Delta$ acts loxodromically on $T$ and its axis is not a Nielsen line.
\end{itemize}

\paragraph{Proof of Conclusion \pref{ItemThmF_Nonelem}: The action $\wh\cH \act \S$ is nonelementary.} We shall focus on the restricted actions of the subgroup $J$ (often abusing notation, as warned earlier, by identifying $J \subgroup \Inn(F_n)$ with the corresponding subgroup of $F_n$). We prove first that $J$ has nonelementary action on $T$, then on $T^*$, and then on~$\S$; from the latter it follows that the whole action $\wh\cH \act \S$ is nonelementary.

We shall apply Lemma~\ref{LemmaJNonelementary} and so we must check the hypotheses of that lemma. Let $V^\nt$ be the set of vertices $v \in T$ having nontrivial stabilizer under the action of $F_n$. As shown in Section~\ref{SectionActionOnT}~\pref{Item_f_PsiVertices}, the semigroup action $\wh\D_+ \act T$ restricts to a semigroup action $\wh\D_+ \act V^\nt$ having the property that each element of $\Psi \in \wh\D_+$ acts by a $\Psi$-twisted equivariant bijection of~$V^\nt$, and it follows immediately that this semigroup action extends uniquely to a group action $\wh\D \act V^\nt$ having the same property. Restricting to $\wh\cH$ we obtain an action $\wh\cH \act V^\nt$ satisfying the hypotheses in the first paragraph of Lemma~\ref{LemmaJNonelementary}. By hypothesis of the Hyperbolic Action Theorem, no proper nontrivial free factor of $F_n$ is fixed by $\wh\cH$, and in particular no subgroup $\Stab_{F_n}(v)$ ($v \in V^\nt$) is fixed by $\wh\cH$. Finally, the free group $J$ has rank~$\ge 2$ because otherwise, since $\cH$ is abelian, it would follow that $\wh\cH$ is a solvable subgroup of $\Aut(F_n) \subgroup \Out(F_{n+1})$ and hence is virtually abelian by \SubgroupsThree, contradicting the hypothesis that $\wh\cH$ is not virtually abelian. Having verified all the hypotheses of Lemma~\ref{LemmaJNonelementary}, from its conclusion we have that the action $J \act T$ is nonelementary.

Since the action $J \act T$ is nonelementary, its minimal subtree $T^J$ is not a line, and so $T^J$ contains a finite path $\alpha$ which is not contained in any Nielsen line. Furthermore, $\alpha$ is contained in the axis of some loxodromic element $\gamma \in J$ whose axis $L_\gamma$ is therefore not a Nielsen line. Applying Lemma~\ref{LemmaTStarDynamics}, the action of $\gamma$ on $T^*$ is loxodromic and $L_\gamma$ is a quasi-axis for $\gamma$. Choosing $\delta \in J - \Stab_{F_n}(L_\gamma)$ and letting $\gamma' = \delta\gamma\delta^\inv$, it follows that $\gamma'$ also acts loxodromically on $T$ and on $T^*$, and that its axis $L_{\gamma'}$ in $T$ is also a quasi-axis in~$T^*$. By choice of $\delta$ the intersection $L_\gamma \intersect L_{\gamma'}$ is either empty or finite. Since neither of the lines $L_\gamma$, $L_{\gamma'}$ is a Nielsen axis, each ray in each line has infinite $D_\PF$ diameter and so goes infinitely far away from the other line in $T^*$-distance. It follows that $\gamma,\gamma'$ are independent loxodromic elements on $T^*$, proving that $J \act T^*$ is nonelementary. 

Finally, using the same $\gamma,\gamma'$ as in the previous paragraph whose axes $L_\gamma,L_\gamma'$ in $T$ are not Nielsen lines, we showed in the proof of Conclusion \pref{ItemThmF_EllOrLox} that $\gamma,\gamma'$ act loxodromically on~$\S$. Furthermore, since the lines $L_\gamma,L_\gamma'$ have infinite Hausdorff distance in $T^*$, it follows by they also have infinite Hausdorff distance in~$\S$, as shown in item~$(*)$ of Section~\ref{SectionSHypProof}. This proves that $\gamma,\gamma'$ are independent loxodromics on $\S$ and hence the action $J \act \S$ is nonelementary. 

\paragraph{Proof of Conclusion \pref{ItemThmF_WWPD}: Each element of $J$ that acts loxodromically on $\S$ is a strongly axial, WWPD element of $\wh\cH \act \S$.} \quad

\smallskip
Given $\gamma \in J = \wh\cH \intersect \Inn(F_n) \subgroup K$, as was shown earlier under the proof of Condition~\pref{ItemThmF_EllOrLox}, we know that the action of $\gamma$ on $\S$ is loxodromic if and only if its action on $T^*$ is loxodromic, if and only if its action on $T$ is loxodromic with an axis $L_\gamma \subset T$ that is not a Nielsen line. It follows that $L_\gamma$ is a quasi-axis for the actions of $\gamma$ on $T^*$ and on $\S$.
We shall prove that each such element $\gamma$ is a WWPD element with respect to three actions:

\smallskip Step 1: The action $K \act T$, with respect to the metric $d_u$ (Definition~\ref{DefinitionMetricsOnT}); 

\smallskip Step 2: The action $K \act T^*$\vphantom{$\wh\cH$}, with respect to the metric $d^*$ (Definition~\ref{DefLittleDStar}); 

\smallskip Step 3: The action $\wh\cH \act \S$, with respect to the metric $d_\S$ (Definition~\ref{DefSuspensionMetric}). 

\smallskip\noindent
The proofs in Steps 2 and 3 are bootstrapping arguments, deriving the WWPD property for the current step from the WWPD property of the previous step. 

Once the WWPD arguments are complete, we will verify that $\gamma$ is strongly axial with respect to the action $\wh\cH \act \S$.

\medskip\textbf{Step 1: The action $K \act T$.} 
As shown in Section~\ref{SectionActionOnT}~\pref{Item_f_KernelActs}, the action $J \act T$ is the restriction of the action $K \act T$. 
Since $J$ is normal in $K$, and since $J \act T$ is the restriction to $J$ of the free splitting action $\Inn(F_n) \approx F_n \act T$, we may apply Lemma~\ref{LemmaTreeWWPD} to conclude that $\gamma$ is a WWPD element of the action $K \act T$ with respect to the edge counting metric $d_u$.


\medskip\textbf{Step 2: The action $K \act T^*$.}  The underlying intuition of this bootstrapping proof is that WWPD behaves well with respect to coning operations, for elements whose loxodromicity survives the coning process. We shall use the WWPD property of $\gamma$ with respect to the action $K \act T$ and metric $d_u$ to derive the WWPD property of $\gamma$ with respect to the action $K \act T^*$ and the ``coned off'' metric $d^*$. For this purpose we shall use the original version of WWPD from \cite[Section 2.4]{BBF:SCLonMCG}, referred to in \cite[Proposition 2.3]{HandelMosher:WWPD} as ``WWPD~(3)'':
\begin{description}
\item[WWPD~(3):] Given a hyperbolic action $\Gamma \act X$, a loxodromic element $h \in \Gamma$ satisfies WWPD if and only if for any quasi-axis $\ell$ of $h$ and for any closest point map $\pi \from X \to \ell$ there exists $B \ge 0$ such that for any $g \in \Gamma - \Stab(\bdy h)$, the set $g(\ell)$ has diameter $\le B$.  
\end{description}
\emph{Remark:} This statement is equivalent to any alternate version in which either of the universal quantifiers on $\ell$ or on $\pi$ is replaced with an existential quantifier, because any two quasi-axes of $h$ have finite Hausdorff distance, and any two closest point maps $X \to \ell$ have finite distance in the sup metric. We~shall use these equivalences silently in what follows.

Let $\pi \from T \to L_\gamma$ be the retraction which maps each component $C$ of $T-L_\gamma$ to the point where the closure of $C$ intersects $L_\gamma$. This map $\pi$ is the unique closest point map from $T$ to $L_\gamma$ with respect to the metric $d_u$, and so in the context of Step~1 we can apply WWPD~(3) to conclude that there is a constant $B \ge 0$ such that for each $g \in K - \Stab(\bdy\gamma)$, the set $\pi(g(L_\gamma))$ has $d_u$-diameter $\le B$, and hence the set $\pi(g(L_\gamma))$ has $D^u$-diameter $\le B$ (Corollary~\ref{CorollarySomeFormulas}). 

Recall two facts: the coarse metrics $D_u$~and~$D_\PF$ are quasicomparable on $T$ (Definition~\ref{DefinitionMetricsOnT}); and the inclusion map $T \inject T^*$ is a quasi-isometry with respect to $D_\PF$ on $T$ and $d^*$ on $T^*$ (Proposition~\ref{PropConeQI}). It follows from these two facts that the sets $\pi(g(L_\gamma))$ have uniformly bounded $d^*$-diameter for all $g \in K - \Stab(\bdy\gamma)$. Property WWPD~(3) for $h$ with respect to the action $K \act T^*$ and the metric $d^*$ will therefore be proved if we can demonstrate the following: for any closest point map $\pi^* \from T^* \to L_\gamma$ with respect to the metric~$d^*$, the distance $d^*(\pi(x),\pi^*(x))$ is uniformly bounded as $x$ varies over $T$. For this purpose, using the same two facts above, it suffices to prove that the quasidistance $D_u(\pi(x),\pi^*(x))$ is uniformly bounded as $x$ varies over~$T$. Applying the Coarse Triangle Inequality for the path function $L_u$ (Lemma~\ref{LemmaCTI}), and using that $\pi^*(x)$ minimizes $D^u$ distance from $x$ to $L_\gamma$, we have
\begin{align*}
D_u(\pi(x),\pi^*(x)) &\le D_u(\pi(x),x) + D_u(x,\pi^*(x)) +  C_{\ref{LemmaCTI}} \\
  &\le D_u(\pi(x),x) + D_u(x,\pi(x)) + C_{\ref{LemmaCTI}} \\
  &\le 2B + C_{\ref{LemmaCTI}}
\end{align*}
This completes the proof that $\gamma$ is a WWPD element of the action $K \act T^*$.

\subparagraph{Step 3: The action $\wh\cH \act \S$.} Arguing by contradiction, suppose that $\gamma$ fails to be a WWPD element of the action $\wh\cH \act \S$ with respect to the metric $d_\S$. Fix a closest point map $\pi \from \S \to L_\gamma$. Applying WWPD~(3) (stated above, and see the following remark), we obtain an infinite sequence $\Psi_i \in \wh\cH - \Stab_{\wh\cH}(\bdy\gamma)$ ($i=1,2,\ldots$) such that the diameter of $\pi(\Psi_i(L_\gamma))$ goes to $+\infty$ as $i \to +\infty$. Denote $L_i = \Psi_i(L_\gamma)$ which is a $(k,c)$-quasiaxis in $\S$ for $\Psi_i \gamma \Psi_i^\inv$, where $k \ge 1$, $c \ge 0$ are independent of~$i$. 


Using that the coordinate homomorphism $\wh\omega_u \from \wh\cH \to \Z$ is surjective with kernel~$K$, and that $\wh\omega_u(\Phi)=1$ (see Figure~\ref{FigCommutativeKJ}), for each $i$ we have $\Psi_i = \Delta_i \Phi^{m_i}$ for a unique $\Delta_i \in K$ and a unique integer $m_i$. Choose points $w_i,x_i \in L_\gamma$ and $y_i,z_i \in L_i$ such that $\pi(y_i)=w_i$, $\pi(z_i)=x_i$, the point $w_i$ precedes the point $x_i$ in the oriented axis $L_\gamma$, and \hbox{$d_\S(w_i,x_i) \to +\infty$.} By replacing $\Psi_i$ with $\Delta_i \Phi^{m_i} \gamma^k$ for an appropriate exponent $k$ that depends on $i$, after passing to a subsequence we may assume that the subpaths $[w_i,x_i]$ of $L_\gamma$ are nested:
$$(*) \qquad\qquad [w_1,x_1] \subset \cdots \subset [w_i,x_i] \subset [w_{i+1},x_{i+1}] \subset \cdots
$$

We next prove that the sequence of integers $(m_i)$ takes only finitely many values. Consider the $(k,c)$-quasigeodesic quadrilateral $Q_i$ in $\S$ having the $(k,c)$-quasigeodesics $[w_i,x_i] \subset L_\gamma$ and $[y_i,z_i] \subset L_i$ as one pair of opposite sides, and having geodesics $\overline{w_i y_i}$, $\overline{x_i z_i}$ as the other pair of opposite sides. By a basic result in Gromov hyperbolic geometry \cite{Gromov:hyperbolic}, there exists a finite, connected graph $G_i$ equipped with a path metric, and having exactly four valence~$1$ vertices denoted $\bar w_i, \bar x_i, \bar y_i, \bar z_i$, and there exists a quasi-isometry $f_i \from (Q_i,w_i,x_i,y_i,z_i) \to (G_i,\bar w_i, \bar x_i, \bar y_i, \bar z_i)$ with uniform quasi-isometry constants independent of $i$ (depending only on $k$, $c$, and the hyperbolicity constant of~$\S$). We consider two cases, depending on properties of the graph~$G_i$. In the first case, the paths $[\bar w_i, \bar x_i]$ and $[\bar y_i, \bar z_i]$ are disjoint in $G_i$, and so the points $\bar y_i, \bar z_i$ have the same image under the closest point projection $G_i \mapsto [\bar w_i, \bar x_i]$; it follows that $\pi(y_i)=w_i$ and $\pi(z_i)=x_i$ have uniformly bounded distance, which happens for only a finite number of values of $i$. In the second case the paths $[\bar w_i, \bar x_i]$ and $[\bar y_i, \bar z_i]$ are \emph{not} disjoint in $G_i$, and so the minimum distance between the paths $[w_i,x_i]$ and $[y_i,z_i]$ in $\S$ is uniformly bounded, implying that the minimum distance in $\S$ between $\S_0$ and $\Phi^{m_i}(\S_0)$ is uniformly bounded, but that distance equals $\abs{m_i}$ by Lemma~\ref{LemmaProjectNonIncrease}. In each case it follows that $m_i$ takes on only finitely many values. 

We may therefore pass to a subsequence so that $m_i=m$ is constant, hence $\Psi_i = \Delta_i \Phi^m$ and $L_i \subset \S_m$. By restriction of the action $\wh\cH \act \S$ we obtain an isometric action $K \act \S_m$. Reverting to formal action notation, let $\A_0 \from K \act \S_0$ and $\A_m \from K \act \S_m$ denote the two restrictions of the action $K \act \S$. We have a commutative diagram 
$$\xymatrix{
K \ar[rrr]^{\A_0}  \ar[d]_{i_{\Phi^m}} &&& \Isom(\S_0) \ar[d]^{\text{Ad}_{\Phi^m}} \\
K \ar[rrr]^{\A_m} &&& \Isom(\S_m)
}$$
in which the left vertical arrow is the restriction to the normal subgroup $K \normal \wh\cH$ of the inner automorphism $i_{\Phi^m} \from \wh\cH \to \wh\cH$, and the right vertical arrow is the ``adjoint'' isomorphism given by the formula 
$$\text{Ad}_{\Phi^m}(\Theta)(y) = \Phi^m \cdot \Theta(\Phi^{-m} \cdot y) \quad\text{for each $y \in \S_m$ and each $\Theta \in \Isom(\S_0)$}
$$ 
Using the above commutative diagram, observe that $i_{\Phi^m}$ takes loxodromic WWPD elements of the action $\A_0 \from K \act \S_0$ to loxodromic WWPD elements of the action $\A_m \from K \act \S_m$. Combining this with Step~2, it follows that $\gamma' = i_{\Phi^m}(\gamma)$ is a loxodromic WWPD element of the action $K \act \S_m$. 

Noting that the line $L_{\gamma'} = \Phi^m(L_\gamma)$ is a quasi-axis for $\gamma'$ in $\S_m$, we may apply the property WWPD~(3) to $L_{\gamma'}$, with the conclusion that for each $\Delta \in K - \Stab(\bdy\gamma')$ the image of an $\S_m$-closest point map $\Delta \cdot L_{\gamma'} \mapsto \gamma'$ has uniformly bounded diameter. By careful choice of $\Delta$, namely the members of the sequence $\Delta_i^\inv \Delta^{\vphantom{\inv}}_{i-1} \in K - \Stab(\bdy\gamma')$, we shall use the nesting property $(*)$ to obtain a contradiction. 

Knowing that $[y_{i-1},z_{i-1}] \subset L_{i-1}$ is uniformly Hausdorff close in $\S$ to $[w_{i-1},x_{i-1}]$, and using that the inclusion $\S_m \to \S$ is uniformly property (Lemma~\ref{LemmaUnifProp}), it follows that the diameter of $[y_{i-1},z_{i-1}]$ in $\S_m$ goes to $+\infty$ as $i \to +\infty$. Also, knowing that $[w_{i-1},x_{i-1}]$ is a subpath of $[w_{i},x_{i}]$, and that $[w_{i},x_{i}]$ is uniformly Hausdorff close in $\S$ to $[y_{i},z_{i}] \subset L_{i}$, it follows that $[y_{i-1},z_{i-1}]$ is uniformly Hausdorff close in $\S$ to a subpath of $[y_{i},z_{i}]$. Again using that $\S_m \to \S$ is uniformly proper (Lemma~\ref{LemmaUnifProp}), it follows that $[y_{i-1},z_{i-1}]$ is uniformly Hausdorff close in $\S_m$ to a subpath of $[y_{i},z_{i}]$. It follows that the diameter of the image of the $\S_m$-closest point map $L_{i-1} \mapsto L_{i}$ goes to $+\infty$, because it is greater than $\diam_{\S_m}[y_i,z_i] - C$ for some constant $C$ independent of $i$. Applying the isometric action of the group element $\Delta_i^\inv$, it follows that diameter of the image of the $\S_m$-closest point map from $\Delta_i^\inv \Delta_{i-1} L_{\gamma'} = \Delta_i^\inv L_{i-1}$ to $\Delta_i^\inv L_i = L_{\gamma'}$ goes to $+\infty$ as $i \to +\infty$. This gives the desired contradiction, completing Step~3.

\newcommand\bdyStar{\bdy^*}
\newcommand\bdyS{\bdy^\S}

\subparagraph{The strong axial property.} 
For each $\gamma \in J$ that acts loxodromically on the tree $T$, we regard its axis $L_\gamma$ as an oriented line with positive/negative ideal endpoints equal to the attracting/repelling points $\bdy_+ \gamma$, $\bdy_-\gamma$, respectively. Assuming also $L_\gamma$ is not a Nielsen line, we shall prove that $L_\gamma$ is a strong axis with respect to the action $\wh\cH \act \S$. It suffices to prove this for the set $Z \subset J$ consisting of all root free $\gamma \in J$ such that $L_\gamma$ is not a Nielsen line. 

We have indexed sets of oriented lines $\L = \{L_\gamma \suchthat \gamma \in Z\}$ and of ideal points points $\B = \{\bdy_+ \gamma \suchthat \gamma \in Z\}$, with bijective indexing maps $Z \mapsto L_Z$ and $Z \mapsto \B$. Having shown already that each $\gamma \in Z$ acts loxodromically on $T^*$ and on $\S$, with attracting/repelling pairs denoted $\bdyStar_\pm\gamma \in T^*$ and $\bdyS_\pm\gamma \in \bdy\S$, we obtain indexed sets $\B^* = \{\bdyStar_+\gamma \suchthat \gamma \in Z\}$ and $\B^\S = \{\bdyS_+\gamma \suchthat \gamma \in Z\}$.

We next show that the indexing map $Z \mapsto \B^\S$ is a bijection. The oriented line $L_\gamma$ is also a quasi-axis for $\gamma$ in $\S$, and hence the positive end of $L_\gamma$ limits on $\bdyS_+\gamma$ in~$\bdy\S$. Consider $\gamma \ne \delta \in Z$. The line $L_{\gamma\delta} \subset T$ with ideal endpoints $\bdy_+\gamma,\bdy_+\delta$ in~$\bdy T$ is a concatenation of the form $L_{\gamma\delta} = \bar R_\gamma A R_\delta$ where $R_\gamma$ is a positive ray in $L_\gamma$ and $R_\delta$ is a positive ray in $L_\delta$, hence the two ends of $L_{\gamma\delta}$ limit on $\bdyS_+\gamma,\bdyS_-\delta$ in $\bdy\S$. To show that $\bdyS_+\gamma \ne \bdyS_+\delta$ it therefore suffices to prove that $L_{\gamma\delta}$ is a quasigeodesic line in $\S$. Since $L_{\gamma\delta}$ is a reparameterized quasigeodesic in $T^*$ (Proposition~\ref{PropTStarHyp}), and since its disjoint subrays $R_\gamma$, $R_\delta$ each have infinite $D_\PF$ diameter in $T$ and hence infinite $d^*$ diameter in $T^*$ (Proposition~\ref{PropConeQI}), it follows that $L_{\gamma\delta}$ is a quasigeodesic line in $T^*$; this shows that $\bdyStar_+\gamma \ne \bdyStar_+\delta$. Arguing by contradiction, suppose that $\bdyS_+\gamma = \bdyS_+\delta \in \bdy\S$, and so both ends of $L_{\gamma\delta}$ limit on that point of $\bdy\S$. Since the rays $R_\gamma$, $R_\delta$ are quasigeodesics in $\S$, it follows that there are sequences $(x_i)$ in $R_{\gamma}$ and $(y_i)$ in $R_{\delta}$ such that the distances $d^\S\!(x_i,y_i)$ are bounded. Since $x_i,y_i$ approach $\bdyStar_+\gamma \ne \bdyStar_+\delta$ in $\bdy T^*$, respectively, it follows that $d^*(x_i,y_i)$ approaches $+\infty$. Since the inclusion $T^* \inject \S$ is uniformly proper (Lemma~\ref{LemmaUnifProp}), it also follows that the distances $d^\S(x_i,y_i)$ approach $+\infty$, a contradiction.

We claim that there exist actions of the group $\wh\cH$ on the sets $Z$, $\B$ and $\B^\S$ satisfying the following properties:
\begin{enumerate}
\item\label{ItemHEqBij} The indexing bijections $Z \mapsto \B, \B^\S$ are $\wh\cH$-equivariant.
\item\label{ItemHMinusJ} No element of $\wh\cH - K$ fixes any element of $Z$. 
\end{enumerate}
Once this claim is proved, from \pref{ItemHEqBij} and~\pref{ItemHMinusJ} it follows that for each $\gamma \in Z$ we have $\Stab_K(\bdy_\pm \gamma) = \Stab_K(\bdy^\S_\pm\gamma) = \Stab_{\wh\cH} (\bdy^\S_\pm\gamma)$. In the place where~Lemma~\ref{LemmaTreeWWPD} is applied in Step~1 we may further conclude that each $\gamma \in Z$ is strongly axial with respect to the action $K \act T$, and hence the action of $\Stab_K(\bdy_\pm \gamma)$ on $T$ preserves $L_\gamma$. The action of $\Stab_{\wh\cH} (\bdy^\S_\pm\gamma)$ on $\S$ therefore preserves~$L_\gamma$, proving that $\gamma$ is strongly axial with respect to the action $\wh\cH \act \S$, and we are done. 

For proving the claim, we will for the most part restrict our attention to the semigroup $\wh\cH_+ = \wh\cH \intersect \wh\D_+$ and its semigroup action $\wh\cH_+ \act T$ which is obtained by restricting $\wh\D_+ \act T$ (Section~\ref{SectionActionOnT}).

The action $\wh\cH \act Z$ is defined by restricting to $Z$ the natural inner action of $\wh\cH$ on its own normal subgroup $J = \wh\cH \intersect \Inn(F_n)$, but we must verify that $Z$ is invariant under that action. Given $\gamma \in J$, and $\Psi \in \wh\cH_+ = \wh\cH \intersect \wh\D_+$, we note three things.
First, applying Property~\pref{Item_f_Twisted} \emph{Twisted Equivariance} of Section~\ref{SectionActionOnT}, the action of $\gamma$ on $T$ is loxodromic with axis $L_\gamma$ if and only if the action of $\Psi(\gamma)$ on $T$ is loxodromic with axis $f^\Psi_\#(L_\gamma)=L_{\Psi(\gamma)}$. Second, $\gamma$ is root free if and only if $\Psi(\gamma)$ is root free. Third, applying Property~\pref{Item_f_PsiNielsen} \emph{Invariance of Nielsen data} of Section~\ref{SectionActionOnT}, the axis $L_\gamma$ is a Nielsen line if and only if $L_{\Psi(\gamma)}$ is a Nielsen line. It follows that $\gamma \in Z$ if and only if $\Psi(\gamma) \in Z$, hence $Z$ is $\wh\cH$-invariant.

The argument in the previous paragraph yields a little more: the bijection $Z \leftrightarrow \L$ is equivariant with respect to the action $\wh\cH \act Z$ and the action $\wh\cH \act \L$, the latter of which is defined by $\Psi \cdot L_\gamma = f^\Psi_\#(L_\gamma) = L_{\Psi(\gamma)}$. Existence of actions of $\wh\cH$ on $\B$ and $\B^\S$ satisfying item~\pref{ItemHEqBij} is an immediate consequence: for each $\Psi \in \wh\cH_+$, the ideal point in $\B$ or in $\B^\S$ represented by the positive end of the oriented line $L_\gamma$ is taken by $\Psi \in \wh\cH_+$ to the ideal point represented by the positive end of $L_{\Psi(\gamma)}$.

Finally, item~\pref{ItemHMinusJ} is a special case of the following: for each $\Psi \in \wh\D - \wh\D_0$ and each $\gamma \in F_n$, if $\gamma$ is loxodromic and if its axis $L_\gamma$ is not a Nielsen line then \hbox{$\Psi(\gamma) \ne \gamma$}. Suppose to the contrary that $\Psi(\gamma)=\gamma$, let $\Psi = \Delta\Phi^m$ where \hbox{$\Delta \in \Inn(F_n)$} and $m \ne 0$, and let $c$ be the conjugacy class of $\gamma$ in $F_n$, and so $\phi^m(c)=c$. Let $\sigma$ be the circuit in $G$ represented by $c$ (Notations~\ref{NotationsFlaring}). We may assume $m > 0$, and so $f^m_\#(\sigma)=\sigma$. It follows that $\sigma$ is a concatenation of fixed edges and indivisible Nielsen paths of $f$ (\SubgroupsOne, Fact 1.39). However, no edge of $H_u$ is fixed, and the only indivisible Nielsen path that crosses an edge of $H_u$ is $\rho_u^{\pm 1}$ which has at least one endpoint disjoint from $G_{u-1}$ (Notations~\ref{NotationsFlaring}~\pref{ItemCTiNP}). One of two cases must therefore hold: either $\sigma$ is contained in $G_{u-1}$, implying that $\gamma$ is not loxodromic (Definition~\ref{DefOfT}); or $\rho$ exists and is closed, and $\sigma$ is an iterate of $\rho$ or $\rho^\inv$, implying that $L_\gamma$ is a Nielsen line (Definition~\ref{DefNielsenSet}). In either case, we are done verifying item~\pref{ItemHMinusJ}.

\bibliographystyle{amsalpha} 
\bibliography{mosher} 


\end{document}